\let\oldtocsection=\tocsection
\let\oldtocsubsection=\tocsubsection
\renewcommand{\tocsection}[2]{\hspace{0em}\vspace{0.1em}\rule{0pt}{14pt}\oldtocsection{#1}{#2}\bf}
\renewcommand{\tocsubsection}[2]{\hspace{2em}\oldtocsubsection{#1}{#2}}
\definecolor{Chocolat}{rgb}{0.36, 0.2, 0.09}
\definecolor{BleuTresFonce}{rgb}{0.215, 0.215, 0.36}
\tikzset{>=stealth'}
\theoremstyle{plain}
\newtheorem{thm}{Th\'eor\`eme}[section]
\newtheorem{proposition}[thm]{Proposition}
\newtheorem{theorem}[thm]{Theorem}
\newtheorem{corollary}[thm]{Corollary}
\newtheorem{lemma}[thm]{Lemma}
\newtheorem*{thmintro}{Theorem}
\theoremstyle{definition}
\newtheorem{definition}[thm]{Definition}
\newtheorem{remark}[thm]{\sc Remark}
\newtheorem{example}[thm]{\sc Example}
\newtheorem{examples}[thm]{\sc Examples}
\NewDocumentCommand{\createbunch}{ m O{} m }
 {
  \clist_map_inline:nn { #3 } { \cs_new_protected:cpn { #2 ##1 } { #1 { ##1 } } }
 }
\createbunch{\mathbb}   [bb]{A,B,C,D,E,F,G,H,I,J,K,L,M,N,O,P,Q,R,S,T,U,V,W,X,Y,Z}
\createbunch{\mathcal}  [cal]{A,B,C,D,E,F,G,H,I,J,K,L,M,N,O,P,Q,R,S,T,U,V,W,X,Y,Z}
\createbunch{\mathscr}  [scr]{A,B,C,D,E,F,G,H,I,J,K,L,M,N,O,P,Q,R,S,T,U,V,W,X,Y,Z}
\createbunch{\mathsf}   [sf]{A,B,C,D,E,F,G,H,I,J,K,L,M,N,O,P,Q,R,S,T,U,V,W,X,Y,Z}
\createbunch{\mathrm}   [rm]{A,B,C,D,E,F,G,H,I,J,K,L,M,N,O,P,Q,R,S,T,U,V,W,X,Y,Z,a,b,c,d,e,f,g,h,i,j,k,l,m,o,p,q,r,s,t,u,v,w,x,y,z}
\createbunch{\mathfrak} [frak]{A,B,C,D,E,F,G,H,I,J,K,L,M,N,O,P,Q,R,S,T,U,V,W,X,Y,Z}
\createbunch{\mathfrak} [frak]{a,b,c,d,e,f,g,h,i,j,k,l,m,n,o,p,q,r,s,t,u,v,w,x,y,z}
\createbunch{\mathcal} [cal]{a,b,c,d,e,f,g,h,i,j,k,l,m,n,o,p,q,r,s,t,u,v,w,x,y,z}
\createbunch{\mathsf}   {sSet,Ch,Mod,Ho,Set,Ab,cdga,dga,Fin,Cat,Op,Gpd,LMod,Alg,Bij}
\createbunch{\operatorname} {Hom,Mor,End,id,colim,Ind,Func,ho,Path,Cyl}
\newcommand{\cev}[1]{\reflectbox{\ensuremath{\vec{\reflectbox{\ensuremath{#1}}}}}}
\newcommand{\sgn}{\mathrm{sgn}}
\newcommand{\Sy}{\bbS}
\newcommand{\la}{\langle}
\newcommand{\ra}{\rangle}
\newcommand{\nec}{\mathfrak{neck}}
\newcommand{\cnec}{\mathfrak{cneck}}
\newcommand{\convDPois}{\mathfrak{DPois}}
\newcommand{\hhc}{\mathfrak{hc}}
\newcommand{\eps}{\varepsilon}
\newcommand{\C}{\mathbb{C}}
\newcommand{\NN}{\mathbb{N}}
\DeclareMathSymbol{\antishrieksymbol}{\mathord}{operators}{'74}
	\DeclareRobustCommand{\antishriek}{{\mathpalette\anti@shriek\relax}}
	\newcommand\anti@shriek[2]{%
		\raisebox{\depth}{$\m@th#1\antishrieksymbol$}%
	}
\newcommand{\antish}{\antishriek}
\newcommand{\ac}{\antishriek}
\newcommand{\Gs}{\mathrm{G}}
\newcommand{\bGs}{\overline{\mathrm{G}}}
\renewcommand{\Bar}{\mathrm{B}}
\newcommand{\Cobar}{\mathrm{\Omega}}
\newcommand{\As}{\mathrm{Ass}}
\newcommand{\DLie}{\mathrm{DLie}}
\newcommand{\DPois}{\mathrm{DPois}}
\newcommand{\cDPois}{\mathrm{cDPois}}
\newcommand{\catofalgebras}[1]{{#1}\text{-}\mathsf{alg}}
\newcommand{\catofmon}[1]{\mathsf{Mon}\!\left(#1\right)}
\newcommand{\Ibox}{\mathrm{I}}
\newcommand{\Sbimod}{\Sy\mbox{-}\mathsf{bimod}}
\newcommand{\Properad}{\mathsf{properads}}
\newcommand{\dgVect}{\mathsf{dgVect}}
\newcommand{\prop}{\mathrm{P}}
\newcommand{\op}{{\mathrm{op}}}
\newcommand{\too}{\longrightarrow}
\newcommand{\field}{\mathbb{k}}
\newcommand{\susp}{\mathrm{s}}
\newcommand{\asusp}{\mathrm{s}^{-1}}
\renewcommand{\gg}    {\mathrm{g}}
\newcommand{\DB}[2]{\{\!\!\{#1,#2\}\!\!\}}
\newcommand{\DDBL}[3]{\left\{\!\!\left\{#1,\DB{#2}{#3}\right\}\!\!\right\}_L}
\def\point{\vcenter{\hbox{\scalebox{0.5}{$\bullet$}}}}
\def\pap#1#2#3{#1\stackrel{#2}{\vcenter{\hbox{\text{\scalebox{1.2}{$\Join$}}}}}#3}
\def\1{\mathbb{1}}
\def\s1{\mathrm{s}^{-1}}
\newcommand{\Cyc}{\mathsf{Cyc}}
\newcommand{\CycMod}{\mathsf{CycMod}}
\newcommand{\F}{\mathcal{F}}
\newcommand{\M}{\mathcal{M}}
\renewcommand{\P}{\mathcal{P}}
\newcommand{\CC}{\mathcal{C}}
\newcommand{\PT}{\mathrm{PT}}
\newcommand{\bbPT}{\mathbb{P}\mathbb{T}}
\newcommand{\K}{\field}
\newcommand{\g}{\mathfrak{g}}
\newcommand{\MC}{\mathrm{MC}}
\renewcommand{\d}{\mathrm{d}}
\newcommand{\whP}{\widehat{\mathcal{P}}}
\DeclareMathOperator{\EEnd}{\mathcal{E}\!\mathcal{n}\!\mathcal{d}}
\DeclareMathOperator{\AAs}{\mathcal{A}\!\mathcal{s}}
\DeclareMathOperator{\uAAs}{\mathcal{u}\mathcal{A}\!\mathcal{s}}
\DeclareMathOperator{\cAAs}{\mathcal{c}\mathcal{A}\!\mathcal{s}}
\DeclareMathOperator{\Tw}{Tw}
\DeclareMathOperator{\eend}{end}
\newcommand{\ibt}{\underset{\scriptscriptstyle (1,1)}{\boxtimes}}
\newcommand{\libt}{\lhd_{(*)}}
\newcommand{\ribt}{{\vphantom{\rhd}}_{(*)}\!\rhd}
\newcommand{\Cop}[2]{{\vphantom{\Delta}}_{{#1}}\Delta{_{#2}}}
\title{Pre-Calabi--Yau algebras and homotopy double Poisson gebras}
\author{Johan Leray}
\address{Universit\'e de Nantes, 
Laboratoire de Math\'ematiques Jean LERAY (LMJL), CNRS, 
UMR 6629, 
2 Chemin de la Houssini\`ere, BP 92208,  44322 Nantes Cedex 3, France}
\email{Johan.Leray@univ-nantes.fr}
\author{Bruno Vallette}
\address{Universit\'e Sorbonne Paris Nord, Laboratoire de G\'eom\'etrie, Analyse et Applications, LAGA, CNRS, UMR 7539, 93430, Villetaneuse, France}
\email{vallette@math.univ-paris13.fr}
\date{\today}
\keywords{pre-Calabi--Yau algebra, double Poisson gebra, cyclic operad, properad, deformation theory}
\thanks{2020 \emph{Mathematics Subject Classification.}
Primary 18M85; Secondary 17B, 18M70, 14A22.
\newline
The first author was supported by a postdoctoral grant of the DIM Math Innov -- R\'egion \^Ile de France and the second author was supported by the Institut Universitaire de France and the project ANR-20-CE40-0016 HighAGT}
\dedicatory{"Le souvenir de ta r\'eciproque" -- Roberta Scarsella}
\begin{document}

\begin{abstract}
We prove that the notion of a curved pre-Calabi--Yau algebra is equivalent to the notion of a curved homotopy double Poisson gebra, thereby settling the equivalence between the two ways to define derived noncommutative Poisson structures. 
We actually prove that the respective differential graded Lie algebras controlling both deformation theories are isomorphic.
This approach allows us to apply the recent developments of the properadic calculus in order to establish the homotopical properties of pre-Calabi--Yau algebras and homotopy double Poisson gebras: $\infty$-morphisms, homotopy transfer theorem, formality, Koszul hierarchy, and twisting procedure. 
\end{abstract}

\maketitle

\tableofcontents

\section*{Introduction}

\paragraph*{\bf Poisson geometry}
In commutative geometry, there are two equivalent ways to define the notion of  a \emph{Poisson structure}. 
One can first consider a Lie bracket
\[
	\{-,-\} \colon C^{\infty}(M) \otimes C^{\infty}(M) \too C^{\infty}(M)
\]
satisfying the Leibniz rule with respect to the commutative product of the algebra of smooth functions on a manifold $M$~. 
Then, one can notice that this  data is equivalent to a bivector field satisfying the Maurer--Cartan equation with respect to the Schouten--Nijenhuis bracket of polyvector fields.
In general, an algebra made up of a compatible pair of a commutative product and a Lie bracket is called a \emph{Poisson algebra}. 

\medskip

\paragraph*{\bf Noncommutative Poisson geometry}
How can one develop a noncommutative analogue of a Poisson structure? 
Let us recall the guiding \emph{Kontsevich--Rosenberg principle of noncommutative geometry} \cite{KR00}: 
the noncommutative analogue of a type of structures on schemes should be a type of structures on associative algebras $A$, viewed as noncommutative affine schemes, which induces the original type of structures on the affine schemes of representations of $A$~. 
M. Van den Bergh solved this question  in \cite{VdB08}: the noncommutative analogue of a Poisson structure is a \emph{double Lie bracket}
\[
	\DB{-}{-} \colon A \otimes A \too A\otimes A
\]
satisfying a Leibniz relation with respect to the associative product on $A$~. 
Similarly, one can coin an algebra of 
noncommutative polyvector fields which induces polyvector fields on representation schemes: it is given by the 
tensor algebra on derivations from $A$ to $A\otimes A$~, equipped with a  Schouten--Nijenhuis type (double) bracket. 
A  \emph{noncommutative bivector field} is a bitensor satisfying the associated Maurer--Cartan equation. 
In the smooth case, these two types of structures were shown to be equivalent in \cite[Section~4]{VdB08}.

\medskip

The aforementioned type of bialgebras is called a 
\emph{double Poisson gebra}; it has already found applications in many fields of mathematics like quantum algebra, 
low-dimensional geometry, symplectic geometry, integrable systems, and 
mathematical physics. An exhaustive list of references can be found on the web site \cite{FaiWeb} of M. Fairon. 
(In this paper, as in our previous works like \cite{HLV20}, we prefer to use the terminology \emph{gebra} to refer to algebraic structure with several inputs and several outputs. We reserve the terminology ``algebra'' for algebraic structure with exactly one output, like associative or Lie algebras.)

\medskip

\begin{center}
	\begin{tabular}{|c|c|}
	\hline
	\rule{0pt}{12pt}
	{\sc Commutative Geometry}&
	{\sc \ Noncommutative Geometry}\\
	\hline
	\rule{0pt}{11pt}
	representation varietes&
	associative algebras\\
	\hline
	\rule{0pt}{11pt}
	symplectic structures
	&
	bisymplectic structures
	 \\
	 \hline
	\rule{0pt}{11pt}
	Poisson structures
	&
	 \emph{double Poisson structures} \\
	\hline
\end{tabular}
\end{center}

\medskip

\paragraph*{\bf Derived Poisson geometry}
The passage to \emph{derived} (algebraic) geometry amounts basically to working now with \emph{differential graded} (or simplicial) commutative algebras. 
One can use either one or the other definition to extend the classical notion of a Poisson structure on this derived level. 
In the differential graded context, the proper generalization of the notion of a Poisson algebra, satisfying the expected homotopical properties, is that of a \emph{Poisson algebra up to homotopy}. 
Such an algebraic structure is made up of infinite series of operations with various arities which relax up to homotopy all the relations satisfied by  the commutative product and the Lie bracket of a Poisson algebra. 
Since the combinatorics of these higher operations is quite involved, one enjoys encoding them conceptually using operads: this is made  possible by the fact that the operad encoding Poisson algebras is Koszul, see \cite[Section~13.3.7]{LV12}.

\medskip

In derived geometry, a homotopical version of \emph{shifted polyvector fields}, equipped with 
a Schouten--Nijenhuis type bracket, was introduced in \cite{PTTV13, CPTVV17}. 
A \emph{shifted Poisson structure} is nothing but a shifted bivector field satisfying the associated Maurer--Cartan equation. 
V. Melani proved in \cite{Mel16} that these two notions of derived Poisson structures are equivalent for 
derived affine stacks. 

\medskip

\paragraph*{\bf Derived noncommutative Poisson geometry}
How can one develop a noncommutative analogue of a derived Poisson structure? According to the Kontsevich--Rosenberg principle, one should consider some structures on some \emph{derived representation schemes} of a differential graded  associative algebra. 
Fortunately this latter notion was developed by Y. Berest, G. Khachatryan, and A. Ramadoss  in \cite{BKR13}. 
So one can try first to come up with a notion of a \emph{double Poisson gebras up to homotopy} such that this kind of structure on $A$ induces a homotopy Poisson algebra structure on  the derived representation schemes of $A$~. 
Here again, the guiding algebraic toolbox already exists: the notion of a double Poisson gebra can be encoded by a properad \cite{Val07}, which is a generalisation of the notion of an operad allowing several outputs. 
This properad has recently been proved to be Koszul by the first named author in \cite{Ler19ii}. 
In the first section of this paper (\cref{sec:ProperadDoublePoissInfty}), we make explicit the notion of a homotopy double Poisson gebra obtained in this way. 

\medskip

Dually, the derived noncommutative space of polyvector fields is provided by the \emph{generalised necklace Lie-admissible algebra}
made up of symmetric and nonsymmetric tensors of $A$ and $A^*$ and equipped with a Schouten--Nijenhuis type bracket, see \cref{subsec::pCY_alg}. 
M. Kontsevich, A. Takeda, and Y. Vlassopoulos defined what should be a ``noncommutative shifted bivector'' as a Maurer--Cartan element in it and, this way,  they came to the notion of a \emph{pre-Calabi--Yau algebra} structure on $A$, see \cite{KTV21}. This notion is actually equivalent to the notion of a (genus $0$) \emph{$\rmV_\infty$-gebra} introduced earlier by T. Tradler and M. Zeinalian in \cite{TZ07Bis} and to the notion of \emph{$\rmA_\infty$-algebras with boundary} of P. Seidel \cite{Seidel12}.
W.-K. Yeung proved that this definition of a derived noncommutative Poisson structure satisfies the Kontsevich--Rosenberg principle: 
any pre-Calabi--Yau algebra structure on $A$ induces a shifted Poisson structure on 
its derived moduli stack of representations \cite[Corollary~4.78]{Yeu18}. 

\bigskip

\begin{center}
	\begin{tabular}{|c|c|}
	\hline
	\rule{0pt}{12pt}
	{\sc Derived Geometry} &
	{\sc Derived Noncommutative Geometry}\\
	\hline
	\rule{0pt}{11pt}
	polyvector fields
	&
	higher cyclic complex
	\\
	\hline
	\rule{0pt}{11pt}
	shifted symplectic structures
	&
	Calabi--Yau structures
	 \\
	 \hline
	\rule{0pt}{11pt}
	shifted Poisson structures
	&
	 \emph{homotopy double Poisson gebras}/\emph{pre-Calabi--Yau algebras}\\
	\hline
\end{tabular}
\end{center}

\bigskip

Several authors have already compared pre-Calabi--Yau algebras to some homotopical variations of  double Poisson gebras. 
First, N. Iyudu, M. Kontsevich, and Y. Vlassopoulos have shown that any double Poisson algebra structure induces canonically a pre-Calabi--Yau algebra structure, see \cite{IK18,IK19,IKV21}. 
Another proof of this result was given by W.-K.Yeung in \cite[Example~1.35]{Yeu18}. 
D. Fern\'andez and E. Herscovich have extended this result in \cite{FH20, FH19} to double Poisson-infinity gebras, as defined by T. Schedler in \cite{Sch06},  and to double quasi-Poisson gebras \cite{VdB08}.

\medskip

\paragraph*{\bf Present achievements}

The first result of the present paper extends \emph{the equivalence between the two definitions of a Poisson structure in classical geometry  to the level of derived noncommutative geometry}: 

\bigskip

\begin{center}
\fbox{curved pre-Calabi--Yau algebras $\cong$ curved homotopy double Poisson gebras}~.
\end{center}

\bigskip

Notice that, for both notions to be strictly equivalent, it is mandatory to consider some mild generalisations including curvatures on both sides. 
Our main theorem actually holds on the level of the differential graded Lie-admissible algebras encoding both notions.  

\begin{thmintro}[\cref{thm:main}]
There exists canonical morphisms of differential graded Lie-admissible algebras
\[\begin{tikzcd}[column sep=normal, row sep=tiny]
 \cnec_A \arrow[r, hook] & \hhc_A  \arrow[r, "\cong"]    & \mathfrak{c}\convDPois_A~,  
	\end{tikzcd}
\]
encoding respectively tensorial curved pre-Calabi--Yau algebras, curved pre-Calabi--Yau algebras, and  curved homotopy double Poisson gebras,
where the first one is an isomorphism if and only if $A$ is degree-wise finite dimensional. 
\end{thmintro}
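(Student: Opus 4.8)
The plan is to establish the right-hand isomorphism first, since it carries the real content, and then to obtain the left-hand map and the finite-dimensionality clause by a soft comparison between a finite-tensor model and its $\Hom$-completion. Recall from \cite{Ler19ii} that the properad $\DPois$ is Koszul; consequently the dg Lie-admissible algebra governing curved homotopy double Poisson gebras on $A$ is the convolution algebra $\mathfrak{c}\convDPois_A=\Hom_{\Sy}\big(\mathcal{C},\EEnd_A\big)$, where $\mathcal{C}$ is the coproperad governing curved homotopy double Poisson gebras (the curved Koszul dual of $\DPois$, carrying an extra curvature generator), the product is the properadic convolution product built from the infinitesimal decomposition of $\mathcal{C}$ and the infinitesimal composition of $\EEnd_A$, and the differential is the one induced by $d_A$ twisted by the Maurer--Cartan element encoding the curved associative structure. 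The first task is to make $\mathcal{C}$ fully explicit: Van den Bergh's generators being the product (arity $(1,2)$) and the double bracket (arity $(2,2)$), the Koszul complex of \cite{Ler19ii} shows that each arity component $\mathcal{C}(m,n)$ is, up to a degree shift and a sign twist, a direct sum of \emph{regular} $\Sy_m\times\Sy_n$-representations indexed by the ``necklace'' configurations of $m$ inputs and $n$ outputs distributed around a cyclically ordered wheel, and that the infinitesimal decomposition of $\mathcal{C}$ is the operation of cutting such a necklace into two.

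Since each $\mathcal{C}(m,n)$ is, slot by slot, a sum of regular representations, the identity $\Hom_{\Sy_m\times\Sy_n}\big(\field[\Sy_m\times\Sy_n],V\big)\cong V$ identifies the corresponding component of $\Hom_{\Sy}\big(\mathcal{C},\EEnd_A\big)$ with the plain sum, over the indexing necklaces, of the spaces $\Hom\big((\asusp A)^{\otimes m},(\asusp A)^{\otimes n}\big)$ carrying the degree shifts prescribed by the pre-Calabi--Yau dimension. Assembled over all arities, this is precisely the higher cyclic complex $\hhc_A$ of \cite{KTV21}. Under this identification I would then verify the two pieces of structure: that the properadic convolution product corresponds term-by-term to the ``cut a necklace and reglue'' operation, which is the definition of the Schouten--Nijenhuis-type necklace product on $\hhc_A$, so the Lie-admissible products agree on the nose; and that the twisted differential on the convolution side (internal part $d_A$, bracket with the associative Maurer--Cartan element, and curvature term) matches the Hochschild-type differential of $\hhc_A$, which is automatic once the products have been identified. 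This yields the isomorphism $\hhc_A\cong\mathfrak{c}\convDPois_A$ of dg Lie-admissible algebras.

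Tensorial curved pre-Calabi--Yau algebras are controlled by $\cnec_A$, whose arity-$(m,n)$ slot is the suitable shift of $(A^*)^{\otimes m}\otimes A^{\otimes n}$, again assembled over necklaces, with the necklace product defined by contracting the $A^*$-legs against the $A$-legs. The canonical map $(A^*)^{\otimes m}\otimes A^{\otimes n}\to\Hom\big(A^{\otimes m},A^{\otimes n}\big)$ converts contraction of tensors into composition of homomorphisms, so it is compatible with the necklace products and with the differentials, and it assembles into a morphism of dg Lie-admissible algebras $\cnec_A\hookrightarrow\hhc_A$; it is degree-wise injective because each such map is. It is an isomorphism in every arity if and only if each of these maps is surjective; already the arity-$(1,1)$ map $A^*\otimes A\to\Hom(A,A)$ is surjective if and only if $A$ is degree-wise finite dimensional, and conversely, when $A$ is degree-wise finite dimensional, one has $(A^{\otimes m})^*\cong(A^*)^{\otimes m}$ and $(A^{\otimes m})^*\otimes A^{\otimes n}\cong\Hom\big(A^{\otimes m},A^{\otimes n}\big)$ in each arity, which gives the isomorphism. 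This settles the remaining claims.

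The main obstacle is the explicit identification underlying the first two steps: presenting the coproperad $\mathcal{C}$ as a \emph{cyclic} $\Sy$-bimodule together with its decomposition coproduct, finely enough to recognise the necklace combinatorics, and --- most delicately --- threading the Koszul sign conventions and the suspensions $\asusp$ through both sides so that the convolution product equals the Schouten--Nijenhuis necklace product exactly, not merely up to isomorphism. The cyclic/dihedral symmetry of necklaces has to be reconciled with the $\Sy_m\times\Sy_n$-actions, and the curvature generator on the coproperad side must be matched with the constant term of the necklace complex in a way compatible with this sign discipline; once these are in place, the rest is bookkeeping.
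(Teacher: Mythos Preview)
Your overall strategy matches the paper's: identify the underlying graded spaces using the explicit description of the Koszul dual coproperad, then check that the Lie-admissible products agree. The paper carries this out via an explicit isomorphism $\Upsilon$ and a long sign verification (relegated to the appendix), and you correctly flag in your final paragraph that the signs are where the real work lies.

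Two points in your write-up are not right and would need correction. First, the components $\mathcal{C}(m,n)$ are \emph{not} sums of regular $\Sy_m\times\Sy_n$-representations: the basis given in the paper is indexed by $(\Sy_m\times\Sy_n\times\mathrm{Part}_m(n))/\C_m$, with the cyclic group acting diagonally and with a sign, so each summand is an \emph{induced} representation from a diagonal cyclic subgroup, carrying a sign character. Your shortcut $\Hom_{\Sy}(\field[\Sy_m\times\Sy_n],V)\cong V$ therefore does not apply as stated; one needs Frobenius reciprocity instead, and this is exactly what produces the $\C_m$-invariants visible in the definition of $\hhc_A$. Second, there is no ``twisting by the Maurer--Cartan element encoding the curved associative structure'' on either side: both $\hhc_A$ and $\mathfrak{c}\convDPois_A$ are defined for a bare dg (or graded) vector space $A$, with differential induced solely by $d_A$. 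No associative product on $A$ is assumed, and the ``Hochschild-type'' language is misleading here---the Maurer--Cartan elements are what one looks for \emph{in} these Lie-admissible algebras, not part of their definition.

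Your treatment of the left-hand map $\cnec_A\hookrightarrow\hhc_A$ via the canonical inclusion of tensors into homomorphisms, and the finite-dimensionality criterion via the arity-$(1,1)$ component, is correct and is exactly what the paper does.
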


\paragraph*{\bf Homotopy theory of algebraic structures.}
The categories of pre-Calabi--Yau algebras, homotopy double Poisson gebras, and $\rmV_\infty$-algebras are all encoded by structural (co)properads, which is an algebraic object used to represent operations with several inputs and several outputs. 
This bright point opens the doors to general homotopical properties using of the properadic calculus developed recently in 
\cite{HLV20, HLV22}.  
This allows us to settle a suitable notion of an \emph{$\infty$-morphism} for 
pre-Calabi--Yau algebras, homotopy double Poisson gebras, and $\rmV_\infty$-algebras, see \cref{sec:InftyMor}. 
We show that the associated categories carry all the required homotopical properties for this type of structures: 
homotopy transfer theorem (\cref{thm:HTT}), homological invertibility of $\infty$-quasi-isomorphisms (\cref{thm:InvInfQI}), and
equivalence between zig-zags of quasi-isomorphisms and $\infty$-quasi-isomorphisms (\cref{thm:MainInftyQi}). 

\medskip

A few words are in order about the present methods. In homotopical algebra, one can introduce new structures or objects, like $\infty$-morphisms, transferred or twisted structures, by explicit formulas and then prove their respective properties, which can amount to long and complicated computations, especially checking carefully all the signs. Like in \cite{HLV20, DSV21} for instance, we use a different approach here: we obtain all these structures and objects by applying the conceptual toolbox provided by the properadic calculus. This way, we know that all of them will  automatically satisfy the required properties. The only remaining piece of work amounts to making them explicit, which  is obtained by describing all the various decomposition maps of the structural coproperads. We perform this task for 
pre-Calabi--Yau algebras and homotopy double Poisson gebras in \cref{subsec:CoproduitsCpCY} and \cref{sec:InftyMor}. A notion of $\infty$-morphism for pre-Calabi--Yau algebras was introduced independently by M. Kontsevich, A. Takeda, and Y. Vlassopoulos in \cite[Definition~25]{KTV21}:  the abovementioned explicit description shows that this definition agrees with the properadic one and thus it establishes all the expected homotopical properties. 

\medskip

In \cite{CV25I}, we integrate dg Lie-admissible algebras of properadic convolution type into an explicit deformation gauge group.
In the present case, its action on Maurer--Cartan elements induces two universal ways to 
create derived noncommutative Poisson structures. 
The first one, called the \emph{Koszul hierarchy}, produces a homotopy double Poisson gebra, and thus a pre-Calabi--Yau algebra, from the data of a shifted Koszul dual double Poisson gebra and a chain complex (\cref{thm:KoszulHierach}). 
The second one, called the \emph{twisting procedure},  perturbes (curved) pre-Calabi--Yau algebras with the data of just one (Maurer--Cartan) element (\cref{thm:TwistProc}). Altogether, these homotopical constructions and properties form the second main contribution of the present paper. 

\medskip

A particularly careful attention was paid to signs: throughout this text, we make them explicit. 
Although almost all of them come from a direct application of the Koszul sign rule and the Koszul sign convention, but computing them is often a tedious exercice. The proof of the main result is an illuminating example. In order to preserve the flow of exposition, we postponed it to \cref{appendice}. 
 
\medskip

\paragraph*{\bf Layout}
In the first section, we make explicit the notion of a (curved) homotopy double Poisson gebra prescribed by the Koszul duality theory for properads. 
The second section is devoted to curved pre-Calabi--Yau algebras using the deformation theory of morphisms of cyclic non-symmetric operads. 
We conclude it with the main result: the two differential graded Lie-admissible algebras encoding these two types of structures are isomorphic. 
In a third section, we consider the notion of pre-Calabi--Yau algebras, that we compare with homotopy double Poisson gebras and $\rmV_\infty$-algebras, and we make explicit the various decomposition maps of the related coproperads. 
We apply this to settle the homotopy theory of pre-Calabi--Yau algebras, homotopy double Poisson algebras, and 
$\rmV_\infty$-algebras via their key notion of $\infty$-morphisms in the forth section. 
The proof of the main theorem is given in an appendix. 

\medskip

\paragraph*{\bf Conventions}
We work over a  field $\field$ of characteristic $0$ and its category $\dgVect$ of differential graded (dg) vector spaces. Since every object will be differential graded, we will drop the prefix "dg" for simplicity.
We use the homological degree convention  with the differential of degree $-1$.
The degree of a homogenous element $a$ will be denoted by  $|a|$~. 
The symmetric monoidal category structure on dg vector spaces carries the \emph{Koszul sign rule}
\[(12)\cdot(a\otimes b) \coloneq (-1)^{|a||b|} b\otimes a\]
and the  \emph{Koszul sign convention} lies in the following definition of the tensor product $f\otimes g$ of two maps
\[(f\otimes g)(a\otimes b)\coloneq (-1)^{|a||g|}f(a) \otimes g(b)~.\]
The linear dual of dg vector space $A$ is defined by $(A^*)_n \coloneq \Hom (A_{-n},\field)$ and by $d_{A^*}(f)\coloneq -(-1)^{|f|}f \circ d_A$. 
We denote the homological suspension (resp. desuspension) by $\susp$ (resp. $\asusp$), i.e. the one-dimensional dg vector space concentrated in degree $1$ (resp. $-1$). Notice that $\susp^*=\asusp$~, and thus $\asusp \susp=1=-\susp\asusp$~.

\medskip

We denote the symmetric groups by $\Sy_n$ and the cyclic groups by $\C_n$, which generating cycle $\tau_n\coloneq (12\ldots n)$~. 
We will represent graphically  the various cyclic constructions in a planar way, for instance with boxes instead of ringed bands, in order to address the main issue which is to compute signs coming from  the Koszul sign rule and convention. 
We follow the conventions for operads  given  in  \cite{LV12}. 

\medskip

\paragraph*{\bf Acknowledgements}
We would like to acknowledge enlightening discussions with 
Marion Boucrot, Damien Calaque, Ricardo Campos, Estanislao Herscovich, Eric Hoffbeck, Guillaume Laplante-Anfossi, Geoffrey Powell, Salim Rivi\`ere, Victor Roca i Lucio,  Vladimir Rubtsov, and Alex Takeda. 
We express our deep appreciation to all the participants of the Villaroger 2021 workshop on double Poisson structures. 
The first author thanks Catherine and Victor for the warm welcomes at home and Charles De Clercq for introducing him to the great poetry.
The second author would like to thank Catherine for her help with the terminology and Victor for his help with the figures. 

\section{Double Poisson gebras up to homotopy}\label{sec:ProperadDoublePoissInfty}

In this section, we first recall the notion of a \emph{double Poisson gebra} before to making explicit  its  homotopy version as prescribed by the Koszul duality of properads. To ease the reading, we provide the reader with two subsections recalling the basic properties of properads and the deformation theory of their morphisms. One subsection is devoted to the computation of the Koszul dual (co)properad of double Poisson gebras. 
Finally, we go one step further by settling the even more general notion of a \emph{curved homotopy double Poisson gebra}.

\subsection{Double Poisson gebra}

\begin{definition}[Double Lie gebra]\label{def:gebra_structDLIE}
	A \emph{double Lie gebra} amounts to a data $(A,\DB{-}{-})$ made up of a
	dg vector space $A$ and a morphism of chain complexes
	\[
		\DB{-}{-}\colon A\otimes A \too A \otimes A~,
	\]
	called the \emph{double bracket}, satisfying the following two relations, where we use the Sweedler notation
	\[
		\DB{a}{b}=\DB{a}{b}'\otimes\DB{a}{b}''
	\]
	for any $a, b\in A$~.
	\begin{description}
		\item[\sc Skew-symmetry]
		      \[
			      \DB{a}{b} = -(-1)^{|a||b|+|\DB{a}{b}'||\DB{a}{b}''|} \DB{b}{a}''\otimes \DB{b}{a}'
		      \]

		\item[\sc Double Jacobi relation]
		      \[
			      \DDBL{a}{b}{c}
			      + (-1)^{|a|(|b|+|c|)}\, (123)\cdot\DDBL{b}{c}{a}
			      + (-1)^{|c|(|a|+|b|)}\, (132)\cdot\DDBL{c}{a}{b}
			      =0 \ ,
		      \]
		      where $\DDBL{a}{b}{c} \coloneq \{\!\!\{a,\DB{b}{c}'\}\!\!\} \otimes \DB{b}{c}''$ and where
		      \[
			      \begin{aligned}
				      (123)\cdot(u\otimes v\otimes w)\coloneq
				       & (-1)^{|w|(|u|+|v|)}w\otimes u\otimes v \ , \\
				      (132)\cdot(u\otimes v\otimes w)\coloneq
				       & (-1)^{|u|(|v|+|w|)}v\otimes w\otimes u \ .
			      \end{aligned}
		      \]
	\end{description}
\end{definition}

\begin{definition}[Double Poisson gebra \cite{VdB08}]\label{def:gebra_struct}
	A \emph{double Poisson gebra} is a triple $(A, \mu,\DB{-}{-})$ made up of a dg associative product  $\mu\colon A \otimes A \to A$ and a double Lie gebra structure $(A, \DB{-}{-})$ satisfying the following compatibility relation.
	\begin{description}
		\item[\sc Derivation]
		      \[
			      \DB{a}{\mu(b,c)} =
			      (-1)^{|a||b|} \mu(b, \DB{a}{c}') \otimes \DB{a}{c}''
			      + \DB{a}{b} '\otimes \mu(\DB{a}{b}'',c)
		      \]
	\end{description}
\end{definition}

\begin{example}\leavevmode
	\begin{enumerate}
		\item In \cite[Section 6]{VdB08}, the author shows that for every finite quiver $Q$, one can define a double Poisson bracket on the path algebra of the double of $Q$.

		\item In \cite{ORS13}, the authors classify double Poisson brackets on the free noncommutative associative algebra $\bbC\langle x,y\rangle$.

		\item For other significant examples, the interested reader can consult the exhaustive website \cite{FaiWeb} of M. Fairon, who has listed all the articles about double brackets.
	\end{enumerate}
\end{example}

\subsection{Properads}\label{subsec:Properads}
In this section, we  briefly recall the definitions of a properad and a coproperad, following mainly the presentation of \cite[Section~2]{HLV20}.
Let $\Bij$ be the groupoid of non empty finite sets with bijections.

\begin{definition}[$\Sy$-bimodule]
	A \emph{$\Sy$-bimodule} is a module over the groupoid $\Bij\times\Bij^\op$. The associated category is denoted by $\Sbimod$.
\end{definition}

\begin{example}
	Let $A$ and $B$ be two dg vector spaces. We define the $\Sy$-bimodule $\End^A_B$ by
	\[
		\End^A_B(Y,X)\coloneq \Hom\left(\bigotimes_{x\in X} A , \bigotimes_{y\in Y} B\right) \ ,
	\]
	for any non-empty finite sets $X$ and $Y$.
\end{example}

The groupoid $\Bij$ admits for skeletal category the one made up of the sets $\underline{n}\coloneq \{ 1,\cdots, n \}$ equipped with the symmetric groups $\Sy_n$ for automorphisms.
So  the data of a $\Sy$-bimodule $\rmM$ is equivalent to a collection $\{\rmM(m,n)\}_{m,n\in \NN^*}$ of dg vector spaces equipped with two compatible actions of the symmetric groups, one of $\Sy_m$ on the left and one of $\Sy_n$ on the right, under the formula
\[
	\rmM(Y,X)\coloneq
	\left(\prod_{f\in \Bij\times\Bij^\op(\underline{m}\times \underline{n},\, Y\times X)} \rmM(m,n)\right)\big/\sim\ ,
\]
where $|X|=n, |Y|=m$
and where
$
	(f, \mu)\sim
	\big(
	g, {g}^{-1}f\cdot\mu
	\big)~.
$\\

We consider the set $\rmG$ of connected graphs directed by a global flow and the endofunctor
$\scrG\colon \Sbimod \to \Sbimod$ defined by
\[
	\scrG(\rmM)(m,n)\coloneq \bigoplus_{\rmg\in \rmG(m,n)} \rmg(\rmM)
\]
where $\rmg(\rmM)\coloneq \bigotimes_{v\in \mathrm{vert}(\rmg)}\rmM(m(v), n(v))$~,
where $n(v)$ stands for the number of inputs of the vertex $v$ and where $m(v)$
stands for the number of outputs of the vertex $v$~.
The operation of forgetting the nesting of connected flow-directed graphs in $\scrG(\scrG(\rmM))$, produces elements of $\scrG(\rmM)$ and thus induces a monad structure on $\scrG$. We call this monad the \emph{monad of connected flow-directed graphs}.

\begin{definition}[Properad]
	A \emph{properad} is an algebra over the monad $\scrG$ of connected flow-directed graphs.
\end{definition}

This definition of a properad is actually not the original one. In \cite{Val07}, the second author defined a monoidal product $\boxtimes$ called \emph{the connected composition product} on the category $\Sbimod$, which amounts to composing operations along  2-level connected flow-directed graphs, as it is illustrated in \Cref{fig:two_levelled_graph}. We refer the reader to \cite[Definition~1.13]{HLV20} for a formal definition, see also \cite[Definition~3.11]{Ler19i}. Its unit $\rmI$ is the $\Sy$-bimodule made up of a one-dimensional space in arity $(1,1)$.

\begin{figure*}[h!]
	\begin{tikzpicture}[scale=0.7]
		\draw[thick] (2,1) to[out=270,in=90] (6,-1) to[out=270,in=90] (7,-2);
		\draw[thick] (1,1) to[out=270,in=90] (3,-1);
		\draw[thick] (0,-1) to[out=270,in=90] (1,-2);
		\draw[thick] (1,2) to[out=270,in=90] (0,1) to[out=270,in=90] (1,-1) -- (3,-1) to[out=270,in=90] (2,-2);
		\draw[thick] (8,2) to[out=270,in=90] (6,1) to[out=270,in=90] (8,-1) to[out=270,in=90] (10,-2);
		\draw[thick] (3,2) to[out=270,in=90] (5,1) ;
		\draw[thick] (10,1) to[out=270,in=90] (9,-1);
		\draw[draw=white,double=black,double distance=2*\pgflinewidth,thick] (1,-1) to[out=270,in=90] (0,-2);
		\draw[draw=white,double=black,double distance=2*\pgflinewidth,thick] (9,-1) to[out=270,in=90] (8,-2);
		\draw[draw=white,double=black,double distance=2*\pgflinewidth,thick] (7,-1) to[out=270,in=90] (4,-2);
		\draw[draw=white,double=black,double distance=2*\pgflinewidth,thick] (7,2) to[out=270,in=90]  (9,1) to[out=270,in=90] (10,-1);
		\draw[draw=white,double=black,double distance=2*\pgflinewidth,thick] (0,2) to[out=270,in=90] (2,1);
		\draw[draw=white,double=black,double distance=2*\pgflinewidth,thick] (8,1) to[out=270,in=90] (4,-1);
		\draw[draw=white,double=black,double distance=2*\pgflinewidth,thick] (5,2) to[out=270,in=90] (4,1) to[out=270,in=90] (0,-1);
		\draw[fill=white] (-0.3,0.8) rectangle (2.3,1.2);
		\draw[fill=white] (3.7,0.8) rectangle (6.3,1.2);
		\draw[fill=white] (7.7,0.8) rectangle (10.3,1.2);
		\draw[fill=white] (-0.3,-1.2) rectangle (4.3,-0.8);
		\draw[fill=white] (5.7,-1.2) rectangle (10.3,-0.8);
		\draw (1,1) node {\small{$\nu_1$}};
		\draw (5,1) node {\small{$\nu_2$}};
		\draw (9,1) node {\small{$\nu_3$}};
		\draw (2,-1) node {\small{$\mu_1$}};
		\draw (8,-1) node {\small{$\mu_2$}};
	\end{tikzpicture}
	\caption{An element of $\rmM\boxtimes \rmN$.}
	\label{fig:two_levelled_graph}
\end{figure*}

\begin{proposition}[\cite{Val07}]
	The category of algebras over the monad $\scrG$ is isomorphic to the category of monoids with respect to  the connected composition product:
	\[
		\catofalgebras{\scrG} \cong 	\catofmon{\Sbimod,\boxtimes,\Ibox}\ .
	\]
\end{proposition}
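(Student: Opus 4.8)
The plan is to recognize the monad $\scrG$ of connected flow-directed graphs as the free-monoid monad for the monoidal structure $(\Sbimod,\boxtimes,\Ibox)$, and then to invoke the purely formal fact that algebras over a free-monoid monad are exactly monoids.

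First I would unwind the relation between $\boxtimes$ and graphs. Granting the associativity of $\boxtimes$ from \cite{Val07}, one identifies the iterated product $\rmM^{\boxtimes k}$ with the $\Sy$-bimodule of \emph{leveled} $\rmM$-decorated connected flow-directed graphs, the $i$-th level of vertices being decorated by the $i$-th factor; this identification, for $k=2$, is precisely the definition of $\boxtimes$ via $2$-level graphs, and for general $k$ it is a reformulation of the associativity coherence. Forgetting the level structure yields a natural surjection $\bigoplus_{k\ge 0}\rmM^{\boxtimes k}\twoheadrightarrow\scrG(\rmM)$, which is highly non-injective, since a connected flow-directed graph generally admits many compatible levelings. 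The key step is to check that $\scrG(\rmM)$, equipped with the grafting of decorated graphs and with the exceptional edge as unit, is the \emph{free monoid} on $\rmM$ in $(\Sbimod,\boxtimes,\Ibox)$: the displayed surjection is the canonical comparison morphism, and the universal property is established by sending a leveled decorated graph to the corresponding iterated composite of a given monoid structure, the output being independent of the chosen leveling by associativity of the monoid product together with the interchange relation encoded in $\boxtimes$. Along the way one verifies that, under this identification, the monad multiplication of $\scrG$ --- ``forget the nesting'' of graphs-of-graphs --- corresponds to the multiplication of the free monoid, and that the units match; this upgrades the comparison map into an isomorphism of monads between $\scrG$ and the free-monoid monad on $\Sbimod$.

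It then remains to quote the general statement that, for any monoidal category in which free monoids exist, the Eilenberg--Moore category of the free-monoid monad is isomorphic to the category of monoids. Concretely: a monoid $(\rmP,\gamma,\eta)$ produces an algebra $\scrG(\rmP)\to\rmP$ by assembling the iterates of $\gamma$; conversely, an algebra $a\colon\scrG(\rmP)\to\rmP$ restricts along the summands $\rmP\boxtimes\rmP\hookrightarrow\scrG(\rmP)$ and $\Ibox\hookrightarrow\scrG(\rmP)$ to a product and a unit, and the two monad-algebra axioms translate verbatim into the associativity and unitality of a monoid; these two assignments are mutually inverse, natural in $\rmP$, and compatible with morphisms. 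Transporting this isomorphism of categories along the isomorphism of monads from the previous step yields $\catofalgebras{\scrG}\cong\catofmon{\Sbimod,\boxtimes,\Ibox}$.

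The main obstacle is the identification of $\scrG(\rmM)$ with the free $\boxtimes$-monoid on $\rmM$ --- equivalently, proving that the composite of a decorated connected flow-directed graph produced from a monoid structure is insensitive to the way the graph is cut into levels. This is where the combinatorics of level structures on connected flow-directed graphs and the coherence of $\boxtimes$ (associativity together with the interchange law) really enter; once it is in place, the passage to the statement about algebras and monoids is a formal transport of a standard categorical fact.
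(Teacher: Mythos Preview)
The paper does not actually prove this proposition: it is stated with attribution to \cite{Val07} and no argument is given. So there is nothing in the present paper to compare your proposal against.

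That said, your outline is essentially the approach of the original reference. The two steps you isolate --- identifying $\scrG$ with the free-monoid monad for $(\Sbimod,\boxtimes,\Ibox)$, and then invoking the formal equivalence between monoids and algebras over a free-monoid monad --- are exactly the strategy of \cite{Val07}. You also correctly identify the only genuinely non-formal ingredient: the fact that composing a decorated connected flow-directed graph in a monoid is independent of the chosen leveling, which is what makes $\scrG(\rmM)$ satisfy the universal property of the free $\boxtimes$-monoid. This is where the combinatorics live, and this is where \cite{Val07} does the work. One small caution worth keeping in mind if you flesh this out: $\boxtimes$ does not distribute over coproducts on both sides, so the ``tensor algebra'' $\bigoplus_{k\geqslant 0}\rmM^{\boxtimes k}$ is genuinely larger than the free monoid, and the surjection you describe must be shown to realize the correct quotient; this is handled in \cite{Val07} but is not entirely automatic.
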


\begin{example}
	For any  dg vector space $A$, we consider the properad structure on the $\Sy$-bimodule $\End_A\coloneq\End_A^A$ defined by  the composite of maps. It is called the \emph{endomorphism properad} of $A$.
	For instance, given two maps $f : A^{\otimes n} \to A^{\otimes m}$ and $g : A^{\otimes n'} \to A^{\otimes m'}$, their partial composite $f \circ_i^j g$ along the $i$th input of $f$ and the $j$th output of $g$ is organised as follows.

	\begin{figure*}[h!]
		\begin{tikzpicture}[scale=0.8,baseline =(n.base)]
			\node (n) at (0,0) {};
			\draw[thick]
			(-0.4,2) -- (-0.4,3)
			(0.2,2) -- (0.2,3)
			(0.8,2) -- (0.8,3)
			(1.4,2) -- (1.4,3);
			\draw[thick]
			(-0.5,-2) -- (-0.5,-3)
			(-1.25,-2) -- (-1.25,-3)
			(0.25,-2) -- (0.25,-3);
			\draw[thick]
			(0,-1) -- (0,1);
			\draw[thick]
			(-0.5,1) to[out=270,in=90] (-4,-3)
			(0.5,1) to[out=270,in=90] (3,-3)
			(1,1) to[out=270,in=90] (3.5,-3)
			(1.5,1) to[out=270,in=90] (4,-3);
			\draw[draw=white,double=black,double distance=2*\pgflinewidth,thick]
			(-4,3) to[out=270,in=90] (-1.5,-1)
			(-3.5,3) to[out=270,in=90] (-1,-1)
			(-3,3) to[out=270,in=90] (-0.5,-1)
			(4,3) to[out=270,in=90] (0.5,-1);
			\draw
			(0,-1) node[above right] {$\scriptstyle{i}$}
			(0,1) node[below right] {$\scriptstyle{j}$};
			\draw[fill=white]
			(-2,-2) rectangle (1,-1) 	node[midway] 	{${f}$}
			(-1,1) rectangle (2,2) 	node[midway] 	{${g}$};
		\end{tikzpicture}
		\caption{The partial composite of two multilinear maps.}
		\label{fig:PartCompoEnd}
	\end{figure*}
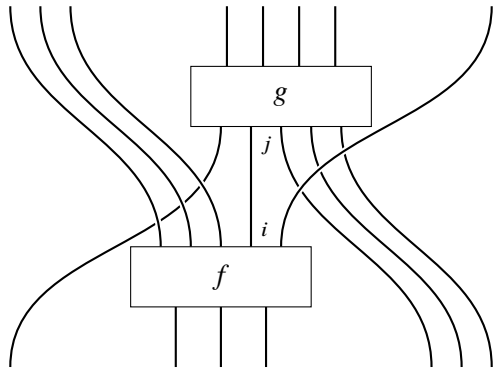

\end{example}

The most important  examples of properads used in this paper are the  following two ones.

\begin{definition}[$\DLie$ and $\DPois$]
	The properad $\DLie$ is defined by the following presentation:
	\[
		\DLie \coloneq
		\dfrac{
			\scrG\left(
			\begin{tikzpicture}[baseline=0ex,scale=0.2]
				\draw (0,0)  node[below] {$\scriptscriptstyle{1}$}
				-- (0,1.5)node[above] {$\scriptscriptstyle{1}$} ;
				\draw (2,0) node[below] {$\scriptscriptstyle{2}$}
				-- (2,1.5) node[above] {$\scriptscriptstyle{2}$};
				\draw[fill=white] (-0.3,0.5) rectangle (2.3,1);
			\end{tikzpicture}
			\ = \ - \
			\begin{tikzpicture}[baseline=0ex,scale=0.2]
				\draw (0,0)  node[below] {$\scriptscriptstyle{2}$}
				-- (0,1.5)node[above] {$\scriptscriptstyle{2}$} ;
				\draw (2,0) node[below] {$\scriptscriptstyle{1}$}
				-- (2,1.5) node[above] {$\scriptscriptstyle{1}$};
				\draw[fill=white] (-0.3,0.5) rectangle (2.3,1);
			\end{tikzpicture}
			\right)
		}{
			\left(
			\begin{tikzpicture}[scale=0.2,baseline=-1]
				\draw[thin] (0,-1) node[below] {$\scriptscriptstyle{1}$}
				-- (0,1.5) node[above] {$\scriptscriptstyle{1}$};
				\draw[thin] (2,-1) node[below] {$\scriptscriptstyle{2}$}
				-- (2,1.5) node[above] {$\scriptscriptstyle{2}$};
				\draw[thin] (4,-1) node[below] {$\scriptscriptstyle{3}$}
				-- (4,1.5) node[above] {$\scriptscriptstyle{3}$};
				\draw[fill=white] (1.7,0.5) rectangle (4.3,1);
				\draw[fill=white] (-0.3,-0.5) rectangle (2.3,0);
			\end{tikzpicture}
			~+~
			\begin{tikzpicture}[scale=0.2,baseline=-1]
				\draw[thin] (0,-1) node[below] {$\scriptscriptstyle{2}$}
				-- (0,1.5) node[above] {$\scriptscriptstyle{2}$};
				\draw[thin] (2,-1) node[below] {$\scriptscriptstyle{3}$}
				-- (2,1.5) node[above] {$\scriptscriptstyle{3}$};
				\draw[thin] (4,-1) node[below] {$\scriptscriptstyle{1}$}
				-- (4,1.5) node[above] {$\scriptscriptstyle{1}$};
				\draw[fill=white] (1.7,0.5) rectangle (4.3,1);
				\draw[fill=white] (-0.3,-0.5) rectangle (2.3,0);
			\end{tikzpicture}
			~+~
			\begin{tikzpicture}[scale=0.2,baseline=-1]
				\draw[thin] (0,-1) node[below] {$\scriptscriptstyle{3}$}
				-- (0,1.5) node[above] {$\scriptscriptstyle{3}$};
				\draw[thin] (2,-1) node[below] {$\scriptscriptstyle{1}$}
				-- (2,1.5) node[above] {$\scriptscriptstyle{1}$};
				\draw[thin] (4,-1) node[below] {$\scriptscriptstyle{2}$}
				-- (4,1.5) node[above] {$\scriptscriptstyle{2}$};
				\draw[fill=white] (1.7,0.5) rectangle (4.3,1);
				\draw[fill=white] (-0.3,-0.5) rectangle (2.3,0);
			\end{tikzpicture}
			\right)
		}~,
	\]
	where the generator has degree $0$. The properad $\DPois$ is defined by the following presentation:
	\[
		\DPois \coloneq
		\dfrac{
			\scrG\left(
			\begin{tikzpicture}[baseline=0ex,scale=0.10]
				\draw (0,4) node[above] {$\scriptscriptstyle{1}$}
				-- (2,2);
				\draw (4,4) node[above] {$\scriptscriptstyle{2}$}
				-- (2,2) -- (2,0) node[below] {$\scriptscriptstyle{1}$};
				\draw[fill=white] (2,2) circle (10pt);
			\end{tikzpicture}
			~ ; ~
			\begin{tikzpicture}[baseline=0ex,scale=0.2]
				\draw (0,0)  node[below] {$\scriptscriptstyle{1}$}
				-- (0,1.5)node[above] {$\scriptscriptstyle{1}$} ;
				\draw (2,0) node[below] {$\scriptscriptstyle{2}$}
				-- (2,1.5) node[above] {$\scriptscriptstyle{2}$};
				\draw[fill=white] (-0.3,0.5) rectangle (2.3,1);
			\end{tikzpicture}
			\ = \ - \
			\begin{tikzpicture}[baseline=0ex,scale=0.2]
				\draw (0,0)  node[below] {$\scriptscriptstyle{2}$}
				-- (0,1.5)node[above] {$\scriptscriptstyle{2}$} ;
				\draw (2,0) node[below] {$\scriptscriptstyle{1}$}
				-- (2,1.5) node[above] {$\scriptscriptstyle{1}$};
				\draw[fill=white] (-0.3,0.5) rectangle (2.3,1);
			\end{tikzpicture}
			\right)
		}{
			\left(
			\begin{tikzpicture}[baseline=0.5ex,scale=0.1]
				\draw (0,4) node[above] {$\scriptscriptstyle{1}$} --(4,0)
				-- (4,-1) node[below] {$\scriptscriptstyle{1}$};
				\draw (4,4) node[above] {$\scriptscriptstyle{2}$} -- (2,2);
				\draw (8,4) node[above] {$\scriptscriptstyle{3}$}-- (4,0);
				\draw[fill=white] (2,2) circle (10pt);
				\draw[fill=white] (4,0) circle (10pt);
			\end{tikzpicture}
			-
			\begin{tikzpicture}[baseline=0.5ex,scale=0.1]
				\draw (0,4) node[above] {$\scriptscriptstyle{1}$}
				-- (4,0) -- (4,-1) node[below] {$\scriptscriptstyle{1}$};
				\draw (4,4) node[above] {$\scriptscriptstyle{2}$} -- (6,2);
				\draw (8,4) node[above] {$\scriptscriptstyle{3}$} -- (4,0);
				\draw[fill=white] (6,2) circle (10pt);
				\draw[fill=white] (4,0) circle (10pt);
			\end{tikzpicture}
			~~;~~
			\begin{tikzpicture}[scale=0.2,baseline=1ex]
				\draw (0,0) node[below] {$\scriptscriptstyle{1}$} -- (0,1)
				to[out=90,in=270] (-1,2.5) node[above] {$\scriptscriptstyle{1}$};
				\draw (2,0) node[below] {$\scriptscriptstyle{2}$}
				-- (2,1.5) -- (1,2.5) node[above] {$\scriptscriptstyle{2}$};
				\draw (2,1.5) -- (3,2.5) node[above] {$\scriptscriptstyle{3}$};
				\draw[fill=white] (2,1.5) circle (6pt);
				\draw[fill=white] (-0.3,0.5) rectangle (2.3,1);
			\end{tikzpicture}
			-
			\begin{tikzpicture}[scale=0.2,baseline=1ex]
				\draw (1,0.5) -- (2,1.5) -- (1,0.5) -- (0,1.5) -- (1,0.5)
				-- (1,-0.5) node[below] {$\scriptscriptstyle{1}$};
				\draw (0,2.5) node[above] {$\scriptscriptstyle{2}$}
				-- (0,1.5);
				\draw (2,2.5) node[above] {$\scriptscriptstyle{1}$}
				-- (2,1.5);
				\draw (4,2) -- (4,2.5) node[above] {$\scriptscriptstyle{3}$};
				\draw (3,-0.5) node[below] {$\scriptscriptstyle{2}$}
				to[out=90,in=270] (4,1.5) ;
				\draw[fill=white] (1,0.5) circle (6pt);
				\draw[fill=white] (1.7,1.5) rectangle (4.3,2);
			\end{tikzpicture}
			-
			\begin{tikzpicture}[scale=0.2,baseline=1ex]
				\draw (1,0.5) -- (2,1.5) -- (1,0.5) -- (0,1.5) -- (1,0.5)
				-- (1,-0.5) node[below] {$\scriptscriptstyle{2}$};
				\draw (0,2.5) node[above] {$\scriptscriptstyle{2}$} -- (0,1.5);
				\draw (2,2.5) node[above] {$\scriptscriptstyle{3}$} -- (2,1.5);
				\draw (-2,2) -- (-2,2.5) node[above] {$\scriptscriptstyle{1}$};
				\draw (-1,-0.5)  node[below] {$\scriptscriptstyle{1}$}
				to[out=90,in=270] (-2,1.5);
				\draw[fill=white] (1,0.5) circle (6pt);
				\draw[fill=white] (-2.3,1.5) rectangle (0.3,2);
			\end{tikzpicture}
			~~;~~
			\begin{tikzpicture}[scale=0.2,baseline=-1]
				\draw[thin] (0,-1) node[below] {$\scriptscriptstyle{1}$}
				-- (0,1.5) node[above] {$\scriptscriptstyle{1}$};
				\draw[thin] (2,-1) node[below] {$\scriptscriptstyle{2}$}
				-- (2,1.5) node[above] {$\scriptscriptstyle{2}$};
				\draw[thin] (4,-1) node[below] {$\scriptscriptstyle{3}$}
				-- (4,1.5) node[above] {$\scriptscriptstyle{3}$};
				\draw[fill=white] (1.7,0.5) rectangle (4.3,1);
				\draw[fill=white] (-0.3,-0.5) rectangle (2.3,0);
			\end{tikzpicture}
			+
			\begin{tikzpicture}[scale=0.2,baseline=-1]
				\draw[thin] (0,-1) node[below] {$\scriptscriptstyle{2}$}
				-- (0,1.5) node[above] {$\scriptscriptstyle{2}$};
				\draw[thin] (2,-1) node[below] {$\scriptscriptstyle{3}$}
				-- (2,1.5) node[above] {$\scriptscriptstyle{3}$};
				\draw[thin] (4,-1) node[below] {$\scriptscriptstyle{1}$}
				-- (4,1.5) node[above] {$\scriptscriptstyle{1}$};
				\draw[fill=white] (1.7,0.5) rectangle (4.3,1);
				\draw[fill=white] (-0.3,-0.5) rectangle (2.3,0);
			\end{tikzpicture}
			+
			\begin{tikzpicture}[scale=0.2,baseline=-1]
				\draw[thin] (0,-1) node[below] {$\scriptscriptstyle{3}$}
				-- (0,1.5) node[above] {$\scriptscriptstyle{3}$};
				\draw[thin] (2,-1) node[below] {$\scriptscriptstyle{1}$}
				-- (2,1.5) node[above] {$\scriptscriptstyle{1}$};
				\draw[thin] (4,-1) node[below] {$\scriptscriptstyle{2}$}
				-- (4,1.5) node[above] {$\scriptscriptstyle{2}$};
				\draw[fill=white] (1.7,0.5) rectangle (4.3,1);
				\draw[fill=white] (-0.3,-0.5) rectangle (2.3,0);
			\end{tikzpicture}
			\right)
		}~,
	\]
	where the generators have degree $0$.
\end{definition}

\begin{definition}[$\prop$-gebra]
	Let $A$ be a dg vector space and let $\prop$ be a properad. A \emph{structure of $\prop$-gebra on $A$} is a morphism of properads $\prop \to \End_A$~.
\end{definition}

\begin{lemma}
	There is a one-to-one correspondence between $\DLie$-gebra
	(respectively $\DPois$-gebra) structures
	and double Lie gebra
	(respectively double Poisson gebra)
	structures.
\end{lemma}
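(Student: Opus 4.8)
The plan is to read off both bijections directly from the presentations by generators and relations displayed above, using the standard principle that a $\prop$-gebra structure on $A$, that is a morphism of properads $\prop\to\End_A$, is nothing but the data of the images of the generators of $\prop$, subject to the condition that the defining relations are sent to $0$.

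Concretely, write $\DLie=\scrG(E)/(R)$, where $E$ is the $\Sy$-bimodule concentrated in arity $(2,2)$ and degree $0$, spanned by the pictured generator modulo the skew-symmetry relation, and $(R)$ is the properadic ideal generated by the three-term double Jacobi element. By the universal properties of the free properad $\scrG(-)$ and of the quotient, a morphism of properads $\DLie\to\End_A$ is the same as a morphism of $\Sy$-bimodules $\varphi\colon E\to\End_A$ whose canonical extension $\scrG(E)\to\End_A$ vanishes on $R$. Unwinding the first point: such a $\varphi$ is a degree-$0$ element of $\End_A(2,2)=\Hom(A^{\otimes 2},A^{\otimes 2})$ which, since $E$ carries the zero differential, must be a cycle, i.e.\ a chain map $\DB{-}{-}\colon A\otimes A\to A\otimes A$ exactly as in \cref{def:gebra_structDLIE}; and the skew-symmetry relation imposed on the generator becomes, once the $\Sy_2\times\Sy_2^{\op}$-action on $\End_A(2,2)$ is spelled out with the Koszul sign rule, precisely the {\sc Skew-symmetry} identity of that definition.

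It then remains to identify the vanishing of $\varphi$ on $R$ with the {\sc Double Jacobi relation}. For this one evaluates in $\End_A$ each of the three graphs forming the Jacobi element: using the description of the composition in $\End_A$ as iterated partial composites of multilinear maps (as in \cref{fig:PartCompoEnd}) together with the Koszul sign convention $(f\otimes g)(a\otimes b)=(-1)^{|a|\,|g|}f(a)\otimes g(b)$, the first graph evaluated on $a\otimes b\otimes c$ gives $\DDBL{a}{b}{c}$, while the two remaining graphs, which differ from the first by the cyclic $\Sy_3$-action on the three legs, yield exactly $(-1)^{|a|(|b|+|c|)}(123)\cdot\DDBL{b}{c}{a}$ and $(-1)^{|c|(|a|+|b|)}(132)\cdot\DDBL{c}{a}{b}$, the prefactors and the operators $(123)\cdot$ and $(132)\cdot$ being forced by the Koszul sign rule applied to the relevant block permutations. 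Hence $\varphi$ kills $R$ if and only if $\DB{-}{-}$ satisfies the double Jacobi relation, which establishes the bijection for $\DLie$. The $\DPois$ case is treated in the same manner: a morphism $\DPois\to\End_A$ now amounts to a pair $(\mu,\DB{-}{-})$, with $\mu$ of arity $(1,2)$ and $\DB{-}{-}$ of arity $(2,2)$, both being degree-$0$ chain maps and $\DB{-}{-}$ skew-symmetric (again imposed at the level of generators), the three families of relations in the denominator of the presentation translating respectively into the associativity of $\mu$, the {\sc Derivation} relation, and the {\sc Double Jacobi relation} of \cref{def:gebra_struct}; in both cases the two assignments are manifestly mutually inverse and natural in $A$.

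The only genuine work sits in the sign bookkeeping of the last two paragraphs: one must check that evaluating the planar graphs of the presentations inside $\End_A$ through the symmetric monoidal structure of dg vector spaces reproduces \emph{on the nose} the Koszul signs written in \cref{def:gebra_structDLIE} and \cref{def:gebra_struct}. This is exactly where the planar/boxed pictorial conventions fixed in the introduction are meant to help; there is no conceptual obstacle, the equivalence ``$\prop$-gebra $=$ morphism $\prop\to\End_A$'' for a presented properad being entirely formal. I would therefore only record the sign identifications explicitly and leave the routine verifications to the reader, pointing to \cite{Val07,Ler19ii} where the properad $\DPois$ and this presentation were introduced and shown to encode double Poisson gebras.
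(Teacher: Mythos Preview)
Your proposal is correct and takes essentially the same approach as the paper, which simply records the proof as ``This is straightforward.'' You have merely unpacked this straightforwardness in detail---the universal property of a quotient of a free properad together with the Koszul sign bookkeeping---which is exactly what the paper leaves implicit.
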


\begin{proof}
	This is straightforward.
\end{proof}

The notion and properties of homotopy $\prop$-gebra are efficiently encoded by the dual notion of a coproperad.

\begin{definition}[Coproperad]
	A \emph{coproperad} is a comonoid in the monoidal category $(\Sbimod,\boxtimes,\Ibox)$.
\end{definition}

\begin{example}
	The Koszul duality theory for properads \cite{Val07} provides us with Koszul dual coproperads. In this article, we will mainly consider the  coproperads
	$\DLie^\antish$ and $\DPois^\antish$, which are  Koszul dual to  the abovementioned  properads, see \cref{subsec::DPois_antish}.
\end{example}

Dual to the monad $\scrG$ of connected flow-directed graphs, we consider the
\emph{comonad $\scrG^c$ of reduced connected flow-directed graphs} $\bGs\coloneq \Gs \backslash \{| \}$:
\[\scrG^c(\rmM) \coloneq \bigoplus_{\gg \in \bGs} \rmg(\rmM) \ , \]
with  coproduct given by   the sum of all the ways to partition the underlying graph $\gg$ into connected 
flow-directed sub-graphs.
A coalgebra over this comonad is called a \emph{comonadic coproperad} in \cite[Section~2.2]{HLV20}.
Adding a coaugmented counit to comonadic coproperad produces coproperads which are called \emph{conilpotent}.

\begin{example}
	The Koszul dual coproperads $\DLie^\antish$ and $\DPois^\antish$ are conilpotent.
\end{example}

The linear dual of a coproperad carries a canonical  structure of a properad. The reverse statement does not hold true in general; one can however recover in this case some interesting coproperadic structure  as follows.
Recall that the (reduced) graph (co)monad admits a presentation with generators given by the summand made up of graphs with two vertices: 
\[\rmM \ibt \rmM\coloneq \scrG(\rmM)^{(2)}\cong \scrG^c(\rmM)^{(2)}~.\]
\begin{definition}[{Infinitesimal} coproperad]
	An \emph{{infinitesimal} coproperad} is an $\Sy$-bimodule $\rmC$ equipped with a  morphism of $\Sy$-bimodules
	\[\Delta_{(1,1)} \colon\rmC \to \rmC \ibt \rmC~,\] 
	which satisfies the properties of the restriction of a comonadic product, see  \cite[Section~2]{MV09I}.
\end{definition}

By definition, any (comonadic) coproperad carries a canonical {infinitesimal} coproperad structure. 
The reverse holds true if and only if  the 
 iterations of the infinitesimal decomposition map $\Delta_{(1,1)}$ on any element of $\rmC$ produce finite sums of labelled graphs.
We already refer the reader to \cref{subsec:CurvedHDPois} for the main example of infinitesimal coproperad considered here and to \cref{subsec:HoThCHDPois} for the way its crucial properties are used.

\begin{remark}
For simplicity, this section is written under the assumption that $n\geqslant 1$, but everything holds true \emph{mutatis mutandis} for $n\geqslant 0$~. We refer the reader to \cite[Section~1]{HLV20} for more details in this latter case. This setting will be mandatory in \cref{subsec:CurvedHDPois}.
\end{remark}

\begin{definition}[Dioperad/codioperad \cite{Gan03}]
Restricting the above-mentioned definitions to connected flow-directed graphs of genus 0, one gets the notion of a \emph{dioperad} and, dually, the notion of a \emph{codioperad}. 
\end{definition}

Any properad induces  a dioperad by forgetting the compositions of elements based on graphs of positive genera. Dually, a codioperad is a coproperad with decomposition map producing only graphs of genus 0. So the notion of a dioperad is a restriction of the notion of a properad whereas the dual notion of a codioperad is a coproperad satisfying some extra property. Notice that the notions of double Lie gebras and double Poisson gebras can be faithfully encoded by a dioperad. 
{The following proposition holds true for dioperads but fails in general for properads due to the existence of an infinite number of directed graphs with two vertices and arbitrary genus.}

\begin{proposition}\label{prop:PropDualPartCoprop}
	The linear dual of any arity-wise finite dimensional dioperad carries a canonical structure of  an infinitesimal codioperad.
\end{proposition}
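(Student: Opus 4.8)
The plan is to unwind the definitions and exploit the two equivalent presentations of the (reduced) graph monad and comonad. By hypothesis $\rmM = \prop$ is a properad which is finite dimensional in each arity $(m,n)$; set $\rmC \coloneqq \prop^*$ with the arity-wise linear dual $\rmC(m,n) = \prop(m,n)^*$. I must produce an infinitesimal decomposition map $\Delta_{(1,1)} \colon \rmC \to \rmC \ibt \rmC$ satisfying the co-associativity/co-unitality constraints that are the restriction of a comonadic coproperad structure (as in \cite[Section~2]{MV09I}), \emph{without} claiming that the full comonadic coproperad structure exists — precisely because infinite sums may appear upon iteration.

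First I would recall that the properad structure on $\prop$ is equivalently a collection of partial composition maps, and in particular a single ``genus-zero, two-vertex'' composition operation encoded by the component $\mu_{(1,1)} \colon \prop \ibt \prop \to \prop$ coming from the presentation $\scrG(\rmM)^{(2)} \cong \rmM \ibt \rmM$. The key observation is that $\ibt$ is built from the finite sum over two-vertex connected flow-directed graphs $\gg$, and for each such $\gg$ the associated functor $\gg(-)$ is a \emph{finite} tensor product of arity-wise pieces of $\rmM$; hence, when $\prop$ is arity-wise finite dimensional, each summand $\gg(\prop)$ is a finite dimensional dg vector space and the canonical map $\gg(\prop^*) \to \gg(\prop)^*$ (coming from the lax monoidal structure of linear duality, which is \emph{strong} on finite dimensional spaces) is an isomorphism. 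Dualizing the finite direct sum over $\gg \in \bGs^{(2)}$ then gives $(\prop \ibt \prop)^* \cong \prop^* \ibt \prop^*$, so that $\Delta_{(1,1)}$ may be \emph{defined} as the linear dual
\[
 \Delta_{(1,1)} \colon \prop^* \xrightarrow{\ \mu_{(1,1)}^*\ } (\prop \ibt \prop)^* \xrightarrow{\ \cong\ } \prop^* \ibt \prop^*~.
\]
Concretely, if $\{e_\alpha\}$ is a homogeneous basis of $\prop$ in the relevant arities with dual basis $\{e_\alpha^*\}$, then $\Delta_{(1,1)}(e_\gamma^*)$ is the finite sum over all two-vertex graphs $\gg$ and basis elements $e_\alpha, e_\beta$ of the structure constants of $\mu_{(1,1)}$ (with the Koszul signs from the paper's conventions) times $e_\alpha^* \ibt e_\beta^*$; finiteness here is exactly the arity-wise finite dimensionality hypothesis.

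It then remains to check that $\Delta_{(1,1)}$ satisfies the axioms of a partial coproperad, i.e. the counit relation with respect to $\Ibox$ and the ``associativity'' identity expressing that the two ways of applying $\Delta_{(1,1)}$ to land in the three-vertex piece $\rmC \ibt \rmC \ibt \rmC$ agree on each connected three-vertex flow-directed graph. This is pure formal dualization: the associativity of the partial product $\mu_{(1,1)}$ of the properad $\prop$ — which, in the language of the graph monad, is the statement that forgetting nesting of two-level-by-two-level graphs is well-defined — restricts to an identity of maps $\prop \ibt \prop \ibt \prop \to \prop$ on each fixed three-vertex graph, and on each such graph the relevant spaces are again finite dimensional, so applying $(-)^*$ and the strong-monoidality isomorphisms term by term yields the dual identity for $\Delta_{(1,1)}$. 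The counit axiom dualizes the unit axiom of $\prop$ in the same way. I would phrase this so that each verification is localized to a single graph $\gg$ with at most three vertices, where everything is finite dimensional and duality is exact and strong monoidal, so no convergence issue intervenes — this is what makes ``partial'' coproperad (rather than full comonadic coproperad) the correct conclusion.

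The main obstacle, and the reason the statement is phrased for a \emph{partial} coproperad, is bookkeeping rather than conceptual: one must verify that the isomorphism $(\prop \ibt \prop)^* \cong \prop^* \ibt \prop^*$ is compatible with the $\Sy$-bimodule structure (the invariants/coinvariants appearing in the definition of $\ibt$ via the $\sim$ relation behave well in characteristic $0$, and in any case over a field the dual of a (co)limit over a groupoid action of finite dimensional spaces is computed termwise), and that the Koszul signs produced by dualizing $\mu_{(1,1)}$ are exactly those dictated by the paper's sign conventions for $\ibt$. I would carry out the sign check on the two-vertex graphs explicitly and then invoke functoriality of duality for the three-vertex coherence, rather than recomputing signs there. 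No attempt is made to promote $\Delta_{(1,1)}$ to a genuine comonadic coproperad: as noted right before the statement, that would require the iterated decompositions to be finite sums of labelled graphs, which fails in general even when $\prop$ is arity-wise finite dimensional (e.g. because $\prop$ may be supported in infinitely many arities).
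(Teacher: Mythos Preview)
Your proposal is correct and follows exactly the same approach as the paper: define $\Delta_{(1,1)}$ as the linear dual of the infinitesimal composition $\rmP \ibt \rmP \to \rmP$, using arity-wise finite dimensionality to identify $(\rmP \ibt \rmP)^* \cong \rmP^* \ibt \rmP^*$. The paper's own proof is a one-line sketch stating precisely this, so your version is simply a fleshed-out account of the same argument, including the verification of the partial coproperad axioms by dualizing the properad axioms graph-by-graph.
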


\begin{proof}
	This is obtained in a  straightforward way by considering the linear dual
\[\rmP^* \to    \left(\scrG_{g=0}(\rmP)^{(2)}\right)^*\cong \scrG_{g=0}\left(\rmP^*\right)^{(2)}\]
	of the infinitesimal composition product
	$  \scrG_{g=0}(\rmP)^{(2)} \to \rmP$
	of the dioperad $\rmP$, where $\scrG_{g=0}(\rmP)^{(2)}$ stands for the module of  genus 0 direct graphs with 2 vertices labelled by $\rmP$, which is arity-wise finite-dimensional.
\end{proof}

\subsection{Deformation theory of morphisms of properads}\label{subsec::def_theo_properad}
In this section, the recall the deformation theory of morphisms of properads after \cite[Section~2]{MV09II}.

\begin{definition}[Totalisation of $\Sy$-bimodule]
	The \emph{totalisation} of an $\Sy$-bimodule $\rmM$ is the dg vector space defined by
	\[
		\widehat{\rmM}	\coloneq \prod_{m,n \in \NN^*} \rmM(m,n)^{\Sy_m^\op\times \Sy_n} \ .
	\]
\end{definition}

Recall that a binary product is called \emph{Lie-admissible} when its commutator satisfies the Jacobi relation, i.e. is a Lie bracket.

\begin{lemma}[{\cite[Proposition~6]{MV09I}}]
	The following assignment defines a functor from the category of properads to the category of Lie-admissible algebras
	\[
		\begin{array}{rcl}
			\mathsf{properads}             & \too        & \mathsf{Lie}\text{-}\mathsf{admissible~algebras} \\
			\big(\rmP, d_\rmP,\gamma \big) & \longmapsto & \left({\bigoplus_{m,n \in \NN^*} \rmP(m,n)^{\Sy_m^\op\times \Sy_n}}, \mathrm{d},\star \right)~,
		\end{array}
	\]
	where the differential $\mathrm{d}$ is induced by the differential $d_\rmP$ and where the binary product $\star$ is defined by
	\[
		\mu \star \nu \coloneq
		\sum_{\mathrm{g} \in \rmG^{(2)}}
		\gamma(\mathrm{g}(\mu,\nu))\ .
	\]
\end{lemma}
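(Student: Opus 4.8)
The statement to prove is the Lemma (attributed to \cite[Proposition~6]{MV09I}) asserting that $\big(\rmP, d_\rmP,\gamma \big)\mapsto\big(\widehat{\rmP}, \mathrm{d},\star \big)$ is a functor from properads to Lie-admissible algebras. Here is how I would organize the argument.

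\textbf{Plan of proof.} The plan is to verify three things: (i) that $\star$ is well defined on the totalisation, i.e. that $\mu\star\nu$ lies again in $\widehat{\rmP}$ and that the (possibly infinite) sum over $\rmG^{(2)}$ makes sense; (ii) that the commutator $[\mu,\nu]_\star\coloneqq\mu\star\nu-(-1)^{|\mu||\nu|}\nu\star\mu$ satisfies the (graded) Jacobi identity, which is exactly the Lie-admissibility condition; and (iii) that $\mathrm{d}$ is a derivation of $\star$ and squares to zero, and that the assignment is functorial, i.e. a morphism of properads $f\colon\rmP\to\rmQ$ induces a morphism $\widehat f\colon\widehat\rmP\to\widehat\rmQ$ of Lie-admissible dg algebras. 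Throughout I would keep careful track of Koszul signs coming from the sign rule and convention recalled in the Conventions paragraph.

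\textbf{Step (i): well-definedness.} First I would note that $\rmG^{(2)}$, the set of (reduced) connected flow-directed graphs with exactly two vertices together with a labelling of legs by finite sets, decomposes by the arities $(m(v),n(v))$ of the two vertices $v$; for fixed total input/output arity $(m,n)$ there are only finitely many such two-vertex graphs, so $\gamma(\mathrm{g}(\mu,\nu))$ contributes to $\widehat{\rmP}(m,n)$ a finite sum, and the product over all $(m,n)$ is absorbed by the product defining $\widehat\rmP$. Invariance under the $\Sy_m^\op\times\Sy_n$-action follows because $\gamma$ is a morphism of $\Sy$-bimodules and because averaging/summing over leg-labellings in $\rmG^{(2)}$ is equivariant; the fact that $\mu,\nu$ are themselves invariant elements then forces $\mu\star\nu$ to be invariant. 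This step is essentially bookkeeping about the combinatorics of $\rmG^{(2)}$ and the equivariance of the composition product $\boxtimes$.

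\textbf{Step (ii): Lie-admissibility.} This is the heart of the matter. I would expand the associator $\mathrm{as}(\mu,\nu,\lambda)\coloneqq(\mu\star\nu)\star\lambda-\mu\star(\nu\star\lambda)$ as a sum over connected flow-directed graphs with three vertices, $\rmG^{(3)}$. Using the monad (equivalently monoid) structure of $\scrG$ — associativity of $\gamma$ — one sees that both $(\mu\star\nu)\star\lambda$ and $\mu\star(\nu\star\lambda)$ are obtained by summing $\gamma$ applied to three-vertex graphs, but organised differently: $(\mu\star\nu)\star\lambda$ collects graphs where $\mu$ and $\nu$ are ``composed first'' (i.e. the graph, read as a nesting in $\scrG^2$, has $\mu,\nu$ in the inner block), while $\mu\star(\nu\star\lambda)$ collects those where $\nu,\lambda$ are composed first. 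The key combinatorial identity is that a three-vertex connected flow-directed graph $\mathrm{g}$ has either $\mu$–$\nu$ connected by an edge, or $\nu$–$\lambda$ connected, or both, or (in the case of a graph where the middle vertex separates the other two) possibly neither adjacency forced — and one has to show that the difference $\mathrm{as}(\mu,\nu,\lambda)$ is precisely the sum over those $\mathrm g$ in which $\mu$ and $\lambda$ are \emph{not} directly connected (so the associator only ``sees'' $\mu$ and $\lambda$ through $\nu$). Granting this, the associator is symmetric in the outer two arguments up to the Koszul sign: $\mathrm{as}(\mu,\nu,\lambda)=(-1)^{|\mu||\lambda|}\,\mathrm{as}(\lambda,\nu,\mu)$, because exchanging the roles of $\mu$ and $\lambda$ is realised by an (orientation-preserving) automorphism of the set of relevant three-vertex graphs and $\gamma$ is equivariant. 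A binary product whose associator is (graded-)symmetric in its first and third slots is exactly a Lie-admissible product; this is a classical, purely algebraic implication (the ``right-symmetric/pre-Lie-type'' criterion, here in its left-symmetric variant), and I would cite or recall the short computation showing that such symmetry makes the commutator satisfy Jacobi. The \textbf{main obstacle} is precisely tracking the Koszul signs in this three-vertex expansion: one must pin down the sign attached to each graph $\mathrm g(\mu,\nu,\lambda)$ coming from reordering the tensor factors $\mu,\nu,\lambda$ according to the planar position of the vertices, and verify that the $\mu\leftrightarrow\lambda$ transposition produces exactly $(-1)^{|\mu||\lambda|}$ and nothing more — all the ``internal'' signs from the graph structure must cancel in the difference. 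This is where a careful, planar (box-rather-than-band) bookkeeping, as advertised in the Conventions, is indispensable.

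\textbf{Step (iii): differential and functoriality.} That $\mathrm d$ squares to zero is immediate from $d_\rmP^2=0$. That $\mathrm d$ is a derivation for $\star$ follows because $\gamma$ is a chain map (the composition product of the properad commutes with differentials) and $d_\rmP$ on a two-vertex composite acts by the graded Leibniz rule on the two tensor factors; the sign $(-1)^{|\mu|}$ in $\mathrm d(\mu\star\nu)=\mathrm d\mu\star\nu+(-1)^{|\mu|}\mu\star\mathrm d\nu$ is the usual one from the Koszul convention. Finally, given a morphism of properads $f\colon\rmP\to\rmQ$, it is a morphism of $\Sy$-bimodules commuting with differentials and with the composition products $\gamma_\rmP,\gamma_\rmQ$; hence the induced map on invariants $\widehat f\colon\widehat\rmP\to\widehat\rmQ$ commutes with $\mathrm d$ and, applying $f$ vertex-wise inside each two-vertex graph and using $f\circ\gamma_\rmP=\gamma_\rmQ\circ(f\boxtimes f)$, sends $\mu\star_\rmP\nu$ to $f(\mu)\star_\rmQ f(\nu)$. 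Well-definedness of $\widehat f$ on totalisations (compatibility with the infinite products and with the $\Sy$-invariance) is the same kind of bookkeeping as in Step (i). This establishes that the assignment is a functor into dg Lie-admissible algebras, completing the proof.
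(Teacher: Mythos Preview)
The paper does not give its own proof of this lemma; it is quoted with a reference to \cite[Proposition~6]{MV09I} and used as a black box. So there is no in-paper argument to compare against, and your outline is the only detailed proof on the table.

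Your Steps~(i) and~(iii) are fine, and the overall strategy in Step~(ii)---expand the associator over three-vertex graphs and show the Jacobiator cancels---is exactly the right idea. However, the specific combinatorial claim you make is backwards. Writing $(a,b,c)$ for the connectivity pattern of the pairs $(\mu\nu,\nu\lambda,\mu\lambda)$ in a connected three-vertex graph, the possible patterns are $(1,1,0),(1,0,1),(0,1,1),(1,1,1)$. Now $(\mu\star\nu)\star\lambda$ collects the graphs with $a=1$ (the $\mu\nu$-subgraph must be connected for the inner $\star$ to see it), and $\mu\star(\nu\star\lambda)$ collects those with $b=1$; subtracting, the associator is the class $(1,0,1)$ minus the class $(0,1,1)$. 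In \emph{every} surviving term $\mu$ and $\lambda$ \emph{are} directly connected, contrary to what you write, and swapping $\mu\leftrightarrow\lambda$ exchanges the two classes with a minus sign rather than fixing them. An antisymmetry of the associator in the outer slots does \emph{not} by itself imply Lie-admissibility (try the ungraded case: pairing the six $\Sy_3$-terms via $(13)$ leaves $2\big(\mathrm{as}(\mu,\nu,\lambda)-\mathrm{as}(\mu,\lambda,\nu)+\mathrm{as}(\lambda,\mu,\nu)\big)$, which need not vanish). In the properadic setting there is genuinely no single two-argument symmetry of the associator to lean on---the paper itself remarks that $\star$ is not pre-Lie in general---so one must verify directly that the six signed associators $\sum_{\sigma\in\Sy_3}\mathrm{sgn}(\sigma)\,\mathrm{as}^{\sigma}$ cancel by matching, for each three-vertex graph, its occurrences across the different permutations.
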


In the above lemma, we had to restrict ourselves to finite sums for the product $\star$ well-defined. However, for the some properads, like the convolution properads introduced below, one can extend this result to the totalisation, this is to the product of components. 

\begin{lemma}[{\cite[Lemma~2]{MV09I}}]
	The following assignment defines a functor
	\[
		\begin{array}{rcl}
			\mathsf{coproperads}^\op \times \mathsf{properads}  & \too        & \mathsf{properads}                                   \\
			\big((\rmC,d_\rmC,\Delta),(\rmP,d_\rmP,\gamma)\big) & \longmapsto & \big(\Hom(\rmC,\rmP),\partial, \gamma_{\Hom} \big)~,
		\end{array}
	\]
	where the symmetric groups act by conjugaison, and where the composition
	\[
		\gamma_{\Hom} \colon \Hom(\rmC,\rmP) \boxtimes \Hom(\rmC,\rmP) \too \Hom(\rmC,\rmP)
	\]
	is given by
	\[
		\rmC \xrightarrow{\Delta} \rmC \boxtimes \rmC
		\xrightarrow{(f_1,\ldots,f_k) \boxtimes (g_1,\ldots,g_l)}
		\rmP\boxtimes \rmP
		\xrightarrow{\gamma}
		\rmP \ ,
	\]
	for $f_1,\ldots,f_k,g_1,\ldots,g_l \in \Hom(\rmC,\rmP)$~.
\end{lemma}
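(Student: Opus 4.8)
The plan is to transport the comonoid structure of $\rmC$ and the monoid structure of $\rmP$ through the internal hom of the category $\Sbimod$, and to check that every axiom survives. Concretely, $\Hom(\rmC,\rmP)$ is the $\Sy$-bimodule whose arity $(m,n)$ component is $\Hom_\field\big(\rmC(m,n),\rmP(m,n)\big)$ with the conjugation action, which is an honest $\Sy$-bimodule with no finiteness hypothesis. The elementary device I would isolate first is the lax-monoidality interchange: for every graph $\gg$ the Koszul sign convention yields a natural map
\[
\kappa_\gg \colon \bigotimes_{v\in\mathrm{vert}(\gg)}\Hom\big(\rmC(m(v),n(v)),\rmP(m(v),n(v))\big)\too \Hom\big(\gg(\rmC),\gg(\rmP)\big)\,,
\]
and these maps are compatible with the grafting of graphs. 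With $\kappa$ in hand, the restriction of $\gamma_{\Hom}$ to the $\gg$-summand of $\Hom(\rmC,\rmP)\boxtimes\Hom(\rmC,\rmP)$ is exactly $f\mapsto \gamma_\gg\circ\kappa_\gg(f)\circ\Delta_\gg$, where $\Delta_\gg$ and $\gamma_\gg$ are the $\gg$-components of the decomposition map of $\rmC$ and of the composition map of $\rmP$; this makes manifest both the well-definedness and the $\Sy$-equivariance, and it coincides with the displayed composite $\rmC\xrightarrow{\Delta}\rmC\boxtimes\rmC\to\rmP\boxtimes\rmP\xrightarrow{\gamma}\rmP$.

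Then I would check the monoid axioms. For associativity, I would unwind $\gamma_{\Hom}\circ(\gamma_{\Hom}\boxtimes\mathrm{id})$ and $\gamma_{\Hom}\circ(\mathrm{id}\boxtimes\gamma_{\Hom})$ along a two-step graph into a single composite of the shape $\gamma^{(2)}\circ\kappa^{(2)}\circ\Delta^{(2)}$, and then conclude from three independent inputs: coassociativity of $\Delta$ on $\rmC$, associativity of $\gamma$ on $\rmP$, and the compatibility of $\kappa$ with grafting. For the unit, the element $\rmI\xrightarrow{\eta}\rmP$ precomposed with the counit $\rmC\xrightarrow{\eps}\rmI$ is the two-sided unit, and the unit axioms for $\gamma_{\Hom}$ are precisely the counit axiom of the coproperad $\rmC$ paired with the unit axiom of the properad $\rmP$. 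For the dg structure I would take the usual convolution differential $\partial(f)=d_\rmP\circ f-(-1)^{|f|}f\circ d_\rmC$; then $\partial^2=0$ is immediate from $d_\rmC^2=d_\rmP^2=0$, and the Leibniz rule with respect to $\gamma_{\Hom}$ follows from $d_\rmC$ being a coderivation of $\Delta$ and $d_\rmP$ a derivation of $\gamma$, together with the compatibility of $\kappa$ with these (co)derivations. Functoriality is then formal: a morphism of coproperads $\phi\colon\rmC'\to\rmC$ and a morphism of properads $\psi\colon\rmP\to\rmP'$ induce $f\mapsto\psi\circ f\circ\phi$, which intertwines $\gamma_{\Hom}$ and $\partial$ because $\phi$ commutes with the decomposition maps and $\psi$ with the composition maps.

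The one place where real care is needed is the Koszul sign bookkeeping inside $\kappa_\gg$: moving each $\Hom$-factor past the arguments it is applied to produces signs, and the heart of the proof of associativity (and of the Leibniz rule) is checking that these signs match exactly the ones produced by the iterated decomposition map $\Delta$ and the iterated composition $\gamma$ when the two-step graph is read in its two possible ways. I would carry this out in the present homological conventions, but it is otherwise the content of \cite[Lemma~2]{MV09I}, to which I would refer for the detailed sign verification.
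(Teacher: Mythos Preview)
Your proposal is a correct and complete outline of the standard convolution properad argument. Note, however, that the paper itself provides no proof for this lemma: it is stated with a reference to \cite[Lemma~2]{MV09I} and used as a black box, so there is no ``paper's own proof'' to compare against. Your write-up is essentially a faithful sketch of what the cited reference does, including the lax-monoidal interchange map $\kappa$ and the reduction of the monoid axioms to (co)associativity, (co)unitality, and compatibility with the (co)derivations.
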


\begin{definition}[Properadic convolution algebra]
The \emph{properadic convolution Lie-admissible algebra} is defined by 
	\[
		\widehat{\Hom}(\rmC,\rmP)\coloneq
		\left(
		\prod_{m,n\in \NN^*}\Hom_{\Sy_m^\op\times\Sy_n}(\rmC(m,n),\rmP(m,n)), \partial, \star
		\right) \ ,
	\]
	where the product $f \star g$ is equal to
	\[
		\rmC \xrightarrow{\Delta_{(1,1)}}
		\rmC \underset{(1,1)}{\boxtimes} \rmC
		\xrightarrow{f\underset{(1,1)}{\boxtimes} g}
		\rmP \underset{(1,1)}{\boxtimes} \rmP
		\xrightarrow{\gamma}
		\rmP \ .
	\]
\end{definition}

\begin{remark}\leavevmode

\begin{enumerate}
\item When the coproperad $\rmC$ is coaugmented, we rather consider its coaugmentation coideal $\overline{\rmC}$, which is the cokernel of its coaugmentation map, in the above definition.  

\item Since only the infinitesimal decomposition map $\Delta_{(1,1)}$ of the coproperad is involved in this construction, the same statement holds true when $\rmC$ is only an infinitesimal coproperad.
\end{enumerate}
\end{remark}

Recall that when $\rmP=\scrG(E)/(R)$ is a quadratic properad, its Koszul dual coproperad $\rmP^{\ac}$ is the conilpotent coproperad cogenerated by the suspension $\susp E$  and the double suspension $\susp^2R$, see \cite{Val07} for more details.

\begin{example}
	One important example of properadic convolution algebra in the present paper is the one associated to the  conilpotent coproperad $\DPois^{\ac}$, which is Koszul dual to the properad encoding double Poisson gebras, and the endomorphism properad $\End_A$. We denote it by
	\[
		\convDPois_A\coloneq \widehat{\Hom}\left({\DPois}^\antish , \End_A\right)~.
	\]
\end{example}

\begin{definition}[Properadic twisting morphism]
	The solutions to the Maurer--Cartan equation
	\begin{equation}\label{eq:MCProperad}
		\partial \alpha + \alpha \star \alpha=0
	\end{equation}
	in the properadic convolution algebra $\widehat{\Hom}(\rmC,\rmP)$ are called the  \emph{twisting morphisms} from $\rmC$ to $\rmP$; their set is denoted by $\Tw(\rmC,\rmP)$~.
\end{definition}

\begin{definition}[$\prop_\infty$-gebra]
	When $\prop$ is a Koszul properad, a gebra over the cobar resolution $\Omega \prop^{\ac} \xrightarrow{\sim} \prop$ is called
	a \emph{homotopy $\prop$-gebra} or \emph{$\prop_\infty$-gebra}.
\end{definition}

\begin{proposition}[{\cite[Proposition~17]{MV09I}}]\label{prop:twisting}
	Homotopy $\prop$-gebra structures on a dg vector space $A$ are in canonical one-to-one correspondence with
	twisting morphisms $\Tw(\rmP^\antish,\End_A)$~.
\end{proposition}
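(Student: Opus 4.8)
The plan is to reduce the statement to the freeness of the cobar construction together with the bar--cobar adjunction. By definition, a homotopy $\prop$-gebra structure on $A$ is a morphism of dg properads $f\colon\Omega\prop^{\ac}\to\End_A$, so the first step is to unwind the structure of the source. Its underlying graded properad is the free properad $\scrG(\asusp\overline{\prop^{\ac}})$ on the desuspension of the coaugmentation coideal of $\prop^{\ac}$, and its differential $d_\Omega$ is the unique derivation whose restriction to the space of generators is the sum of the term induced by the internal differential $d_{\prop^{\ac}}$ and the \emph{cobar differential} induced by the infinitesimal decomposition map $\Delta_{(1,1)}\colon\prop^{\ac}\to\prop^{\ac}\ibt\prop^{\ac}$; this derivation is well defined precisely because $\prop^{\ac}$ is conilpotent, so no Koszulness is needed for the correspondence itself.

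Next I would use the universal property of the free properad: a morphism of \emph{graded} properads $\scrG(\asusp\overline{\prop^{\ac}})\to\End_A$ is exactly a degree-$0$ morphism of $\Sy$-bimodules $\asusp\overline{\prop^{\ac}}\to\End_A$, and precomposing with the desuspension map $\overline{\prop^{\ac}}\to\asusp\overline{\prop^{\ac}}$ converts it into a degree $-1$ morphism of $\Sy$-bimodules, i.e. a degree $-1$ element $\alpha\in\widehat{\Hom}(\prop^{\ac},\End_A)$. So the only remaining question is which $\alpha$ produce a \emph{chain} map $f_\alpha$. Since $f_\alpha$ is multiplicative and both $d_\Omega$ and $d_{\End_A}$ are derivations, the identity $d_{\End_A}\circ f_\alpha=f_\alpha\circ d_\Omega$ holds everywhere as soon as it holds on the generators $\asusp\overline{\prop^{\ac}}$. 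Evaluating both sides on a generator $\asusp c$ and using that $f_\alpha$ is the multiplicative extension of $\alpha$ --- so that it sends the two-vertex graph recording $\Delta_{(1,1)}(c)$ to $\gamma\big((\alpha\ibt\alpha)\Delta_{(1,1)}(c)\big)$, which equals $(\alpha\star\alpha)(c)$ up to sign, while the $d_{\prop^{\ac}}$-term produces $\alpha\circ d_{\prop^{\ac}}$ and the target side produces $d_{\End_A}\circ\alpha$ --- and then unwinding the definitions of $\partial$ and $\star$, one reads off exactly the Maurer--Cartan equation $\partial\alpha+\alpha\star\alpha=0$. Hence $f_\alpha$ is a morphism of dg properads if and only if $\alpha\in\Tw(\prop^{\ac},\End_A)$, and $\alpha\mapsto f_\alpha$ is the desired canonical bijection; conceptually this is one direction of the bar--cobar adjunction $\Hom_{\Properad}(\Omega\prop^{\ac},\End_A)\cong\Tw(\prop^{\ac},\End_A)$.

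The hard part is entirely concentrated in the last step. First, one must check that testing the chain-map condition on generators genuinely suffices, which rests on conilpotency of $\prop^{\ac}$: this is what makes $d_\Omega$ a well-defined derivation determined by its restriction to generators. Second, and more tediously, one has to run the Koszul-sign bookkeeping coming from the desuspension $\asusp$ so that the quadratic part of the cobar differential is matched with $\alpha\star\alpha$ \emph{on the nose} and the linear part reproduces the convolution differential $\partial\alpha$, with no stray global sign surviving. The remaining ingredients --- freeness of $\scrG$ and the naturality of the correspondence in $A$ --- are formal.
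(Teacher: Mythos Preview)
Your proposal is correct and is exactly the standard argument behind this result; the paper itself does not give a proof but simply cites \cite[Proposition~17]{MV09I}, whose proof proceeds along the same lines you sketch (freeness of $\Omega\prop^{\ac}$ reduces a dg properad morphism to its restriction on generators, and the chain-map condition there is precisely the Maurer--Cartan equation in the convolution algebra).
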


Let us apply this to the case of double Poisson gebras;
the most important result of the Ph.D. Thesis of the first author lies in the following statement.

\begin{theorem}[{\cite[Theorem~5.11]{Ler19ii}}] \label{thm:DPois_Koszul}
	The properads $\DLie$ and $\DPois$ are Koszul.
\end{theorem}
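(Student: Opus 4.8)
The plan is to deduce Koszulness from the Koszul duality theory for properads of~\cite{Val07}: it is enough to show that the Koszul complex attached to each of these quadratic properads is acyclic, equivalently that the cobar map $\Cobar\, \DLie^{\ac}\xrightarrow{\sim}\DLie$ (respectively $\Cobar\, \DPois^{\ac}\xrightarrow{\sim}\DPois$) is a quasi-isomorphism. I would split the argument in two: first handle the properad $\DLie$ generated by the double bracket alone, and then obtain the case of $\DPois$ by a distributive law built on the derivation relation.

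For $\DLie$, the method I would use is the rewriting (Gr\"obner basis / Diamond Lemma) technique for properads, the properadic counterpart of the classical implication ``quadratic Gr\"obner basis $\Rightarrow$ Koszul''. Since the defining relations of a double Lie gebra are all supported on genus-$0$ graphs, the rewriting rules touch only genus-$0$ sub-configurations, and the whole analysis reduces to labelled genus-$0$ graphs built from the single $(2,2)$-generator. Concretely, I would fix an admissible order on such graph monomials, orient the skew-symmetry relation (a normalisation among the $\Sy$-bimodule representatives of a one-vertex monomial) and the double Jacobi relation (a two-vertex monomial in arity $(3,3)$) so that each acquires a well-defined leading term, and then verify confluence of every critical pair. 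There are two families of critical pairs: the overlaps of the symmetry rule with the Jacobi rule, living in two-vertex monomials in arity $(3,3)$, and the overlaps of the Jacobi rule with itself, living in three-vertex monomials in arity $(4,4)$. Checking that each of these finitely many critical pairs resolves — the genuine computational core of this step — produces a quadratic Gr\"obner basis, hence Koszulness of $\DLie$, and as a by-product an explicit monomial basis of $\DLie$ and of $\DLie^{\ac}$.

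For $\DPois$, I would present it as the properad built from the Koszul properad $\As$ (the associative product $\mu$, regarded as a properad concentrated in arities $(1,n)$) and the Koszul properad $\DLie$ via a distributive law $\lambda$ encoding the derivation relation, i.e. the rewriting of $\DB{a}{\mu(b,c)}$ into $\mu$ applied to the components of $\DB{-}{-}$. In rewriting terms, one appends to the system of the previous step the orientation of the derivation relation, checks termination of the combined system, and verifies that the new critical pairs — an associativity overlap against the bracket, a symmetry overlap against the derivation rule, and the three-vertex overlaps of one bracket with two products and of two brackets with one product — all resolve. Equivalently, one checks that $\lambda$ is a tame distributive law, i.e. that the induced $\Sy$-bimodule morphism $\DLie\boxtimes\As\to\DPois$ is an isomorphism. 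Granting this, either the properadic distributive law criterion (the analogue for properads of~\cite[Chapter~8]{LV12}) or directly the confluence of the combined quadratic rewriting system yields Koszulness of $\DPois$ from that of $\As$ and $\DLie$, and identifies $\DPois^{\ac}$ with the corresponding twisted composite of $\As^{\ac}$ and $\DLie^{\ac}$.

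The main obstacle, in both stages, is the confluence bookkeeping in the several-outputs graph setting: one must choose an admissible order on graph monomials for which the Diamond-Lemma hypotheses genuinely hold, enumerate the critical pairs exhaustively — in particular the ones produced by the symmetry relation, which are easy to overlook — and carry the Koszul signs correctly through each resolution. The essential conceptual point hidden in this computation is the \emph{tameness} of the distributive law: one must ensure that no relation beyond the quadratic ones is forced in $\DPois$, so that its underlying $\Sy$-bimodule has exactly the expected size; this is precisely what confluence of the mixed critical pairs guarantees.
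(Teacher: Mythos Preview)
Your distributive-law reduction of $\DPois$ to $\As$ and $\DLie$ matches the paper (see \cref{prop:decompo_duale_DPois}): once $\DLie$ is Koszul, the derivation relation gives a distributive law and Koszulness of $\DPois$ follows.

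The gap is in your treatment of $\DLie$. You invoke a properadic Diamond Lemma and assert that, since the relations are genus~$0$, ``the whole analysis reduces to labelled genus-$0$ graphs''. Two issues. First, there is no off-the-shelf Gr\"obner / Diamond-Lemma machinery for properads analogous to Dotsenko--Khoroshkin for shuffle operads---graphs lack the divisibility combinatorics of shuffle trees---so you would have to build that theory before using it. Second, and more to the point, Koszulness of the \emph{properad} $\DLie$ requires acyclicity of the bar/Koszul complex built from \emph{all} connected flow-directed graphs, in every genus; that the relations live in genus~$0$ does not by itself force the positive-genus syzygies to vanish. The paper flags exactly this at the end of \cref{subsec::DPois_antish}: the fact that $\DPois^\antish$ is a codioperad is ``equivalent to the vanishing of the homology groups of some involved graph complexes concentrated in positive genera'', and \cref{subsec:TypesofAlg} exhibits $\rmV$ as an instance where the genus-$0$ Koszul dual fails to resolve the full properad. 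Your list of critical pairs (arities $(3,3)$ and $(4,4)$) is only the genus-$0$ list; nothing in your outline explains why no positive-genus check is needed.

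The paper's actual route is accordingly different: the proof in \cite{Ler19ii} introduces an auxiliary category of \emph{protoperads} \cite{Ler19i}, tailored to structures concentrated in diagonal arities $(n,n)$ like $\DLie$. One develops Koszul duality there, shows that $\DLie$ is induced from a Koszul protoperad, and proves that this induction to properads preserves Koszulness---the step that absorbs the positive-genus contributions your outline leaves unaddressed.
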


\begin{proof}
	Let us just mention that the strategy of this proof relies on the introduction and the development of a new notion dubbed \emph{protoperads}, see \cite{Ler19i}.
\end{proof}

Thanks to this result, we get a homotopy meaningful notion of $\DPois_\infty$-gebra as a
Maurer--Cartan element in the properadic convolution algebra $\convDPois_A$~.
It remains to make the former one explicit.

\subsection{Explicit description of the Koszul dual coproperad \texorpdfstring{$\DPois^\antish$}{DPois}} \label{subsec::DPois_antish}

The derivation relation between the double Lie bracket and the associative product in the definition of a double Poisson gebra induces a rewriting rule in the properad $\DPois$ between the sub-properad
$\DLie$ and the sub-operad $\As$, viewed as a sub-properad concentrated in output arity one.

\begin{proposition}[{\cite[Corollary~5.10]{Ler19ii}}]\label{prop:decompo_duale_DPois}
	This rewriting rule induces a distributive law. This implies that the properad of double Poisson gebra is canonically isomorphic to
	\begin{equation} \label{prop:DPois_iso_As_box_DLie}
		\DPois\cong \As \boxtimes \DLie
	\end{equation}
	and that its Koszul dual coproperad is canonically isomorphic to
	\begin{equation} \label{prop:DPois_iso_As_box_DLieKOSZUL}
		\DPois^{\rm\antish} \cong \DLie^\antish \boxtimes \As^\antish	\ .
	\end{equation}
\end{proposition}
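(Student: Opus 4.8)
The plan is to realise $\DPois$ through a \emph{distributive law} between the associative operad $\As$, regarded as a properad concentrated in output arity one, and the dioperad $\DLie$, and then to dualise. First I would read off from \cref{def:gebra_struct} the presentation $\DPois = \scrG(E)/(R)$ with $E = E_{\As}\oplus E_{\DLie}$, where $E_{\As}$ is the arity $(1,2)$ generator $\mu$ and $E_{\DLie}$ is the arity $(2,2)$ generator $\DB{-}{-}$, taken already modulo the skew-symmetry relation so that it is the small generating $\Sy$-bimodule of $\DLie$; the relations split as $R = R_{\As}\oplus R_\lambda \oplus R_{\DLie}$ with $R_{\As}$ associativity, $R_{\DLie}$ the double Jacobi relation, and $R_\lambda$ the \emph{derivation relation}. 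The decisive observation is that $R_\lambda$ is the graph of a linear rewriting rule
\[ \lambda \colon E_{\DLie} \ibt E_{\As} \too E_{\As} \ibt E_{\DLie}~, \]
since the left-hand side $\DB{a}{\mu(b,c)}$ of the derivation relation has a $\DLie$-vertex sitting above an $\As$-vertex, while its right-hand side is a combination of $\As$-vertices sitting above a $\DLie$-vertex. Thus $\DPois$ is \emph{by construction} the quotient of $\scrG(E)$ by $R_{\As}$, by $R_{\DLie}$, and by the relations $\xi - \lambda(\xi)$ for $\xi \in E_{\DLie}\ibt E_{\As}$, i.e. it is of the form $\As \vee_\lambda \DLie$. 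As noted in the excerpt, $\DPois$ is faithfully encoded by a dioperad, so the entire argument can be run inside dioperads in the spirit of \cite{Gan03}, where only genus $0$ graphs occur and the combinatorics stays manageable.

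Next I would invoke the Diamond Lemma for distributive laws, the analogue for the connected composition product $\boxtimes$ of the operadic statement of \cite[Section~8.6]{LV12}: the two inclusions $E_{\As}\hookrightarrow E$ and $E_{\DLie}\hookrightarrow E$ induce a canonical epimorphism of properads $\As\boxtimes\DLie \twoheadrightarrow \DPois$, which is an isomorphism if and only if $\lambda$ is a genuine distributive law; and, the presentation being quadratic, the latter condition is detected on the weight $3$ part $\scrG(E)^{(3)}$, i.e. by checking confluence of the rewriting rule on the finitely many $3$-vertex connected flow-directed graphs carrying a critical pair. These split into two families: (i) graphs with one $\DLie$-vertex and two $\As$-vertices, where confluence is exactly the compatibility of the derivation relation with associativity — the statement that $\DB{-}{-}$ is a double derivation in each entry — and (ii) graphs with two $\DLie$-vertices and one $\As$-vertex, where confluence is the compatibility of the derivation relation with the double Jacobi and skew-symmetry relations. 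In both families the required identities hold, since the axioms of \cref{def:gebra_struct} were designed precisely so that moving the product past the double brackets is consistent. This gives the isomorphism $\DPois \cong \As\boxtimes\DLie$.

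For the Koszul dual coproperad I would dualise. Applying the Koszul duality theory for properads \cite{Val07} to the presentation $\As\vee_\lambda\DLie$, the coproperad $\DPois^{\antish}$ is cogenerated by the suspended generators $\susp E_{\As}\oplus\susp E_{\DLie}$ subject to the corelations $\susp^2 R_{\As}$, $\susp^2 R_{\DLie}$, and the corelation coming from the transpose $\lambda^\vee$ of the rewriting rule — which now rewrites an $\As^{\antish}$-cogenerator above a $\DLie^{\antish}$-cogenerator into a $\DLie^{\antish}$-cogenerator above an $\As^{\antish}$-cogenerator, the two factors being exchanged as always under Koszul duality. Since each weight $3$ component is finite dimensional, the confluence condition for $\lambda$ is equivalent, by linear duality, to the confluence condition for $\lambda^\vee$, so the dual Diamond Lemma applies and yields $\DPois^{\antish} \cong \DLie^{\antish}\boxtimes\As^{\antish}$. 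Feeding in the classical Koszulity of $\As$ and the Koszulity of $\DLie$ established in \cite{Ler19ii} (see \cref{thm:DPois_Koszul}), the same machinery moreover shows that $\DPois$ itself is Koszul, although this is not needed for the two isomorphisms above.

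The main obstacle is the weight $3$ confluence check: it is finite — a short list of $3$-vertex graphs — but it is exactly where every instance of the Koszul sign rule and convention has to be tracked, and one must also make sure that the rewriting never produces a graph of positive genus, which here is automatic because $\lambda$ creates no oriented cycle and one stays within dioperads. A minor preliminary point is to set up $\DLie$ with its generator already reduced modulo skew-symmetry, so that its presentation is honestly quadratic and the distributive-law formalism applies without modification.
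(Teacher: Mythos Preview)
The paper does not give a proof of this proposition; it is imported verbatim from \cite[Corollary~5.10]{Ler19ii}. Your outline is the standard distributive-law argument and is correct in substance: recognise the derivation relation as a rewriting rule $\lambda\colon E_{\DLie}\ibt E_{\As}\to E_{\As}\ibt E_{\DLie}$, verify confluence on the weight~$3$ critical graphs, and dualise.

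One contextual remark: the paper hints (see the proof sketch of \cref{thm:DPois_Koszul}) that the argument in \cite{Ler19ii} is actually carried out in the framework of \emph{protoperads} introduced in \cite{Ler19i}, which exploits the diagonal $\Sy_n$-symmetry built into $\DLie$ to streamline exactly the weight~$3$ confluence check you identify as the main obstacle. So while your direct approach via the properadic Diamond Lemma is valid, the cited proof packages the same computation differently, and the protoperad formalism is what makes the sign bookkeeping tractable in practice. Your caveat about staying within genus~$0$ is well placed, since the paper later stresses (\cref{prop:InfDecDPoisAC}) that all composites along positive-genus graphs vanish in $\DPois^!$; this is consistent with, but not automatic from, working dioperadically.
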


Let $\rmP\coloneq \scrG(E)/(R)$ be a quadratic properad.
Since it is much easier to work with properads than with coproperads, we consider the arity wise linear dual of the Koszul dual coproperad $\rmP^!\coloneq \left(\rmP^{\ac}\right)^{*}$, that we call  the \emph{Koszul dual properad} of $\rmP$.

\begin{lemma}[{\cite[Corollary~7.12]{Val07}}]\label{lemm:KDproperad}
	When $E$ is finite dimensional, the Koszul dual properad admits the following presentation
	\begin{equation}\label{Eq:KoszulDual}
		\rmP^!\cong
		\scrG\left(\susp^{-1}E^*\right)/\left(\susp^{-2}R^\perp\right)\ .
	\end{equation}
\end{lemma}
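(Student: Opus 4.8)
The plan is to reduce the statement to the already-established structure of the Koszul dual coproperad $\rmP^{\ac}$ and then dualise arity-wise. Recall from the discussion preceding the lemma that, for a quadratic properad $\rmP = \scrG(E)/(R)$, the coproperad $\rmP^{\ac}$ is the conilpotent coproperad \emph{cogenerated} by $\susp E$ with corelations $\susp^2 R$; concretely, $\rmP^{\ac}$ is the sub-$\Sy$-bimodule of the cofree conilpotent coproperad $\scrG^c(\susp E)$ consisting of those elements all of whose ``local'' two-vertex components land in $\susp^2 R \subseteq \scrG^c(\susp E)^{(2)} = \susp E \ibt \susp E$. The first step is therefore to identify, in each arity $(m,n)$, the finite-dimensional vector space $\rmP^{\ac}(m,n)$ as an intersection of kernels inside $\scrG^c(\susp E)(m,n)$, using that $E$ is finite dimensional so that every arity of $\scrG^c(\susp E)$ is finite dimensional and the weight grading is bounded in each arity.

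Next I would take the arity-wise linear dual. Since $\scrG^c(\susp E)$ is, arity-wise, the linear dual of $\scrG(\susp^{-1} E^*)$ — this is the standard duality between the cofree coproperad on a finite-dimensional $\Sy$-bimodule and the free properad on its dual, compatible with the weight grading — dualising the inclusion $\rmP^{\ac}(m,n) \hookrightarrow \scrG^c(\susp E)(m,n)$ yields a surjection $\scrG(\susp^{-1}E^*)(m,n) \twoheadrightarrow \rmP^!(m,n)$. The kernel of this surjection is the orthogonal of $\rmP^{\ac}(m,n)$; by the description above, $\rmP^{\ac}$ is cut out by the corelations $\susp^2 R$ in every two-vertex subgraph, so its orthogonal is exactly the properadic ideal generated by $(\susp^2 R)^\perp = \susp^{-2} R^\perp \subseteq \scrG(\susp^{-1}E^*)^{(2)}$. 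The sign bookkeeping here is the point where one must be careful: one checks that under the pairing between $\susp E \ibt \susp E$ and $\susp^{-1}E^* \ibt \susp^{-1}E^*$, the annihilator of the shifted space $\susp^2 R$ is the doubly-desuspended orthogonal $\susp^{-2}R^\perp$, using $\susp^* = \asusp$ and the Koszul sign conventions fixed in the introduction.

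Assembling these identifications over all arities gives the isomorphism of properads \eqref{Eq:KoszulDual}: the free properad $\scrG(\susp^{-1}E^*)$ surjects onto $\rmP^!$ with kernel the ideal $(\susp^{-2}R^\perp)$. Finiteness of $E$ is used twice — to make arity-wise duals well behaved and to guarantee that the ``orthogonal of the generators of an ideal generates the orthogonal ideal'' passes through, since in each arity only finitely many weights contribute. I expect the main obstacle to be purely that last sign/compatibility verification — matching the shifts and the orthogonal-of-an-ideal argument vertex by vertex along the two-level graphs — rather than anything structurally deep; the conceptual content is entirely contained in the cogenerated description of $\rmP^{\ac}$ recalled just above and in the finite-dimensional duality between $\scrG^c(\susp E)$ and $\scrG(\susp^{-1}E^*)$. (Indeed, this is \cite[Corollary~7.12]{Val07}, so it suffices to recall the argument.)
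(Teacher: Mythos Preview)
Your proposal is correct and follows the standard argument. The paper itself does not give an independent proof: it simply notes the slight change of conventions (no suspension properad is used here) and refers to \cite[Corollary~7.12]{Val07} and the complete proof of \cite[Proposition~7.2.1]{LV12}, which carries over \emph{mutatis mutandis} to the properadic setting; your sketch is precisely an outline of that referenced argument.
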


\begin{proof}
	Notice that we slightly changed the definitions from \emph{loc. cit.}: we do not use any suspension properad here.
	For more details, we refer to the complete proof of \cite[Proposition~7.2.1]{LV12}, which applies \emph{mutatis mutandis} to the present case.
\end{proof}

We start by the description of the Koszul dual of the properad $\DLie=\scrG(E)/(R)$.
The $\Sy$-bimodule  $s^{-1}E^*$ has dimension $2$ with basis depicted by ``planar boxes''
\begin{equation}\label{eq:GenDLie}
	\begin{tikzpicture}[scale=0.4,baseline=(n.base))]
		\node (n) at (0,0) {};
		\draw (0,-.5) node[below] {$\scriptstyle{1}$}
		-- (0,1) node[above] {$\scriptstyle{1}$};
		\draw[thin] (2,-.5) node[below] {$\scriptstyle{2}$}
		-- (2,1) node[above] {$\scriptstyle{2}$};
		\draw[fill=gray!25] (-0.3,0) rectangle (2.3,0.5);
	\end{tikzpicture}
	\quad \text{and} \quad
	\begin{tikzpicture}[scale=0.4,baseline=(n.base))]
		\node (n) at (0,0) {};
		\draw (0,-.5) node[below] {$\scriptstyle{2}$}
		-- (0,1) node[above] {$\scriptstyle{1}$};
		\draw[thin] (2,-.5) node[below] {$\scriptstyle{1}$}
		-- (2,1) node[above] {$\scriptstyle{2}$};
		\draw[fill=gray!25] (-0.3,0) rectangle (2.3,0.5);
	\end{tikzpicture} \ ,
\end{equation}
where the action of $(12)$ on the left-hand side of the first term gives the second term and
where the action of $(12)$ on the right-hand side of the first term gives minus the second term. The $\Sy$-bimodule
$s^{-2}R^\perp$ is generated under the action of the symmetric groups by
\begin{equation}\label{eq:RelDLie}
	\begin{tikzpicture}[scale=0.4,baseline=(n.base)]
		\node (n) at (0,.5) {};
		\draw (0,-.5) node[below] {$\scriptstyle{1}$}
		-- (0,1) node[above] {$\scriptstyle{1}$};
		\draw[thin] (2,-.5) node[below] {$\scriptstyle{2}$}
		-- (2,2) node[above] {$\scriptstyle{2}$};
		\draw[thin] (4,.5) node[below] {$\scriptstyle{3}$}
		-- (4,2) node[above] {$\scriptstyle{3}$};
		\draw[fill=gray!25] (-0.3,0) rectangle (2.3,0.5);
		\draw[fill=gray!25] (1.7,1) rectangle (4.3,1.5);
	\end{tikzpicture}
	\ - \
	\begin{tikzpicture}[scale=0.4,baseline=(n.base)]
		\node (n) at (0,.5) {};
		\draw (0,-.5) node[below] {$\scriptstyle{2}$}
		-- (0,1) node[above] {$\scriptstyle{2}$};
		\draw[thin] (2,-.5) node[below] {$\scriptstyle{3}$}
		-- (2,2) node[above] {$\scriptstyle{3}$};
		\draw[thin] (4,.5) node[below] {$\scriptstyle{1}$}
		-- (4,2) node[above] {$\scriptstyle{1}$};
		\draw[fill=gray!25] (-0.3,0) rectangle (2.3,0.5);
		\draw[fill=gray!25] (1.7,1) rectangle (4.3,1.5);
	\end{tikzpicture}
	\quad \text{and} \quad
	\begin{tikzpicture}[scale=0.4,baseline=(n.base)]
		\node (n) at (0,.5) {};
		\draw (0,-.5) node[below] {$\scriptstyle{1}$}
		-- (0,2) node[above] {$\scriptstyle{1}$};
		\draw[thin] (2,-.5) node[below] {$\scriptstyle{2}$}
		-- (2,2) node[above] {$\scriptstyle{2}$};
		\draw[fill=gray!25] (-0.3,0) rectangle (2.3,0.5)
		(-0.3,1) rectangle (2.3,1.5);
	\end{tikzpicture} \ .
\end{equation}

\begin{remark}
	We depict the elements of properads $\rmP$ and their Koszul dual coproperad $\rmP^{\ac}$ by white boxes and
	we depict the elements of the Koszul dual properad $\rmP^!$ by gray boxes. In other words, elements of $E$ are in white and elements of $E^*$ are in gray.
\end{remark}

\begin{lemma}[{\cite[Section 5.1]{Ler19ii}}]\label{lemm:DLie!}
	The properad $\DLie^!$ is concentrated in arities $(n,n)$, for $n\geqslant 1$, where
	$\DLie^!(n,n)$ is concentrated in degree $1-n$ with a basis labelled by $(\Sy_n\times\Sy_n)/\C_n$
	given by the following stairways, up to cyclic permutations:
	\[
		\begin{tikzpicture}[scale=0.4,baseline=1.2pc]
			\draw (0,-.5) node[below] {$\scriptstyle{j_1}$}
			-- (0,1) node[above] {$\scriptstyle{i_1}$};
			\draw[thin] (2,-.5) node[below] {$\scriptstyle{j_2}$}
			-- (2,2) node[above] {$\scriptstyle{i_2}$};
			\draw[thin] (4,.5) node[below] {$\scriptstyle{j_3}$}
			-- (4,2);
			\draw[fill=gray!25] (-0.3,0) rectangle (2.3,0.5);
			\draw[fill=gray!25] (1.7,1) rectangle (4.3,1.5);
			\draw [dotted] (4,2) -- (8,3.5);
			\draw (8,3.5)-- (8,5) node[above] {$\scriptstyle{i_{n-2}}$};
			\draw (10,3.5) node[below] {$\scriptstyle{j_{n-1}}$}
			-- (10,6) node[above] {$\scriptstyle{i_{n-1}}$};
			\draw (12,4.5)  node[below] {$\scriptstyle{j_n}$}
			-- (12,6) node[above] {$\scriptstyle{i_n}$};
			\draw (8,3.5) -- (8,4);
			\draw[fill=gray!25] (7.7,4) rectangle (10.3,4.5);
			\draw[fill=gray!25] (9.7,5) rectangle (12.3,5.5);
		\end{tikzpicture}
		\ \ = \ \ (-1)^{n-1} \ \
		\begin{tikzpicture}[scale=0.4,baseline=1.2pc]
			\draw (0,-.5) node[below] {$\scriptstyle{j_2}$}
			-- (0,1) node[above] {$\scriptstyle{i_2}$};
			\draw[thin] (2,-.5) node[below] {$\scriptstyle{j_3}$}
			-- (2,2) node[above] {$\scriptstyle{i_3}$};
			\draw[thin] (4,.5) node[below] {$\scriptstyle{j_4}$}
			-- (4,2);
			\draw[fill=gray!25] (-0.3,0) rectangle (2.3,0.5);
			\draw[fill=gray!25] (1.7,1) rectangle (4.3,1.5);
			\draw [dotted] (4,2) -- (8,3.5);
			\draw (8,3.5)-- (8,5) node[above] {$\scriptstyle{i_{n-1}}$};
			\draw (10,3.5) node[below] {$\scriptstyle{j_n}$}
			-- (10,6) node[above] {$\scriptstyle{i_n}$};
			\draw (12,4.5)  node[below] {$\scriptstyle{j_1}$}
			-- (12,6) node[above] {$\scriptstyle{i_1}$};
			\draw (8,3.5) -- (8,4);
			\draw[fill=gray!25] (7.7,4) rectangle (10.3,4.5);
			\draw[fill=gray!25] (9.7,5) rectangle (12.3,5.5);
		\end{tikzpicture}.
	\]
\end{lemma}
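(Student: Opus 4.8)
The plan is to derive the statement from the explicit presentation of the Koszul dual properad provided by \cref{lemm:KDproperad}. The generating $\Sy$-bimodule $E$ of $\DLie=\scrG(E)/(R)$ is the two-dimensional $\Sy$-bimodule concentrated in arity $(2,2)$ spanned by the double bracket modulo its skew-symmetry, so it is finite dimensional and
\[
	\DLie^!\cong \scrG\left(\susp^{-1}E^*\right)/\left(\susp^{-2}R^\perp\right)\,,
\]
where $\susp^{-1}E^*$ is the two-dimensional $\Sy$-bimodule drawn with the gray boxes of \eqref{eq:GenDLie} and $\susp^{-2}R^\perp$ is generated by the two relators of \eqref{eq:RelDLie}. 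I would briefly recall why the latter holds: the weight-two part $\scrG(E)^{(2)}$ consists of connected graphs with two $(2,2)$-vertices directed by a global flow; being acyclic, the two vertices are joined either by one internal edge (a genus-$0$ graph of arity $(3,3)$) or by two parallel edges (a genus-$1$ graph of arity $(2,2)$), and no other configuration occurs. The double Jacobi relator together with its $\Sy_3$-orbit spans $R$ inside the arity-$(3,3)$ summand only, hence the whole arity-$(2,2)$ summand of $\scrG(\susp^{-1}E^*)^{(2)}$ lies in $R^\perp$ --- this is the ``vertical stack'' relator --- while in arity $(3,3)$ a direct computation of the orthogonal complement of the (one-dimensional up to $\Sy_3$) span of the Jacobi relator yields the ``stairway'' relator.

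Next I would establish the two concentration statements. In a connected flow-directed graph with $v$ vertices, $e$ internal edges, and all vertices of arity $(2,2)$, adding up the inputs and the outputs over all vertices gives
\[
	(\text{external inputs})+e \;=\; 2v \;=\; (\text{external outputs})+e\,,
\]
so the external arity has the form $(n,n)$, and writing $g\geqslant 0$ for the genus one gets $v=n-1+g$. As \eqref{Eq:KoszulDual} involves no suspension properad, a monomial supported on such a graph has homological degree $-v=1-n-g$. It then suffices to prove that every monomial of positive genus vanishes in the quotient: modulo the ``stairway'' relation --- an associativity-type rule that lets one slide vertices along the graph --- any graph carrying a directed cycle can be rewritten as a combination of graphs each containing two vertices joined by a double edge, hence lying in the ideal generated by the ``vertical stack'' relator. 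Consequently $\DLie^!$ is concentrated in arities $(n,n)$, and $\DLie^!(n,n)$ is the span of the genus-$0$ trees with $n-1$ two-input/two-output vertices, all in homological degree $1-n$.

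It remains to put these trees in normal form. On genus-$0$ trees the ``stairway'' relation is exactly an associativity rewriting, and a confluence (diamond lemma) argument --- resolving every branching into a single zigzag --- shows that every such tree is a scalar multiple of the standard stairway of the statement with some labelling of its $n$ inputs and $n$ outputs. The only remaining freedom is the choice of a ``first'' leg, i.e. the simultaneous action of the cyclic group $\C_n$ on inputs and outputs; propagating the skew-symmetry of the generator through one full rotation (equivalently, chaining the arity-$(3,3)$ relator along the stairway) produces the stated identity with the sign $(-1)^{n-1}=\sgn(\tau_n)$. Hence $\DLie^!(n,n)$ has a basis indexed by $(\Sy_n\times\Sy_n)/\C_n$.

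The main obstacle lies in these last two steps and is the usual one for such presentations. First, reducing an arbitrary positive-genus graph into the ideal of the degenerate relator is not a one-step rewriting: one must show that the associativity relation genuinely suffices to create a double edge out of any directed cycle, a short but real combinatorial argument on flow-directed graphs. Second, one must check that the normal form is not over-collapsed --- that the stairway monomials indexed by $(\Sy_n\times\Sy_n)/\C_n$ stay linearly independent in the quotient and that the cyclic identification carries precisely the sign $(-1)^{n-1}$ and no further relation --- which is the delicate Koszul-sign bookkeeping, handled either by running the diamond lemma with full sign control or by matching the resulting dimensions against a direct count of $\DLie$ itself.
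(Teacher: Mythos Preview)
The paper does not actually prove this lemma: it is stated with a citation to \cite[Section~5.1]{Ler19ii} and no proof is supplied here. What the paper \emph{does} do is set up the quadratic presentation of $\DLie^!$ via \cref{lemm:KDproperad}, making the generators \eqref{eq:GenDLie} and the relations \eqref{eq:RelDLie} explicit; the passage from that presentation to the stairway basis is precisely what is deferred to the reference. Your outline is therefore not being compared against a proof in this paper, but it is the natural argument one would expect from \cite{Ler19ii}, and your Euler-characteristic count for the arity and degree concentration, together with the observation that the ``vertical stack'' relator kills the genus, is correct.

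One point deserves sharpening. You call the arity-$(3,3)$ relator of \eqref{eq:RelDLie} an ``associativity rewriting'', but as written it is a \emph{cyclic invariance} of the $3$-stairway under simultaneous rotation of inputs and outputs (consistent with $(-1)^{n-1}=+1$ at $n=3$). The reduction of a general genus-$0$ graph to a stairway is not a single associativity move: you must combine this cyclic relation with the skew-symmetry of the generator to slide a vertex past a branching, and for $n\geqslant 5$ there genuinely are non-path tree shapes (e.g.\ a star on four vertices in arity $(5,5)$) that must be handled. Your text acknowledges this obstacle at the end, but the body of the argument reads as if ``associativity'' alone suffices; it would be clearer to phrase the rewriting system as \emph{cyclic relation plus generator skew-symmetry} and to note that the diamond-lemma termination relies on both. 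With that caveat, your sketch matches the expected approach, and the honest identification of the sign-bookkeeping and linear-independence steps as the delicate part is accurate.
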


From this result, we deduce the following form of the Koszul dual coproperad $\DLie^{\ac}$. We denote the dual basis of the stairways by
\[
	\begin{tikzpicture}[scale=0.4,baseline=(n.base)]
		\node (n) at (0,0.5) {};
		\draw[thin] (0,0) node[below] {$\scriptstyle{j_1}$}
		-- (0,1.5) node[above] {$\scriptstyle{i_1}$};
		\draw[thin] (1.5,0) node[below] {$\scriptstyle{j_2}$}
		-- (1.5,1.5) node[above] {$\scriptstyle{i_2}$};
		\draw (3.75,0) node[below] {$\scriptstyle{\cdots}$}
		(3.75,1.5) node[above] {$\scriptstyle{\cdots}$};
		\draw[thin]	(6,0) node[below] {$\scriptstyle{j_n}$}
		-- (6,1.5) node[above] {$\scriptstyle{i_n}$};
		\draw[fill=white] (-0.3,0.5) rectangle (6.3,1);
	\end{tikzpicture}
	=
	(-1)^{n-1} \ \
	\begin{tikzpicture}[scale=0.4,baseline=(n.base)]
		\node (n) at (0,0.5) {};
		\draw[thin] (0,0) node[below] {$\scriptstyle{j_2}$}
		-- (0,1.5) node[above] {$\scriptstyle{i_2}$};
		\draw (2.25,0) node[below] {$\scriptstyle{\cdots}$};
		\draw (2.25,1.5) node[above] {$\scriptstyle{\cdots}$};
		\draw[thin] (4.5,0) node[below] {$\scriptstyle{j_n}$}
		-- (4.5,1.5) node[above] {$\scriptstyle{i_n}$};
		\draw[thin]	(6,0) node[below] {$\scriptstyle{j_1}$}
		-- (6,1.5) node[above] {$\scriptstyle{i_1}$};
		\draw[fill=white] (-0.3,0.5) rectangle (6.3,1);
	\end{tikzpicture}
\]
understood to be of degree $n-1$. Their image under the infinitesimal decomposition map is equal to
\[
	\Delta_{(1,1)}\ \colon   \
	\begin{tikzpicture}[scale=0.4,baseline=(n.base)]
		\node (n) at (0,0.5) {};
		\draw[thin] (0,0) node[below] {$\scriptstyle{1}$}
		-- ++(0,1.5) node[above] {$\scriptstyle{1}$};
		\draw[thin] (1.5,0) node[below] {$\scriptstyle{2}$}
		-- (1.5,1.5) node[above] {$\scriptstyle{2}$};
		\draw (3.75,0) node[below] {$\scriptstyle{\cdots}$}
		++(0,1.5) node[above] {$\scriptstyle{\cdots}$};
		\draw[thin]	(6,0) node[below] {$\scriptstyle{n}$}
		-- ++(0,1.5) node[above] {$\scriptstyle{n}$};
		\draw[fill=white] (-0.3,0.5) rectangle (6.3,1);
	\end{tikzpicture}
	\ \ {\longmapsto} \ \
	\sum_{{\substack{2 \leqslant k \leqslant n-1  \\ \sigma \in \C_n}}}
	(-1)^{(k-1)(n-k)}\sgn(\sigma)
	\begin{tikzpicture}[scale=0.4,baseline=(n.base)]
		\node (n) at (0,1) {};
		\draw[thin] (-2,0) node[below] {$\scriptstyle{\sigma(1)}$}
		-- (-2,2) node[above] {$\scriptstyle{\sigma(1)}$};
		\draw[thin] (0,0) node[below] {$\scriptstyle{\sigma(2)}$} --
		(0,2) node[above] {$\scriptstyle{\sigma(2)}$};
		\draw (2,0) node[below] {$\scriptstyle{\cdots}$}
		++(0,1.5) node[above] {$\scriptstyle{\cdots}$};
		\draw[thin]	(4,0) node[below] {$\scriptstyle{\sigma(k)}$}
		-- (4,2.5) node[above] {$\scriptstyle{\sigma(k)}$};
		\draw (7,0) node[below] {$\scriptstyle{\cdots}$}
		++(0,2.5) node[above] {$\scriptstyle{\cdots}$};
		\draw[thin]	(10,0) node[below] {$\scriptstyle{\sigma(n)}$}
		-- (10,2.5) node[above] {$\scriptstyle{\sigma(n)}$};
		\draw[fill=white] (-2.3,0.5) rectangle (4.3,1);
		\draw[fill=white] (3.7,1.5) rectangle (10.3,2);
	\end{tikzpicture}
\]

We are now ready to pass to the Koszul dual (co)properad of  $\DPois$.
Let us denote by $\mathrm{Part}_m(n)$ the set of ordered partitions $\lambda_1+\cdots+\lambda_m=n$ of  $n$ into $m$ positive integers.

\begin{lemma}\label{lemm:Dpois!}
	The properad $\DPois^!$ is concentrated in arities $(m,n)$, for $n\geqslant m\geqslant 1$, where
	$\DPois^!(m,n)$ is concentrated in degree $1-n$ with a basis labelled by $(\Sy_m\times\Sy_n\times \mathrm{Part}_m(n))/\C_m$
	given by the following treewise stairways
	\[
		\begin{tikzpicture}[scale=0.4,baseline=(n.base)]
			\node (n) at (0,0.5) {};
			\coordinate (A) at (0,1);
			\draw[thin]
			(A) ++(0,-1) node[below] {$\scriptstyle{j_1}$} --
			(A) -- ++(-.6,1.5)
			(A) -- ++(-.3,1.5)
			(A) -- ++(.3,1.5)
			(A)++(-.1,1.5) node[above] {${\scriptstyle{\Lambda_1}}$};
			\coordinate (A) at (1.5,1);
			\draw[thin]
			(A) ++(0,-1) node[below] {$\scriptstyle{j_2}$} --
			(A) -- ++(-.6,1.5)
			(A) -- ++(-.3,1.5)
			(A) -- ++(.3,1.5)
			(A)++(-.1,1.5) node[above] {${\scriptstyle{\Lambda_2}}$};
			\draw (3.75,0) node[below] {$\scriptstyle{\cdots}$};
			\draw (3.75,1.5) node[above] {$\scriptstyle{\cdots}$};
			\coordinate (A) at (6,1);
			\draw[thin]
			(A) ++(0,-1) node[below] {$\scriptstyle{j_m}$} --
			(A) -- ++(-.6,1.5)
			(A) -- ++(-.3,1.5)
			(A) -- ++(.3,1.5)
			(A)++(-.1,1.5) node[above] {${\scriptstyle{\Lambda_m}}$};
			\draw[fill=gray!25] (-0.3,0.5) rectangle (6.3,1);
		\end{tikzpicture}
		\quad \coloneq \quad
		\begin{tikzpicture}[scale=0.4,baseline=(n.base)]
			\node (n) at (0,3.5) {};
			\def\sT{0.6} 
			\coordinate (A) at (0, 1);
			\draw (0,-.5) node[below] {$\scriptstyle{j_1}$}
			-- (0, 0.5) to[out=90,in=270] (A) ;
			\draw (A)
			-- ++(-1*\sT,\sT) node[above] {$\scriptstyle{i_1}$}
			-- ++(\sT,-1*\sT)
			-- ++(\sT,\sT)
			-- ++(-1*\sT,\sT) node[above] {$\scriptstyle{i_2}$}
			-- ++(\sT,-1*\sT)
			-- ++(0.5*\sT,0.5*\sT);
			\draw[dotted] (A) ++(1.5*\sT,1.5*\sT) -- ++(\sT,\sT);
			\draw (A) ++(2.5*\sT,2.5*\sT) -- ++(.5*\sT,.5*\sT)
			-- ++(-1*\sT,\sT)
			-- ++(\sT,-1*\sT) -- ++(\sT,\sT) node[above] {$\scriptstyle{i_{\lambda_1}}$};
			\draw[fill=gray!25] (A) circle (4pt)
			++(\sT,\sT) circle (4pt)
			++(2*\sT,2*\sT) circle(4pt) ;
			\coordinate (B) at (4,4.5);
			\draw[thin] (2,-.5) node[below] {$\scriptstyle{j_2}$}
			-- (2,0.5)
			to[out=90,in=270] (4,3.75)
			to[out=90,in=270]  (B);
			\draw (B)
			-- ++(-1*\sT,\sT) node[above left] {$\scriptstyle{i_{\lambda_1+1}}$}
			-- ++(\sT,-1*\sT)
			-- ++(\sT,\sT)
			-- ++(-1*\sT,\sT)
			-- ++(\sT,-1*\sT)
			-- ++(0.5*\sT,0.5*\sT);
			\draw[dotted] (B) ++(1.5*\sT,1.5*\sT) -- ++(\sT,\sT);
			\draw (B) ++(2.5*\sT,2.5*\sT)
			-- ++(.5*\sT,.5*\sT)
			-- ++(-1*\sT,\sT)
			-- ++(\sT,-1*\sT)
			-- ++(\sT,\sT) node[above] {$\scriptstyle{i_{\lambda_1+\lambda_2}}$};
			\draw[fill=gray!25] (B) circle (4pt)
			++(\sT,\sT) circle (4pt)
			++(2*\sT,2*\sT) circle(4pt) ;
			\draw[thin] (6,3) node[below] {$\scriptstyle{j_3}$} -- (6,4.5);
			\draw[fill=gray!25] (-0.3,0) rectangle (2.3,0.5);
			\draw[fill=gray!25] (3.7,3.5) rectangle (6.3,4);
			\draw[dotted] (6,4.5) to[out=90,in=270] (10,7.5);
			\coordinate (C) at (10,9);
			\draw (10,7.5)  --  (C);
			\draw (C)
			-- ++(-1*\sT,\sT) node[above left] {$\scriptstyle{i_{\lambda_1+\cdots+\lambda_{m-2}+1}}$}
			-- ++(\sT,-1*\sT)
			-- ++(\sT,\sT)
			-- ++(-1*\sT,\sT)
			-- ++(\sT,-1*\sT)
			-- ++(0.5*\sT,0.5*\sT);
			\draw[dotted] (C) ++(1.5*\sT,1.5*\sT) -- ++(\sT,\sT);
			\draw (C) ++(2.5*\sT,2.5*\sT)
			-- ++(.5*\sT,.5*\sT)
			-- ++(-1*\sT,\sT)
			-- ++(\sT,-1*\sT)
			-- ++(\sT,\sT) ; 
			\draw[fill=gray!25] (C) circle (4pt)
			++(\sT,\sT) circle (4pt)
			++(2*\sT,2*\sT) circle(4pt) ;
			\coordinate (D) at (13.5,11.5);
			\draw (12,7.5) node[below] {$\scriptstyle{j_m}$}
			-- (12,8.5)
			to[out=90,in=270] (D);
			\draw (D)
			-- ++(-1*\sT,\sT)
			-- ++(\sT,-1*\sT)
			-- ++(\sT,\sT)
			-- ++(-1*\sT,\sT)
			-- ++(\sT,-1*\sT)
			-- ++(0.5*\sT,0.5*\sT);
			\draw[dotted] (D) ++(1.5*\sT,1.5*\sT) -- ++(\sT,\sT);
			\draw (D) ++(2.5*\sT,2.5*\sT)
			-- ++(.5*\sT,.5*\sT)
			-- ++(-1*\sT,\sT)
			-- ++(\sT,-1*\sT)
			-- ++(\sT,\sT) node[above] {$\scriptstyle{i_{\lambda_1+\cdots+\lambda_m}}$};
			\draw[fill=gray!25] (D) circle (4pt)
			++(\sT,\sT) circle (4pt)
			++(2*\sT,2*\sT) circle(4pt) ;
			\draw[fill=gray!25] (9.7,8) rectangle (12.3,8.5);
		\end{tikzpicture},
	\]
	where ${\Lambda_1}\sqcup \cdots \sqcup {\Lambda_m}=\{1, \ldots, n\}$ is a partition, with $\lambda_l=|\Lambda_l|$ and $\Lambda_l\coloneq\allowbreak  (i_{\lambda_1+\cdots+\lambda_{l-1}+1}, \allowbreak \ldots, i_{\lambda_1+\cdots+\lambda_{l}})$,
	up to cyclic permutations:
	\[
		\begin{tikzpicture}[scale=0.4,baseline=(n.base)]
			\node (n) at (0,0.5) {};
			\coordinate (A) at (0,1);
			\draw[thin]
			(A) ++(0,-1) node[below] {${\scriptstyle{j_1}}$} --
			(A) -- ++(-.6,1.5)
			(A) -- ++(-.3,1.5)
			(A) -- ++(.3,1.5)
			(A)++(-.1,1.5) node[above] {${\scriptstyle{\Lambda_1}}$};
			\coordinate (A) at (1.5,1);
			\draw[thin]
			(A) ++(0,-1) node[below] {$\scriptstyle{j_2}$} --
			(A) -- ++(-.6,1.5)
			(A) -- ++(-.3,1.5)
			(A) -- ++(.3,1.5)
			(A)++(-.1,1.5) node[above] {${\scriptstyle{\Lambda_2}}$};
			\draw (3.75,0) node[below] {$\scriptstyle{\cdots}$};
			\draw (3.75,1.5) node[above] {$\scriptstyle{\cdots}$};
			\coordinate (A) at (6,1);
			\draw[thin]
			(A) ++(0,-1) node[below] {$\scriptstyle{j_m}$} --
			(A) -- ++(-.6,1.5)
			(A) -- ++(-.3,1.5)
			(A) -- ++(.3,1.5)
			(A)++(-.1,1.5) node[above] {${\scriptstyle{\Lambda_m}}$};
			\draw[fill=gray!25] (-0.3,0.5) rectangle (6.3,1);
		\end{tikzpicture}
		\ = \ (-1)^{n\lambda_1+\lambda_m} \
		\begin{tikzpicture}[scale=0.4,baseline=(n.base)]
			\node (n) at (0,0.5) {};
			\coordinate (A) at (0,1);
			\draw[thin]
			(A) ++(0,-1) node[below] {$\scriptstyle{j_2}$} --
			(A) -- ++(-.6,1.5)
			(A) -- ++(-.3,1.5)
			(A) -- ++(.3,1.5)
			(A)++(-.1,1.5) node[above] {${\scriptstyle{\Lambda_2}}$};
			\coordinate (A) at (4.5,1);
			\draw[thin]
			(A) ++(0,-1) node[below] {$\scriptstyle{j_m}$} --
			(A) -- ++(-.6,1.5)
			(A) -- ++(-.3,1.5)
			(A) -- ++(.3,1.5)
			(A)++(-.1,1.5) node[above] {${\scriptstyle{\Lambda_m}}$};
			\draw (2.25,0) node[below] {$\scriptstyle{\cdots}$}
			++(0,1.5) node[above] {$\scriptstyle{\cdots}$};
			\coordinate (A) at (6,1);
			\draw[thin]
			(A) ++(0,-1) node[below] {$\scriptstyle{j_1}$} --
			(A) -- ++(-.6,1.5)
			(A) -- ++(-.3,1.5)
			(A) -- ++(.3,1.5)
			(A)++(-.1,1.5) node[above] {${\scriptstyle{\Lambda_1}}$};
			\draw[fill=gray!25] (-0.3,0.5) rectangle (6.3,1);
		\end{tikzpicture} \ .
	\]
\end{lemma}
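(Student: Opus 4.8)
The plan is to obtain the description of $\DPois^!$ from the distributive-law decomposition of the Koszul dual coproperad, together with the already-established form of $\DLie^!$ and the classical description of the associative operad's Koszul dual $\As^!$.

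\emph{Dualizing the decomposition.} By \cref{prop:decompo_duale_DPois} one has an isomorphism of $\Sy$-bimodules (in fact of coproperads) $\DPois^\antish \cong \DLie^\antish \boxtimes \As^\antish$. Both factors are arity-wise finite dimensional, and — as the two-level graph analysis below shows — in each fixed arity only finitely many connected two-level graphs contribute to their connected composite; hence arity-wise linear duality commutes with $\boxtimes$ (vertex-wise dualization of a two-level graph preserves its input/output grading), and
\[
	\DPois^! = \bigl(\DPois^\antish\bigr)^{*} \cong \bigl(\DLie^\antish\bigr)^{*}\boxtimes\bigl(\As^\antish\bigr)^{*} = \DLie^! \boxtimes \As^!
\]
as $\Sy$-bimodules. (Alternatively one can apply \cref{lemm:KDproperad} and split $\susp^{-2}R^\perp$ into its associative, double-Jacobi, and derivation parts, the last of which encodes the dual distributive law, now between $\As^!$ and $\DLie^!$.) Here $\DLie^!$ is given by \cref{lemm:DLie!} — concentrated in arities $(p,p)$, in degree $1-p$, with basis the stairways modulo their cyclic relation — and $\As^!$ is the Koszul dual of the associative operad: concentrated in output arity one, with $\As^!(1,k)$ spanned by the $k!$ planar combs (all binary parenthesizations being identified) and, with the suspension-free conventions of \cref{lemm:KDproperad}, placed in degree $1-k$.

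\emph{Reading off the underlying $\Sy$-bimodule.} I would next describe $\DLie^!\boxtimes\As^!$ in arity $(m,n)$ by inspecting connected two-level graphs. Since $\As^!$ has a single output, each $\As^!$-vertex attaches its output to exactly one $\DLie^!$-vertex and to no other $\As^!$-vertex, so connectedness forces a single $\DLie^!$-vertex; as it has equally many inputs and outputs, all outputs external and all inputs fed by $\As^!$-outputs, that vertex lies in arity $(m,m)$ and is fed by exactly $m$ combs $\As^!(1,\lambda_1),\dots,\As^!(1,\lambda_m)$ with $\lambda_1+\cdots+\lambda_m=n$ — whence $n\geqslant m\geqslant 1$ and the ``treewise stairway'' picture. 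Collecting basis data: the $\DLie^!(m,m)$-stairway contributes its basis element (an element of $(\Sy_m\times\Sy_m)/\C_m$), the combs contribute orderings of their leaves, and the gluing matches them up; after quotienting by the relabellings built into $\boxtimes$ this all reorganizes into a triple in $\Sy_m\times\Sy_n\times\mathrm{Part}_m(n)$ — i.e. the ordered partition $\Lambda_1\sqcup\cdots\sqcup\Lambda_m=\{1,\dots,n\}$ — taken modulo the residual $\C_m$ coming from the stairway's cyclic symmetry, giving the claimed indexing set. Since the composition maps of a properad have degree $0$, the degree is additive along the composite: $(1-m)+\sum_{l=1}^m(1-\lambda_l)=1-n$ (equivalently, such a basis element is built from $(m-1)+\sum_l(\lambda_l-1)=n-1$ degree $-1$ generators).

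\emph{The cyclic sign.} It remains to determine the residual $\C_m$-action, i.e. the sign relating a treewise stairway to its cyclic rotate. Its generator acts on the underlying $\DLie^!(m,m)$-stairway by the sign of \cref{lemm:DLie!} (that statement with $n$ replaced by $m$); dragging the attached combs into cyclic order and applying the Koszul sign rule to their degrees $1-\lambda_l$ — while carefully bookkeeping the conventions entering the identification of $\As^!$ with combs and the $\Sy$-bimodule structure on $\boxtimes$ — assembles into the exponent $n\lambda_1+\lambda_m$. (Consistency check: for $m=n$ all $\lambda_l=1$, the combs are trivial, and $(-1)^{n\lambda_1+\lambda_m}=(-1)^{n+1}=(-1)^{n-1}$, recovering \cref{lemm:DLie!}.) This final sign computation is, I expect, the only genuine obstacle: everything before it is a structural unwinding of the distributive law, whereas pinning the exponent down to exactly $n\lambda_1+\lambda_m$ is a delicate Koszul-sign chase of the very kind the introduction flags as tedious, so I would isolate it (or at least a careful statement of the sign conventions feeding into it) from the combinatorial argument.
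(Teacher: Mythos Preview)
Your proposal is correct and follows essentially the same route as the paper: the paper's proof is the single line ``This is a direct corollary of \cref{prop:decompo_duale_DPois} and \cref{lemm:DLie!}'', and your argument is precisely an unpacking of that corollary via the dual distributive law $\DPois^!\cong\DLie^!\boxtimes\As^!$, the two-level graph analysis, and the resulting degree and sign bookkeeping. Your alternative suggestion of working directly from the quadratic presentation of \cref{lemm:KDproperad} is in fact what the paper does immediately afterwards, in the proof of \cref{prop:InfDecDPoisAC}, to make the rewriting rules and genus-vanishing explicit.
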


By convention, there are only binary trees, i.e. no box, for $m=1$~.

\begin{proof}
	This is a direct corollary of \cref{prop:decompo_duale_DPois} and \cref{lemm:DLie!}.
\end{proof}

For the Koszul dual coproperad $\DPois^{\ac}$, we represent the linear dual basis elements by
\begin{equation}\label{eq:SignCyclicDual}
	\begin{tikzpicture}[scale=0.4,baseline=(n.base)]
		\node (n) at (0,0.5) {};
		\coordinate (A) at (0,1);
		\draw[thin]
		(A) ++(0,-1) node[below] {${\scriptstyle{j_1}}$} --
		(A) -- ++(-.6,1.5)
		(A) -- ++(-.3,1.5)
		(A) -- ++(.3,1.5)
		(A)++(-.1,1.5) node[above] {${\scriptstyle{\Lambda_1}}$};
		\coordinate (A) at (1.5,1);
		\draw[thin]
		(A) ++(0,-1) node[below] {$\scriptstyle{j_2}$} --
		(A) -- ++(-.6,1.5)
		(A) -- ++(-.3,1.5)
		(A) -- ++(.3,1.5)
		(A)++(-.1,1.5) node[above] {${\scriptstyle{\Lambda_2}}$};
		\draw (3.75,0) node[below] {$\scriptstyle{\cdots}$};
		\draw (3.75,1.5) node[above] {$\scriptstyle{\cdots}$};
		\coordinate (A) at (6,1);
		\draw[thin]
		(A) ++(0,-1) node[below] {$\scriptstyle{j_m}$} --
		(A) -- ++(-.6,1.5)
		(A) -- ++(-.3,1.5)
		(A) -- ++(.3,1.5)
		(A)++(-.1,1.5) node[above] {${\scriptstyle{\Lambda_m}}$};
		\draw[fill=white] (-0.3,0.5) rectangle (6.3,1);
	\end{tikzpicture}
	\ = \
	(-1)^{n\lambda_1+\lambda_m} \
	\begin{tikzpicture}[scale=0.4,baseline=(n.base)]
		\node (n) at (0,0.5) {};
		\coordinate (A) at (0,1);
		\draw[thin]
		(A) ++(0,-1) node[below] {$\scriptstyle{j_2}$} --
		(A) -- ++(-.6,1.5)
		(A) -- ++(-.3,1.5)
		(A) -- ++(.3,1.5)
		(A)++(-.1,1.5) node[above] {${\scriptstyle{\Lambda_2}}$};
		\coordinate (A) at (4.5,1);
		\draw[thin]
		(A) ++(0,-1) node[below] {$\scriptstyle{j_m}$} --
		(A) -- ++(-.6,1.5)
		(A) -- ++(-.3,1.5)
		(A) -- ++(.3,1.5)
		(A)++(-.1,1.5) node[above] {${\scriptstyle{\Lambda_m}}$};
		\draw (2.25,0) node[below] {$\scriptstyle{\cdots}$}
		++(0,1.5) node[above] {$\scriptstyle{\cdots}$};
		\coordinate (A) at (6,1);
		\draw[thin]
		(A) ++(0,-1) node[below] {$\scriptstyle{j_1}$} --
		(A) -- ++(-.6,1.5)
		(A) -- ++(-.3,1.5)
		(A) -- ++(.3,1.5)
		(A)++(-.1,1.5) node[above] {${\scriptstyle{\Lambda_1}}$};
		\draw[fill=white] (-0.3,0.5) rectangle (6.3,1);
	\end{tikzpicture}
\end{equation}
and we consider the following notation for the most canonical ones:
\[
	\nu_{\lambda_1, \ldots, \lambda_m}\coloneq
	\begin{tikzpicture}[scale=0.9,baseline=(n.base)]
		\node (n) at (0,0.5) {};
		\coordinate (A) at (0,1);
		\draw[thin]
		(A) ++(0,-1) node[below] {${\scriptstyle{1}}$} --
		(A) -- ++(-.6,1)
		(A) -- ++(-.3,1)
		(A) -- ++(.3,1)
		(A)++(-.1,1) node[above right] {${\scriptstyle{\lambda_1}}$};
		\coordinate (A) at (1.5,1);
		\draw[thin]
		(A) ++(0,-1) node[below] {$\scriptstyle{2}$} --
		(A) -- ++(-.6,1)
		(A) -- ++(-.3,1)
		(A) -- ++(.3,1)
		(A)++(-.1,1);
		\draw (3.75,0) node[below] {$\scriptstyle{\cdots}$};
		\draw (3.75,1.4) node[above] {$\scriptstyle{\cdots}$};
		\draw (-0.22,2.05) node[above] {$\scriptstyle{\cdots}$};
		\draw (-.6,2.03) node[above] {${\scriptstyle{1}}$};
		\draw (0.86,2) node[above] {${\scriptstyle{\lambda_1+1}}$};
		\draw (1.45,2.05) node[above] {$\scriptstyle{\cdots}$};
		\draw (2.1,2) node[above] {${\scriptstyle{\lambda_1+\lambda_2}}$};
		\draw (5.35,2.05) node[above] {$\scriptstyle{\cdots}$};
		\coordinate (A) at (6,1);
		\draw[thin]
		(A) ++(0,-1) node[below] {$\scriptstyle{m}$} --
		(A) -- ++(-.6,1)
		(A) -- ++(-.3,1)
		(A) -- ++(.3,1)
		(A)++(-.2,1) node[above right] {${\scriptstyle{\lambda_1+\cdots+\lambda_m}}$};
		\draw[fill=white] (-0.3,0.5) rectangle (6.3,1);
	\end{tikzpicture}~.
\]

\begin{proposition}\label{prop:InfDecDPoisAC}
	For any partition ordered partition
	$\lambda_1+\cdots+\lambda_m=n$ of $n$ into $m$ positive integers, the image of the basis element
	$\nu_{\lambda_1, \ldots, \lambda_m}$
	under the infinitesimal decomposition map is equal to
	\begin{multline*} \label{eq:decompo_DPois_DPois}
		\Delta_{(1,1)}\left(\nu_{\lambda_1, \ldots, \lambda_m}\right)=
		\sum_{k=1}^{m}
		\sum_{\sigma \in \C_m}
		\sum_{0\leqslant p <  \lambda_{\sigma(m)}\atop 0\leqslant q <  \lambda_{\sigma(k)}}
		(-1)^{\theta} \
		\begin{tikzpicture}[scale=0.5,baseline=(n.base)]
			\node (n) at (0,2) {};
			\coordinate (A) at (0,1);
			\draw[thin]
			(A) ++(0,-1) node[below] {$\scriptstyle{\sigma(1)}$} --
			(A) -- ++(-.6,1.5)
			(A) -- ++(-.3,1.5)
			(A) -- ++(.3,1.5)
			(A)++(-.1,1.5) node[above] {$\scriptstyle{\overline{\lambda}_{\sigma(1)}}$};
			\coordinate (A) at (1.5,1);
			\draw[thin]
			(A) ++(0,-1) node[below] {$\scriptstyle{\sigma(2)}$} --
			(A) -- ++(-.6,1.5)
			(A) -- ++(-.3,1.5)
			(A) -- ++(.3,1.5)
			(A)++(-.1,1.5) node[above] {$\scriptstyle{\overline{\lambda}_{\sigma(2)}}$};
			\draw (3.5,0) node[below] {$\scriptstyle{\cdots}$}
			++ (0,1.5) node[above] {$\scriptstyle{\cdots}$};
			\coordinate (A) at (6,1);
			\draw[thin]
			(A) ++(0,-1) node[below] {$\scriptstyle{\sigma(k)}$} --
			(A) -- ++(-1.4,1.5)
			(A) -- ++(-1,1.5) node[above] {$\scriptstyle{\overline{p}}$}
			(A) -- ++(-.6,1.5)
			(A) -- ++(.6,1.5)
			(A) -- ++(1,1.5) node[above] {$\scriptstyle{\overline{q}}$}
			(A) -- ++(1.4,1.5) ;
			\coordinate (shift) at (0,3);
			\draw (A) -- ++(shift);
			\draw[fill=white] (-0.3,0.5) rectangle (6.3,1);
			\coordinate (A) at (6,4.5);
			\draw[thin]
			(A) -- ++(-1.2,2)
			(A) -- ++(-.8,2)
			(A) ++(.5,2) node[above] {$\scriptstyle{\overline{q}}$};
			\draw[dotted]
			(A) -- ++(.1,2)
			(A) -- ++(.5,2)
			(A) -- ++(1,2) ;
			\draw[decoration={brace}, decorate]
			(A) ++(-1.3,3) -- ++ (2.4,0)
			(A) ++(0,3)
			node[above] {$\scriptstyle{\overline{\lambda}_{\sigma(k)}}$};
			\coordinate (A) at (9,4.5);
			\draw[thin]
			(A) ++(0,-2) node[below] {$\scriptstyle{\sigma(k+1)}$} --
			(A) -- ++(-.6,1.5)
			(A) -- ++(-.3,1.5)
			(A) -- ++(.3,1.5)
			(A)++(-.1,1.5) node[above] {$\scriptstyle{\overline{\lambda}_{\sigma(k+1)}}$};
			\draw (11,3.5) node[below] {$\scriptstyle{\cdots}$}
			++ (-.3,1.5) node[above] {$\scriptstyle{\cdots}$};
			\coordinate (A) at (13,4.5);
			\draw[thin]
			(A) ++(0,-2) node[below] {$\scriptstyle{\sigma(m)}$} --
			(A) -- ++(.8,2)
			(A) -- ++(.5,2)
			(A) -- ++(1.2,2) ;
			\draw[dotted]
			(A) -- ++(-1,2)
			(A) -- ++(-.6,2)
			(A) -- ++(-.2,2)
			(A) ++ (-.5,2) node[above] {$\scriptstyle{\overline{p}}$};
			\draw[decoration={brace}, decorate]
			(A) ++(-1.1,3) -- ++ (2.4,0)
			(A) ++(0,3)
			node[above] {$\scriptstyle{\overline{\lambda}_{\sigma(m)}}$};
			\draw[fill=white] (5.7,4) rectangle (13.3,4.5); \ .
		\end{tikzpicture}~,
	\end{multline*}
	where
	$\overline{\lambda_l}\coloneq \{\lambda_1+\cdots+\lambda_{l-1}+1, \allowbreak \ldots, \lambda_1+\cdots+\lambda_{l}\}$~,
	where $\overline{p}$ is the set made up of the first $p$ terms of $\overline{\lambda}_{\sigma(m)}$~,
	where $\overline{q}$ is the set made up of the last $q$ terms of $\overline{\lambda}_{\sigma(k)}$~, and where
	\begin{align*}
		\theta= &
		q\left(\lambda_{\sigma(k)}-1\right)
		+p\left(\lambda_{\sigma(k)}+\cdots+\lambda_{\sigma(m-1)}\right)
		+\lambda_m
		+(n+1)\left(\lambda_1+\cdots+\lambda_{\sigma(1)-2}\right)+n\lambda_{\sigma(1)-1} \\
		        & ~~+\left(\lambda_{\sigma(1)}+\cdots+\lambda_{\sigma(k-1)}+p+q\right)
		\left(\lambda_{\sigma(k)}+\cdots+\lambda_{\sigma(m)}-p-q-1\right)
		~.
	\end{align*}
\end{proposition}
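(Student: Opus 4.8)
The plan is to prove the formula for $\Delta_{(1,1)}\bigl(\nu_{\lambda_1,\dots,\lambda_m}\bigr)$ by transporting the already-known infinitesimal decomposition map on $\DLie^{\ac}$ through the isomorphism $\DPois^{\ac} \cong \DLie^{\ac} \boxtimes \As^{\ac}$ of \cref{prop:decompo_duale_DPois}. First I would recall that $\As^{\ac}(1,n)$ is one-dimensional for each $n\geqslant 1$, with generator in degree $n-1$, and that its infinitesimal decomposition $\Delta_{(1,1)}$ splits the $n$ leaves of a corolla into an upper subcorolla (on a contiguous block of inputs) grafted onto a lower one — up to a Koszul sign $(-1)^{(k-1)(n-k)}$ coming from the desuspensions. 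The element $\nu_{\lambda_1,\dots,\lambda_m}$ is exactly the image under the distributive-law isomorphism of the tensor $\bigl(\text{stairway on }m\text{ strands}\bigr) \boxtimes \bigl(\text{corolla}_{\lambda_1},\dots,\text{corolla}_{\lambda_m}\bigr)$, so its decomposition is governed by the Leibniz-type rule for the decomposition of a $\boxtimes$-product in a coproperad obtained from a distributive law: one either decomposes the $\DLie^{\ac}$-part (the stairway) keeping the $\As^{\ac}$-corollas intact, or one decomposes one of the $\As^{\ac}$-corollas keeping the stairway intact, and then re-normalizes via the distributive law to land back in the treewise-stairway basis.

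The key steps, in order, are: (1) write $\nu_{\lambda_1,\dots,\lambda_m}$ in the $\DLie^{\ac}\boxtimes\As^{\ac}$ picture and record the sign in \eqref{eq:SignCyclicDual} relating it to the various cyclic representatives; (2) apply the displayed $\Delta_{(1,1)}$ on $\DLie^{\ac}$ from the paragraph after \cref{lemm:DLie!}, which already contributes the sum over $\sigma\in\C_m$, the choice of a splitting point $k$, and the sign $(-1)^{(k-1)(m-k)}\sgn(\sigma)$ — but with \emph{empty} blocks $\overline p,\overline q$, i.e. $p=q=0$ — and separately apply $\Delta_{(1,1)}$ on each $\As^{\ac}(1,\lambda_{\sigma(k)})$ and $\As^{\ac}(1,\lambda_{\sigma(m)})$ to produce the terms with $0\leqslant q<\lambda_{\sigma(k)}$ and $0\leqslant p<\lambda_{\sigma(m)}$, where a contiguous block of $q$ (resp. $p$) leaves at the appropriate end is lifted to the upper vertex; (3) use the distributive law of \cref{prop:decompo_duale_DPois} to rewrite the resulting element — in which an $\As^{\ac}$-corolla sits \emph{below} a $\DLie^{\ac}$-stairway vertex in the wrong order — as a linear combination of treewise stairways in standard $\DLie^{\ac}\boxtimes\As^{\ac}$ order, picking up the remaining signs; (4) collect all Koszul signs from desuspensions and from permuting the upper/lower leaf blocks past one another, and match them to the stated exponent $\theta$. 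The cyclic normalization in \eqref{eq:SignCyclicDual} must be applied carefully to the upper vertex (which now carries the split blocks $\overline p$ and $\overline q$ at its two ends) to get the canonical representative, and this is where the term $\lambda_m$ and the "$(n+1)(\lambda_1+\cdots+\lambda_{\sigma(1)-2})+n\lambda_{\sigma(1)-1}$" piece of $\theta$ come from.

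The main obstacle is entirely the sign bookkeeping in step (4): one must track (i) the internal Koszul signs of the $\DLie^{\ac}$-decomposition, which in degree $n-1$ elements produce quadratic-in-block-size exponents like $(k-1)(n-k)$; (ii) the sign incurred when an $\As^{\ac}$-corolla of degree $\lambda_{\sigma(k)}-1$ is split and its upper part is moved across the stairway and across the other corollas (this is the source of $q(\lambda_{\sigma(k)}-1)$ and $p(\lambda_{\sigma(k)}+\cdots+\lambda_{\sigma(m-1)})$); and (iii) the cyclic-renormalization signs from \eqref{eq:SignCyclicDual}, applied both to the original $\nu$ and to the two output vertices. The cleanest way to organize this is to fix, once and for all, a planar representative for every basis element, compute $\Delta_{(1,1)}$ on that representative with no renormalization, and only at the very end renormalize both the input and the two outputs to their canonical cyclic forms; the difference of the three renormalization signs, added to the "naive" Koszul sign of the planar computation, should collapse to $\theta$. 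I expect the verification that these contributions sum to exactly the stated $\theta$ — in particular the cross term $(\lambda_{\sigma(1)}+\cdots+\lambda_{\sigma(k-1)}+p+q)(\lambda_{\sigma(k)}+\cdots+\lambda_{\sigma(m)}-p-q-1)$, which is the Koszul sign for transposing the lower leaf block past the upper one — to be the lengthy but routine heart of the argument, and it is the kind of computation the authors explicitly defer to \cref{appendice} elsewhere in the paper.
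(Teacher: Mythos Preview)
Your approach differs from the paper's, and there is a genuine gap in it.

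The paper does \emph{not} work directly in the coproperad $\DPois^{\ac}$ via the isomorphism $\DLie^{\ac}\boxtimes\As^{\ac}$. Instead it works in the Koszul dual \emph{properad} $\DPois^!$: using \cref{lemm:KDproperad} it writes down the presentation of $\DPois^!$ by generators and relations (anti-associativity, the rewriting rules \eqref{eqn:ReWrDPois!}, and the genus-vanishing relation), shows that all positive-genus composites vanish, and then, for a given stairway basis element, enumerates the pairs of basis elements whose partial composite in $\DPois^!$ yields it. The sign $\theta$ is obtained by pushing the $q$ rightmost binary vertices of the bottom element up through one box via \eqref{eqn:ReWrDPois!}, re-combing (giving the $q(\lambda_{\sigma(k)}-1)$ term), then pushing the remaining $p$ binary vertices through all the boxes and combs of the top element (giving the $p(\cdots)$ term), then applying the cyclic normalisation \eqref{eq:SignCyclicDual} (the next three terms), and finally adding the product of the degrees of the two factors coming from linear dualisation (the last term). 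No Leibniz rule for $\Delta_{(1,1)}$ on a $\boxtimes$-coproperad is invoked.

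The gap in your plan is that the ``Leibniz-type rule'' you describe is not correct for the infinitesimal decomposition of a distributive-law coproperad. Decomposing only the $\DLie^{\ac}$-stairway while keeping the corollas intact produces only the $p=q=0$ terms; decomposing a single $\As^{\ac}$-corolla can contribute to at most one of $p$ or $q$. But the formula contains terms with \emph{both} $p>0$ and $q>0$ simultaneously: in these, the corollas at the two ends $\sigma(k)$ and $\sigma(m)$ of the upper stairway are each split, \emph{and} the stairway itself is split. These mixed terms do not arise from an either/or Leibniz rule; in the paper's dual picture they come precisely from applying the rewriting rules \eqref{eqn:ReWrDPois!} repeatedly to slide binary products through stairway boxes during a single partial composition in $\DPois^!$. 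Your step (3) gestures toward a re-normalisation, but to make it work you would have to reconstruct exactly this rewriting process on the coproperad side, which is the whole content of the computation --- and your outline does not account for it. (Also, the sign argument for this proposition is carried out in its proof, not deferred to \cref{appendice}; that appendix concerns \cref{thm:main}.)
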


The summand $k=1$ is understood without a white box on the bottom left side, that is just a corolla with $p+1+q$ leaves, with $p+1+q\geqslant 2$~. The summand $k=m$ is understood without a white box on the top right side, that is just a corolla whose leaves are labelled by $\bar\lambda_{\sigma(m)} \backslash (\bar{p}\cup \bar{q})$, which is required to have at least two elements.

\begin{proof}
	\cref{lemm:KDproperad} shows that
	the Koszul dual properad $\DPois^!$ admits the same generators \eqref{eq:GenDLie} as the Koszul dual properad $\DLie^!$ plus the ``planar'' binary product
	\[
		\begin{tikzpicture}[baseline=0ex,scale=0.2]
			\draw (0,4) node[above] {$\scriptstyle{1}$}
			-- (2,2);
			\draw (4,4) node[above] {$\scriptstyle{2}$}
			-- (2,2) -- (2,0) node[below] {$\scriptstyle{1}$};
			\draw[fill=gray!25] (2,2) circle (10pt);
		\end{tikzpicture}~.
	\]
	Its relations are that of $\DLie^!$, depicted above in \eqref{eq:RelDLie}, plus the anti-associativity relation
	\[
		\begin{tikzpicture}[baseline=0.5ex,scale=0.2]
			\draw (0,4) node[above] {$\scriptstyle{1}$} --(4,0)
			-- (4,-1) node[below] {$\scriptstyle{1}$};
			\draw (4,4) node[above] {$\scriptstyle{2}$} -- (2,2);
			\draw (8,4) node[above] {$\scriptstyle{3}$}-- (4,0);
			\draw[fill=gray!25] (2,2) circle (10pt);
			\draw[fill=gray!25] (4,0) circle (10pt);
		\end{tikzpicture}
		=-
		\begin{tikzpicture}[baseline=0.5ex,scale=0.2]
			\draw (0,4) node[above] {$\scriptstyle{1}$}
			-- (4,0) -- (4,-1) node[below] {$\scriptstyle{1}$};
			\draw (4,4) node[above] {$\scriptstyle{2}$} -- (6,2);
			\draw (8,4) node[above] {$\scriptstyle{3}$} -- (4,0);
			\draw[fill=gray!25] (6,2) circle (10pt);
			\draw[fill=gray!25] (4,0) circle (10pt);
		\end{tikzpicture},
	\]
	the rewriting rules
	\begin{equation}\label{eqn:ReWrDPois!}
		\begin{tikzpicture}[scale=0.4,baseline=(n.base))]
			\node (n) at (0,0) {};
			\draw (-2,1) node[above] {$\scriptstyle{2}$}
			to[out=270, in=140] (-1,-1.5)
			-- (-1,-2) node[below] {$\scriptstyle{1}$}
			-- (-1,-1.5) to[out=40, in=270] (0,0)
			-- (0,1) node[above] {$\scriptstyle{1}$};
			\draw (2,-2) node[below] {$\scriptstyle{2}$}
			-- (2,1) node[above] {$\scriptstyle{3}$};
			\draw[fill=gray!25] (-0.3,0) rectangle (2.3,0.5)
			(-1,-1.5) circle (5pt);
		\end{tikzpicture}
		\ =\ - \
		\begin{tikzpicture}[scale=0.4,baseline=(n.base))]
			\node (n) at (0,0) {};
			\draw (0,-.5) node[below] {$\scriptstyle{1}$}
			-- (0,1) node[above] {$\scriptstyle{1}$};
			\draw (2,-.5) node[below] {$\scriptstyle{2}$}
			-- (2,1) -- (1,2) node[above] {$\scriptstyle{2}$}
			-- (2,1) -- (3,2)  node[above] {$\scriptstyle{3}$};
			\draw[fill=gray!25] (-0.3,0) rectangle (2.3,0.5) (2,1) circle (5pt);
		\end{tikzpicture}
		\qquad \text{and} \qquad
		\begin{tikzpicture}[scale=0.4,baseline=(n.base))]
			\node (n) at (0,0) {};
			\draw  (1,-2) node[below] {$\scriptstyle{1}$} -- (1,-1.5) to[out=140, in=270] (0,0) -- (0,1) node[above] {$\scriptstyle{1}$};
			\draw  (3,-2) node[below] {$\scriptstyle{2}$}
			-- (3,-1.5) to[out=90, in=270] (2,0) --
			(2,1) node[above] {$\scriptstyle{3}$};
			\draw[white, double=black , double distance =0.3pt, ultra thick]
			(1,-1.5) to[out=40, in=270] (4,1) ;
			\draw (4,1) node[above] {$\scriptstyle{2}$};
			\draw[fill=gray!25] (-0.3,0) rectangle (2.3,0.5)
			(1,-1.5) circle (5pt);
		\end{tikzpicture}
		\ =\ - \
		\begin{tikzpicture}[scale=0.4,baseline=(n.base))]
			\node (n) at (0,0) {};
			\draw (0,-.5) node[below] {$\scriptstyle{1}$}
			-- (0,1) -- (-1,2)  node[above] {$\scriptstyle{1}$}
			--(0,1) -- (1,2) node[above] {$\scriptstyle{2}$};
			\draw (2,-.5) node[below] {$\scriptstyle{2}$}
			-- (2,1) node[above] {$\scriptstyle{3}$} ;
			\draw[fill=gray!25] (-0.3,0) rectangle (2.3,0.5)
			(0,1) circle (5pt);
		\end{tikzpicture}\ ,
	\end{equation}
	and the genus vanishing relation
	\[
		\begin{tikzpicture}[scale=0.4,baseline=(n.base))]
			\node (n) at (0,0) {};
			\draw (-2,1) node[above] {$\scriptstyle{1}$}
			to[out=270, in=140] (-1,-1.5)
			-- (-1,-2) node[below] {$\scriptstyle{1}$}
			-- (-1,-1.5) to[out=40, in=270] (0,0)
			-- (0,1) node[above] {$\scriptstyle{2}$};
			\draw[fill=gray!25] (-2.3,0) rectangle (0.3,0.5)
			(-1,-1.5) circle (5pt);
		\end{tikzpicture}\ .
	\]

	It was already shown in \cite[Lemma~5.4]{Ler19ii} that the composite of any elements in the properad $\DLie^!$ along a graph of positive genus vanishes. Using the aforementioned relations, the same property holds true in the Koszul dual properad $\DPois^!$. Indeed, given any composite along a graph of positive genus, one can pull up the binary  product, unless one encounters a box connected to the two inputs of such a product, in which case it vanishes. Once all the binary products stand above, one finds below a composite of boxes along a graph of positive genus, which  vanishes.

	\medskip

	So the infinitesimal decomposition map of the Koszul dual coproperad $\DPois^{\ac}$ splits any element into two along one edge.
	In order to understand which ones, we work in the Koszul dual properad  $\DPois^!$~.
	Using the rewriting rule~\eqref{eqn:ReWrDPois!}, one can see that only the partial composite
	of two basis elements given in \cref{lemm:Dpois!} along the 2-vertices graphs given on the right-hand side of the above formula can produce a given basis element.
	Here, we have been using the cyclic symmetric to place the composed output at the first place and the composed input at the last place.

	\medskip

	We conclude with the computation of the sign by systematically applying the Koszul sign rule and convention as follows.
	The partial composite of these two basis elements in the Koszul dual properad  $\DPois^!$ is given by first permuting the right-most $q$ binary operations of the bottom element above the first binary box of the top element using $q$ iterations of the right-most rewriting rule \eqref{eqn:ReWrDPois!}: this produces the sign $(-1)^q$~.
	At the first input of the left bottom binary box of the top element, one has now the composite of a right comb with $\lambda_{\sigma(k)}-q-1$ vertices with a right comb made up of  $q$ vertices.
	Rewriting it a total right comb produces the sign $(-1)^{q(\lambda_{\sigma(k)}-q-1)}$.
	All together, they form the first term of $\theta$.

	Then, one permutes the remaining right-most $p$ binary operations of the bottom element above
	all the binary boxes of the top element, using $p$ iterations of the left-most rewriting rule \eqref{eqn:ReWrDPois!}, and one also permutes them with all the right-combs of the top element except for the last one: this produces the second term of $\theta$.
	This way, one gets a basis type element of $\DPois^!$, that is a stairway with right-combs on each step, that one has to rotate cyclically in order to have one with first input and output label by $1$: this produces the next three terms of $\theta$.

	Finally, one has to dualise linearly this partial composition product in the Koszul dual properad $\DPois^!$ in order to obtain the infinitesimal decomposition map in the Koszul dual coproperad $\DPois^{\ac}$~. This produces an extra sign given by the product of the degrees of the two elements which is the last term of $\theta$.
\end{proof}

 \cref{prop:InfDecDPoisAC} shows that the Koszul dual coproperad $\DPois^{\ac}$ is actually a codioperad \cite{Gan03}, that is its  (infinitesimal) decomposition map produces only genus $0$ graphs. This is salient feature of the properad $\DPois$: even if a category  of gebras can be encoded by a dioperad, which is the case for double Poisson gebras, nothing ensures \emph{a priori} that its Koszul dual codioperad is big enough to contain all the higher homotopies (operadic syzygies) to resolve the associated properad, see \cref{subsec:TypesofAlg} and \cite[Section~5.6]{MV09I} for  counterexamples. 
This property is actually equivalent to the vanishing of the homology groups of some involved graph complexes concentrated in positive genera \cite{Ler19ii}. 

\subsection{Homotopy curved double Poisson gebras}\label{subsec:CurvedHDPois}
Now we have  everything at hand to make explicit the notion of a homotopy double Poisson gebra 
and to generalise it to the curved level.

\begin{theorem}\label{thm:DoublePoisson}
	A homotopy double Poisson gebra is a  dg vector space $A$ equipped with a collection of operations
	\[
		\calm_{\lambda_1, \ldots, \lambda_m} \colon A^{\otimes n} \too A^{\otimes m}
	\]
	of degree $n-2$, for any ordered partition $\lambda_1+\cdots+\lambda_m=n$ of $n\geqslant 1$ into positive integers, without the trivial partition of $1$~, satisfying the following relations.
	\begin{description}
		\item[\sc cyclic skew symmetry]
		      \begin{equation}\label{eq:CyclicSkewSymOP}
			      \calm_{\lambda_2, \ldots, \lambda_m, \lambda_1}=
			      (-1)^{n\lambda_1+\lambda_m}
			      \ \tau_m^{-1}\cdot\calm_{\lambda_1, \lambda_2, \ldots, \lambda_m}\cdot \tau_{\lambda_1, \ldots, \lambda_m}
			      ~,
		      \end{equation}
		      where $\tau_{\lambda_1, \ldots, \lambda_m} \in \Sy_n$ permutes cyclically the blocks of  size $\lambda_1, \ldots, \lambda_m$~.

		\item[\sc homotopy double Poisson relations]
		      \begin{multline}\label{eq:RelHoDoublePoiss}
			      \partial \left(\calm_{\lambda_1, \ldots, \lambda_m}\right)=\\
			      \sum_{k=1}^{m}
			      \sum_{\sigma \in \C_m}
			      \sum_{0\leqslant p <  \lambda_{\sigma(m)}\atop 0\leqslant q <  \lambda_{\sigma(k)}}
			      (-1)^{\xi}\,
			      \sigma^{-1}\cdot\left(\calm_{\lambda_{\sigma(1)}, \ldots, \lambda_{\sigma(k-1)}, p+1+q}
			      \circ^1_{i}
			      \calm_{\lambda_{\sigma(k)-q}, \lambda_{\sigma(k+1)}, \ldots, \lambda_{\sigma(m-1)},
				      \lambda_{\sigma(m)-p}}\right)\cdot \omega~,
		      \end{multline}
		      where
		      $i\coloneq \lambda_{\sigma(1)}+\cdots +\lambda_{\sigma(k-1)}+ p+1$, where
		      \begin{align*}
			      \xi= &
			      q\left(\lambda_{\sigma(k)}-1\right)
			      +p\left(\lambda_{\sigma(k)}+\cdots+\lambda_{\sigma(m-1)}\right)
			      +\lambda_m
			      +(n+1)\left(\lambda_1+\cdots+\lambda_{\sigma(1)-2}\right)+n\lambda_{\sigma(1)-1}+ \\
			           & \left(\lambda_{\sigma(1)}+\cdots+\lambda_{\sigma(k-1)}+p+q\right)
			      \left(\lambda_{\sigma(k)}+\cdots+\lambda_{\sigma(m)}-p-q\right)+1
			      ~,
		      \end{align*}
		      and
		      where $\omega$ is the permutation sending $(1, \ldots, n)$ to
		      \begin{align*}
			      \Big(
			       & \cev{\lambda}_{\sigma(1)}+1, \ldots, \cev{\lambda}_{\sigma(k)},
			      \cev{\lambda}_{\sigma(m)}+1, \cdots, \cev{\lambda}_{\sigma(m)}+p,
			      \cev{\lambda}_{\sigma(k)}+1, \cdots, \cev{\lambda}_{\sigma(k+1)}-q, \\
			       &
			      \cev{\lambda}_{\sigma(k+1)}+1, \ldots, \cev{\lambda}_{\sigma(m)},
			      \cev{\lambda}_{\sigma(m)}+p+1, \ldots, \cev{\lambda}_{\sigma(m+1)},
			      \cev{\lambda}_{\sigma(k+1)}-q+1, \cdots, \cev{\lambda}_{\sigma(k+1)}
			      \Big)~.
		      \end{align*}
		      under the convention $\cev{\lambda}_l \coloneq \lambda_1+\cdots+\lambda_{l-1}$~, for
		      $1<l \leqslant m$~, and $\cev{\lambda}_1\coloneq 0$~.
	\end{description}
\end{theorem}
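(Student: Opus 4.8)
The plan is to decode, through the correspondence of \cref{prop:twisting}, the coproperadic data already assembled in \cref{lemm:Dpois!} and \cref{prop:InfDecDPoisAC}. Since $\DPois$ is Koszul by \cref{thm:DPois_Koszul}, a homotopy double Poisson gebra structure on $A$ is the same datum as a Maurer--Cartan element $\alpha$ in the properadic convolution algebra $\convDPois_A=\widehat{\Hom}(\DPois^{\ac},\End_A)$, that is a degree $-1$ morphism of $\Sy$-bimodules $\alpha\colon\DPois^{\ac}\to\End_A$ satisfying $\partial\alpha+\alpha\star\alpha=0$. As $\DPois^{\ac}$ is coaugmented, one works with its coaugmentation coideal, so $\alpha$ is determined by its values on the canonical basis elements $\nu_{\lambda_1,\dots,\lambda_m}$ of \cref{lemm:Dpois!}, one for each ordered partition $\lambda_1+\cdots+\lambda_m=n$ with $(m,n)\neq(1,1)$, i.e.\ excluding the trivial partition of $1$. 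First I would set $\calm_{\lambda_1,\dots,\lambda_m}\coloneqq\alpha(\nu_{\lambda_1,\dots,\lambda_m})\in\Hom(A^{\otimes n},A^{\otimes m})$; since $\nu_{\lambda_1,\dots,\lambda_m}$ has homological degree $n-1$ and $\alpha$ has degree $-1$, this operation has degree $n-2$. Conversely, any such family of operations extends uniquely to an $\Sy$-bimodule morphism out of the $\Sy$-bimodule freely generated by the $\nu$'s, so the theorem amounts to spelling out exactly when this morphism is a genuine Maurer--Cartan element.

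Next I would derive the cyclic skew symmetry \eqref{eq:CyclicSkewSymOP}. The basis of $\DPois^{\ac}(m,n)$ is indexed by $(\Sy_m\times\Sy_n\times\mathrm{Part}_m(n))/\C_m$, and identity \eqref{eq:SignCyclicDual} records the effect of the generating cyclic rotation $\tau_m$ on the $\nu$'s: it expresses $\nu_{\lambda_1,\dots,\lambda_m}$ as $(-1)^{n\lambda_1+\lambda_m}$ times $\nu_{\lambda_2,\dots,\lambda_m,\lambda_1}$ with inputs relabelled by $\tau_{\lambda_1,\dots,\lambda_m}$ and outputs by $\tau_m$. Applying the $\Sy_m^{\op}\times\Sy_n$-equivariant map $\alpha$ to this identity yields \eqref{eq:CyclicSkewSymOP}; the compatibility of the convention $|\nu_{\lambda_1,\dots,\lambda_m}|=n-1$ with this rotation is already built into \eqref{eq:SignCyclicDual}, so no extra sign appears at this stage.

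Then I would expand the Maurer--Cartan equation on $\nu_{\lambda_1,\dots,\lambda_m}$. Since $\DPois^{\ac}$ carries the zero internal differential, the term $\partial\alpha$ evaluates to $\partial(\calm_{\lambda_1,\dots,\lambda_m})$, the differential of the operation in $\Hom(A^{\otimes n},A^{\otimes m})$, which is the left-hand side of \eqref{eq:RelHoDoublePoiss}. By definition of the properadic convolution product, $\alpha\star\alpha$ is the composite
\[
\DPois^{\ac}\xrightarrow{\Delta_{(1,1)}}\DPois^{\ac}\ibt\DPois^{\ac}\xrightarrow{\alpha\ibt\alpha}\End_A\ibt\End_A\xrightarrow{\gamma}\End_A\ .
\]
Inserting the explicit formula of \cref{prop:InfDecDPoisAC} for $\Delta_{(1,1)}(\nu_{\lambda_1,\dots,\lambda_m})$ and applying $\alpha$ at both vertices, each two-vertex summand becomes, after composition in $\End_A$, a partial composite of the shape $\sigma^{-1}\cdot\big(\calm_{\lambda_{\sigma(1)},\dots,\lambda_{\sigma(k-1)},p+1+q}\circ^1_i\calm_{\lambda_{\sigma(k)-q},\lambda_{\sigma(k+1)},\dots,\lambda_{\sigma(m-1)},\lambda_{\sigma(m)-p}}\big)\cdot\omega$, where the index $i$ and the block permutation $\omega$ on $A^{\otimes n}$ are read off from the labellings and braces of that figure. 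Thus $\partial\alpha+\alpha\star\alpha=0$ unwinds to $\partial(\calm_{\lambda_1,\dots,\lambda_m})=-(\alpha\star\alpha)(\nu_{\lambda_1,\dots,\lambda_m})$, which is \eqref{eq:RelHoDoublePoiss} as soon as the signs agree.

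The hard part will be exactly this last sign comparison: showing that the total sign of each summand equals $(-1)^\xi$ with $\xi$ as stated. Starting from the sign $(-1)^\theta$ of \cref{prop:InfDecDPoisAC}, three further contributions must be reckoned with: the Koszul sign convention inside $\alpha\ibt\alpha$ (governed, since $|\alpha|=-1$, by the homological degree of one of the two coproperad factors), the Koszul sign rule incurred when turning the abstract composition along the two-vertex graph into the concrete partial composite $\circ^1_i$ conjugated by $\sigma^{-1}$ on $A^{\otimes m}$ and by $\omega$ on $A^{\otimes n}$, and the global sign flip in $\partial\alpha+\alpha\star\alpha=0\Leftrightarrow\partial\calm=-(\alpha\star\alpha)(\nu)$. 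A careful tally of these contributions shows $\xi\equiv\theta+\big(\lambda_{\sigma(1)}+\cdots+\lambda_{\sigma(k-1)}+p+q\big)+1\pmod 2$ --- the additional $\lambda_{\sigma(1)}+\cdots+\lambda_{\sigma(k-1)}+p+q$ being precisely the degree of the factor $\nu_{\lambda_{\sigma(1)},\dots,\lambda_{\sigma(k-1)},p+1+q}$ --- which is exactly the relation between $\xi$ and $\theta$ visible in the statement. This bookkeeping is where essentially all the effort lies; it is of the same nature as, though considerably lighter than, the computation carried out for \cref{thm:main} in \cref{appendice}, everything else being a formal application of \cref{prop:twisting} together with \cref{lemm:Dpois!} and \cref{prop:InfDecDPoisAC}.
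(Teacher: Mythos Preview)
Your proof is correct and follows essentially the same approach as the paper's own argument: identify homotopy double Poisson gebras with twisting morphisms via \cref{prop:twisting}, define $\calm_{\lambda_1,\dots,\lambda_m}\coloneqq\alpha(\nu_{\lambda_1,\dots,\lambda_m})$, read off the cyclic skew symmetry from \eqref{eq:SignCyclicDual}, and evaluate the Maurer--Cartan equation on basis elements using \cref{prop:InfDecDPoisAC}. Your sign bookkeeping $\xi\equiv\theta+\big(\lambda_{\sigma(1)}+\cdots+\lambda_{\sigma(k-1)}+p+q\big)+1\pmod 2$ is exactly what the paper records more tersely as ``the only extra sign created here comes from the Koszul convention: one has to permute $\alpha$ with the bottom basis element in order to apply it to the top basis element'' (the extra $+1$ being the flip $\partial\alpha=-\alpha\star\alpha$).
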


Graphically, the partial composite appearing on the right-hand side of the homotopy double Poisson relations is represented in \cref{PartCompoHoDP}.

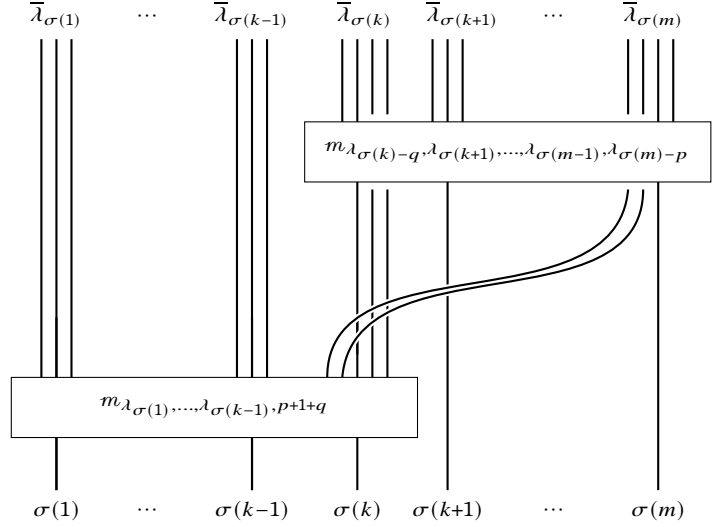
\begin{figure*}[h!]
	\begin{tikzpicture}[scale=1,baseline =(n.base)]
		\node (n) at (1,2) {};
		\draw[thick]
		(2.2,6.5) -- (2.2,5.5)
		(2.4,6.5) -- (2.4,5.5)
		(2.2,4.5) to[out=270,in=90] (2.2,2)
		(2.4,4.5)  to[out=270,in=90] (2.4,2);
		\draw[thick]
		(2,5) --  (2,2.3) 
		(3.2,5) --  (3.2,0.5)	node[below] {$\scriptstyle{\sigma(k+1)}$}
		(6,5) -- (6,0.5) node[below] {$\scriptstyle{\sigma(m)}$};
		\draw[thick] 
		(-2.2,6.5)  to (-2.2,2)
		(-2,6.5) node[above] {$\scriptstyle{\overline{\lambda}_{\sigma(1)}}$} to[out=270,in=90] (-2,0.5)
		(-1.8,6.5) to[out=270,in=90] (-1.8,2)
		(0.4,6.5)  to[out=270,in=90] (0.4,2)
		(0.6,6.5) node[above] {$\scriptstyle{\overline{\lambda}_{\sigma(k-1)}}$} to[out=270,in=90] (0.6,0.5)
		(0.8,6.5) to[out=270,in=90] (0.8,2)
		(1.8,6.5) to[out=270,in=90] (1.8,5)
		(2,6.5)  to  (2,5)
		(2.1, 6.5) node[above] {$\scriptstyle{\overline{\lambda}_{\sigma(k)}}$}
		(3,6.5) to[out=270,in=90] (3,5)
		(3.2,6.5) to[out=270,in=90] (3.2,5)
		(3.4,6.5) node[above] {$\scriptstyle{\overline{\lambda}_{\sigma(k+1)}}$} to[out=270,in=90] (3.4,5)
		(6,6.5) to[out=270,in=90] (6,5)
		(6.2,6.5) to[out=270,in=90] (6.2,5)
		(5.4,6.5) node[above right] {$\scriptstyle{\overline{\lambda}_{\sigma(m)}}$}
		(5.6,6.5) -- (5.6,5.5)
		(5.8,6.5) -- (5.8, 5.5)
		(-0.8,6.6) node[above] {$\scriptstyle{\cdots}$}
		(-0.8,0.4) node[below] {$\scriptstyle{\cdots}$}
		(4.6,6.6) node[above] {$\scriptstyle{\cdots}$}
		(4.6,0.4) node[below] {$\scriptstyle{\cdots}$};
		\draw[thick]
		(-2,2.8) -- (-2,0.5) node[below] {$\scriptstyle{\sigma(1)}$}
		(0.6,2.8) -- (0.6,0.5) node[below] {$\scriptstyle{\sigma(k-1)}$}
		(2,2.8) -- (2,0.5) node[below] {$\scriptstyle{\sigma(k)}$};
		\draw[draw=white,double=black,double distance=2*\pgflinewidth,thick] 
		(5.6,4.5) to[out=265,in=90] (1.6,2)
		(5.8,4.5) to[out=270,in=85] (1.8,2);
		\draw[fill=white]
		(-2.6,1.2) rectangle (2.8,2)
		node[midway]
			{$\scriptstyle{\calm_{\lambda_{\sigma(1)}, \ldots, \lambda_{\sigma(k-1)}, p+1+q}}$}
		(1.3,4.6) rectangle (6.7,5.4)
		node[midway]
			{$\scriptstyle{\calm_{\lambda_{\sigma(k)-q}, \lambda_{\sigma(k+1)}, \ldots, \lambda_{\sigma(m-1)}, 	\lambda_{\sigma(m)-p}}}$};
	\end{tikzpicture}
	\caption{Partial composite involved in the homotopy double Poisson relations.}
	\label{PartCompoHoDP}
\end{figure*}

\begin{proof}
	Recall from~\cref{prop:twisting} that a structure of a homotopy double Poisson gebra on $A$ corresponds to a twisting morphism $\alpha \colon \DPois^\antish \to \End_A$~,
	that is a  Maurer--Cartan element of the properadic convolution Lie-admissible algebra
	\begin{align*}
		\widehat{\Hom}\left(\DPois^\antish , \End_A\right)
		\cong
		\left(
		\prod_{n,m\geqslant 1}
		\Hom_{\Sy_m^\op\times \Sy_n}\left(\overline{\DPois}^\antish(m,n) , \Hom\left(A^{\otimes n}, A^{\otimes m}\right)\right), \partial,  \star\right)  \ .
	\end{align*}
	Let us denote by
	\[\calm_{\lambda_1, \ldots, \lambda_m}\coloneq \alpha\left(\nu_{\lambda_1, \ldots, \lambda_m}\right)\]
	the image of the  basis elements.
	Since these later ones have degree $n-1$ and since a twisting morphism $\alpha$ has degree $-1$, the operations
	$\calm_{\lambda_1, \ldots, \lambda_m}$ have degree $n-2$~.
	The cyclic skew symmetry \eqref{eq:SignCyclicDual} of the basis elements of $ \DPois^\antish$
	induces the cyclic skew symmetry
	\eqref{eq:CyclicSkewSymOP}
	of the structural operations $\calm_{\lambda_1, \ldots, \lambda_m}$~.
	The Maurer--Cartan equation \eqref{eq:MCProperad} satisfied by $\alpha$~, once evaluated on the  basis elements
	$\nu_{\lambda_1, \ldots, \lambda_m}$~, produces the homotopy double Poisson relations, after the form of the infinitesimal decomposition map of the Koszul dual coproperad
	$\DPois^\ac$
	given in  \cref{prop:InfDecDPoisAC}. The only extra sign created here comes from the Koszul convention: one has to permute  $\alpha$ with the bottom basis element in order to apply it to the top basis element.
\end{proof}

\begin{remark}\label{rem::Schedler}\leavevmode
	\begin{enumerate}

		\item Since the Koszul dual cooperad $\As^\antish$ of the operad encoding associative algebras, embeds into the coproperad
		      $\DPois^\ac$, any homotopy double Poisson gebra contains an $\rmA_\infty$-algebra structure made up of the operations $\calm_{n}$~, for $n\geqslant 2$~.

		\item In \cite[Definition 4.1]{Sch06}, T. Schedler defined the notion of a \emph{double Poisson-infinity (al)gebra}, which is a dg associative algebra made up of a collection of brackets
		      \[
			      \{-,\ldots,- \}_n \colon A^{\otimes n} \too A^{\otimes n}
		      \]
		      of degree $n-2$, for all $n\in \NN^*$, satisfying some identities. Such a structure is a special case of homotopy double Poisson gebra, where the operations $\calm_{\lambda_1, \ldots, \lambda_m}=0$ vanish, for every partition  with at least one $\lambda_i>1$, and under the identification
		      \[
			      \calm_{1, \ldots, 1} \coloneq \{-,\ldots,- \}_n
		      \]
		      adding extra symmetries. 
		      
		\item  In \cite{Yeu18,Pri20}, W. Yeung and J.P. Pridham define the notion of an \emph{$n$-shifted double Poisson algebra}.
		      We will only consider here  the case  $n=-1$ and we will refer to this structure as  \emph{semi homotopy double Poisson gebra}. Such a notion is modelled by the properad $\mathrm{shDPois}$ given by
		      \[
			      \dfrac{\scrG\left(
				      \begin{tikzpicture}[baseline=(n.base),scale=0.10]
					      \node (n) at (0,1) {};
					      \draw (0,4) node[above] {$\scriptscriptstyle{1}$}
					      -- (2,2);
					      \draw (4,4) node[above] {$\scriptscriptstyle{2}$}
					      -- (2,2) -- (2,0) node[below] {$\scriptscriptstyle{1}$};
					      \draw[fill=white] (2,2) circle (12pt);
				      \end{tikzpicture}, \
				      \begin{tikzpicture}[scale=0.3,baseline=(n.base)]
					      \node (n) at (0,0.5) {};
					      \draw[thin] (0,0) node[below] {$\scriptscriptstyle{1}$}
					      -- ++(0,1.5) node[above] {$\scriptscriptstyle{1}$};
					      \draw[thin] (1.5,0) node[below] {$\scriptscriptstyle{2}$}
					      -- (1.5,1.5) node[above] {$\scriptscriptstyle{2}$};
					      \draw (3.75,0) node[below] {$\scriptscriptstyle{\cdots}$}
					      ++(0,1.5) node[above] {$\scriptscriptstyle{\cdots}$};
					      \draw[thin]	(6,0) node[below] {$\scriptscriptstyle{n}$}
					      -- ++(0,1.5) node[above] {$\scriptscriptstyle{n}$};
					      \draw[fill=white] (-0.3,0.5) rectangle (6.3,1);
				      \end{tikzpicture}
				      , \  n\geqslant 2\right)}
			      {\left(
				      \begin{tikzpicture}[baseline=0.5ex,scale=0.1]
					      \draw (0,4) node[above] {$\scriptscriptstyle{1}$} --(4,0)
					      -- (4,-1) node[below] {$\scriptscriptstyle{1}$};
					      \draw (4,4) node[above] {$\scriptscriptstyle{2}$} -- (2,2);
					      \draw (8,4) node[above] {$\scriptscriptstyle{3}$}-- (4,0);
					      \draw[fill=white] (2,2) circle (10pt);
					      \draw[fill=white] (4,0) circle (10pt);
				      \end{tikzpicture}
				      -
				      \begin{tikzpicture}[baseline=0.5ex,scale=0.1]
					      \draw (0,4) node[above] {$\scriptscriptstyle{1}$}
					      -- (4,0) -- (4,-1) node[below] {$\scriptscriptstyle{1}$};
					      \draw (4,4) node[above] {$\scriptscriptstyle{2}$} -- (6,2);
					      \draw (8,4) node[above] {$\scriptscriptstyle{3}$} -- (4,0);
					      \draw[fill=white] (6,2) circle (10pt);
					      \draw[fill=white] (4,0) circle (10pt);
				      \end{tikzpicture},
				      \begin{tikzpicture}[scale=0.3,baseline=(n.base)]
					      \node (n) at (0,0.5) {};
					      \draw[thin]
					      (0,0) node[below] {$\scriptscriptstyle{1}$}
					      -- (0,1.5) node[above] {$\scriptscriptstyle{1}$}
					      (1.5,0) node[below] {$\scriptscriptstyle{2}$}
					      -- (1.5,1.5) node[above] {$\scriptscriptstyle{2}$}
					      (6,0) node[below] {$\scriptscriptstyle{n}$}
					      -- (6,2) -- (5,3) node[above] {$\scriptscriptstyle{n}$}
					      (6,2) -- (7,3) node[above] {$\scriptscriptstyle{n+1}$};
					      \draw[fill=white] (6,2) circle (4pt);
					      \draw[fill=white] (-0.3,0.5) rectangle (6.3,1);
				      \end{tikzpicture}
				      -\
				      \begin{tikzpicture}[scale=0.3,baseline=(n.base)]
					      \node (n) at (0,.5) {};
					      \draw[thin]
					      (0,0) -- (0,1.5) node[above] {$\scriptscriptstyle{1}$}
					      (1.5,0) node[below] {$\scriptscriptstyle{2}$}
					      -- (1.5,1.5) node[above] {$\scriptscriptstyle{2}$}
					      (4.5,0) node[below] {$\scriptscriptstyle{n-1}$}
					      -- (4.5,1.5) node[above] {$\scriptscriptstyle{n-1}$}
					      (6,0) node[below] {$\scriptscriptstyle{n}$}
					      -- (6,1.5)  node[above]{$\scriptscriptstyle{n+1}$};
					      \draw[thin] (0,0) -- (-1,-1) --
					      (-2,0) node[above]{$\scriptscriptstyle{n}$}
					      (-1,-1) -- (-1,-1.5) node[below] {$\scriptscriptstyle{1}$};
					      \draw[fill=white] (-1,-1) circle (4pt);
					      \draw[fill=white] (-0.3,0.5) rectangle (6.3,1);
				      \end{tikzpicture}
				      -\
				      \begin{tikzpicture}[scale=0.3,baseline=(n.base)]
					      \node (n) at (0,.5) {};
					      \draw[thin]
					      (0,0) node[below] {$\scriptscriptstyle{1}$}
					      -- (0,1.5) node[above] {$\scriptscriptstyle{1}$}
					      (1.5,0) node[below] {$\scriptscriptstyle{2}$}
					      -- (1.5,1.5) node[above] {$\scriptscriptstyle{2}$}
					      (6,1.5) node[above] {$\scriptscriptstyle{n}$} -- (6,0)
					      -- (7,-1) -- (8,0) node[above] {$\scriptscriptstyle{n+1}$}
					      (7,-1) -- (7,-1.5) node[below] {$\scriptscriptstyle{n}$};
					      \draw[fill=white] (7,-1) circle (4pt);
					      \draw[fill=white] (-0.3,0.5) rectangle (6.3,1);
				      \end{tikzpicture}
				      \right)}~,
		      \]
		      equipped with the differential of $\DLie_\infty$.
		      Sending the generators of $\DPois_\infty$, which are not present here, to zero, defines a canonical surjection of properads
		      \[
			      \DPois_\infty \twoheadrightarrow \mathrm{shDP} \ .
		      \]
		      This  induces a canonical  functor from  semi homotopy double Poisson gebras to homotopy double Poisson gebras.
		      Notice that the definition given in \emph{loc. cit.} of a semi homotopy double Poisson gebra is in terms of a Maurer--Cartan element in a  Lie algebra of noncommutative vector fields. In this direction, we expect the following  noncommutative version of the main result of \cite{Mel16}: for a cofibrant dg associative algebra $A$, the space of $\DPois_\infty$-gebra structures on $A$ equivalent to $A$ as $A_\infty$-algebras is homotopy equivalent to the space of $\mathrm{shDP}$-gebra structures on $A$ with the same associative product. The most natural first step would be to settle a partial rectification of the $A_\infty$-structure of a $\DPois_\infty$-gebra. 
			\end{enumerate}
\end{remark}

The notion a homotopy double Poisson gebra admits a curved generalisation in a way similar to homotopy associative algebras \cite{CT01, FOOO09I} (that it contains) and homotopy Lie algebras \cite{ZWIEBACH199333}; for more details, we refer the reader to  \cite[Chapter~4]{DSV21}.
To settle it, we use a properadic extension of the method presented in \emph{loc. cit.} which relies on the principle that ``curvature is Koszul dual to unit''.

\begin{definition}[The properad $\mathrm{u}\DPois^!$]
	The \emph{unital extension of the properad $\DPois^!$} is defined by

	\[
		\mathrm{u}\DPois^!\coloneq
		\frac{\DPois^!\vee u}{\left(\nu\circ_1 u=-\id; \nu \circ_2 u=\id\right)}
		=
		\dfrac{
			\scrG\left(
			\begin{tikzpicture}[baseline=0ex,scale=0.10]
				\draw (0,4) node[above] {$\scriptscriptstyle{1}$}
				-- (2,2);
				\draw (4,4) node[above] {$\scriptscriptstyle{2}$}
				-- (2,2) -- (2,0) node[below] {$\scriptscriptstyle{1}$};
				\draw[fill=gray!25] (2,2) circle (10pt);
			\end{tikzpicture}
			~ ; ~
			\begin{tikzpicture}[baseline=0ex,scale=0.2]
				\draw (0,0)  node[below] {$\scriptscriptstyle{1}$}
				-- (0,1.5)node[above] {$\scriptscriptstyle{1}$} ;
				\draw (2,0) node[below] {$\scriptscriptstyle{2}$}
				-- (2,1.5) node[above] {$\scriptscriptstyle{2}$};
				\draw[fill=gray!25] (-0.3,0.5) rectangle (2.3,1);
			\end{tikzpicture}
			\ = \ - \
			\begin{tikzpicture}[baseline=0ex,scale=0.2]
				\draw (0,0)  node[below] {$\scriptscriptstyle{2}$}
				-- (0,1.5)node[above] {$\scriptscriptstyle{2}$} ;
				\draw (2,0) node[below] {$\scriptscriptstyle{1}$}
				-- (2,1.5) node[above] {$\scriptscriptstyle{1}$};
				\draw[fill=gray!25] (-0.3,0.5) rectangle (2.3,1);
			\end{tikzpicture}
			~;~
			\begin{tikzpicture}[baseline=0ex,scale=0.10]
				\draw (2,0) node[below] {$\scriptscriptstyle{1}$}
				-- (2,3);
				\draw[fill=gray!25] (2,3) circle (10pt);
			\end{tikzpicture}
			\right)
		}{
			\left(
			\begin{tikzpicture}[baseline=0.5ex,scale=0.1]
				\draw (0,4) node[above] {$\scriptscriptstyle{1}$} --(4,0)
				-- (4,-1) node[below] {$\scriptscriptstyle{1}$};
				\draw (4,4) node[above] {$\scriptscriptstyle{2}$} -- (2,2);
				\draw (8,4) node[above] {$\scriptscriptstyle{3}$}-- (4,0);
				\draw[fill=gray!25] (2,2) circle (10pt);
				\draw[fill=gray!25] (4,0) circle (10pt);
			\end{tikzpicture}
			+
			\begin{tikzpicture}[baseline=0.5ex,scale=0.1]
				\draw (0,4) node[above] {$\scriptscriptstyle{1}$}
				-- (4,0) -- (4,-1) node[below] {$\scriptscriptstyle{1}$};
				\draw (4,4) node[above] {$\scriptscriptstyle{2}$} -- (6,2);
				\draw (8,4) node[above] {$\scriptscriptstyle{3}$} -- (4,0);
				\draw[fill=gray!25] (6,2) circle (10pt);
				\draw[fill=gray!25] (4,0) circle (10pt);
			\end{tikzpicture}
			~~;~~
			\begin{tikzpicture}[baseline=0.75ex,scale=0.10]
				\draw (0,6) -- (0,4) -- (2,2);
				\draw (4,4) node[above] {$\scriptscriptstyle{1}$}
				-- (2,2) -- (2,0) node[below] {$\scriptscriptstyle{1}$};
				\draw[fill=gray!25] (2,2) circle (10pt);
				\draw[fill=gray!25] (0,6) circle (10pt);
			\end{tikzpicture}
			+
			\begin{tikzpicture}[baseline=0.75ex,scale=0.10]
				\draw (0,4) node[above] {$\scriptscriptstyle{1}$}
				-- (0,0) node[below] {$\scriptscriptstyle{1}$};
			\end{tikzpicture}
			~~;~~
			\begin{tikzpicture}[baseline=0.75ex,scale=0.10]
				\draw (4,6) -- (4,4) -- (2,2);
				\draw (0,4) node[above] {$\scriptscriptstyle{1}$}
				-- (2,2) -- (2,0) node[below] {$\scriptscriptstyle{1}$};
				\draw[fill=gray!25] (2,2) circle (10pt);
				\draw[fill=gray!25] (4,6) circle (10pt);
			\end{tikzpicture}
			-
			\begin{tikzpicture}[baseline=0.75ex,scale=0.10]
				\draw (0,4) node[above] {$\scriptscriptstyle{1}$}
				-- (0,0) node[below] {$\scriptscriptstyle{1}$};
			\end{tikzpicture}
			~~;~~
			\begin{tikzpicture}[scale=0.2,baseline=-1ex]
				\node (n) at (0,0) {};
				\draw (-2,1) node[above] {$\scriptscriptstyle{1}$} --(-2,0)
				to[out=270, in=140] (-1,-1.5)
				-- (-1,-2) node[below] {$\scriptscriptstyle{1}$}
				-- (-1,-1.5) to[out=40, in=270] (0,0)
				-- (0,1) node[above] {$\scriptscriptstyle{2}$};
				\draw[fill=gray!25] (-2.3,0) rectangle (0.3,0.5)
				(-1,-1.5) circle (5pt);
			\end{tikzpicture}
			~~;~~
			\begin{tikzpicture}[scale=0.2,baseline=0.1ex]
				\node (n) at (0,.5) {};
				\draw (0,-.5) node[below] {$\scriptscriptstyle{1}$}
				-- (0,2) node[above] {$\scriptscriptstyle{1}$};
				\draw[thin] (2,-.5) node[below] {$\scriptscriptstyle{2}$}
				-- (2,2) node[above] {$\scriptscriptstyle{2}$};
				\draw[fill=gray!25] (-0.3,0) rectangle (2.3,0.5)
				(-0.3,1) rectangle (2.3,1.5);
			\end{tikzpicture}
			\atop
			\begin{tikzpicture}[scale=0.2,baseline=1ex]
				\draw (0,0) node[below] {$\scriptscriptstyle{1}$} -- (0,1)
				to[out=90,in=270] (-1,2.5) node[above] {$\scriptscriptstyle{1}$};
				\draw (2,0) node[below] {$\scriptscriptstyle{2}$}
				-- (2,1.5) -- (1,2.5) node[above] {$\scriptscriptstyle{2}$};
				\draw (2,1.5) -- (3,2.5) node[above] {$\scriptscriptstyle{3}$};
				\draw[fill=gray!25] (2,1.5) circle (6pt);
				\draw[fill=gray!25] (-0.3,0.5) rectangle (2.3,1);
			\end{tikzpicture}
			+
			\begin{tikzpicture}[scale=0.2,baseline=1ex]
				\draw (1,0.5) -- (2,1.5) -- (1,0.5) -- (0,1.5) -- (1,0.5)
				-- (1,-0.5) node[below] {$\scriptscriptstyle{1}$};
				\draw (0,2.5) node[above] {$\scriptscriptstyle{2}$}
				-- (0,1.5);
				\draw (2,2.5) node[above] {$\scriptscriptstyle{1}$}
				-- (2,1.5);
				\draw (4,2) -- (4,2.5) node[above] {$\scriptscriptstyle{3}$};
				\draw (3,-0.5) node[below] {$\scriptscriptstyle{2}$}
				to[out=90,in=270] (4,1.5) ;
				\draw[fill=gray!25] (1,0.5) circle (6pt);
				\draw[fill=gray!25] (1.7,1.5) rectangle (4.3,2);
			\end{tikzpicture}
			~~;~~
			\begin{tikzpicture}[scale=0.2,baseline=(n.base)]
				\node (n) at (0,-0.6) {};
				\draw  (1,-2) node[below] {$\scriptscriptstyle{1}$} -- (1,-1.5) to[out=140, in=270] (0,0) -- (0,1) node[above] {$\scriptscriptstyle{1}$};
				\draw  (3,-2) node[below] {$\scriptscriptstyle{2}$}
				-- (3,-1.5) to[out=90, in=270] (2,0) --
				(2,1) node[above] {$\scriptscriptstyle{3}$};
				\draw[white, double=black , double distance =0.3pt, ultra thick]
				(1,-1.5) to[out=40, in=270] (4,1) ;
				\draw (4,1) node[above] {$\scriptscriptstyle{2}$};
				\draw[fill=gray!25] (-0.3,0) rectangle (2.3,0.5)
				(1,-1.5) circle (5pt);
			\end{tikzpicture}
			+
			\begin{tikzpicture}[scale=0.2,baseline=1ex]
				\draw (2,0) node[below] {$\scriptscriptstyle{2}$} -- (2,1)
				to[out=90,in=270] (3,2.5) node[above] {$\scriptscriptstyle{3}$};
				\draw (0,0) node[below] {$\scriptscriptstyle{1}$}
				-- (0,1.5) -- (-1,2.5) node[above] {$\scriptscriptstyle{1}$};
				\draw (0,1.5) -- (1,2.5) node[above] {$\scriptscriptstyle{2}$};
				\draw[fill=gray!25] (0,1.5) circle (6pt);
				\draw[fill=gray!25] (-0.3,0.5) rectangle (2.3,1);
			\end{tikzpicture}
			~~;~~
			\begin{tikzpicture}[scale=0.2,baseline=-1]
				\draw[thin] (0,-1) node[below] {$\scriptscriptstyle{1}$}
				-- (0,1.5) node[above] {$\scriptscriptstyle{1}$};
				\draw[thin] (2,-1) node[below] {$\scriptscriptstyle{2}$}
				-- (2,1.5) node[above] {$\scriptscriptstyle{2}$};
				\draw[thin] (4,-1) node[below] {$\scriptscriptstyle{3}$}
				-- (4,1.5) node[above] {$\scriptscriptstyle{3}$};
				\draw[fill=gray!25] (1.7,0.5) rectangle (4.3,1);
				\draw[fill=gray!25] (-0.3,-0.5) rectangle (2.3,0);
			\end{tikzpicture}
			-
			\begin{tikzpicture}[scale=0.2,baseline=-1]
				\draw[thin] (0,-1) node[below] {$\scriptscriptstyle{2}$}
				-- (0,1.5) node[above] {$\scriptscriptstyle{2}$};
				\draw[thin] (2,-1) node[below] {$\scriptscriptstyle{3}$}
				-- (2,1.5) node[above] {$\scriptscriptstyle{3}$};
				\draw[thin] (4,-1) node[below] {$\scriptscriptstyle{1}$}
				-- (4,1.5) node[above] {$\scriptscriptstyle{1}$};
				\draw[fill=gray!25] (1.7,0.5) rectangle (4.3,1);
				\draw[fill=gray!25] (-0.3,-0.5) rectangle (2.3,0);
			\end{tikzpicture}
			\right)
		}~,
	\]

	\[
		\begin{aligned}
			\mbox{where }			\nu=\begin{tikzpicture}[baseline=0ex,scale=0.10]
				\draw (0,4) node[above] {$\scriptscriptstyle{1}$}
				-- (2,2);
				\draw (4,4) node[above] {$\scriptscriptstyle{2}$}
				-- (2,2) -- (2,0) node[below] {$\scriptscriptstyle{1}$};
				\draw[fill=gray!25] (2,2) circle (10pt);
			\end{tikzpicture}
			\mbox{ has degree } -1\ ,
			\begin{tikzpicture}[baseline=0ex,scale=0.2]
				\draw (0,0)  node[below] {$\scriptscriptstyle{1}$}
				-- (0,1.5)node[above] {$\scriptscriptstyle{1}$} ;
				\draw (2,0) node[below] {$\scriptscriptstyle{2}$}
				-- (2,1.5) node[above] {$\scriptscriptstyle{2}$};
				\draw[fill=gray!25] (-0.3,0.5) rectangle (2.3,1);
			\end{tikzpicture}
			\mbox{ has degree } -1\ , \text{and where} \
			u=			\begin{tikzpicture}[baseline=0ex,scale=0.10]
				\draw (2,0) node[below] {$\scriptscriptstyle{1}$}
				-- (2,3);
				\draw[fill=gray!25] (2,3) circle (10pt);
			\end{tikzpicture}~~
			\mbox{ has degree } 1\ .
		\end{aligned}
	\]
\end{definition}

\begin{remark}
	A $\mathrm{u}\DPois^!$-algebra structure on  the suspension $\susp A$ of a dg vector space
	amounts to a unital associative algebra structure on $A$ and a degree $-1$ skew-symmetric operation $\varkappa \colon A^{\otimes 2} \to A^{\otimes 2}$ satisfying Relation~\eqref{eq:RelDLie}, the vanishing of the genus $1$ composite of the associative product with $\varkappa$, and
	Relations \eqref{eqn:ReWrDPois!} without signs.
\end{remark}

\begin{lemma}\label{Lem:uDPois!} The
	unital extension of the properad $\DPois^!$ is given by a distributive law, so it is  canonically isomorphic to
	\[
		\mathrm{u}\DPois^!\cong \DLie^! \boxtimes \rmS\mathrm{u}\As~,
	\]
	where $\rmS=\End_{\susp \field}$ is the suspension operad, cf. \cite[Section~7.2.2]{LV12}.
\end{lemma}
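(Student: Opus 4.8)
The plan is to deduce the claimed presentation from a distributive law between $\DLie^!$ and the operadic suspension $\rmS\mathrm{u}\As$ of the unital associative operad, by extending to the unital setting the distributive law already governing $\DPois^!$. Recall first that $\DPois^!$ is itself given by a distributive law: dualising the distributive law of \cref{prop:decompo_duale_DPois} arity-wise and feeding it into the presentation of \cref{lemm:KDproperad} yields an isomorphism of properads $\DPois^!\cong\DLie^!\boxtimes\rmS\As$, whose normal forms are precisely the treewise stairways of \cref{lemm:Dpois!}, the underlying rewriting system being the rules \eqref{eqn:ReWrDPois!} together with the genus-vanishing rule (see the proof of \cref{prop:InfDecDPoisAC}). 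On the other hand, by construction $\rmS\mathrm{u}\As = \rmS\As\vee u\,/\,(\nu\circ_1 u=-\id,\ \nu\circ_2 u=\id)$, and, by the preceding definition, $\mathrm{u}\DPois^! = \DPois^!\vee u\,/\,(\nu\circ_1 u=-\id,\ \nu\circ_2 u=\id)$ with the \emph{same} unit relations; no relation of $\mathrm{u}\DPois^!$ couples $u$ with the ``box'' generators. So all that is needed is to see that the distributive law for $\DPois^!$ extends over this one nullary generator.

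I claim it extends in the only possible way, and essentially for free. Since $u$ has arity $(1,0)$ it is a leaf in every connected graph, so it cannot lie on a positive-genus composite; the composite of any box with $u$ is a single connected genus $0$ graph with the box below $u$, hence already in the normal form prescribed by $\DLie^!\boxtimes\rmS\mathrm{u}\As$; and no box can be composed above $u$. Consequently $\lambda\colon\rmS\As\boxtimes\DLie^!\to\DLie^!\boxtimes\rmS\As$ admits a unique extension $\lambda^{u}\colon\rmS\mathrm{u}\As\boxtimes\DLie^!\to\DLie^!\boxtimes\rmS\mathrm{u}\As$, equal to $\lambda$ away from $u$ and carrying each monomial that involves $u$ to itself, re-read in the target. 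By the diamond criterion for distributive laws between properads (the properadic analogue of \cite[Section~8.6]{LV12}, already invoked in \cref{prop:decompo_duale_DPois}), $\lambda^{u}$ is a distributive law --- equivalently $\mathrm{u}\DPois^!\cong\DLie^!\boxtimes\rmS\mathrm{u}\As$ --- as soon as it resolves every critical monomial. The critical monomials not involving $u$ are exactly those handled in the proofs of \cref{prop:decompo_duale_DPois} and \cref{prop:InfDecDPoisAC} (confluence of \eqref{eqn:ReWrDPois!}, of the $\DLie^!$-relations \eqref{eq:RelDLie} and of the genus-vanishing rule); the critical monomials built only from $\nu$ and $u$ resolve because $\rmS\mathrm{u}\As$ is a genuine operad; and there is no overlap of a unit relation with a $\DLie^!$-relation or with the genus-vanishing rule. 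The only remaining critical monomials are the overlaps of a unit relation with the mixed rule \eqref{eqn:ReWrDPois!}.

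These occur on monomials of the shape $\text{box}\circ_{i}(\nu\circ_{j}u)$ with $i,j\in\{1,2\}$: reducing ``unit first'' replaces $\nu\circ_{j}u$ by $\pm\id$ and returns the box with one leaf relabelled, while applying \eqref{eqn:ReWrDPois!} first produces $\nu$ sitting above the box with $u$ capping one of its inputs, and a further unit relation again returns that box; one has to check that the two resulting signs agree. Carrying out this last check --- tracking the signs in \eqref{eqn:ReWrDPois!}, the asymmetry $\nu\circ_1 u=-\id$ versus $\nu\circ_2 u=\id$ coming from the suspension $\rmS$, and the Koszul sign produced whenever two degree $1$ copies of $u$ are interchanged --- is the only delicate point: it is a finite computation, and it comes out consistent precisely because the signs in the presentation of $\mathrm{u}\DPois^!$ were fixed so that it would. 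Granting it, the diamond criterion gives $\mathrm{u}\DPois^!\cong\DLie^!\boxtimes\rmS\mathrm{u}\As$ as properads, with $\rmS=\End_{\susp\field}$ the suspension operad of \cite[Section~7.2.2]{LV12}.
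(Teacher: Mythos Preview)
Your proof is correct and follows the same approach as the paper: both argue that the distributive law underlying $\DPois^!\cong\DLie^!\boxtimes\rmS\As$ (which is the dual form of \cref{prop:decompo_duale_DPois}) extends to the unital setting. The paper's own proof is a single sentence, ``This is a direct consequence of \cref{prop:decompo_duale_DPois}'', so you have supplied substantially more detail than the authors, spelling out why the extension is automatic (the unit $u$ has arity $(1,0)$, hence is always a leaf and never interferes with the box generators) and running the diamond criterion on the new critical overlaps between the unit relations and the rewriting rules~\eqref{eqn:ReWrDPois!}. One small caveat: you acknowledge that the final sign check is ``a finite computation'' without actually carrying it out; this is acceptable at the level of detail the paper itself adopts, but if you wanted your argument to be fully self-contained you would need to record those four sign verifications explicitly.
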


\begin{proof}
	This is a direct consequence of \cref{prop:decompo_duale_DPois}.
\end{proof}

This result and \cref{prop:InfDecDPoisAC} show that the unital extension properad $\mathrm{u}\DPois^!$ is a ``dioperad", that is its composition products along graphs of positive genera vanish. 
By \cref{prop:PropDualPartCoprop}, the arity-wise linear dual
$\mathrm{c}\DPois^\antish \coloneq \left(\mathrm{u}\DPois^!\right)^*$
of this properad forms an infinitesimal  coproperad. (It does not form a coproperad neither a comonadic coproperad due to the infinite series that appear in the iterations of the infinitesimal decomposition maps; it carries a counit which fails to be coaugmented.) \cref{Lem:uDPois!} implies that its underlying $\Sy$-bimodule is isomorphic to
\[
	\mathrm{c}\DPois^\antish \cong \DLie^\antish \boxtimes \left(\field u^*\oplus \As^\antish\right)~ .
\]

\begin{definition}[Homotopy curved double Poisson gebra]\label{def:HoCuDPois}
	A \emph{homotopy curved double Poisson gebra} structure of a graded vector space $A$ is a Maurer--Cartan element in the convolution algebra
	\[\mathfrak{c}\convDPois_A \coloneq \widehat{\Hom}\left(\mathrm{c}\DPois^\antish , \End_A\right)~.\]
\end{definition}

Notice that in the definition of this convolution algebra, we use the full infinitesimal coproperad $\mathrm{c}\DPois^\antish$, and not any coaugmentation coideal, since this latter one fails to be coaugmented. This explains why one does not start from a dg vector space but just from a graded vector space. 

\begin{proposition}\label{prop:chDPois}
	A  homotopy curved double Poisson gebra is a  graded vector space $A$ equipped with a collection of operations
	\[
		\calm_{\lambda_1, \ldots, \lambda_m} \colon A^{\otimes n} \too A^{\otimes m}
	\]
	of degree $n-2$, for any ordered partition $\lambda_1+\cdots+\lambda_m=n$ of $n\geqslant 0$ into non-negative integers, satisfying the following relations.
	\begin{description}
		\item[\sc cyclic skew symmetry]
		      \[\calm_{\lambda_2, \ldots, \lambda_m, \lambda_1}=
			      (-1)^{\lambda_1(n-m-1)-n}
			      \ \tau_m^{-1}\cdot\calm_{\lambda_1, \lambda_2, \ldots, \lambda_m}\cdot \tau_{\lambda_1, \ldots, \lambda_m}
			      ~, \]
		      where $\tau_{\lambda_1, \ldots, \lambda_m} \in \Sy_n$ permutes cyclically the blocks of  size $\lambda_1, \ldots, \lambda_m$~.

		\item[\sc homotopy double Poisson relations]
		      \begin{multline*}
			      \sum_{k=1}^{m}
			      \sum_{\sigma \in \C_m}
			      \sum_{0\leqslant p \leqslant  \lambda_{\sigma(m)}\atop 0\leqslant q \leqslant  \lambda_{\sigma(k)}}
			      (-1)^{\xi}
			      \sigma^{-1}\cdot\left(\calm_{\lambda_{\sigma(1)}, \ldots, \lambda_{\sigma(k-1)}, p+1+q}
			      \circ^1_{i}
			      \calm_{\lambda_{\sigma(k)-q}, \lambda_{\sigma(k+1)}, \ldots, \lambda_{\sigma(m-1)},
				      \lambda_{\sigma(m)-p}}\right)\cdot \omega=0~,
		      \end{multline*}
		      with the same notations as in \eqref{eq:RelHoDoublePoiss} for homotopy double Poisson gebras.
	\end{description}
\end{proposition}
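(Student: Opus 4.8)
The plan is to follow verbatim the proof of \cref{thm:DoublePoisson}, tracking the modifications imposed by the curved setting. First I would note that, by \cref{prop:PropDualPartCoprop}, the arity-wise linear dual $\mathrm{c}\DPois^\antish=\left(\mathrm{u}\DPois^!\right)^*$ is a partial coproperad, so the Lie-admissible convolution algebra $\mathfrak{c}\convDPois_A=\widehat{\Hom}\left(\mathrm{c}\DPois^\antish,\End_A\right)$ is well defined; since $A$ is merely graded and the properad $\mathrm{u}\DPois^!$ carries no internal differential, the induced differential $\partial$ on $\mathfrak{c}\convDPois_A$ vanishes. Hence a homotopy curved double Poisson gebra structure on $A$ is precisely a degree $-1$ element $\alpha$ satisfying the homogeneous Maurer--Cartan equation $\alpha\star\alpha=0$, compare \eqref{eq:MCProperad}.

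Next I would make the underlying $\Sy$-bimodule explicit. By \cref{Lem:uDPois!}, a basis of $\mathrm{u}\DPois^!$ is obtained from the treewise stairways of \cref{lemm:Dpois!} by allowing the leaf-combs attached to the boxes to be empty, these empty combs coming precisely from the arity $(1,0)$ generator $u$. Dualising, one gets a basis $\nu_{\lambda_1,\ldots,\lambda_m}^*$ of $\mathrm{c}\DPois^\antish$ indexed by ordered partitions $\lambda_1+\cdots+\lambda_m=n$ of $n\geqslant 0$ into non-negative integers, each sitting in degree $n-1$ by the same degree bookkeeping as in \cref{lemm:Dpois!} (with $u$ in degree $1$). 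Setting $\calm_{\lambda_1,\ldots,\lambda_m}\coloneqq \alpha\left(\nu_{\lambda_1,\ldots,\lambda_m}^*\right)$ and using $|\alpha|=-1$ produces operations of degree $n-2$; the now-allowed partitions with some $\lambda_l=0$, in particular $n=0$ which gives the curvature $\calm_{0,\ldots,0}\colon\field\to A^{\otimes m}$, and the partition $1=1$ which gives $\calm_1\colon A\to A$, are exactly the data absent from \cref{thm:DoublePoisson}. The cyclic skew symmetry of the $\calm_{\lambda_1,\ldots,\lambda_m}$ then follows from the cyclic symmetry of the stairway basis by the argument that yielded \eqref{eq:SignCyclicDual}, the only change being that one must now drag along the suspension operad factor $\rmS$ of \cref{Lem:uDPois!} --- equivalently, pass through the suspension $\susp A$ as in the remark following the definition of $\mathrm{u}\DPois^!$ --- whose Koszul signs replace $(-1)^{n\lambda_1+\lambda_m}$ by $(-1)^{\lambda_1(n-m-1)-n}$.

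Finally I would evaluate $\alpha\star\alpha=0$ on each $\nu_{\lambda_1,\ldots,\lambda_m}^*$, which amounts to computing the infinitesimal decomposition map $\Delta_{(1,1)}$ of $\mathrm{c}\DPois^\antish$, that is the linear dual of the partial composition product of $\mathrm{u}\DPois^!$. This I would do exactly as in the proof of \cref{prop:InfDecDPoisAC}: the rewriting rules, now including those involving $u$, pull all binary products above all boxes and kill positive-genus composites, so the only 2-vertex graphs contributing to a fixed basis element are those displayed on the right-hand side of the formula in \cref{prop:InfDecDPoisAC}; the single new ingredient is the unit $u$, whose partial composite at a leaf simply deletes that leaf and thus contributes precisely the boundary summands $p=\lambda_{\sigma(m)}$ and $q=\lambda_{\sigma(k)}$. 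This explains why the ranges become $0\leqslant p\leqslant\lambda_{\sigma(m)}$ and $0\leqslant q\leqslant\lambda_{\sigma(k)}$, whereas the composition index $i$, the permutation $\omega$, and the sign $\xi$ are inherited unchanged from \eqref{eq:RelHoDoublePoiss} (the last Koszul sign, coming from commuting $\alpha$ past the bottom basis element, being absorbed into $\xi$ as there). The main obstacle is, as everywhere in this paper, the sign bookkeeping: one must check at once that the suspension-operad twist of \cref{Lem:uDPois!} produces exactly the modified cyclic sign $(-1)^{\lambda_1(n-m-1)-n}$ and that the degenerate terms created by $u$, namely those involving $\calm_{0,\ldots,0}$ and $\calm_1$, fall into the single uniform relation with no further sign corrections.
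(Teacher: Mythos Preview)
Your proposal is correct and follows precisely the approach the paper implicitly intends: the paper does not spell out a proof of \cref{prop:chDPois}, treating it as a direct variant of the proof of \cref{thm:DoublePoisson} obtained by replacing $\DPois^\antish$ with the partial coproperad $\mathrm{c}\DPois^\antish$ and tracking the effect of the unital extension from \cref{Lem:uDPois!}. Your identification of the three modifications---the vanishing differential on a graded $A$, the enlarged index ranges $0\leqslant p\leqslant\lambda_{\sigma(m)}$, $0\leqslant q\leqslant\lambda_{\sigma(k)}$ coming from the unit $u$, and the altered cyclic sign arising from the suspension operad factor $\rmS$---is exactly right.
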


In the summand $k=1$, the left-hand operation is  $\calm_{p+1+q}$, with $p+1+q\geqslant 1$~.
In the summand $k=m$, the right-hand operations is
$\calm_{ \lambda_{\sigma(m)-p-q}}$, with $p+q\leqslant \sigma(m)$~.

\begin{remark}
A homotopy curved double Poisson gebra whose structure maps vanish except possibly for 
$\calm_0$, $\calm_{1}$, $\calm_{2}$, and $\calm_{1,1}$ is a 
double Poisson gebra ($\calm_{2}, \calm_{1,1}$)
equipped with a derivation ($\calm_{1}$),  that does not square to zero \emph{a priori}, and a \emph{curvature} 
($\calm_0\in A_{-2}$) satisfying 
\[\calm_1 \circ_1^1 \calm_0=0 \quad ,\quad 
\calm_1^2=\calm_2\circ_1^1 \calm_0-\calm_2\circ_2^1 \calm_0
\quad ,\quad \calm_{1,1}\circ_1^1 \calm_0=0
~.\]
Such a notion deserves the name of a \emph{curved double Poisson gebra}. 
\end{remark}

\cref{prop:chDPois} shows that a homotopy double Poisson gebra is a homotopy curved double Poisson gebra such that the operations $\calm_{\lambda_1, \ldots, \lambda_m}$ are trivial when at least one of the $\lambda_i$  is trivial. (In this case, the operation $\calm_{1}$ squares to zero and is considered as the differential of $A$.)
This result is also a direct consequence of the following general arguments.
\cref{Lem:uDPois!} shows that the canonical map
\[
	\DPois^! \hookrightarrow \mathrm{u}\DPois^!
\]
is an embedding of properads. So the the linear dual morphism of infinitesimal coproperads
\[
	\mathrm{c}\DPois^\antish \twoheadrightarrow \DPois^\antish
\]
is surjective. This induces an embedding of (dg Lie-admissible) dg Lie algebras
\[\convDPois_A= \widehat{\Hom}\left(\DPois^\antish , \End_A\right)
	\hookrightarrow
	\widehat{\Hom}\left(\mathrm{c}\DPois^\antish , \End_A\right)=\mathfrak{c}\convDPois_A
	~.\]
So the deformation theory of the homotopy double Poisson gebras is included in the deformation theory of
 homotopy curved double Poisson gebras.

\begin{remark}
The presentation given in this section tries to go straight to the point  but it is  surely not the most conceptual one. 
In order to encode faithfully the notion of (homotopy) curved double Poisson gebras and to settle their properties, one should follow the method introduced by V. Roca i Lucio in \cite{RL22} who introduced a new notion of a \emph{curved properad} and develop \emph{mutatis mutandis} the associated curved properadic calculus. 
\end{remark}

\section{Curved pre-Calabi--Yau algebras}\label{sec:PCY}

In this section, we introduce the notion of a \emph{curved pre-Calabi--Yau algebra} via the deformation theory of morphisms of cyclic non-symmetric operads. 
We treat this in details in order to  settle carefully all the signs appearing. 
This  leads naturally to the (curved) necklace Lie-admissible algebra, which extends to the even more general higher cyclic complex. A curved pre-Calabi--Yau algebra structure is a Maurer--Cartan element there. The higher cyclic Lie-admissible algebra is stated to be isomorphic to the Lie-admissible algebra encoding curved homotopy double Poisson gebras at the  end of this section. 

\subsection{Cyclic non-symmetric operads}\label{subsec:cyclic_nonsymmetric_operads}
Let $\Cyc$ be the groupoid of finite cyclic sets $\langle x_1, \ldots, \allowbreak x_n \rangle$ of cardinality $n\geqslant 1$,  i.e. sets equipped with an identification with the edges of an oriented $n$-gon, modulo rotations.
The morphisms are the bijections which respect the respective cyclic orders.  We will often represent cyclically ordered sets like gears.

\[
	\begin{tikzpicture}[scale=1.3, baseline = (n.base)]
		\node (n) at (0,0) {};
		\foreach \i in {1,...,8}{
				\draw[thick] (0,0) -- ({\i * 360/8}:20pt);
				\draw[thick] ({(\i + 3) * (-360)/8}:27pt) node {{$x_{\i}$}};
			}
		\draw[fill=white, thick] (0,0) circle [radius=10pt];
	\end{tikzpicture}
\]

\begin{definition}[Cyclic module]
	A \emph{cyclic module} is a module $\M \colon \mathsf{Cyc}^{\mathsf{op}}\to \dgVect$ over the groupoid $\mathsf{Cyc}$~.
	The associated category is  denoted by $\CycMod$.
\end{definition}

The groupoid $\Cyc$ admits for skeletal category the one made up of
one cyclically ordered set  $\langle n\rangle \coloneq \langle 1,  \ldots,  n\rangle$, for any $n\geqslant 1$, equipped with the cyclic groups $\C_n\subset \Sy_n$ for automorphisms. So  the data of a cyclic module $\M$ is equivalent to a collection $\{\M(\la n\ra )\}_{n\geqslant 1}$ of dg vector spaces equipped with actions of the cyclic groups $\C_n$ under the
formula
\[
	\M(X)\coloneq
	\left(\prod_{f\in \Cyc(\langle n\rangle,\, X)} \M(\la n\ra)\right)\big/\sim\ ,
\]
where $|X|=n$
and where
$
	(f, \mu)\sim
	\big(
	g, {g}^{-1}f\cdot\mu
	\big)
$ .
From now on, we will identify both notions, coordinate-free and skeletal, and use the more appropriate description each time.

\bigskip

For any cyclic set $X$, we consider the set $\PT(X)$ of (non-necessarily rooted) planar trees with leaves labeled bijectively and clockwise cyclically by the elements of $X$~.
They induce  the endofunctor
$
	\bbPT \colon \CycMod \to \CycMod
$
defined by
\[
	\bbPT(\M)(X)\coloneq\coprod_{\rmt \in \PT(X)} \rmt(\M)\ ,
\]
where $\rmt(\M)\coloneq\bigotimes_{v\in \text{vert}(\rmt)} \M(\mathrm{in}(v))$~, where $\mathrm{in}(v)$ stands for cyclic set of leaves
of the vertex $v$. The operation of forgetting the nesting of planar trees in $\bbPT\left(\bbPT(\M)\right)$, produces elements of $\bbPT(\M)$ and thus induces a monad structure on $\bbPT$.  We call this monad the \emph{monad of planar trees}. We refer the reader to \cite[Section~1]{DV17}.

\begin{definition}[Cyclic non-symmetric operad \cite{Markl99}]
	A \emph{cyclic non-symmetric (ns) operad} is an algebra over the monad $\bbPT$ of planar trees.
\end{definition}

The monad of planar trees admits a homogenous quadratic presentation, with $2$-vertices planar trees
for generators,
which simplifies the definition of a cyclic non-symmetric operad.
There is a one-to-one correspondence between
$2$-vertices planar trees
\[\rmt=\vcenter{\hbox{
			\begin{tikzpicture}[scale=1.3]
				\draw[thick] (-1, 0) to (1,0)
				(-1, 0) -- (-1, -0.7)
				(-1, 0) -- (-0.5, -0.5) node[below]  {\scalebox{0.8}{$\quad n'+i-1$}}
				(-1, 0) -- (-1, 0.7)
				(-1, 0) -- (-0.5, 0.5) node[above right]  {\scalebox{0.8}{$i-1$}}
				(-1, 0) -- (-1.7, 0) node[left]  {\scalebox{0.8}{$1$}}
				(-1, 0) -- (-1.5, 0.5) node[above left]  {\scalebox{0.8}{$2$}}
				(-1, 0) -- (-1.5, -0.5) node[below left]  {\scalebox{0.8}{$n$}};
				\draw[thick] (1, 0) -- (1, 0.7) node[above]  {\scalebox{0.8}{$i$}}
				(1, 0) -- (1, -0.7) node[below]  {\scalebox{0.8}{$n'+i-2$}}
				(1,0) -- (1.5, -0.5)
				(1, 0) -- (0.3, 0)
				(1, 0) -- (1.7, 0)
				(1, 0) -- (1.5, 0.5) node[above right]  {\scalebox{0.8}{$i+1$}};
				(1, 0) -- (0.5, 0.5) ;
				(1, 0) -- (1.5, -0.5)  ;
				(1, 0) -- (0.5, -0.5) ;
				\draw[fill=white, thick] (1,0) circle [radius=10pt];
				\draw[fill=white, thick] (-1,0) circle [radius=10pt];
			\end{tikzpicture}}}
\]
and triples $(n, n',i)$, with $n \geqslant 2$, $n'\geqslant 1$,  and $2\leqslant i\leqslant n$~.

So the action of the planar tree monad on a cyclic non-symmetric operad $\P$ corresponding to 2-vertices planar trees is equivalent to \emph{partial composition maps}:

\begin{equation*}
	\circ_i \ \colon \ \P(\la n\ra)\otimes \P(\la n'\ra) \to  \P(\la n+n'-2\ra)\ , \ \text{for} \
	n \geqslant 2~, \ n'\geqslant 1~,
	\ \text{and}\  2\leqslant i\leqslant n~.
\end{equation*}

\begin{proposition}\label{prop:DefEquivCycOp}
	A cyclic non-symmetric operad structure on a cyclic module is equivalent to the data of partial composition maps  satisfying the parallel-sequential relation
	\begin{equation}\label{Rel1}
		\big(\mu \circ_i \nu\big)\circ_j\omega = \left\{
		\begin{array}{ll}
			\rule{0pt}{12pt} (-1)^{|\nu||\omega|} \left(\mu \circ_j \omega\right) \circ_{i+n''-2}\nu\ , & \text{for}\quad  2\leqslant j\leqslant i-1\ ,   \\
			\rule{0pt}{12pt} \mu \circ_i \left(\nu\circ_{j-i+2} \omega\right)\ ,                        & \text{for}\quad i\leqslant j\leqslant i+n'-2\ , \\
			\rule{0pt}{12pt} (-1)^{|\nu||\omega|}\left(\mu \circ_{j-n'+2}\omega\right)\circ_{i}\nu\ ,   & \text{for}\quad
			i+n'-1\leqslant j\leqslant n+n'-2\ ,                                                                                                          \\
		\end{array}\right. \
	\end{equation}
	and the equivariance property
	\begin{equation}\label{eq:equiv}
		\left\{\begin{array}{l}
			\left(\mu \circ_i \nu\right)^{\tau_{n+n'-2}}=\left(\mu^{\tau_n}\right) \circ_{i-1} \nu\ , \quad \text{for}\quad i\geqslant 3~, \\
			\rule{0pt}{12pt} \left(\mu \circ_2 \nu\right)^{\tau_{n+n'-2}}=(-1)^{|\mu||\nu|}\left(\nu^{\tau_{n'}}\right) \circ_{n'}
			\left(\mu^{\tau_n}\right)\ ,
		\end{array}\right.
	\end{equation}
	for any $\mu \in \P(\la n\ra), \nu \in \P(\la n'\ra), \omega \in \P(\la n''\ra)$~.
\end{proposition}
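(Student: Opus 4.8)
The plan is to read the equivalence off the quadratic presentation of the monad $\bbPT$ of planar trees recalled just above. Since $\bbPT$ is presented by generators — the $2$-vertex planar trees, equivalently the triples $(n,n',i)$ with $n\geqslant 2$, $n'\geqslant 1$, $2\leqslant i\leqslant n$ — together with relations indexed by $3$-vertex planar trees, an algebra over $\bbPT$ is the same datum as a cyclic module equipped with one operation per generator subject to the identities read off from the relators. First I would identify these generating operations: the summand of $\bbPT(\P)$ spanned by $2$-vertex trees is $\bigoplus_{(n,n',i)}\P(\la n\ra)\otimes\P(\la n'\ra)$, so the structure map $\bbPT(\P)\to\P$ restricts on it to the partial compositions $\circ_i\colon\P(\la n\ra)\otimes\P(\la n'\ra)\to\P(\la n+n'-2\ra)$. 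It then remains to enumerate the $3$-vertex planar trees, to describe the two orders in which their two internal edges can be contracted, and to check that the resulting identities — with their Koszul signs — are exactly \eqref{Rel1} and \eqref{eq:equiv}.

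Every planar tree on three vertices is a path $\bullet\!-\!\bullet\!-\!\bullet$; distinguishing the middle vertex, it is obtained by two successive partial compositions, the first of which I write $\mu\circ_i\nu$. The second grafting attaches $\omega$ either at a leaf of $\mu$ lying before the $\nu$-slot, at a leaf produced by $\nu$, or at a leaf of $\mu$ lying after the $\nu$-slot. The middle case is the sequential composition (``associativity along a path'') and is strict; the other two are the parallel compositions, where $\nu$ and $\omega$ are grafted independently onto $\mu$, so the two orders of performing them differ by a transposition of the tensor factors $\nu$ and $\omega$ — hence the Koszul sign $(-1)^{|\nu||\omega|}$ and the reindexing of the insertion slots displayed in \eqref{Rel1}. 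This accounts for the parallel-sequential relation. For the equivariance relations I would observe that, since planar trees carry no root, $\bbPT$ also records the cyclic rotations: relabelling the leaves of a $2$-vertex tree by $\tau$ and then splitting along its internal edge must agree with splitting first and relabelling. Rotating the leaf-labelling of $\mu\circ_i\nu$ by $\tau_{n+n'-2}$ either merely shifts the insertion index, when the new first leaf still belongs to $\mu$, or crosses the grafting edge, in which case it swaps the roles of $\mu$ and $\nu$ and produces $\nu^{\tau_{n'}}\circ_{n'}\mu^{\tau_n}$ together with the Koszul sign $(-1)^{|\mu||\nu|}$ from transposing the two factors — the two lines of \eqref{eq:equiv}. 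Conversely, given partial compositions satisfying \eqref{Rel1} and \eqref{eq:equiv}, one reconstructs the structure map $\bbPT(\P)\to\P$ by iterated contraction along any planar tree and checks the monad-algebra axioms; the non-cyclic analogue is classical \cite{LV12} and the cyclic refinement is in \cite{DV17, Markl99}.

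The conceptual content is light; the genuine work is the coherence check and the sign bookkeeping. The main obstacle is to prove that any two ways of contracting a given planar tree down to a single vertex are linked by a finite sequence of the elementary moves above, and that the Koszul signs produced along two such sequences — accounting for the degrees of $\mu,\nu,\omega$ and for the leaf reorderings implicit in the cyclic structure — always agree with the signs in \eqref{Rel1} and \eqref{eq:equiv}, so that no further independent relation is hidden. In practice I would fix once and for all a normal form for planar trees, say the leftmost-innermost contraction order, show that every other contraction order is connected to it by the listed moves, and propagate the sign at each move; the cyclic case requires the most care, since the second line of \eqref{eq:equiv} is what will control the cyclic signs appearing throughout the rest of the paper.
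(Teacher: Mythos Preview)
Your proposal is correct and follows essentially the same approach as the paper, which gives only a two-sentence proof: it cites \cite[Theorem~2.2]{GetzlerKapranov95} as the symmetric prototype and notes that any planar tree can be written as an iterated grafting of $2$-vertex trees. You have simply unpacked that argument in detail, spelling out the quadratic presentation of $\bbPT$, the classification of $3$-vertex relators, and the sign/coherence bookkeeping that the paper leaves implicit.
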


\begin{proof}
	This is a non-symmetric version of \cite[Theorem~2.2]{GetzlerKapranov95}. In one direction, any cyclic non-symmetric operad structure carries partial composition maps satisfying the parallel-sequential axioms and the equivariance property. In the other way round, the data of such partial composition maps allows one to define the action of any planar tree as it can be written as iterating graftings with 2-vertices planar trees.
\end{proof}

\begin{remark}
	Any cyclic non-symmetric operad induces a structure of a non-symmetric operad on $\P_n\coloneq \P(\la n+1\ra)$ by forgetting the cyclic group actions. In the other way round, a cyclic non-symmetric operad structure amounts to a non-symmetric operad structure on a cyclic module satisfying the equivariant property~\eqref{eq:equiv}, see \cite[Page~257]{MSS02}.
\end{remark}

\begin{example}\label{ex:cyc_ns_operads}
	\leavevmode
	\begin{enumerate}

		\item We consider the one-dimensional cyclic module $\AAs(\langle n\rangle)\coloneq \K \mu_n$ with trivial $\mathbb{C}_n$ action and where
		      $|\mu_n|=0$~, for $n\geqslant 3$~, and $\AAs(\langle 2\rangle) = \AAs(\langle  1\rangle) = 0$~.
		      It forms a cyclic non-symmetric operad once equipped with the following partial composition maps
		      \begin{align*}
			      \mu_n\circ_i \mu_{n'}\coloneq \mu_{n+n'-2}\ .
		      \end{align*}

		\item Let $(V, d_V, \langle\ ,\, \rangle)$ be a differential graded vector space equipped with a symmetric bilinear form of degree $0$~.
		      For instance, this can be given by $V\coloneq A\oplus A^*$ equipped with the usual linearity paring $\langle f, x\rangle =f(x)$~.
		      Its \emph{endomorphism cyclic non-symmetric operad}  $\EEnd_V$ is defined on the underlying cyclic module
		      \[
			      \EEnd_V(\la n\ra)\coloneq V^{\otimes n}
		      \]
		      by the partial composition map
		      \begin{multline*}
			      \circ_i(a_1\otimes \cdots \otimes a_n, b_1\otimes \cdots \otimes b_{n'})\coloneq \\
			      (-1)^{
					      \left(|b_1|+\cdots+|b_{n'}|\right)\left(|a_{i+1}|+\cdots+|a_{n}|\right)} \,\langle a_i, b_1\rangle \,
			      a_1 \otimes \cdots \otimes a_{i-1}\otimes b_{2} \otimes \cdots \otimes b_{n'} \otimes
			      a_{i+1}\otimes\cdots \otimes a_n \ .
		      \end{multline*}

		\item The genus $0$ part of any differential graded modular operad carries a canonical cyclic non-symmetric operad structure, where one retains only the actions of the cyclic groups and the partial composition maps. For instance, the homology $\left\{H_\bullet\left(\overline{\mathcal{M}}_{0,\langle n\rangle}\right)\right\}_{n\geqslant 3}$
		      of the genus $0$ part of the the Deligne--Mumford--Knudsen moduli spaces  of stable curves with marked points form a cyclic non-symmetric operad, with trivial summand in arities $1$ and $2$, where the structure maps are given by gluing
		      curves at marked points, see \cite[Section~6]{GetzlerKapranov98}. Similarly, the homology $\left\{H_\bullet\left({\mathcal{M}}^\delta_{0,\langle n\rangle}\right)\right\}_{n\geqslant 3}$ of the Brown's partial compactification \cite{Bro09, DV17} also  forms a cyclic non-symmetric operad.
	\end{enumerate}
\end{example}

\begin{remark}
	In order to avoid any confusion, we use the Roman font for properads (\cref{subsec:Properads}) and the calligraphic font for cyclic non-symmetric operads. For instance, the endomorphism properad of a  dg vector space $A$ is denoted by $\End_A$ and  the endomorphism cyclic non-symmetric operad of a dg vector space $V$ equipped with a scalar product is denoted by $\EEnd_V$~.
\end{remark}

A morphism of cyclic non-symmetric operads is a map of cyclic modules which commutes with the respective partial composition maps. We denote the associated category by
$\mathsf{cyclic} \ \mathsf{ns} \ \mathsf{operads}$.

\begin{definition}[Algebra over a cyclic non-symmetric operad]
	An \emph{algebra structure over a cyclic non-symmetric operad $\P$} on a differential graded vector space $(V, d_V, \langle\ ,\, \rangle)$ equipped with a symmetric bilinear form  is given by the data of morphism of cyclic non-symmetric operads $\P \to \EEnd_V$~.
\end{definition}

\begin{example}
	The category of algebras over the cyclic non-symmetric operad  $\AAs$ is the category of \emph{cyclic associative algebras}, which are
	differential graded associative algebras $(V, d_V, \cdot)$ equipped with a
	symmetric bilinear form $\langle\ ,\, \rangle$ satisfying
	$\langle a\cdot b, c \rangle= \langle a , b\cdot c  \rangle$~, for any $a,b,c\in V$~.
\end{example}

\begin{remark}
	One can also consider the notion of a \emph{unital} cyclic non-symmetric operad defined with the extra data of an arity $2$ element which is a unit for the partial composition maps.
	The cyclic module $\uAAs(\langle n\rangle)\coloneq \K \mu_n$ with trivial $\mathbb{C}_n$ action, for $n\geqslant 1$, equipped with the partial composition maps $\mu_n\circ_i \mu_{n'}\coloneq \mu_{n+n'-2}$ forms such a unital cyclic non-symmetric operad.
	The endomorphism cyclic non-symmetric operad $\EEnd_V$ associated to a vector space  $(V, \langle\ ,\, \rangle)$ equipped with a \emph{non-degenerate} symmetric bilinear form, admits a unital structure given by $\sum_{i=1}^k v_i\otimes v_i^*$~, where $\{v_1, \ldots, v_k\}$ is a basis of $V$ and where $\{v_1^*, \ldots, v_k^*\}$ is the induced dual basis.
	A morphism of unital cyclic non-symmetric operads is required to preserve the respective units. In the present case, an algebra over the unital cyclic non-symmetric operad $\uAAs$ is a \emph{unital} associative algebra
	satisfying $\langle a\cdot b, c \rangle= \langle a , b\cdot c  \rangle$~.
\end{remark}

We will need the following variations of the notion of a cyclic non-symmetric operad.
One can first consider the monad $\bbPT_-$ of \emph{signed planar trees}, which is given by the monad of planar trees equipped with an extra sign coming from the permutation of vertices.

\begin{definition}[Anti-cyclic non-symmetric operad]
	An \emph{anti-cyclic non-symmetric operad} is an algebra over the monad $\bbPT_-$ of signed planar trees.
	Such a structure is equivalent to a cyclic module $\P$ endowed with partial composition maps  satisfying the parallel--sequential relation \eqref{Rel1}
	and the equivariance property up to the following sign
	\begin{equation}\label{eq:equivanti}
		\left(\mu \circ_2 \nu\right)^{\tau_{n+n'-2}}=
		-(-1)^{|\mu||\nu|}\left(\nu^{\tau_{n'}}\right) \circ_{n'}
		\left(\mu^{\tau_n}\right)\ ,
	\end{equation}
	for any $\mu \in \P(\la n\ra), \nu \in \P(\la n'\ra)$, with $n\geqslant 2$ and $n'\geqslant 1$~.
\end{definition}

\begin{examples}\leavevmode
	\begin{enumerate}
		\item
		      The toy model of anti-cyclic non-symmetric operads
		      is the endomorphism operad $\EEnd_V$ of a vector space $V$ equipped with a \emph{skew-symmetric} bilinear form
		      $\la a, b\ra=-(-1)^{|a||b|} \la b, a \ra$~.

		\item A conceptually important anti-cyclic non-symmetric operad is  the \emph{suspension operad} $\susp^2 \EEnd_{\field \asusp}$~.
	\end{enumerate}
\end{examples}

The endofunctor of signed planar trees can also be equipped with a comonad structure, denoted by
$\bbPT^c_- \to \bbPT^c_-(\bbPT^c_-)$, which amounts to sending a planar tree into the sum
of all its partitions  into planar sub-trees.

\begin{definition}[Anti-cyclic non-symmetric cooperad]
	A \emph{anti-cyclic non-symmetric cooperad} is a coalgebra over the comonad $\bbPT^c_-$ of signed planar trees.
\end{definition}

Similarly to the monad case, the comonad $\bbPT^c_-$  is cogenerated by planar trees with two vertices.

\begin{proposition}
	An anti-cyclic non-symmetric cooperad structure on a cyclic module $\CC$ is equivalent to the data of
	\emph{infinitesimal decomposition maps}
	\begin{eqnarray*}
		&\delta_i \ \colon  \
		\CC(\la n+n'-2\ra) \to   \CC(\la n\ra)\otimes \CC(\la n'\ra)   ~, \quad \text{for}\ n\geqslant 2~, n' \geqslant 1~, \  \text{and}\ 2\leqslant i\leqslant n~, \\
		&\begin{tikzpicture}[baseline = (n.base)]
			\node (n) at (0,0) {};
			\foreach \i in {1,...,8}{
					\draw[thick] (0,0) -- ({\i * 360/8}:20pt);
					\draw[thick] ({(\i + 3) * (-360)/8}:27pt) node {\scriptsize{${\i}$}};
				}
			\draw[fill=white, thick] (n) circle [radius=10pt] node {\scriptsize{$\mu$}};
		\end{tikzpicture}
		\longmapsto
		\sum
		\qquad
		\begin{tikzpicture}[baseline = (n.base)]
			\node (n) at (0,0) {};
			\node (A) at (0.5,-0.5) {};
			\node (B) at (-0.5,0.5) {};
			\draw[fill=white, densely dotted] (0,0) circle [radius=45pt];
			\coordinate (A1) at (0:45pt);
			\draw[thick] (B) -- (A);
			\newcounter{compte}
			\setcounter{compte}{4}
			\foreach \angle in {30, -10, -55, -80}{
					\draw[thick] (A) -- (\angle:45pt);
					\draw[thick] (\angle:45pt) -- (\angle:55pt) ;
					\draw[thick] (\angle:60pt) node {\scriptsize{${\thecompte}$}};
					\addtocounter{compte}{1}
				}
			\setcounter{compte}{1}
			\foreach \angle in {180, 120, 90}{
					\draw[thick] (B) -- (\angle:45pt);
					\draw[thick] (\angle:45pt) -- (\angle:55pt) ;
					\draw[thick] (\angle:60pt) node {\scriptsize{${\thecompte}$}};
					\addtocounter{compte}{1}
				}
			\setcounter{compte}{8}
			\foreach \angle in {220}{
					\draw[thick] (B) -- (\angle:45pt);
					\draw[thick] (\angle:45pt) -- (\angle:55pt) ;
					\draw[thick] (\angle:60pt) node {\scriptsize{${\thecompte}$}};
				}
			\draw[densely dotted] (65:45pt) -- ({65+180}:45pt);
			\draw[fill=white,thick] (A) circle [radius=10pt] node {\scriptsize{$\mu''$}};
			\draw[fill=white,thick] (B) circle [radius=10pt] node {\scriptsize{$\mu'$}};
		\end{tikzpicture}~,
	\end{eqnarray*}
	satisfying the parallel--sequential relation
	\begin{equation}\label{Rel1CO}
		(\delta_i\otimes \id)\delta_j=
		\left\{
		\begin{array}{ll}
			\rule{0pt}{12pt}
			(23)\cdot \left((\delta_j\otimes \id)\delta_{i+n''-2}\right)\ ,
			 & \text{for}\quad  2\leqslant j\leqslant i-1\ ,   \\
			\rule{0pt}{12pt}
			\, (\id \otimes \delta_{j-i+2})\delta_i\ ,
			 & \text{for}\quad i\leqslant j\leqslant i+n'-2\ , \\
			\rule{0pt}{12pt}
			(23)\cdot \left((\delta_{j-n'+2}\otimes \id)\delta_{i}\right)\ ,
			 & \text{for}\quad
			i+n'-1\leqslant j\leqslant n+n'-2\ ,               \\
		\end{array}\right. \
	\end{equation}
	and the equivariance property
	\begin{equation}\label{eq:equivCO}
		\left\{\begin{array}{l}
			\rule{0pt}{12pt}
			\delta_i \tau_{n+n'-2}^{-1}= \left(\tau_n^{-1}\otimes \id\right)\delta_i\ , \quad \text{for}\quad i\geqslant 3~, \\
			\rule{0pt}{12pt}
			\delta_2 \tau_{n+n'-2}^{-1}=-
			\left(\tau_{n'}^{-1}\otimes \tau_{n}^{-1}\right){}^{(23)}\delta_{n'}
			\ .
		\end{array}\right.
	\end{equation}
	where $\tau_n^{-1}(\mu)\coloneq \mu^{\tau_n}$, for any $\mu\in \CC(\la n\ra))$~.
\end{proposition}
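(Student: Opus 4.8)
The plan is to establish this statement as the linear-dual, signed counterpart of \cref{prop:DefEquivCycOp}, following the strategy of the proof of \cite[Theorem~2.2]{GetzlerKapranov95}. The starting point is the fact, recalled just above, that the comonad $\bbPT^c_-$ of signed planar trees is cogenerated by the planar trees with two vertices, together with the one-to-one correspondence between such trees and triples $(n, n', i)$ with $n\geqslant 2$, $n'\geqslant 1$, $2\leqslant i\leqslant n$. Given an anti-cyclic non-symmetric cooperad structure $\CC \to \bbPT^c_-(\CC)$, projecting onto the summand indexed by the $2$-vertex planar tree of type $(n,n',i)$ produces a map
\[
	\delta_i \colon \CC(\la n+n'-2\ra) \too \CC(\la n\ra)\otimes \CC(\la n'\ra)~,
\]
which is the sought partial decomposition map. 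Conversely, one reconstructs a full structure map $\CC \to \bbPT^c_-(\CC)$ from such $\delta_i$ by setting, on the summand indexed by a planar tree $\rmt$, the iterated composite of the maps $\delta_i$ attached to any presentation of $\rmt$ as an iterated grafting of $2$-vertex trees. These two assignments are mutually inverse once one checks the claimed relations.

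The forward direction goes as follows. The counit axiom of the comonad gives that each $\delta_i$ is counital, while coassociativity of $\CC \to \bbPT^c_-(\CC)$, restricted to the summand indexed by planar trees with three vertices, produces exactly the parallel--sequential relations \eqref{Rel1CO}: a $3$-vertex planar tree is obtained from a $2$-vertex one by grafting a third vertex either on the root side, inside the previously grafted subtree, or in parallel on the other side, which are precisely the three cases; the first and third differ from the nested one by the transposition $(23)$ of the two outer tensor factors, which also carries the corresponding Koszul sign. The equivariance property \eqref{eq:equivCO} then follows from the compatibility of the structure map with the $\C_n$-actions, i.e. from $\CC$ being a cyclic module and from the fact that a rotation of the ambient polygon permutes the leaves of a planar tree; the distinguished role of $\delta_2$ comes from the fact that such a rotation exchanges the root vertex with the grafted one, and the $-$ sign is exactly the sign that the \emph{signed} comonad $\bbPT^c_-$ assigns to this transposition of vertices, which is the co-version of \eqref{eq:equivanti}. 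For the reverse direction, the parallel--sequential relations guarantee that the iterated composite of the $\delta_i$'s along $\rmt$ is independent of the chosen bracketing, since any two presentations of $\rmt$ by $2$-vertex graftings are linked by the elementary moves encoded in \eqref{Rel1CO}, and \eqref{eq:equivCO} ensures compatibility with the identifications built into the coordinate-free description of $\bbPT^c_-(\CC)$; coassociativity and counitality of the resulting map then hold by construction. This is the exact dual of the argument for \cref{prop:DefEquivCycOp}.

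The main obstacle is the bookkeeping of signs. Two sources of signs must be reconciled: the sign coming from the comonad $\bbPT^c_-$ being \emph{signed}, i.e. the sign produced when reordering the vertices of a planar tree, and the Koszul signs produced whenever tensor factors of $\CC(\la n\ra)\otimes\CC(\la n'\ra)\otimes\CC(\la n''\ra)$ are permuted, which here are concealed inside the $(23)$-actions in \eqref{Rel1CO} and the ${}^{(23)}$-action in \eqref{eq:equivCO}. Checking that these combine so that the sign in front of $\delta_{n'}$ in \eqref{eq:equivCO} reduces to the bare $-$ distinguishing anti-cyclic from cyclic structures, with no residual Koszul factor $(-1)^{|\mu||\nu|}$ (in contrast with the operadic formula \eqref{eq:equivanti}), is the delicate point; it is handled, as throughout this text, by the systematic application of the Koszul sign rule and convention.
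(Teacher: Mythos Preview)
Your proof sketch is correct and follows exactly the approach the paper intends: the proposition is the dual, signed analogue of \cref{prop:DefEquivCycOp}, obtained from the cogeneration of $\bbPT^c_-$ by two-vertex planar trees. In fact the paper does not supply a proof at all, relying on the sentence ``Similarly to the monad case, the comonad $\bbPT^c_-$ is cogenerated by planar trees with two vertices'' as the only justification; your sketch spells out in detail what the paper leaves implicit, including the sign bookkeeping that distinguishes the anti-cyclic case.
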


\begin{example}\label{ex:coop_As_anticyclic}
	We consider the arity-wise linear dual of the suspension anti-cyclic non-symmetric operad:
	\[\AAs^{\ac}(\langle n\rangle)\coloneq \left(\susp^2 \EEnd_{\field \asusp}   \right)^\vee(\langle n\rangle)\cong \field \,\susp^{n-2}~, \ \text{for}\ n\geqslant 3~,\]
	$\AAs^{\ac}(\langle 1\rangle)=0$, and $\AAs^{\ac}(\langle 2\rangle)=0$~.
	Since $\EEnd_{\field \asusp}$ forms an arity-wise finite dimensional anti-cyclic non-symmetric operad, its linear dual  is a well defined anti-cyclic non-symmetric cooperad. It is one-dimensional $\AAs^{\ac}(\langle n\rangle)\coloneq \K \upsilon_n$ in each arity $n\geqslant 3$,  with the signature action of $\C_n$ and the degree $|\upsilon_n|=n-2$~. Its infinitesimal decomposition map is equal to
	\begin{align*}
		\delta_i(\upsilon_{n+n'-2})\coloneq (-1)^{in'}\,\upsilon_{n}\otimes \upsilon_{n'}~.
	\end{align*}
\end{example}

\begin{example}\label{ex:coop_uAs_anticyclic}
	In a similar way, we consider the following anti-cyclic non-symmetric cooperad
	\[\cAAs^{\ac}(\langle n\rangle)\coloneq \left(\susp^2 \EEnd_{\field \asusp}   \right)^\vee(\langle n\rangle)\cong \field\, \susp^{n-2}~, \ \text{for}\ n\geqslant 1~.\]
\end{example}

\begin{remark}\leavevmode
	Like the case of modular operads \cite{Ward19, DSVV20}, the category of cyclic non-symmetric operads can be encoded by a groupoid-colored operad which is Koszul and whose Koszul dual (co)operad encodes anti-cyclic non-symmetric (co)operads.
\end{remark}

Another range of generalisations can be obtained by shifting the underlying cyclic module.

\begin{definition}[Shifted cyclic non-symmetric operad]
	A \emph{shifted cyclic non-symmetric operad}  structure on a cyclic module $\P$ is a cyclic non-symmetric operad structure on the desuspended cyclic module $\asusp \P$. Such a data is equivalent to degree $-1$ partial composition maps
	satisfying the parallel--sequential relation up to the following sign
	\begin{equation}\label{Rel1Shifted}
		\big(\mu \circ_i \nu\big)\circ_j\omega = \left\{
		\begin{array}{ll}
			\rule{0pt}{12pt} -(-1)^{|\nu||\omega|} \left(\mu \circ_j \omega\right) \circ_{i+n''-2}\nu\ , & \text{for}\quad  2\leqslant j\leqslant i-1\ ,   \\
			\rule{0pt}{12pt} -\mu \circ_i \left(\nu\circ_{j-i+2} \omega\right)\ ,                        & \text{for}\quad i\leqslant j\leqslant i+n'-2\ , \\
			\rule{0pt}{12pt} -(-1)^{|\nu||\omega|}\left(\mu \circ_{j-n'+2}\omega\right)\circ_{i}\nu\ ,   & \text{for}\quad
			i+n'-1\leqslant j\leqslant n+n'-2\ ,                                                                                                           \\
		\end{array}\right. \
	\end{equation}
	and the equivariance property up to the following sign
	\begin{equation}\label{eq:equivShifted}
		\left\{\begin{array}{l}
			\left(\mu \circ_i \nu\right)^{\tau_{n+n'-2}}=\left(\mu^{\tau_n}\right) \circ_{i-1} \nu\ , \quad \text{for}\quad i\geqslant 3~, \\
			\rule{0pt}{12pt} \left(\mu \circ_2 \nu\right)^{\tau_{n+n'-2}}=-(-1)^{|\mu||\nu|}\left(\nu^{\tau_{n'}}\right) \circ_{n'}
			\left(\mu^{\tau_n}\right)\ ,
		\end{array}\right.
	\end{equation}
	for every $\mu \in \mathcal{P}(X)$, $\nu\in \mathcal{P}(Y)$, and $\omega\in \mathcal{P}(Z)$~.
\end{definition}

\begin{example}\label{ex:End_shifted_cyc_ns_operad}
	The paradigm of shifted cyclic non-symmetric operad is the endormorphism operad
	$\EEnd_{\susp A\oplus  A^*}$ associated to the dg vector space equipped with the degree $-1$ graded-symmetric paring $\langle  f, \susp x\rangle \coloneq (-1)^{|f|} f(x)$ and  $\langle  \susp x, f \rangle \coloneq (-1)^{|f||x|} f(x)$~.
\end{example}

In the same straightforward way, one can define notions of shifted (anti)-cyclic non-symmetric (co)operads. The details are left to the reader.


\subsection{Deformation theory of morphisms of cyclic non-symmetric operads}\label{subsec:subDef}
In this section, we develop the deformation theory of morphisms of cyclic non-symmetric operads
following a  presentation similar to that of \cref{subsec::def_theo_properad}. Let us first recall the shifted version of the classical notion of a Lie algebra.

\begin{definition}[Shifted Lie algebra]
	A \emph{(differential graded) shifted Lie algebra} is a triple $(\g, \d, \{\ , \,\})$  made up of a differential graded vector spaces $(\g,\d)$ and a degree $-1$ symmetric product $\{\ , \,\} \colon \g^{\odot 2} \to \g$
	for which $\d$ is a derivation and satisfying the Jacobi relation
	\[
		\{\{x,y\},z\}+(-1)^{(|x|-1)(|y|+|z|)} \{\{y,z\},x\}+	(-1)^{(|z|-1)(|x|+|y|)}	\{\{z,x\},y\}=0 \ .
	\]
\end{definition}

\begin{remark}
	A shifted Lie algebra structure on $\g$ is equivalent to a Lie algebra structure on the desuspension $\asusp \g$~.
\end{remark}

\begin{definition}[Maurer--Cartan equation]
	The \emph{Maurer--Cartan equation} of a (shifted) Lie algebra is the equation
	\begin{equation}\label{eq:MaEq}
		\d \alpha + {\textstyle \frac12} \{\alpha, \alpha\}=0~.
	\end{equation}
	We only consider solutions of degree $|\alpha|=-1$ (respectively degree $|\alpha|=0$); their set is denoted by $\MC(\g)$~.
\end{definition}

Let us now introduce our main example.

\begin{definition}[Totalisation]
	The \emph{totalisation} of a cyclic module $\P$ is the graded vector space defined by
	\[
		\whP\coloneq \prod_{n \geqslant 1} \P(\la n\ra)^{\C_n}\ .
	\]
\end{definition}

\begin{remark}
	Since we are working here in characteristic $0$, we could have equivalently considered coinvariants instead of invariants in the definition of the totalisation.
\end{remark}

\begin{lemma}[{\cite[Proposition~2.18]{KWZ15}}]
	\label{lem:ModtoDeltaLie}
	The following assignment defines a functor from (shifted) anti-cyclic non-symmetric operads to complete
	(shifted) Lie algebras
	\begin{align*}
		\mathsf{(shifted)}\ \mathsf{anti}\textsf{-}\mathsf{cyclic} \ \mathsf{ns} \ \mathsf{operads}
		 & \to
		\mathsf{complete} \ \mathsf{(shifted)}\ \mathsf{Lie}\ \mathsf{algebras} \\
		\left(\P, d_\P, \circ_i\right)
		 & \mapsto
		\left(\whP, \d, \{\ , \,\}, \F
		\right)\ ,
	\end{align*}
	where the differential $\d$ is induced by the differential $d_\P$ and where the Lie bracket is given by
	\[
		\{\mu,\nu\}\coloneq \sum_{i=2}^n   \mu \circ_i \nu - (-1)^{|\mu||\nu|}\sum_{j=2}^{n'}   \nu \circ_j \mu~,
	\]
	(with the sign $+(-1)^{|\mu||\nu|}$ in front of the second term  in the shifted case),
	for any $\mu \in \P(\la n\ra)^{\C_n}$ and $\nu \in \P(\la n'\ra)^{\C_{n'}}$~,
	and the decreasing filtration
	\[
		\whP=\F_{-1}\supset \F_0 \supset \F_1\supset \cdots \supset \F_N \supset \F_{N+1}\supset \cdots \ ,
	\]
	where $\F_N$ is made up of  series such that the terms of arity $n<N+2$ vanish.
\end{lemma}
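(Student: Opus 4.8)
The statement asserts that the totalisation of a (shifted) anti-cyclic non-symmetric operad, equipped with the displayed bracket, is a complete (shifted) Lie algebra, and that the assignment is functorial. The plan is to verify each structural ingredient in turn: (i) that $\{\ ,\ \}$ is well-defined on invariants and has the correct degree; (ii) that $\d$ is a derivation; (iii) (skew-)symmetry of the bracket; (iv) the (shifted) Jacobi relation; (v) completeness of the filtration $\F$ and compatibility of $\d$ and $\{\ ,\ \}$ with it; and finally (vi) functoriality. Throughout I would treat the shifted and unshifted cases uniformly, tracking the extra sign coming from the desuspension in the places where \eqref{eq:equivanti}/\eqref{eq:equivShifted} differ from \eqref{eq:equiv}.

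First I would check well-definedness: given $\mu\in\P(\la n\ra)^{\C_n}$ and $\nu\in\P(\la n'\ra)^{\C_{n'}}$, one must show $\{\mu,\nu\}\in\P(\la n+n'-2\ra)^{\C_{n+n'-2}}$. This is exactly where the equivariance axiom \eqref{eq:equivanti} (resp. \eqref{eq:equivShifted}) enters: applying $\tau_{n+n'-2}$ to $\sum_{i=2}^n\mu\circ_i\nu$ and using the two cases of the equivariance property cyclically permutes the summands and converts the $i=2$ term (via the anti-cyclic sign) into a term of the second sum $\nu\circ_j\mu$; summing over a full cycle of $\C_{n+n'-2}$ shows invariance of the whole bracket. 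The degree count is immediate in the unshifted case and picks up the desuspension shift in the shifted case. Skew-symmetry $\{\nu,\mu\}=-(-1)^{|\mu||\nu|}\{\mu,\nu\}$ (resp. symmetry in the shifted case) is then visible directly from the defining formula by swapping the two sums.

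The technical heart is the (shifted) Jacobi identity. Here I would expand $\{\{\mu,\nu\},\omega\}$ using the definition, obtaining a sum of terms of the shapes $(\mu\circ_i\nu)\circ_j\omega$, $(\nu\circ_i\mu)\circ_j\omega$, $\omega\circ_j(\mu\circ_i\nu)$, etc., and then apply the parallel--sequential relation \eqref{Rel1} (resp. \eqref{Rel1Shifted}) to rewrite each nested composition. The three terms of the Jacobiator, after this rewriting, organise into pairs that cancel: the "parallel" cases (disjoint inputs) cancel between the first and second terms of the cyclic sum, while the "sequential" cases (nested inputs) cancel between the second and third; this is the familiar operadic mechanism behind the Lie bracket on a (cyclic) operad. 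The bookkeeping of signs — the Koszul signs from the parallel--sequential axiom, the $(-1)^{|\mu||\nu|}$ factors in the bracket, and the $(|x|-1)$-type signs in the shifted Jacobi relation — is the main obstacle, and in the shifted case one must check that the extra minus signs in \eqref{Rel1Shifted} combine correctly with the degree shift; I expect this to be the step requiring the most care, and it is plausibly where the cited reference \cite[Proposition~2.18]{KWZ15} does the work, so I would follow that argument \emph{mutatis mutandis} for the non-symmetric variant.

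Finally, for completeness and the filtration, I would note that $\F_N$ consists of series whose components in arities $n\le N+1$ vanish, so $\whP=\varprojlim \whP/\F_N$ as a graded vector space. Since a partial composition $\circ_i\colon\P(\la n\ra)\otimes\P(\la n'\ra)\to\P(\la n+n'-2\ra)$ strictly increases arity (as $n+n'-2\ge n$ for $n'\ge 2$, and the arity-$1$ and arity-$2$ contributions are handled by the conventions on the operad), the bracket satisfies $\{\F_p,\F_q\}\subseteq\F_{p+q+1}$ or a similar estimate, so $\{\ ,\ \}$ is continuous and $\whP$ is complete. The differential $\d$ is induced component-wise by $d_\P$ and hence preserves arity and the filtration; that it is a derivation for $\{\ ,\ \}$ follows because each $\circ_i$ is a chain map. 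Functoriality is then formal: a morphism $f\colon\P\to\Q$ of (shifted) anti-cyclic ns operads is arity-preserving, commutes with the $\circ_i$ and with the differentials, and preserves $\C_n$-invariants, so it induces $\widehat f\colon\whP\to\widehat\Q$ commuting with $\d$ and $\{\ ,\ \}$ and respecting the filtrations; composition and identities are clearly preserved.
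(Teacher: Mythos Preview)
Your proposal is correct and follows essentially the same approach as the paper, which simply states that the proof amounts to straightforward computations from the axioms \eqref{Rel1}, \eqref{eq:equiv}, and \eqref{eq:equivanti}. Your plan is a faithful expansion of what that one-line proof sketch entails.
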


\begin{proof}
	The proof amounts to straightforward computations from the axioms \eqref{Rel1}, \eqref{eq:equiv}, and \eqref{eq:equivanti}.
\end{proof}

\begin{remark}
	The above Lie bracket is made up of the skew-symmetrisation of the  product
	which is the usual pre-Lie product $\sum_{i=2}^n   \mu \circ_i \nu$ on the totalisation $\prod_{n \geqslant 1} \P(\la n\ra)$ of the underlying non-symmetric operad \cite{KapranovManin01}. Notice that this pre-Lie product is not stable on the sub-space of invariants with respect to the cyclic groups actions and so the skew-symmetrisation is mandatory here.
\end{remark}

\begin{lemma}\label{lemm:ConvolutionLie}
	The assignment
	\begin{align*}
		 & (\mathsf{anti}\textsf{-}\mathsf{cyclic} \ \mathsf{ns} \ \mathsf{cooperads})^{\mathsf{op}}
		\times
		\mathsf{(shifted)}\ \mathsf{cyclic} \ \mathsf{ns} \ \mathsf{operads}
		\to
		\mathsf{(shifted)}\ \mathsf{anti}\textsf{-}\mathsf{cyclic} \ \mathsf{ns} \ \mathsf{operads}  \\
		 & \left(\big(\CC, d_\CC, \delta_i, \big), \big(\P, d_\P, \circ_i\big)\right) \mapsto
		\Hom \left(\CC, \P\right)\coloneq
		\left(
		\left\{\Hom\big(
		\CC(\langle n\rangle), \P(\langle n\rangle)\big)\right\}_{n\in \NN^*}, \partial,\bigcirc_i
		\right)
	\end{align*}
	defines a functor, where  the cyclic groups  act  by conjugaison,
	where
	the  differential is given by
	\[\partial(f)\coloneq d_\P \circ f - (-1)^{|f|}f \circ d_\CC\ ,\]
	and  where the partial compositions maps are given by
	\[
		\bigcirc_{i}(f\otimes g)\coloneq \circ_i (f\otimes g)\delta_i\ .
	\]
\end{lemma}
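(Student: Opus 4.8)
The statement asserts that $\Hom(\CC,\P)$, equipped with the conjugation action of the cyclic groups, the differential $\partial$, and the partial composition maps $\bigcirc_i$, is again a (shifted) anti-cyclic non-symmetric operad, functorially in $\CC$ and $\P$. The plan is to verify, in turn: (1) that $\bigcirc_i$ is well-defined, i.e. lands in the claimed $\Hom$-space of the correct arity; (2) that $\partial$ is a differential squaring to zero and a derivation for the $\bigcirc_i$; (3) that the $\bigcirc_i$ satisfy the parallel--sequential relation \eqref{Rel1}; (4) that they satisfy the anti-cyclic equivariance property \eqref{eq:equivanti} (or its shifted analogue \eqref{eq:equivShifted}); and (5) that the assignment is functorial, including contravariance in $\CC$. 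For (1), note that $\delta_i\colon \CC(\la n+n'-2\ra)\to \CC(\la n\ra)\otimes\CC(\la n'\ra)$ composed on the output side with $\circ_i\colon \P(\la n\ra)\otimes\P(\la n'\ra)\to\P(\la n+n'-2\ra)$ yields a map $\CC(\la n+n'-2\ra)\to\P(\la n+n'-2\ra)$ after applying $f\otimes g$ in the middle; compatibility with the $\C$-actions on the nose follows because conjugation turns the equivariance of $\delta$ and of $\circ$ into equivariance of $\bigcirc_i$. This is the same argument as for the properadic convolution algebra recalled in \cref{subsec::def_theo_properad}.

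For (2), the claim that $\partial^2=0$ is immediate from $d_\P^2=0$ and $d_\CC^2=0$ together with the sign in the definition $\partial(f)=d_\P\circ f-(-1)^{|f|}f\circ d_\CC$; the Leibniz rule for $\partial$ with respect to $\bigcirc_i$ follows from the fact that $d_\CC$ is a coderivation for the $\delta_i$ and $d_\P$ is a derivation for the $\circ_i$, unwinding the Koszul signs carefully. For (3) and (4), the key observation is that each defining axiom for $\bigcirc_i$ is obtained by \emph{pre-composing} the corresponding axiom for $\circ_i$ in $\P$ with the corresponding axiom for $\delta_i$ in $\CC$. Concretely, $(\bigcirc_i\otimes\id)\bigcirc_j$ applied to $f\otimes g\otimes h$ expands, by definition, into an expression involving $(\circ_i\otimes\id)\circ_j$ on the $\P$-side and $(\delta_i\otimes\id)\delta_j$ on the $\CC$-side; substituting relation \eqref{Rel1} for $\P$ and relation \eqref{Rel1CO} for $\CC$, the three cases match up branch by branch, and the transposition $(23)$ appearing in \eqref{Rel1CO} is exactly what is needed to re-order the two $\Hom$-factors so as to reproduce \eqref{Rel1} for $\bigcirc_i$. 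Similarly, the equivariance \eqref{eq:equiv}/\eqref{eq:equivCO} on the two sides combine to give \eqref{eq:equivanti}: here one uses that $\delta_2\tau^{-1}$ carries the extra minus sign from \eqref{eq:equivCO}, which is precisely the sign that upgrades the cyclic equivariance of $\P$ to the \emph{anti}-cyclic equivariance of $\Hom(\CC,\P)$. The shifted case is identical, with the extra signs of \eqref{Rel1Shifted} and \eqref{eq:equivShifted} being produced by the desuspension on the $\P$-side.

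Finally, for (5), functoriality is formal: a morphism $\P\to\P'$ of (shifted) cyclic ns operads induces, by post-composition, a morphism $\Hom(\CC,\P)\to\Hom(\CC,\P')$ compatible with all structure maps; a morphism $\CC'\to\CC$ of anti-cyclic ns cooperads induces, by pre-composition, a morphism $\Hom(\CC,\P)\to\Hom(\CC',\P)$, which accounts for the contravariance in the first variable. The main obstacle is none of the above conceptually, but rather the bookkeeping of Koszul signs in steps (2), (3), and (4): one must track the sign $(-1)^{|g||\delta''|}$-type contributions coming from the Koszul sign convention each time a map is commuted past an element, and verify that these combine with the signs already present in \eqref{Rel1}, \eqref{Rel1CO}, \eqref{eq:equivanti}, and \eqref{eq:equivCO} to give exactly the asserted axioms with no residual sign discrepancy. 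As elsewhere in the paper, this is a tedious but mechanical application of the Koszul rule, and I would carry it out by fixing once and for all the convention for the degree of $\delta_i$ and of $\bigcirc_i(f\otimes g)$ and then expanding both sides of each axiom on a general triple of homogeneous elements.
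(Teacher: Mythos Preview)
Your proof plan is correct and is precisely the verification that the paper's one-line proof (``This follows in a straightforward way from the defining relations'') leaves to the reader. In particular, your key observation---that each axiom for $\bigcirc_i$ arises by sandwiching the corresponding axiom \eqref{Rel1CO}/\eqref{eq:equivCO} for $\delta_i$ on the $\CC$-side with the axiom \eqref{Rel1}/\eqref{eq:equiv} for $\circ_i$ on the $\P$-side, and that the extra minus sign in \eqref{eq:equivCO} is exactly what turns the cyclic equivariance of $\P$ into the anti-cyclic equivariance of $\Hom(\CC,\P)$---is the heart of the matter.
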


\begin{proof}
	This follows in a straightforward way from the defining relations.
\end{proof}

\begin{definition}[Convolution algebra]\label{def:ConDeltaLie}
	The composite of the above two functors produces functorially the \emph{convolution dg Lie algebra}:
	\[
		\widehat{\Hom}(\CC, \P)\coloneq
		\left(
		\prod_{n\geqslant 1} \Hom_{\C_n}\left(\CC(\langle n\rangle), \P(\langle n\rangle)\right),
		\partial, \{-,-\}
		\right)\ .
	\]
\end{definition}

\begin{definition}[Twisting morphism]
	The solutions  to the Maurer--Cartan equation~\eqref{eq:MaEq} in the convolution algebra $\widehat{\Hom}(\CC, \P)$  are called  \emph{twisting morphisms} from $\CC$ to $\P$; their set is denoted by $\Tw(\CC, \P)$~.
\end{definition}

As first example, we consider the convolution dg Lie algebra associated to the
anti-cyclic non-symmetric cooperad $\AAs^\antish$ of \Cref{ex:coop_As_anticyclic} and the
cyclic non-symmetric endomorphism
operad $\EEnd_{V}$ of \Cref{ex:cyc_ns_operads}.

\begin{definition}[Cyclic $\rmA_\infty$-algebra]
	The algebraic structure defined by a twisting morphism in $\Tw\left(\AAs^{\ac}, \EEnd_V\right)$
	is called a \emph{cyclic $\rmA_\infty$-algebra} structure on $V$~.
\end{definition}

\begin{proposition}\label{prop:strucure_on_V}
	A cyclic $\rmA_\infty$-algebra structure on a dg module $V$ induces products $m_n \colon V^{\otimes n} \to V$ of degree $n-2$, for $n\geqslant 2$, satisfying
	\begin{equation}\label{eqn:Ainfini}
		\partial(m_n)=\sum_{p+q+r=n} (-1)^{pq+r+1} m_{p+1+r} \circ \left(\id^{\otimes p}\otimes m_q\otimes\id^{\otimes r}    \right) ~,
	\end{equation}
	for any $n\geqslant 2$~, and equipped with a symmetric bilinear  form $\langle\ ,\, \rangle$ satisfying
	\begin{equation}\label{eqn:AinfiniSymm}
		\langle v_1, m_n(v_2, \ldots, v_{n+1}) \rangle=(-1)^{|v_1|(|v_2| +\cdots +|v_{n+1}|)+n}\langle v_{n+1}, m_n(v_1, \ldots, v_{n}) \rangle~,
	\end{equation}
	for any $n\geqslant 2$~.
	Both structures are equivalent when the pairing $\langle\ ,\, \rangle$ is non-degenerate.
\end{proposition}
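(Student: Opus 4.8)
The plan is to unwind the definition of a twisting morphism $\alpha \in \Tw(\AAs^{\ac}, \EEnd_V)$ through the explicit descriptions already established, keeping careful track of signs. Recall from \Cref{ex:coop_As_anticyclic} that $\AAs^{\ac}$ is one-dimensional in each arity $n\geqslant 3$, spanned by $\upsilon_n$ of degree $n-2$, with decomposition map $\delta_i(\upsilon_{n+n'-2}) = (-1)^{in'}\upsilon_n\otimes\upsilon_{n'}$; and from \Cref{ex:cyc_ns_operads} that $\EEnd_V(\la n\ra) = V^{\otimes n}$ with the stated partial composition maps. A twisting morphism is a degree $-1$ $\C_n$-equivariant map $\alpha\colon \AAs^{\ac}\to\EEnd_V$, so it is determined by the elements $\alpha(\upsilon_n)\in V^{\otimes n}$ of degree $-1+(n-2) = n-3$. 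First I would use the non-degeneracy hypothesis (or, in general, just the pairing) to convert $\alpha(\upsilon_n)\in V^{\otimes n}$ into a multilinear map $m_n\colon V^{\otimes n}\to V$: pair the first tensor factor against the input slot, i.e. define $m_n$ so that $\langle v_1, m_n(v_2,\dots,v_{n+1})\rangle$ recovers (a sign times) the coefficient extracted from $\alpha(\upsilon_{n+1})$ evaluated against $v_1\otimes\cdots\otimes v_{n+1}$. The degree bookkeeping gives $|m_n| = (n+1)-3 - 0 = n-2$ as claimed. When the pairing is non-degenerate this assignment is a bijection, which will give the asserted equivalence.

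Next I would derive the symmetry relation \eqref{eqn:AinfiniSymm}. This comes directly from the $\C_{n+1}$-equivariance of $\alpha$ together with the fact that $\AAs^{\ac}$ carries the \emph{signature} action of the cyclic group: the generator $\upsilon_{n+1}$ satisfies $\upsilon_{n+1}^{\tau_{n+1}} = \sgn(\tau_{n+1})\upsilon_{n+1} = (-1)^n\upsilon_{n+1}$, so $\alpha(\upsilon_{n+1})$ is an element of $V^{\otimes(n+1)}$ that is invariant up to $(-1)^n$ under the cyclic permutation of factors; translating the cyclic invariance of the tensor $\alpha(\upsilon_{n+1})$ through the pairing-duality described above, and incorporating the Koszul signs produced by moving $v_1$ past $v_2,\dots,v_{n+1}$ (which yields the factor $(-1)^{|v_1|(|v_2|+\cdots+|v_{n+1}|)}$), produces exactly \eqref{eqn:AinfiniSymm}. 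This step is essentially a sign computation from the Koszul rule plus the signature character.

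Then I would derive the $\rmA_\infty$-relations \eqref{eqn:Ainfini}. Here one writes out the Maurer--Cartan equation $\partial\alpha + \tfrac12\{\alpha,\alpha\} = 0$ in the convolution Lie algebra of \Cref{def:ConDeltaLie}, evaluated on the generator $\upsilon_n$. The bracket $\{\alpha,\alpha\}$ is, by \Cref{lem:ModtoDeltaLie} and \Cref{lemm:ConvolutionLie}, a sum over partial compositions $\bigcirc_i(\alpha\otimes\alpha) = \circ_i(\alpha\otimes\alpha)\delta_i$; feeding in $\delta_i(\upsilon_n) = (-1)^{i\cdot n'}\upsilon_{n-n'+2}\otimes\upsilon_{n'}$ and the explicit $\circ_i$ of $\EEnd_V$ (which inserts one tensor string into another after contracting one slot against the pairing), then re-expressing everything via the maps $m_k$ through the pairing-duality, collapses the double sum over $i$ and $n'$ into the single sum $\sum_{p+q+r = n}$ with the insertion pattern $\id^{\otimes p}\otimes m_q\otimes \id^{\otimes r}$. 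The differential term $\partial(\alpha(\upsilon_n))$ becomes $\partial(m_n)$ on the other side. The one genuinely delicate point — and the step I expect to be the main obstacle — is verifying that the accumulated signs (from $\delta_i$, from $\circ_i$ of $\EEnd_V$, from the skew-symmetrisation in $\{-,-\}$, from the Koszul convention when transporting tensors through the pairing, and from the factor of $\tfrac12$ versus the unordered sum) combine precisely into the stated $(-1)^{pq+r+1}$; this is a careful but mechanical reconciliation of several sign conventions, of exactly the kind the authors warn about in the introduction. The one-to-one correspondence when $\langle\ ,\,\rangle$ is non-degenerate then follows because every step above is reversible: $m_n$ determines $\alpha(\upsilon_{n+1})$ uniquely, the symmetry \eqref{eqn:AinfiniSymm} is equivalent to cyclic-signature-equivariance of $\alpha$, and \eqref{eqn:Ainfini} is equivalent to the Maurer--Cartan equation.
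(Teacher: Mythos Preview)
Your proposal is correct and follows essentially the same approach as the paper: convert the tensors $\alpha(\upsilon_{n+1})\in V^{\otimes(n+1)}$ into maps $m_n\colon V^{\otimes n}\to V$ via the pairing (the paper packages this as an explicit map $\Theta(v_1,\ldots,v_{n+1})\coloneqq v_1\langle v_2,-\rangle\cdots\langle v_{n+1},-\rangle$), then read off \eqref{eqn:Ainfini} from the Maurer--Cartan equation and \eqref{eqn:AinfiniSymm} from the signature action together with $\C_{n+1}$-equivariance, with non-degeneracy giving the equivalence. Your plan is more explicit about the sign bookkeeping, but the strategy is identical.
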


\begin{proof}
	We consider the morphism of chain complexes $\Theta \colon V^{\otimes (n+1)} \to \Hom(V^{\otimes n}, \allowbreak V)$ defined by
	\[\Theta(v_1, \ldots, v_{n+1})\coloneq v_1\la v_2, -\ra \cdots \la v_{n+1}, -\ra~.\]
	Let $\alpha\in \Tw\left(\AAs^{\ac}, \EEnd_V\right)$ be a twisting morphism. Under the notation
	$m_n\coloneq \Theta(\alpha(\upsilon_{n+1}))$, the image of the Maurer--Cartan equation satisfied by $\alpha$ under $\Theta$ gives the abovementioned equation~\eqref{eqn:Ainfini} of $\rmA_\infty$-algebras.
	The equation~\eqref{eqn:AinfiniSymm} comes from the signature representation on $\AAs^{\ac}(\la n\ra)$ and the equivariance of the map $\alpha$. The map $\Theta$ is an isomorphism if and only if the symmetric bilinear form $\langle\ ,\, \rangle$ is non-degenerate. In this case, the two algebraic structures are equivalent.
\end{proof}

More generally, one can consider the anti-cyclic non-symmetric cooperad
$\cAAs^\antish$ of \Cref{ex:coop_uAs_anticyclic}. The canonical surjection
$\cAAs^\antish\twoheadrightarrow \AAs^\antish$ of anti-cyclic non-symmetric cooperads induces an embedding of dg Lie algebras
\[\widehat{\Hom}\left(\AAs^{\ac}, \EEnd_V\right) \hookrightarrow \widehat{\Hom}\left(\cAAs^{\ac}, \EEnd_V\right)~.\]

\begin{definition}[Cyclic curved $\rmA_\infty$-algebra]
	The algebraic structure defined by a twisting morphism in $\Tw\left(\cAAs^{\ac}, \EEnd_V\right)$
	is called a \emph{cyclic curved $\rmA_\infty$-algebra} structure on $V$~.
\end{definition}

\begin{proposition}\label{prop:strucure_on_V}
	A cyclic curved $\rmA_\infty$-algebra structure on a graded module $V$ is  made up of products $m_n \colon V^{n} \to V$ of degree $n-2$, for $n\geqslant 0$, satisfying
	\begin{equation}\label{eqn:AinfiniCurved}
		\sum_{p+q+r=n} (-1)^{pq+r+1} m_{p+1+r} \circ \left(\id^{\otimes p}\otimes m_q\otimes\id^{\otimes r}\right)=0 ~,
	\end{equation}
	for any $n\geqslant 0$~, and equipped with a symmetric bilinear  form $\langle\ ,\, \rangle$ satisfying
	\begin{equation}\label{eqn:AinfiniSymmCurved}
		\langle v_1, m_n(v_2, \ldots, v_{n+1}) \rangle=(-1)^{|v_1|(|v_2| +\cdots +|v_{n+1}|)+n}\langle v_{n+1}, m_n(v_1, \ldots, v_{n}) \rangle~,
	\end{equation}
	for any $n\geqslant 2$~. Both structures are equivalent when the pairing $\langle\ ,\, \rangle$ is non-degenerate.
\end{proposition}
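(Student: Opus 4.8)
The plan is to follow \emph{mutatis mutandis} the proof of the preceding proposition; the only structural difference is that the anti-cyclic non-symmetric cooperad $\cAAs^\antish$ of \cref{ex:coop_uAs_anticyclic} has a one-dimensional summand in \emph{every} arity $n\geqslant 1$, whereas $\AAs^\antish$ is concentrated in arities $n\geqslant 3$. First I would recall the degree-preserving map of graded modules
\[
\Theta \colon V^{\otimes (n+1)} \too \Hom\big(V^{\otimes n}, V\big)\ , \qquad \Theta(v_1, \ldots, v_{n+1})\coloneqq v_1 \la v_2, -\ra \cdots \la v_{n+1}, -\ra\ ,
\]
which is degree-preserving because $\la\ ,\,\ra$ has degree $0$, and which is an isomorphism in each arity if and only if $\la\ ,\,\ra$ is non-degenerate. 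Given a twisting morphism $\alpha\in \Tw\big(\cAAs^\antish, \EEnd_V\big)$, I set $m_n\coloneqq \Theta\big(\alpha(\upsilon_{n+1})\big)$ for every $n\geqslant 0$, where $\upsilon_{n+1}$ is the canonical generator of $\cAAs^\antish(\la n+1\ra)$. Since $|\upsilon_{n+1}|=n-1$ and $|\alpha|=-1$, each $m_n$ has degree $n-2$; in particular $m_0\in V_{-2}$ is the curvature and $m_1$ is a degree $-1$ endomorphism of $V$ which is \emph{a priori} not a square-zero differential.

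Next I would translate the Maurer--Cartan equation $\partial\alpha+\tfrac12\{\alpha,\alpha\}=0$ of \cref{def:ConDeltaLie} (here $\partial=0$, since $V$ carries no differential and $\cAAs^\antish$ is a cooperad with vanishing differential). Evaluating it on $\upsilon_n$ and using the partial decomposition maps $\delta_i(\upsilon_{n+n'-2})=(-1)^{in'}\,\upsilon_n\otimes\upsilon_{n'}$ (same formula as in \cref{ex:coop_As_anticyclic}), the explicit bracket of the convolution Lie algebra coming from \cref{lem:ModtoDeltaLie} and \cref{lemm:ConvolutionLie}, and the defining formula $\bigcirc_i(f\otimes g)=\circ_i(f\otimes g)\delta_i$, one obtains after applying $\Theta$ exactly the curved $\rmA_\infty$-relations~\eqref{eqn:AinfiniCurved}. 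The sign $(-1)^{pq+r+1}$ is assembled from the sign $(-1)^{in'}$ of $\delta_i$, the Koszul signs produced by $\bigcirc_i$ and by $\Theta$, and the sign $-(-1)^{|\mu||\nu|}$ in front of the second half of the convolution bracket. This is the very computation carried out in the non-curved case, now valid for all $n\geqslant 0$ precisely because $\cAAs^\antish$ is nonzero in every arity $\geqslant 1$, which is what produces the terms involving $m_0$ and $m_1$.

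Then I would derive the cyclic symmetry~\eqref{eqn:AinfiniSymmCurved}: since $\C_n$ acts on $\cAAs^\antish(\la n\ra)$ by the signature representation and $\alpha$ is $\C_n$-equivariant, one has $\alpha(\upsilon_n^{\tau_n})=(-1)^{n-1}\alpha(\upsilon_n)$, while the cyclic action on $\EEnd_V(\la n\ra)=V^{\otimes n}$ is the signed cyclic permutation of tensor factors; writing this identity out through $\Theta$ and unravelling the pairing conventions of $\EEnd_V$ yields~\eqref{eqn:AinfiniSymmCurved} for $n\geqslant 2$ (for $n=0,1$ the relation is vacuous). Finally, when $\la\ ,\,\ra$ is non-degenerate, $\Theta$ is bijective in every arity, so $\alpha\mapsto(m_n)_{n\geqslant 0}$ is a bijection between twisting morphisms and families of maps satisfying~\eqref{eqn:AinfiniCurved} and~\eqref{eqn:AinfiniSymmCurved}, proving the asserted equivalence.

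The main obstacle is purely the sign bookkeeping: one must check that the contributions from $\delta_i$, from the Koszul rule in $\bigcirc_i$ and in $\Theta$, and from the skew-symmetrisation defining the convolution bracket, combine to the single sign $(-1)^{pq+r+1}$ in~\eqref{eqn:AinfiniCurved}, and that $\C_n$-equivariance of $\alpha$ produces exactly the sign in~\eqref{eqn:AinfiniSymmCurved}. Because this is a verbatim reprise of the calculation already performed for the preceding (uncurved) proposition, extended only to the arities $0$, $1$, and $2$, no new phenomenon arises and I expect no genuine difficulty beyond care with signs.
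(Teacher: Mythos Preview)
Your proposal is correct and follows exactly the approach the paper takes: the paper's own proof is the single sentence ``This proof is similar to the one given above and uses the same identifications,'' and you have spelled out precisely that similarity, using the same map $\Theta$, the same definition $m_n\coloneqq\Theta(\alpha(\upsilon_{n+1}))$, the same derivation of the relations from the Maurer--Cartan equation and equivariance, and the same non-degeneracy argument for the equivalence. Your additional remarks about $\partial=0$ and the extended arity range of $\cAAs^\antish$ are the only genuinely new observations needed in the curved case, and they are correct.
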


\begin{proof}
	This proof is similar to the one given above and uses the same identifications.
\end{proof}

Therefore  a cyclic  $\rmA_\infty$-algebra is a cyclic curved $\rmA_\infty$-algebra  such that the curvature operation $m_0$ is trivial. (In this case, the operation $m_1$ squares to zero and is considered as the differential of $V$.) The above-mentioned embedding of dg Lie algebras shows that the deformation theory of the former is included inside the deformation theory of the latter.

\subsection{Generalised necklace algebra}\label{subsec::pCY_alg}
In this section, we consider the special case  $V=\susp A\oplus A^*$ equipped with its canonical degree $-1$ skew-symmetric pairing: $\langle  f, \susp x\rangle \coloneq (-1)^{|f|} f(x)$~.

\begin{proposition}
	The convolution algebra $\widehat{\Hom}\left(\AAs^{\ac}, \allowbreak \EEnd_{\susp A\oplus A^*}\right)$
	is a shifted Lie algebra with  underlying graded vector space isomorphic to
	\begin{align}\label{eq::Lie_adm_of_cyclicAinfty}
		\susp^{2}
		\prod_{N \geqslant 3}\left(
		\bigoplus_{1\leqslant m < N}
		\left(
		\bigoplus_{\lambda_1+\cdots+\lambda_m=n}
		A\otimes
		\left((\susp A)^*\right)^{\otimes \lambda_1}
		\otimes  A \otimes
		\left((\susp A)^*\right)^{\otimes \lambda_2}
		\cdots\otimes  A \otimes
		\left((\susp A)^*\right)^{\otimes \lambda_m}
		\right)^{\C_m}
		\oplus	\left(\left( (\susp A)^*\right)^{\otimes N}\right)^{\C_N}\right)~,
	\end{align}
	where the sum runs over
	ordered partitions $\lambda_1+\cdots+\lambda_m=n$ of $n= N-m$
	into non-negative integers,
	and with the bracket given by
	\begin{align*}
		 & \left\{	\susp^{2}  a_1\otimes \cdots \otimes a_N, 	\susp^{2}
		b_1\otimes \cdots \otimes b_{N'}\right\}=                                                                                                  \\
		 & \susp^{2}
		\sum_{i=2}^N
		(-1)^{|a_1|+\cdots+|a_{i-1}|+|{b}|\left(|a_{i+1}|+\cdots |a_{N}| \right)}
		\la a_i, b_1\ra\,   a_1 \otimes \cdots \otimes a_{i-1} \otimes b_2\otimes \cdots  \otimes b_{N'}\otimes a_{i+1} \otimes \cdots \otimes a_N \\
		 & +
		\susp^{2}
		\sum_{j=2}^{N'}
		(-1)^{|b_1|+\cdots+|b_{j-1}|+|{a}|\left(|b_{1}|+\cdots |b_{j}| \right)}
		\la b_j, a_1\ra\,   b_1 \otimes \cdots \otimes b_{j-1} \otimes a_2\otimes \cdots  \otimes a_{N}\otimes b_{j+1} \otimes \cdots \otimes b_{N'} ~,
	\end{align*}
	with $a_k, b_l \in  A \oplus (\susp A)^*$,
	$a\coloneq a_1\otimes \cdots \otimes a_N$, and
	$b \coloneq b_1\otimes \cdots \otimes b_{N'}$~.
\end{proposition}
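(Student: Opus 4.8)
The plan is to unwind the definition of the convolution dg Lie algebra $\widehat{\Hom}\left(\AAs^{\ac}, \EEnd_{\susp A\oplus A^*}\right)$ from \Cref{def:ConDeltaLie}, using the three explicit ingredients established earlier: the description of $\AAs^{\ac}(\la N\ra)\cong \field\, \upsilon_N$ with signature action of $\C_N$ and degree $|\upsilon_N|=N-2$ (\Cref{ex:coop_As_anticyclic}), the description of $\EEnd_{\susp A\oplus A^*}(\la N\ra)=(\susp A\oplus A^*)^{\otimes N}$ (\Cref{ex:cyc_ns_operads}(2)), and the fact that the pairing on $\susp A\oplus A^*$ is degree $-1$ skew-symmetric (\Cref{ex:End_shifted_cyc_ns_operad}, \Cref{subsec::pCY_alg}). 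By \Cref{lem:ModtoDeltaLie} applied in the shifted setting, $\widehat{\Hom}$ is a shifted Lie algebra; the main content is to identify its underlying graded vector space and to make the bracket explicit.

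First I would compute $\Hom_{\C_N}\left(\AAs^{\ac}(\la N\ra), \EEnd_{\susp A\oplus A^*}(\la N\ra)\right)$ arity by arity. Since $\AAs^{\ac}(\la N\ra)$ is one-dimensional spanned by $\upsilon_N$ in degree $N-2$, a $\C_N$-equivariant map is determined by the image of $\upsilon_N$, which must lie in the subspace of $(\susp A\oplus A^*)^{\otimes N}$ on which $\C_N$ acts by the signature character (to match the signature action on $\upsilon_N$); tensoring by $\asusp^{N-2}$ from the source degree accounts for the shift. Decomposing $(\susp A\oplus A^*)^{\otimes N}$ into summands indexed by the positions of the $A^*$-factors (equivalently $A$-factors after desuspending $\susp A$ to write $(\susp A)^*$ in place of $A^*$), and noting that the $\C_N$-action permutes these summands cyclically, the invariants under signature reorganize into the sum over $1\leqslant m<N$ of $\C_m$-(co)invariants of $A\otimes((\susp A)^*)^{\otimes\lambda_1}\otimes\cdots\otimes A\otimes((\susp A)^*)^{\otimes\lambda_m}$ over ordered partitions $\lambda_1+\cdots+\lambda_m=N-m$, plus the purely-$(\susp A)^*$ summand $\left(((\susp A)^*)^{\otimes N}\right)^{\C_N}$. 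Taking the product over $N\geqslant 3$ and recording the overall degree shift as an outer $\susp^2$ (combining the $\asusp^{N-2}$ factors with the bookkeeping shift in $\lambda_i=|\Lambda_i|$ versus actual positions) gives precisely formula~\eqref{eq::Lie_adm_of_cyclicAinfty}; here I would invoke the characteristic-zero remark allowing invariants to be replaced by coinvariants freely.

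Next I would specialize the general bracket formula of \Cref{lem:ModtoDeltaLie}, namely $\{\mu,\nu\}=\sum_{i=2}^n \mu\circ_i\nu + (-1)^{|\mu||\nu|}\sum_{j=2}^{n'}\nu\circ_j\mu$ in the shifted case, to the present convolution algebra. The partial composition $\bigcirc_i(f\otimes g)=\circ_i(f\otimes g)\delta_i$ of \Cref{lemm:ConvolutionLie} combines the partial decomposition $\delta_i(\upsilon_{N+N'-2})=(-1)^{iN'}\upsilon_N\otimes\upsilon_{N'}$ of $\AAs^{\ac}$ with the partial composition of $\EEnd_{\susp A\oplus A^*}$, which contracts the $i$-th slot against the first slot via the pairing $\langle a_i,b_1\rangle$ and reshuffles the remaining tensor factors with a Koszul sign $(-1)^{(|b_1|+\cdots+|b_{N'}|)(|a_{i+1}|+\cdots+|a_N|)}$. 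Tracking all the Koszul signs coming from (a) the degree-$2$ outer suspensions commuting past each other and past the arguments, (b) the sign $(-1)^{iN'}$ from $\delta_i$, (c) the reshuffling sign in the endomorphism operad, and (d) the desuspension in passing from $\susp A$ to $(\susp A)^*$, should collapse to the signs $(-1)^{|a_1|+\cdots+|a_{i-1}|+|b|(|a_{i+1}|+\cdots+|a_N|)}$ and $(-1)^{|b_1|+\cdots+|b_{j-1}|+|a|(|b_1|+\cdots+|b_j|)}$ displayed in the statement; I would present this as a direct (if tedious) bookkeeping computation, very much in the spirit of the sign-chasing done throughout the paper.

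I expect the main obstacle to be exactly step three: verifying that the accumulated Koszul signs reduce to the clean exponents in the displayed bracket, and in particular getting the precise form of the second summand (the term $|a|(|b_1|+\cdots+|b_j|)$ rather than some nearby variant) right. A secondary subtlety is the identification of the degree shift: one must check that the outer $\susp^2$ in~\eqref{eq::Lie_adm_of_cyclicAinfty} is consistent with the degree-$-1$ bracket on $\widehat{\Hom}$, i.e. that after the identification the bracket genuinely lands in degree $-1$ relative to the shifted grading; this is a consequence of $|\upsilon_N|=N-2$ together with the arities adding as $N+N'-2$ under $\circ_i$, so it amounts to checking $2+(N-2)+(N'-2)-1 = 2+(N+N'-2-2)$, which holds, but the placement of the $\susp^2$ outside the product (rather than $\susp^{N-2}$ inside each factor) needs to be justified carefully as a mere relabeling of the grading. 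Everything else — the vector-space decomposition and the functoriality giving the shifted Lie structure — follows formally from the results already established.
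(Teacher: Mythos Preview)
Your proposal is correct and follows essentially the same route as the paper: invoke \Cref{lem:ModtoDeltaLie} and \Cref{lemm:ConvolutionLie} for the shifted Lie structure, identify the underlying space arity-wise using that $\AAs^{\ac}(\la N\ra)$ is one-dimensional with signature action, and then transport the bracket. The one place where the paper is sharper is the suspension bookkeeping: rather than your somewhat vague ``combining the $\asusp^{N-2}$ factors with the bookkeeping shift in $\lambda_i$'', the paper factors $\asusp^{N-2}=\susp^2\cdot(\asusp)^{\otimes N}$, distributes one $\asusp$ into each tensor slot to convert $\susp A\oplus A^*$ into $A\oplus(\susp A)^*$ (simultaneously turning signature-invariants into ordinary invariants), and then writes down the explicit isomorphism
\[
\Phi\left(\susp^{N-2}\mapsto \susp a_1\otimes\cdots\otimes \susp a_N\right)\coloneqq (-1)^{\frac{N(N+1)}{2}+N|a_N|+\cdots+|a_1|}\,\susp^2\, a_1\otimes\cdots\otimes a_N~,
\]
which is precisely the missing ingredient that makes your ``tedious bookkeeping'' for the bracket signs actually go through.
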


\begin{proof}
	Since the cooperad $\AAs^{\ac}$ is anti-cyclic non-symmetric and since the operad $\EEnd_{\susp A\oplus  A^*}$ is
	shifted cyclic non-symmetric, the convolution algebra
	$\widehat{\Hom}\left(\AAs^{\ac}, \allowbreak \EEnd_{\susp A\oplus  A^*}\right)$ is a shifted Lie algebra by Lemmata~\ref{lem:ModtoDeltaLie} and \ref{lemm:ConvolutionLie}.
	Using now capital $N$ for the arity in the cyclic operad context, the cyclic module $\AAs^{\ac}(\langle N \rangle)$ is isomorphic to $\susp^{N-2}$, for any $N\geqslant 3$. So the underlying graded space of the convolution algebra is isomorphic to
	\[
		\prod_{N\geqslant 3} \Hom_{\C_N}\left(s^{N-2}, \left(\susp A\oplus  A^*\right)^{\otimes N}\right)\cong
		\susp^{2}\prod_{N\geqslant 3}\left(\susp^{-N}\otimes \left(\susp A\oplus  A^*\right)^{\otimes N}  \right)^{\C_N} \cong
		\susp^{2}\prod_{N\geqslant 3}\left( \left(A\oplus  (\susp A)^*\right)^{\otimes N}  \right)^{\C_N}    ~.\]
	Spitting according to which  part of the direct sum comes from either $A$ or $\susp A^*$, one obtains the  graded space   displayed above in \eqref{eq::Lie_adm_of_cyclicAinfty}.
	The abovementioned isomorphism is given explicitly by
	\[\Phi\left(\susp^{N-2}\mapsto \susp a_1 \otimes \cdots \otimes \susp a_N \right)
		\coloneq
		(-1)^{\frac{N(N+1)}{2}+N|a_N|+\cdots +|a_1|}\,
		\susp^{2}\,a_1 \otimes \cdots \otimes a_N~,
	\]
	where $a_1,\ldots, a_N\in A\oplus  (\susp A)^*$~.
	Transporting the shifted Lie bracket of the convolution algebra (\cref{def:ConDeltaLie}) under this isomorphism gives  the shifted Lie bracket of the above statement.
\end{proof}

\begin{definition}[Generalised necklace Lie algebra]\label{def::necklaceLiealg}
	The \emph{generalised necklace Lie algebra} associated to the dg vector space $A$ is
	the desuspension of the above shifted Lie sub-algebra:
	\[
		\nec_A\coloneq \left(
		\susp
		\prod_{N \geqslant 3}	\left(
		\bigoplus_{1\leqslant m < N}
		\left(
		\bigoplus_{\lambda_1+\cdots+\lambda_m=n}
		A\otimes
		\left((\susp A)^*\right)^{\otimes \lambda_1} \otimes
		\cdots\otimes  A \otimes
		\left((\susp A)^*\right)^{\otimes \lambda_m}
		\right)^{\C_m}\right)	,
		d, \{-,-\}\right) \ .
	\]
\end{definition}

There is a crucial point for us in the present case: when $V=\susp A\oplus  A^*$, the Lie bracket on
the generalised necklace Lie algebra splits into two, depending on whether one applies the linearity pairing to $f\otimes \susp x$ or to $\susp x\otimes f$~, where $f\in A^*$ and $x\in A$.
More precisely, we denote by $X\ast Y$ the summand of $\{X,Y\}$ made up of the terms where one applies the linear pairing $\langle f,\susp x\rangle$, where $f\in A^*$ comes from $X$ and $x\in A$ comes from $Y$~.
So the  Lie bracket is equal to the skew-symmetrisation of the product $\ast$:
\[
	\{X, Y\}=X\ast Y - (-1)^{|X||Y|}Y\ast X\ .
\]

\begin{lemma}
	The binary product $\ast$ satisfies the relation of a Lie-admissible algebra:
	\[
		\sum_{\sigma \in \Sy_3} \mathrm{sgn}(\sigma) \mathrm{assoc}(-,-,-)^{\sigma} = 0 \ ,
	\]
	where $\mathrm{assoc}$ stands for the associator:
	$\mathrm{assoc}(x,y,z) \coloneq 	(x\ast y) \ast z - x\ast(y\ast z)$~, for every $x,y,z$~.
\end{lemma}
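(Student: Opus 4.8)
The plan is to obtain the Lie-admissibility of $\ast$ as a formal consequence of the fact, already at hand, that its skew-symmetrisation is a genuine Lie bracket. Recall from Lemmata~\ref{lem:ModtoDeltaLie} and~\ref{lemm:ConvolutionLie} that $\widehat{\Hom}\left(\AAs^{\ac}, \EEnd_{\susp A\oplus A^*}\right)$ is a shifted Lie algebra; the generalised necklace Lie algebra $\nec_A$ of \cref{def::necklaceLiealg} is, by construction, the desuspension of a shifted Lie subalgebra of it, so its bracket $\{-,-\}$ satisfies the ordinary graded Jacobi identity. The only preliminary point to settle is that the product $\ast$, defined as the ``half'' of $\{-,-\}$ selecting the terms in which the linearity pairing $\langle f,\susp x\rangle$ is applied to a covector $f$ coming from the first argument and a vector $x$ coming from the second, does take values in $\nec_A$; this follows from cyclic invariance by the same bookkeeping that shows $\{-,-\}$ itself is well defined on cyclic invariants.

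With this in place, I would invoke the classical identity: for \emph{any} binary operation $\ast$ on a graded vector space, if one sets $[X,Y]\coloneqq X\ast Y-(-1)^{|X||Y|}Y\ast X$, then the graded Jacobiator $[[X,Y],Z]+(\text{cyclic permutations of }X,Y,Z)$ coincides, up to an overall sign, with $\sum_{\sigma\in\Sy_3}\sgn(\sigma)\,\mathrm{assoc}(-,-,-)^{\sigma}$, where the superscript denotes the $\sigma$-action carrying the Koszul signs. One proves this by expanding: each of the four nested-$\ast$ terms of $[[X,Y],Z]$ occurs among the twelve signed terms of the right-hand sum, the six associators regrouping precisely into the three cyclic summands of the Jacobiator, and matching the Koszul signs amounts to pushing the sign $\sgn(\tau)$ of each transposition $\tau$ through the relevant Koszul transpositions. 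Since in our situation $[-,-]$ is exactly $\{-,-\}$, which satisfies Jacobi, the displayed Lie-admissibility relation for $\ast$ follows.

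The argument is therefore essentially a definition-chase, the one genuinely computational ingredient being the sign verification in the formal identity above: one must check that the homological degree convention and the Koszul sign rule used throughout make the antisymmetrised associator of $\ast$ equal to a sign times the Jacobiator of its commutator on the nose. I expect this to be the main (and only real) obstacle, though it is routine. There is no structural shortcut to hope for here: $\ast$ is neither associative nor (pre-)Lie, so Lie-admissibility is the strongest bilinear identity it can be expected to satisfy, and the statement is nothing more than the assertion that the already-known Lie bracket $\{-,-\}$ is the commutator of $\ast$, rewritten in the $\sum_{\sigma\in\Sy_3}$ form. Hence the proof reduces to that single sign check.
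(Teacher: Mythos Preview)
Your proposal is correct and takes essentially the same approach as the paper: the paper's proof is the one-line observation that the Lie-admissibility identity is, by definition, equivalent to the Jacobi relation for the skew-symmetrised bracket, which is already known to hold since $\{-,-\}$ is a Lie bracket. Your write-up is more detailed, but the core idea is identical; the preliminary well-definedness check you flag is implicit in the paper's construction of $\ast$ as a summand of $\{-,-\}$.
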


\begin{proof}
	This is actually the definition of a Lie-admissible bracket: its  skew-symmetrised bracket satisfies the  Jacobi relation.
\end{proof}

\begin{remark}
Contrary to what is claimed in \cite[Section~2]{IKV19}, the operation $\ast$ does not satisfy any stronger relation, like the pre-Lie relation, in general.
\end{remark}

Working \emph{mutatis mutandis} with the anti-cyclic non-symmetric cooperad $\cAAs^\antish$
encoding cyclic curved $\rmA_\infty$-algebras, one gets the following more general context.

\begin{definition}[Curvature necklace Lie-admissible algebra]\label{def::necklaceLiealg}
	The \emph{curvature necklace Lie-admissible algebra} associated to the dg vector space $A$ is
	defined by
	\[
		\cnec_A\coloneq \left(
		\susp
		\prod_{N \geqslant 1}	\left(
		\bigoplus_{1\leqslant m < N}
		\left(
		\bigoplus_{\lambda_1+\cdots+\lambda_m=n}
		A\otimes
		\left((\susp A)^*\right)^{\otimes \lambda_1}
		\otimes
		\cdots\otimes  A \otimes
		\left((\susp A)^*\right)^{\otimes \lambda_m}
		\right)^{\C_m}\right)	,
		d, \ast\right) \ .
	\]
\end{definition}

There is a canonical embedding of dg Lie-admissible algebras
\[\nec_A \hookrightarrow \cnec_A~.\]
The skew-symmetrisation of the Lie-admissible product $\ast$ produces a Lie bracket and thus the curvature necklace Lie algebra.

\begin{definition}[Tensorial curved pre-Calabi--Yau algebra]
	A structure of an \emph{tensorial curved pre-Calabi--Yau algebra} on a graded vector space $A$ is a Maurer--Cartan element in the
	curved necklace  Lie-admissible algebra, that is a degree $-1$ element $\alpha$ satisfying:
	\[\tfrac12\left\{\alpha, \alpha\right\}=\alpha\ast \alpha= 0~.\]

\end{definition}

Otherwise stated, a tensorial curved pre-Calabi--Yau structure on a graded vector space $A$ corresponds to a shifted  cyclic curved $\rmA_\infty$-algebra structure on $\susp A \oplus A^*$, i.e. a  cyclic curved $\rmA_\infty$-algebra structure on $A \oplus (\susp A)^*$, with trivial terms on $\left((\susp A)^*\right)^{\otimes N}$~. In particular, it carries a curved $\rmA_\infty$-algebra structure on $A$, which is a curved $\rmA_\infty$-sub-algebra of $A \oplus (\susp A)^*$ by this latter condition.

\subsection{Higher cyclic complex and the main theorem}
Recall that there is a canonical inclusion
\begin{equation}\label{eq:Inclusion}
	\begin{array}{rcl}
		W\otimes V^* & \hookrightarrow & \Hom\left(V, W\right)            \\
		x\otimes f   & \mapsto         & \big( v  \mapsto  x f(v) \big)~,
	\end{array}
\end{equation}
which is an isomorphism if and only if $V$ is finite dimensional in each degree.
It leads to considering
$\Hom\left((\susp A)^{\otimes \lambda_1}\otimes \cdots \otimes (\susp A)^{\otimes \lambda_m},  A^{\otimes m}\right)$
instead of
$A\otimes \left((\susp A)^*\right)^{\otimes \lambda_m} \otimes  \cdots\otimes  A \otimes \left((\susp A)^*\right)^{\otimes \lambda_1}
$~.
Given two maps
\[\rmF\in \Hom _{\C_m}
	\left(	\bigoplus_{\lambda_1+\cdots+\lambda_m=n} \bigotimes_{j=1}^m
	(\susp A)^{\otimes \lambda_j},  A^{\otimes m}
	\right)\quad   \text{and} \quad
	\rmG\in \Hom _{\C_{m'}}
	\left(	\bigoplus_{\lambda'_1+\cdots+\lambda'_{m'}=n'} \bigotimes_{j=1}^{m'}
	(\susp A)^{\otimes \lambda'_j},  A^{\otimes m'}
	\right)~, \]
we identify the suspension $\susp \rmF$ with the induced map
$\bigotimes_{j=1}^{m}
	(\susp A)^{\otimes \lambda_j} \to \susp A \otimes A^{\otimes m-1}$~, and similarly for $\susp \rmG$~.
For any $1\leqslant i \leqslant n$, we denote by $\susp\rmF \circledast^1_i \susp\rmG$:
\begin{multline*}
	(\susp A)^{\otimes \lambda_1} \otimes \cdots \otimes (\susp A)^{\otimes \lambda_{k-1}}
	\otimes (\susp A)^{\otimes \lambda'_1+\lambda_k-l}\otimes (\susp A)^{\otimes \lambda'_2}\otimes \cdots \otimes
	(\susp A)^{\otimes k-1+\lambda'_{m'}} \otimes
	(\susp A)^{\otimes \lambda_{k+1}}\otimes \cdots \otimes (\susp A)^{\otimes \lambda_m} \\
	\to \susp A^{\otimes m+m'-1}~,
\end{multline*}
with $i=\lambda_1+\cdots+\lambda_{k-1}+l$ and $1\leqslant l \leqslant \lambda_k$,
the partial composite of the two multilinear operations $\susp \rmF$ and $\susp \rmG$
including the permutations of inputs and outputs depicted in  \cref{Fcirc1iG}, for it to leave in the space of cyclically invariant maps.

\begin{figure*}[h!]
	\begin{tikzpicture}[scale=1,baseline =(n.base)]
		\node (n) at (1,2) {};
		\draw[thick] 
		(2,4.8) --  (2,1.5) 
		(3.2,4.8) to[out=270,in=90] (5.7,1.5) --
		(5.7,0.8) to[out=270,in=90] (3,-2)
		node[below] {\small $k+1$}
		(6,4.8) to[out=270,in=90] (8.2,1.5) --
		(8.2,0.8) to[out=270,in=90] (5.8,-2) node[below] {\small $k+m'-1$};
		\draw[thick] 
		(-1.4,6.5)  to[out=270,in=90] (-1,1.5)
		(-1.2,6.5)  to[out=270,in=90] (-0.8,1.5)
		(-1,6.5) to[out=270,in=90] (-0.6,1.5)
		(-1.2,6.56) node[above] {\small$\lambda_1$}
		(0.4,6.5)  to[out=270,in=90] (0.8,1.5)
		(0.6,6.5)  to[out=270,in=90] (1,1.5)
		(0.8,6.5) to[out=270,in=90] (1.2,1.5)
		(0.6,6.56) node[above] {\small$\lambda_{k-1}$}
		(1.8,6.5) to[out=270,in=90] (1.8,5.5)
		(2,6.5) to[out=270,in=90] (2,5.5)
		(2.1,6.5) node[above] {\small $\lambda'_1+\lambda_k-l\quad $}
		(3,6.5) to[out=270,in=90] (3,5.5)
		(3.2,6.5) node[above] {\small $\lambda'_2$}
		to[out=270,in=90] (3.2,5.5)
		(3.4,6.5) to[out=270,in=90] (3.4,5.5)
		(6,6.5) to[out=270,in=90] (6,5.5)
		(6.2,6.5) to[out=270,in=90] (6.2,5.5)
		(5.9,6.5) node[above] {\small $l-1+\lambda'_{m'}$}
		;
		\draw[draw=white,double=black,double distance=2*\pgflinewidth,thick]
		(7,6.5) -- (7,4.8) to[out=270,in=90] (2.8,1.5)
		(7.2,6.5)
		-- (7.2,4.8) to[out=270,in=90] (3,1.5)
		(7.4,6.5) -- (7.4,4.8) to[out=270,in=90] (3.2,1.5)
		(7.3,6.56)  node[above] {\small $\lambda_{k+1}$}
		(9,6.5) -- (9,4.8) to[out=270,in=90] (4.8,1.5)
		(9.2,6.5)
		-- (9.2,4.8) to[out=270,in=90] (5,1.5)
		(9.4,6.5) -- (9.4,4.8) to[out=270,in=90] (5.2,1.5)
		(9.2,6.56) node[above] {\small $\lambda_{m}$};
		%
		\draw[thick]
		(-0.8,0.8) -- (-0.8,-2) node[below] {\small $1$}
		(1,0.8) -- (1,-2) node[below] {\small $k-1$}
		(2,0.8) -- (2,-2) node[below] {\small $k$}
		(0,-2.1) node[below] {\small $\cdots$}
		(4.3,-2.1) node[below] {\small $\cdots$}
		(8.27,-2.1) node[below] {\small $\cdots$}
		(-0.3,6.6) node[above] {\small $\cdots$}
		(4.3,6.6) node[above] {\small $\cdots$}
		(8.3,6.6) node[above] {\small $\cdots$}	;
		\draw[draw=white,double=black,double distance=2*\pgflinewidth,thick]
		(3,0.8) to[out=270,in=90] (7.4,-2) node[below] {\small $k+m'$}
		(5,0.8) to[out=270,in=90] (9.4,-2) node[below] {\small $m+m'-1$};
		\draw[thick]
		(2.2,4.7) to[out=270,in=90] (2.2,1.5)
		(2.4,4.7)  to[out=270,in=90] (2.4,1.5)
		(2.2,6.5) to[out=270,in=90] (2.2,5.6)
		(2.4,6.5)  to[out=270,in=90] (2.4,5.6);
		\draw[draw=white,double=black,double distance=2*\pgflinewidth,thick] 
		(5.6,6.5) -- (5.6,5.6)
		(5.6,4.7)  to[out=270,in=90] (1.6,1.5)
		(5.8,6.5) -- (5.8,5.6)
		(5.8,4.7)  to[out=270,in=90] (1.8,1.5);
		%
		\draw[fill=white]
		(-1.1,0.8) rectangle (5.3,1.5) node[midway] {\small $\susp\rmF$}
		(1.7,4.8) rectangle (6.3,5.5) node[midway] {\small $\susp\rmG$};
	\end{tikzpicture}
	\caption{Illustration of the partial composite $\susp\rmF \circledast^1_i \susp\rmG$~.}
	\label{Fcirc1iG}
\end{figure*}

\begin{definition}[Higher cyclic complex]\label{def:HighHochCx}
	The \emph{higher cyclic complex} associated to a dg vector space $A$
	is the Lie-admissible algebra
	\[
		\hhc_A \coloneq
		\left(\susp
		\prod_{N \geqslant 1}
		\left(
		\bigoplus_{1\leqslant m < N}
		\Hom _{\C_m}
		\left(	\bigoplus_{\lambda_1+\cdots+\lambda_m=n} \bigotimes_{j=1}^m
		(\susp A)^{\otimes \lambda_j},  A^{\otimes m}
		\right)
		\right) \ , \partial \ , \circledast \right) \ ,
	\]
	where
	\[\susp \rmF\circledast \susp \rmG \coloneq \sum_{i=1}^n \susp \rmF \circledast^1_i \susp\rmG ~.\]
\end{definition}

\begin{proposition}
	The inclusion \eqref{eq:Inclusion}  induces a monomorphism of dg Lie-admissible algebras
	\[\mathfrak{cneck}_A \hookrightarrow \hhc_A~, \]
	which is an isomorphism if and only if $A$ is finite dimensional in each degree.
\end{proposition}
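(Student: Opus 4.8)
The plan is to transport the algebraic structure across the inclusion \eqref{eq:Inclusion} termwise and check that it respects everything in sight. First I would observe that the inclusion \eqref{eq:Inclusion}, applied with $V = \susp A$ (arity by arity, via $V = (\susp A)^{\otimes \lambda_1}\otimes\cdots\otimes(\susp A)^{\otimes\lambda_m}$) and $W = A^{\otimes m}$, gives for each ordered partition $\lambda_1+\cdots+\lambda_m=n$ a monomorphism
\[
A\otimes\left((\susp A)^*\right)^{\otimes\lambda_1}\otimes\cdots\otimes A\otimes\left((\susp A)^*\right)^{\otimes\lambda_m}
\hookrightarrow
\Hom\left(\bigotimes_{j=1}^m(\susp A)^{\otimes\lambda_j},\, A^{\otimes m}\right)~,
\]
after reordering the tensor factors; this uses the Koszul sign rule to move the $A$-factors to the left and the dual factors to the right, which introduces a sign depending only on the internal degrees. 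I would then take $\C_m$-invariants on both sides (the functor $\Hom(-,-)$ carries a $\C_m$-action by conjugation, matching the cyclic rotation of blocks), sum over $m$ and over partitions, take the product over $N\geqslant 1$, and apply the suspension $\susp$. The resulting map $\Phi\colon \cnec_A\to\hhc_A$ is injective because \eqref{eq:Inclusion} is, and taking invariants, direct sums, products and a single suspension all preserve injectivity in $\dgVect$ over a field of characteristic $0$.

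Next I would verify that $\Phi$ is a morphism of differential graded Lie-admissible algebras, i.e.\ that it intertwines the differentials and the products $\ast$ and $\circledast$. Compatibility with the differentials is immediate from functoriality of \eqref{eq:Inclusion} in $V$ and $W$ together with the definition $d_{A^*}(f) = -(-1)^{|f|} f\circ d_A$; it amounts to the standard fact that $x\otimes f\mapsto (v\mapsto x f(v))$ is a chain map. For the products, the key point is the one flagged in the text just before \cref{def:HighHochCx}: the Lie-admissible product $\ast$ on $\cnec_A$ is precisely the summand of the (shifted) Lie bracket inherited from $\widehat{\Hom}(\cAAs^\ac,\EEnd_{\susp A\oplus A^*})$ in which the linearity pairing $\langle f,\susp x\rangle$ is applied with $f\in A^*$ coming from the left factor. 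Under \eqref{eq:Inclusion}, contracting $f\in A^*$ coming from $X$ against $\susp x\in \susp A$ coming from $Y$ is exactly plugging the $i$-th input of $Y$'s underlying map into one of $X$'s outputs; this is visibly the partial composite $\susp\rmF\circledast^1_i\susp\rmG$ of \cref{Fcirc1iG}, and summing over all such contractions gives $\circledast$. So the identity $\Phi(X\ast Y)=\Phi(X)\circledast\Phi(Y)$ is really a bookkeeping statement about which tensor factor is paired with which; the nontrivial content is that the signs match, which I would pin down by comparing the sign rule for $\circ_i$ in $\EEnd_V$ (given in \cref{ex:cyc_ns_operads}) with the Koszul signs produced by the reordering isomorphism defining $\Phi$ together with the permutations of inputs and outputs built into $\circledast^1_i$. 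Since both $\ast$ and $\circledast$ are defined as sums of such partial operations, establishing it for a single $\circledast^1_i$ suffices.

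Finally, for the last clause I would argue that $\Phi$ is an isomorphism if and only if \eqref{eq:Inclusion} is surjective for all the relevant $V$ and $W$, which by the quoted property of \eqref{eq:Inclusion} happens exactly when $\susp A$ — equivalently $A$ — is finite dimensional in each degree. For the ``only if'' direction, if $A$ has an infinite-dimensional homogeneous component then already in arity $N=2$, $m=1$, $\lambda_1=1$ one has $A\otimes(\susp A)^*\subsetneq\Hom(\susp A,A)$, so $\Phi$ fails to be onto; for the ``if'' direction, finite dimensionality in each degree makes \eqref{eq:Inclusion} an isomorphism, hence so is its value on each summand, and taking invariants, sums, products and a suspension preserves isomorphisms.

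The main obstacle I anticipate is purely the sign verification in the second paragraph: one must match the Koszul sign incurred by permuting the $A$ and $(\susp A)^*$ factors past one another when writing down $\Phi$, the sign in the partial composition $\circ_i$ of $\EEnd_{\susp A\oplus A^*}$ from \cref{ex:cyc_ns_operads} (with the extra desuspension twist of \cref{ex:End_shifted_cyc_ns_operad}), and the sign hidden in the input/output permutations of \cref{Fcirc1iG} that force the composite back into the cyclically invariant subspace. These should cancel to give a clean identity $\Phi(X\ast Y)=\Phi(X)\circledast\Phi(Y)$, but verifying this cancellation is the one genuinely computational step; everything else is formal naturality of \eqref{eq:Inclusion} and exactness of the functors applied to it.
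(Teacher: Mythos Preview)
Your approach is essentially the same as the paper's: both construct the map via the inclusion \eqref{eq:Inclusion} after reordering tensor factors, and both reduce the verification to a Koszul-sign computation showing $\ast$ is sent to $\circledast$. The paper is slightly more explicit in that it writes down the map on basic tensors as $\Psi(\susp\, a_1\otimes f_1\otimes\cdots\otimes a_m\otimes f_m)=(-1)^\zeta\,\susp\, a_1\otimes\cdots\otimes a_m\otimes f_m\otimes\cdots\otimes f_1$ with a concrete sign $\zeta$, whereas you leave the reordering and sign implicit; note in particular that the paper reverses the order of the $f_j$'s, a detail you would need when carrying out the computation you flag as the main obstacle.
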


\begin{proof}
	It is straightforward to check that the product $\circledast$ is well-defined, that is it lands in cyclically invariant maps, and that it is Lie-admissible. (This is also a direct corollary of the proof of \cref{thm:main} below.)
	The map $\Psi : \mathfrak{cneck}_A \to \hhc_A$ is explicitly given by
	\[\Psi\left(\susp \, a_1 \otimes f_1 \otimes \cdots \otimes a_m\otimes f_m\right)
		\coloneq
		(-1)^\zeta\, \susp \,
		a_1 \otimes \cdots \otimes a_m \otimes f_m \otimes \cdots \otimes f_1~,
	\]
	with $a_j\in A$ and $f_j\in \left((\susp A)^*\right)^{\otimes \lambda_{j}}$, for $1\leqslant j \leqslant m$,
	where
	\[
		\zeta=|f_1|(|a_2|+|f_2|+\cdots+|a_m|+|f_m|)+|f_2|(|a_3|+|f_3|+\cdots+|a_m|+|f_m|)+\cdots +
		|f_{m-1}|(|a_m|+|f_m|)~.
	\]
	and
	where the right-hand term is interpreted as a map in
	$\susp \Hom\left((\susp A)^{\otimes \lambda_1}\otimes \cdots \otimes (\susp A)^{\otimes \lambda_m},  A^{\otimes m}\right)$
	under the inclusion \eqref{eq:Inclusion}.
	Using the Koszul sign rule and the Koszul sign convention, it is straightforward to check that
	\begin{multline*}
		\Psi\left(\susp \,  a_1 \otimes f_1 \otimes \cdots \otimes a_m\otimes f_m\right)
		\circledast
		\Psi\left(\susp \, b_1 \otimes g_1 \otimes \cdots \otimes b_{m'}\otimes g_{m'}\right)\\=
		\Psi\big((\susp \, a_1 \otimes f_1 \otimes \cdots \otimes a_m\otimes f_m)
		\ast
		(\susp \, b_1 \otimes g_1 \otimes \cdots \otimes b_{m'}\otimes g_{m'})\big)~.
	\end{multline*}
\end{proof}

\begin{definition}[Curved pre-Calabi--Yau algebra]
	A structure of \emph{a curved pre-Calabi--Yau algebra} on a graded vector space $A$ is a Maurer--Cartan element in the
	higher cyclic Lie-admissible algebra $\hhc_A$,  that is a degree $-1$ element $\alpha$ satisfying:
	\[\tfrac12\left\{\alpha, \alpha\right\}=\alpha\ast \alpha= 0~.\]
\end{definition}

The main result of this paper is the following theorem.

\begin{theorem}\label{thm:main}
	For any dg vector space $A$,
	there is a canonical and functorial isomorphism of dg Lie-admissible algebras
	\[
		\mathfrak{c}\convDPois_{A}  \cong \hhc_A \ .
	\]
\end{theorem}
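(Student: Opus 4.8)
The plan is to build the isomorphism $\Phi\colon \mathfrak{c}\convDPois_A \to \hhc_A$ explicitly: first as an isomorphism of graded vector spaces, then to check that it intertwines the two differentials and the two Lie-admissible products. By \cref{prop:chDPois} an element of $\mathfrak{c}\convDPois_A=\widehat{\Hom}\left(\mathrm{c}\DPois^\antish,\End_A\right)$ is the same datum as a collection of operations $\calm_{\lambda_1,\ldots,\lambda_m}\colon A^{\otimes n}\to A^{\otimes m}$ of degree $n-2$, indexed by ordered partitions $\lambda_1+\cdots+\lambda_m=n$ with $n\geqslant 0$, subject only to the cyclic skew-symmetry relation inherited from \eqref{eq:SignCyclicDual} (this is where the description $\mathrm{c}\DPois^\antish\cong \DLie^\antish\boxtimes(\field u^*\oplus\As^\antish)$ of \cref{Lem:uDPois!} and the explicit basis of \cref{lemm:Dpois!} enter). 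An element of $\hhc_A$ is a collection of $\C_m$-invariant maps $\bigotimes_{j=1}^m(\susp A)^{\otimes\lambda_j}\to A^{\otimes m}$. Precomposition with the desuspensions $\asusp^{\otimes n}$ together with a Koszul sign identifies $\Hom\left(\bigotimes_j(\susp A)^{\otimes\lambda_j},A^{\otimes m}\right)$ with $\Hom\left(A^{\otimes n},A^{\otimes m}\right)$ in the matching degree, and under this identification the $\C_m$-action permuting the blocks of inputs and rotating the outputs is precisely the cyclic skew-symmetry twist. So $\Phi$, sending $\calm_{\lambda_1,\ldots,\lambda_m}$ to the corresponding map, decorated with the sign $\zeta$ used in the definition of $\Psi$ above (together with the bookkeeping sign of type $\tfrac{N(N+1)}{2}+\cdots$ from the identification $\widehat{\Hom}\left(\AAs^\antish,\EEnd_{\susp A\oplus A^*}\right)$), is a well-defined isomorphism of graded vector spaces.

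\textbf{Differentials.}
Both differentials are induced solely by $d_A$: on $\mathfrak{c}\convDPois_A$ this is because $\mathrm{c}\DPois^\antish=\left(\mathrm{u}\DPois^!\right)^*$ carries the zero internal differential, so $\partial$ is conjugation by $d_{\End_A}$; on $\hhc_A$ the differential is likewise the one induced by $d_{\EEnd_{\susp A\oplus A^*}}$. Hence intertwining $\partial$ reduces to checking that the sign carried by $\Phi$ is compatible with the standard differential of a suspended $\Hom$-complex, which is routine.

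\textbf{Products.}
This is the heart. By definition $\alpha\star\beta=\gamma\circ(\alpha\ibt\beta)\circ\Delta_{(1,1)}$, where $\Delta_{(1,1)}$ is the infinitesimal decomposition map of the partial coproperad $\mathrm{c}\DPois^\antish$; this is obtained from \cref{prop:InfDecDPoisAC} by passing to the unital extension, so each basis element $\nu_{\lambda_1,\ldots,\lambda_m}$ decomposes into a signed sum of two-vertex stairways indexed by $k$, $\sigma\in\C_m$, $0\leqslant p\leqslant\lambda_{\sigma(m)}$, $0\leqslant q\leqslant\lambda_{\sigma(k)}$, the extremal values of $p,q$ accounting for the generator $u^*$. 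Feeding each summand through the infinitesimal composition product $\gamma$ of $\End_A$ (a single $\circ^1_i$) produces exactly one term $\sigma^{-1}\cdot\left(\calm_{\ldots}\circ^1_i\calm_{\ldots}\right)\cdot\omega$ with $i$, $\omega$, $\xi$ as in \eqref{eq:RelHoDoublePoiss}. On the other side, by its very construction (\cref{Fcirc1iG}) the partial composite $\susp\rmF\circledast^1_i\susp\rmG$ is the operation of grafting the first output of $\rmG$ onto the $i$-th input of $\rmF$ together with exactly the block and output permutations forced by cyclic invariance --- i.e. the same combinatorial datum of splitting a necklace along a single edge. The plan is therefore to set up a term-by-term bijection between the summands of $\gamma\circ(\alpha\ibt\beta)\circ\Delta_{(1,1)}$ and the summands of $\Phi(\alpha)\circledast\Phi(\beta)=\sum_i\Phi(\alpha)\circledast^1_i\Phi(\beta)$, and to verify that the total signs agree: on the source side, $\xi$ together with the Koszul sign from commuting $\alpha$ past the bottom vertex of $\Delta_{(1,1)}$ and with the sign distortion introduced by $\Phi$ and by the desuspensions $\asusp^{\otimes n}$; on the target side, the sign built into $\circledast^1_i$ together with the $\zeta$-signs of $\Phi(\alpha)$ and $\Phi(\beta)$.

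\textbf{Main obstacle and conclusion.}
The sole serious difficulty is precisely this last sign verification, which is the long and delicate computation the authors announce will be carried out in \cref{appendice}. Once it is settled, functoriality is automatic, since $\End_A$, $\EEnd_{\susp A\oplus A^*}$, all the (co)operadic and (co)properadic structure maps, and the sign-carrying bijection $\Phi$ are natural in $A$; and an additive isomorphism commuting with $\partial$ and carrying $\star$ to $\circledast$ is by definition an isomorphism of dg Lie-admissible algebras. Composing with the monomorphism $\cnec_A\hookrightarrow\hhc_A$ of the preceding proposition --- an isomorphism exactly when $A$ is degree-wise finite dimensional --- then recovers the chain $\cnec_A\hookrightarrow\hhc_A\xrightarrow{\ \cong\ }\mathfrak{c}\convDPois_A$ of the introduction.
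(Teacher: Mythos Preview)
Your outline matches the paper's approach exactly: the appendix defines the isomorphism $\Upsilon$ by an explicit formula on basis elements, checks $\C_m$-equivariance, notes that the underlying graded spaces match by \cref{lemm:Dpois!} and \cref{Lem:uDPois!}, and then verifies $\Upsilon(\bar\rmF\star\bar\rmG)=\Upsilon(\bar\rmF)\circledast\Upsilon(\bar\rmG)$ by a term-by-term sign comparison.

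That said, what you have is a correct \emph{plan}, not a proof. The entire content of the theorem is the sign computation you defer, and two pieces are missing that the paper actually supplies. First, you never write down the isomorphism: the references to ``the sign $\zeta$ used in $\Psi$'' and ``a bookkeeping sign of type $\tfrac{N(N+1)}{2}+\cdots$'' are gestures toward the right ingredients, but the paper commits to the precise formula
\[
\Upsilon(\bar\rmF)_{\lambda_1,\ldots,\lambda_m}(\susp a_1\otimes\cdots\otimes\susp a_n)
=(-1)^{|\bar\rmF|+\nabla(a_1,\ldots,a_n)+\lambda_m+\frac{n(n+1)}{2}}\,\susp\,\bar\rmF(\nu_{\lambda_1,\ldots,\lambda_m})(a_1\otimes\cdots\otimes a_n),
\]
and then separately checks that this is $\C_m$-equivariant (a nontrivial page of signs using \eqref{eq:SignCyclicDual}); you assert equivariance without verification. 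Second, the product compatibility is not a matter of ``setting up a bijection'' --- the bijection is obvious from \cref{prop:InfDecDPoisAC} versus \cref{Fcirc1iG} --- but of tracking two large sign expressions $C$ and $D$ (each with half a dozen sources: Koszul permutations of the $\susp a_j$ and $\susp a'_k$, the sign $\theta$ from $\Delta_{(1,1)}$, the degree of the bottom vertex commuted past $\bar\rmG$, the $\nabla$-shifts, and the normalization $\tfrac{n(n+1)}{2}$) and showing $C\equiv D\bmod 2$. Until that is written out, there is no proof.
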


\begin{proof}
	Since the proof is a straightforward, long, and not really enligthening computation, we postpone it to  \cref{appendice}.
\end{proof}

In the end, we get the following monomorphism of dg Lie-admissible algebras
\[
	\cnec_A \hookrightarrow \hhc_A\cong	\mathfrak{c}\convDPois_A~,
\]
which is an isomorphism if and only if $A$ is degree-wise finite dimensional.

\begin{corollary}\label{cor::hDPois_pCY}\leavevmode

\begin{enumerate}
\item Curved pre-Calabi--Yau algebra structures  are equivalent to curved homotopy double Poisson gebra structures.
\item Any tensorial curved pre-Calabi--Yau algebra carries a canonical curved pre-Calabi--Yau algebra structure. These two notions are equivalent when $A$ is degree-wise finite dimensional. 
\end{enumerate}
\end{corollary}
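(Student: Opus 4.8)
The plan is to deduce both parts of the corollary formally from the structural results already established, using only the elementary observation that a morphism of dg Lie-admissible algebras $\phi\colon(\g,\partial,\star)\to(\g',\partial',\star')$ carries Maurer--Cartan elements to Maurer--Cartan elements: the equation $\partial\alpha+\alpha\star\alpha=0$ involves only the differential and the binary product, both of which $\phi$ intertwines, and $\phi$ preserves the homological degree, so the constraint $|\alpha|=-1$ is transported as well; consequently an isomorphism of dg Lie-admissible algebras induces a bijection between the two Maurer--Cartan sets. Naturality in $A$ will follow from the functoriality built into each of the constructions $\cnec_A$, $\hhc_A$, and $\mathfrak{c}\convDPois_A$.

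For part (1), I would simply unwind the definitions. By \cref{def:HoCuDPois} a curved homotopy double Poisson gebra structure on $A$ is a Maurer--Cartan element of $\mathfrak{c}\convDPois_A$, whereas a curved pre-Calabi--Yau algebra structure on $A$ is a Maurer--Cartan element of $\hhc_A$. The canonical and functorial isomorphism of dg Lie-admissible algebras $\mathfrak{c}\convDPois_A\cong\hhc_A$ of \cref{thm:main} then restricts to a bijection between the two sets of structures on $A$, natural in $A$.

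For part (2), I would invoke the monomorphism of dg Lie-admissible algebras $\cnec_A\hookrightarrow\hhc_A$ induced by the inclusion~\eqref{eq:Inclusion}, proved just above. Since a tensorial curved pre-Calabi--Yau algebra structure on $A$ is by definition a Maurer--Cartan element of $\cnec_A$, its image in $\hhc_A$ is a Maurer--Cartan element, i.e.\ a curved pre-Calabi--Yau algebra structure on $A$; this is the canonical structure in the statement. When $A$ is degree-wise finite dimensional, the inclusion~\eqref{eq:Inclusion} becomes an isomorphism, hence $\cnec_A\hookrightarrow\hhc_A$ is an isomorphism of dg Lie-admissible algebras, and the induced map on Maurer--Cartan sets is a bijection, yielding the claimed equivalence.

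I do not expect a genuine obstacle here: the corollary is a direct consequence of \cref{thm:main} together with the monomorphism $\cnec_A\hookrightarrow\hhc_A$, and the only thing to be careful about is purely a matter of bookkeeping, namely checking that the two comparison maps in play are honest degree-preserving morphisms of dg Lie-admissible algebras so that they transport the degree constraint and the Maurer--Cartan equation without spurious signs, which has already been recorded. The entire computational weight of the equivalence sits in \cref{thm:main}, whose proof is postponed to \cref{appendice}.
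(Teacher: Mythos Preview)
Your proposal is correct and follows exactly the approach the paper intends; the paper's own proof is the single sentence ``It is a direct corollary of \cref{thm:main}.'' You have simply made explicit the routine Maurer--Cartan transport along the isomorphism $\hhc_A\cong\mathfrak{c}\convDPois_A$ and along the monomorphism $\cnec_A\hookrightarrow\hhc_A$, which is precisely what that sentence is shorthand for.
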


\begin{proof}
	It is a direct corollary of \cref{thm:main}.
\end{proof}


\section{Pre-Calabi--Yau algebras, $\rmV_\infty$-gebras, and homotopy double Poisson gebras}\label{sec:pCYViDP}

In this section, we recall the more commun notion of a \emph{pre-Calabi--Yau algebra} \cite{KTV21}, obtained after killing the curvature  in the abovementioned equivalent notions, and we compare it to the notion of a $\rmV_\infty$-gebras \cite{PT19, TZ07Bis}. 
Homotopy double Poisson gebras are particular examples of pre-Calabi-Yau algebras which, in turn, form the genus $0$ part of $\rmV_\infty$-gebras. These three algebraic categories are actually encoded by different but related coproperads. So we make the various properadic decomposition maps explicit using a certain combinatorics prompted naturally by the slicing of planar corollas of the previous section. In the end, the first combinatorial objects that we get are equivalent to the ones used in \cite{KTV21}: this shows their properadic origin and this will allow us to settle all the homotopical properties of pre-Calabi--Yau algebras, homotopy double Poisson gebras, and $\rmV_\infty$-gebras, in the next section. 

\subsection{Pre-Calabi--Yau algebras and $\rmV_\infty$-gebras}\label{subsec:TypesofAlg}

The notion of a \emph{curved} pre-Calabi--Yau algebra is not the one which appears in the literature so far: the most commun notion has no curvature. Since the data of a curved pre-Calabi--Yau algebra is equivalent to the data of a homotopy curved double Poisson gebra by \cref{cor::hDPois_pCY}, this notion is made up of structure maps 
$\calm_{\lambda_1, \ldots, \lambda_m} \colon A^{\otimes n} \too A^{\otimes m}$~, for any 
partition $\lambda_1+\cdots+\lambda_m=n$ of $n\geqslant 0$~, 
see  \cref{prop:chDPois}.

\begin{definition}[{Pre-Calabi--Yau algebra \cite{KTV21}}]\label{def:pCY}
	A \emph{pre-Calabi--Yau algebra} is
	a curved pre-Calabi--Yau algebra whose curvature vanishes, i.e. $\calm_0 =0$. In other words, 
		a pre-Calabi--Yau algebra is a  dg vector space $(A, d)$ equipped with a collection of operations
	\[
		\calm_{\lambda_1, \ldots, \lambda_m} \colon A^{\otimes n} \too A^{\otimes m}
	\]
	of degree $n-2$, for any ordered partition $\lambda_1+\cdots+\lambda_m=n$ of $n\geqslant 0$ into non-negative integers, for $m\geqslant 2$, and $\lambda_1\geqslant 2$, for $m=1$, satisfying the following relations.
	\begin{description}
		\item[\sc cyclic skew symmetry]
		      \[\calm_{\lambda_2, \ldots, \lambda_m, \lambda_1}=
			      (-1)^{\lambda_1(n-m-1)-n}
			      \ \tau_m^{-1}\cdot\calm_{\lambda_1, \lambda_2, \ldots, \lambda_m}\cdot \tau_{\lambda_1, \ldots, \lambda_m}
			      ~, \]
		      where $\tau_{\lambda_1, \ldots, \lambda_m} \in \Sy_n$ permutes cyclically the blocks of  size $\lambda_1, \ldots, \lambda_m$~.

		\item[\sc pre-Calabi--Yau relations]
  \begin{multline}\label{eq:pCYrelations}
			      \partial \left(\calm_{\lambda_1, \ldots, \lambda_m}\right)=\\
			      \sum_{k=1}^{m}
			      \sum_{\sigma \in \C_m}
			      \sum_{0\leqslant p \leqslant  \lambda_{\sigma(m)}\atop 0\leqslant q \leqslant  \lambda_{\sigma(k)}}
			      (-1)^{\xi}\,
			      \sigma^{-1}\cdot\left(\calm_{\lambda_{\sigma(1)}, \ldots, \lambda_{\sigma(k-1)}, p+1+q}
			      \circ^1_{i}
			      \calm_{\lambda_{\sigma(k)-q}, \lambda_{\sigma(k+1)}, \ldots, \lambda_{\sigma(m-1)},
				      \lambda_{\sigma(m)-p}}\right)\cdot \omega~,
		      \end{multline}		      with the same notations as in \eqref{eq:RelHoDoublePoiss}.
	\end{description}
	\end{definition}

\begin{proposition}	
The category of pre-Calabi--Yau algebras is the category of gebras over the cobar construction of the infinitesimal codioperad 
\[\rmC_{\rm pCY}\coloneq 
\frac{\mathrm{c}\DPois^\antish}{\field u^*}
\cong \left(\frac{\mathrm{u}\DPois^!}{\field u}\right)^*~.\]
\end{proposition}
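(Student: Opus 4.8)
The goal is to identify the category of pre-Calabi--Yau algebras with the category of gebras over $\Cobar\,\rmC_{\mathrm{pCY}}$ for the partial codioperad $\rmC_{\mathrm{pCY}}=\mathrm{c}\DPois^\antish/\field u^*\cong(\mathrm{u}\DPois^!/\field u)^*$. The strategy is to package the three ingredients already established in the excerpt: (1) by \cref{def:HoCuDPois} and \cref{prop:chDPois}, homotopy curved double Poisson gebra structures on $A$ are Maurer--Cartan elements of $\mathfrak{c}\convDPois_A=\widehat{\Hom}(\mathrm{c}\DPois^\antish,\End_A)$, i.e. twisting morphisms from the partial coproperad $\mathrm{c}\DPois^\antish$ to $\End_A$; (2) by \cref{cor::hDPois_pCY}(1), these are the same as curved pre-Calabi--Yau algebras, and \cref{def:pCY} singles out the genuine pre-Calabi--Yau algebras as those for which the curvature operation $\calm_0$ vanishes; (3) the standard bar--cobar adjunction for (partial co)properads identifies twisting morphisms $\Tw(\rmC,\End_A)$ with morphisms of properads $\Cobar\,\rmC\to\End_A$, hence with $\Cobar\,\rmC$-gebra structures on $A$. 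So the proof reduces to checking that quotienting the coproperad $\mathrm{c}\DPois^\antish$ by the cogenerator $\field u^*$ dual to the unit generator $u$ precisely kills the structure operation $\calm_0$ and nothing else.

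\textbf{Key steps, in order.} First I would recall, following \cref{subsec::DPois_antish} and the definition of $\mathrm{u}\DPois^!$, that the only generator of $\mathrm{u}\DPois^!$ living in arity $(1,0)$ is $u$, so that the ideal $(\field u)$ it generates (together with the relations $\nu\circ_1 u=-\id$, $\nu\circ_2 u=\id$, and the genus/rewriting relations) has the property that $\mathrm{u}\DPois^!/(u)$ recovers exactly $\DPois^!$ — indeed \cref{Lem:uDPois!} already exhibits $\mathrm{u}\DPois^!\cong\DLie^!\boxtimes\rmS\mathrm{u}\As$ with $\DPois^!\cong\DLie^!\boxtimes\rmS\As$ as the unit-free part, and the remark after \cref{Lem:uDPois!} gives $\mathrm{c}\DPois^\antish\cong\DLie^\antish\boxtimes(\field u^*\oplus\As^\antish)$. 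Dualising, the sub-$\Sy$-bimodule $\field u^*$ is a subcoproperad-coideal (more precisely it is a coaugmentation-type summand, which is why $\mathrm{c}\DPois^\antish$ fails to be coaugmented), and the quotient partial coproperad $\rmC_{\mathrm{pCY}}=\mathrm{c}\DPois^\antish/\field u^*$ is well-defined, with underlying $\Sy$-bimodule $\DLie^\antish\boxtimes\As^\antish\cong\DPois^\antish$ — consistent with \cref{prop:decompo_duale_DPois}. Second, I would observe that under the twisting-morphism description, a twisting morphism $\alpha\colon\mathrm{c}\DPois^\antish\to\End_A$ restricts to a twisting morphism $\rmC_{\mathrm{pCY}}\to\End_A$ if and only if $\alpha$ vanishes on the component $\field u^*$; since $u^*$ sits in arity $(1,0)$ and is dual to the unit $u$, the value $\alpha(u^*)$ is precisely the structure map $\calm_0\in A$ in degree $-2$ (after the usual degree bookkeeping: $|u^*|=-1$, $\alpha$ of degree $-1$). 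Third, I would invoke \cref{prop:twisting} (or rather its partial-coproperad analogue, valid by the remark noting that only $\Delta_{(1,1)}$ is used in the convolution construction) to conclude $\Tw(\rmC_{\mathrm{pCY}},\End_A)\cong\Hom_{\mathsf{properads}}(\Cobar\,\rmC_{\mathrm{pCY}},\End_A)$, and translate \cref{prop:chDPois} plus \cref{def:pCY} into the statement that this set is exactly the set of pre-Calabi--Yau algebra structures on $A$. Finally I would note that morphisms on both sides agree: a morphism of pre-Calabi--Yau algebras (in whichever sense is fixed in the paper, e.g. strict or via the properadic framework of the next section) corresponds to a morphism compatible with the $\Cobar\,\rmC_{\mathrm{pCY}}$-gebra structures, by functoriality of the whole construction.

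\textbf{Main obstacle.} The delicate point is the bookkeeping around the failure of $\mathrm{c}\DPois^\antish$ to be coaugmented: the standard bar--cobar formalism (\cref{prop:twisting}) is stated for conilpotent coaugmented coproperads, whereas here $\rmC_{\mathrm{pCY}}$ is only a \emph{partial} coproperad (the iterations of $\Delta_{(1,1)}$ need not terminate, and there is a counit that is not coaugmented). One therefore has to be careful that $\Cobar\,\rmC_{\mathrm{pCY}}$ is still defined and that the correspondence twisting morphisms $\leftrightarrow$ cobar gebras still holds; this is exactly the kind of subtlety the paper flags after \cref{prop:PropDualPartCoprop} and in \cref{def:HoCuDPois}, and the clean way around it is to work directly with the Maurer--Cartan characterization in $\widehat{\Hom}(\rmC_{\mathrm{pCY}},\End_A)$ — which only ever uses $\Delta_{(1,1)}$ — rather than literally unwinding a cobar construction, and then to \emph{define} $\Cobar$ of a partial coproperad precisely so that its gebras are these Maurer--Cartan elements. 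A secondary but purely mechanical check is that removing $u^*$ does not accidentally remove any higher operadic syzygy: since $u^*$ is concentrated in arity $(1,0)$ and all the relations of $\mathrm{u}\DPois^!$ involving $u$ express it as a (co)unit, its dual only contributes the curvature, so quotienting is harmless for all operations $\calm_{\lambda_1,\dots,\lambda_m}$ with $n\geqslant 1$; this is where the explicit description in \cref{prop:InfDecDPoisAC} and \cref{prop:chDPois} does the work.
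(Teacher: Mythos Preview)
Your overall strategy matches the paper's: identify $\frac{\mathrm{u}\DPois^!}{\field u}$ with the section given by the sub-$\Sy$-bimodule of $\mathrm{u}\DPois^!$ with trivial arity-$(1,0)$ component, check that a properad structure is induced, dualise to a partial coproperad, and then invoke \cref{cor::hDPois_pCY} together with \cref{def:HoCuDPois} and \cref{def:pCY} to conclude that killing $u^*$ amounts precisely to imposing $\calm_0=0$. That part is fine.

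However, there is a concrete error in your description of the quotient. You write that ``the ideal $(\field u)$ it generates \dots\ has the property that $\mathrm{u}\DPois^!/(u)$ recovers exactly $\DPois^!$'' and that the underlying $\Sy$-bimodule of $\rmC_{\mathrm{pCY}}$ is $\DLie^\antish\boxtimes\As^\antish\cong\DPois^\antish$. Both claims are false. First, $\field u$ here is \emph{not} treated as an ideal: with the relations $\nu\circ_1 u=-\id$ and $\nu\circ_2 u=\id$, the two-sided ideal generated by $u$ contains $\id$ and the quotient would collapse. The paper's proof explicitly takes the quotient by the one-dimensional vector space in arity $(1,0)$ and identifies it with the canonical section, then checks by hand that the properad structure restricts. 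Second, and more importantly, the resulting properad is \emph{strictly larger} than $\DPois^!$: it retains all the cyclic stairway basis elements $\nu_{\lambda_1,\ldots,\lambda_m}$ with $\lambda_i\geqslant 0$ (only the single element in arity $(1,0)$ is removed), whereas $\DPois^!$ only has those with all $\lambda_i>0$. This is exactly why pre-Calabi--Yau algebras form a strictly larger class than homotopy double Poisson gebras, as the paper records in the proposition immediately following this one. Your later remark that ``quotienting is harmless for all operations $\calm_{\lambda_1,\dots,\lambda_m}$ with $n\geqslant 1$'' is correct and in fact contradicts your earlier claim.

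A second, smaller gap: the statement asserts that $\rmC_{\mathrm{pCY}}$ is a partial \emph{codioperad}, i.e.\ its decomposition map produces only genus~$0$ graphs. The paper verifies this by citing the vanishing of positive-genus composites established in \cref{prop:InfDecDPoisAC}; your proposal does not address this point.
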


\begin{proof}		
More precisely, we identify $\frac{\mathrm{u}\DPois^!}{\field u}$ with its canonical section, that is the sub-$\Sy$-bimodule of $\mathrm{u}\DPois^!$ with  trivial component of arity $(1,0)$. 
It is straightforward to check that the properad structure on $\mathrm{u}\DPois^!$ induces a canonical properad structure on it.
As proved in \cref{prop:InfDecDPoisAC}, any composite along a positive genus graph vanishes; so the linear dual 
infinitesimal coproperad forms a codioperad. The rest of the statement follows from \cref{cor::hDPois_pCY} and \cref{def:HoCuDPois}.
\end{proof}

\begin{remark}
An infinitesimal codioperad of \emph{multi-corollas}, defined by an infinitesimal decomposition map, and its cobar construction were  introduced in \cite[Section~6.5.3]{KTV21}, see also \cite{Yeung22}. 
It is shown in \cite[Proposition~47]{KTV21} to encode pre-Calabi--Yau algebras. 
So this infinitesimal codioperad of multi-corollas is isomorphic to the present infinitesimal codioperad $\rmC_{\rm pCY}$, see \cref{prop:CuttingInfDec} for a precise isomorphism. 
\end{remark}

The  infinitesimal codioperad $\rmC_{\rm pCY}$ is actually conilpotent since no infinite series can appear in the iterations of the infinitesimal decomposition maps. This property is mandatory to proceed further with the homotopical properties of pre-Calabi--Yau algebras: for instance, the notion of an $\infty$-morphism of \cref{sec:InftyMor} requires the \emph{full} decomposition map $\Delta \colon \rmC_{\rm pCY} \to \rmC_{\rm pCY} \boxtimes\rmC_{\rm pCY}$, that we  will make explicit in the next section~\ref{subsec:CoproduitsCpCY}. 

\begin{proposition}
The category of homotopy double Poisson gebras is equivalent to the sub-category of pre-Calabi--Yau algebras 
with vanishing structure maps 
$\calm_{\lambda_1, \ldots, \lambda_m}$, when at least one number of indices $\lambda_i$ is equal to $0$. 
\end{proposition}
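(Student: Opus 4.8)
The plan is to compare the structural (co)properads directly, exploiting the descriptions already obtained in the paper. Recall that a homotopy double Poisson gebra is a gebra over $\Omega\DPois^\antish$ and, by \cref{prop:chDPois} together with \cref{def:HoCuDPois}, a homotopy curved double Poisson gebra is a gebra over a cobar-type construction on the partial coproperad $\mathrm{c}\DPois^\antish = \left(\mathrm{u}\DPois^!\right)^*$. Similarly, by the proposition immediately preceding this statement, pre-Calabi--Yau algebras are gebras over the cobar construction of $\rmC_{\rm pCY} = \mathrm{c}\DPois^\antish/\field u^* \cong \left(\mathrm{u}\DPois^!/\field u\right)^*$. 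So the whole comparison reduces to an identification of partial coproperads: I claim that the canonical embedding of $\Sy$-bimodules
\[
	\DPois^\antish \hookrightarrow \rmC_{\rm pCY}
\]
induced by dualizing the projection of properads $\mathrm{u}\DPois^!/\field u \twoheadrightarrow \DPois^!$ (which kills the generators $u^*$ and the extra ``curvature-free'' unit contributions) is a map of partial coproperads, and that it identifies $\DPois^\antish$ with the sub-partial-coproperad of $\rmC_{\rm pCY}$ spanned by those basis elements $\nu_{\lambda_1,\ldots,\lambda_m}$ all of whose parts $\lambda_i$ are $\geqslant 1$.

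First I would set up the $\Sy$-bimodule isomorphism. Using \cref{Lem:uDPois!}, $\mathrm{u}\DPois^! \cong \DLie^! \boxtimes \rmS\mathrm{u}\As$ and hence $\mathrm{u}\DPois^!/\field u \cong \DLie^! \boxtimes \rmS\As$ (modding out the arity-$(1,0)$ unit of $\rmS\mathrm{u}\As$), while $\DPois^! \cong \DLie^! \boxtimes \rmS\As$ as well by the (suspension-twisted) version of \cref{prop:decompo_duale_DPois}; dualizing, $\rmC_{\rm pCY} \cong \DLie^\antish \boxtimes \As^\antish \cong \DPois^\antish$ as $\Sy$-bimodules. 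Thus on the level of underlying $\Sy$-bimodules the three coproperads $\DPois^\antish$, $\rmC_{\rm pCY}$, and their common sub-object agree, and the content is purely about the infinitesimal decomposition maps. Here the key observation is that \cref{prop:InfDecDPoisAC} gives an explicit formula for $\Delta_{(1,1)}$ on $\DPois^\antish$ indexed by ordered partitions into \emph{positive} integers, while \cref{prop:chDPois} (and the proof of \cref{thm:DoublePoisson}) gives the formula on $\rmC_{\rm pCY}$ indexed by ordered partitions into \emph{non-negative} integers, with literally the same sign $\xi$ and the same permutations $\omega$, only the ranges of the summation indices $p,q$ (and the allowed $\lambda_i$) being enlarged to include zero. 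Restricting the $\rmC_{\rm pCY}$-formula to the sub-$\Sy$-bimodule where all $\lambda_i \geqslant 1$ exactly recovers the $\DPois^\antish$-formula — one must only check that every term produced by $\Delta_{(1,1)}$ out of a basis element with all parts $\geqslant 1$ again has all parts $\geqslant 1$, which is immediate from the shape of the two multi-corollas on the right-hand side of \cref{prop:InfDecDPoisAC}: the new corolla on the bottom-left has $p+1+q\geqslant 1$ leaves and the part $\lambda_{\sigma(k)-q}$ on the top remains $\geqslant 1$ precisely because $q < \lambda_{\sigma(k)}$ (strict inequality in the non-curved case of \cref{prop:chDPois} — wait: in \cref{prop:chDPois} one allows $q\leqslant\lambda_{\sigma(k)}$, so one must observe that the terms $q=\lambda_{\sigma(k)}$ or $p=\lambda_{\sigma(m)}$ are exactly the ones that produce a part equal to $0$ and hence land outside the sub-object).

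Next I would translate this back through the cobar construction. Since cobar is functorial in partial (co)properads and since a sub-partial-coproperad closed under $\Delta_{(1,1)}$ gives a quotient (dg sub-properad of generators, hence sub-cobar, with the differential restricting) — more precisely, the projection $\rmC_{\rm pCY} \twoheadrightarrow \DPois^\antish$ sending all multi-corollas with some $\lambda_i=0$ to zero is a morphism of partial coproperads, by the very fact just verified that $\Delta_{(1,1)}$ does not create such basis elements out of ones without them — one gets an induced morphism of properads $\Omega \rmC_{\rm pCY} \to \Omega \DPois^\antish$, and dually a fully faithful functor from gebras over $\Omega\DPois^\antish$ to gebras over $\Omega\rmC_{\rm pCY}$, i.e. from homotopy double Poisson gebras to pre-Calabi--Yau algebras. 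Its essential image is described by those pre-Calabi--Yau structures $\{\calm_{\lambda_1,\ldots,\lambda_m}\}$ whose components indexed by a partition with some $\lambda_i=0$ vanish: indeed a gebra structure over $\Omega\rmC_{\rm pCY}$ that factors through $\Omega\DPois^\antish$ is exactly one whose associated twisting morphism $\rmC_{\rm pCY}\to\End_A$ kills the kernel of $\rmC_{\rm pCY}\twoheadrightarrow\DPois^\antish$, that is, vanishes on all multi-corollas with a zero part, which by the dictionary of \cref{thm:DoublePoisson} and \cref{def:pCY} is precisely the stated vanishing of the corresponding structure operations. Finally, the pre-Calabi--Yau and homotopy double Poisson relations \eqref{eq:pCYrelations} and \eqref{eq:RelHoDoublePoiss} are literally the same equations under this identification (again by \cref{prop:InfDecDPoisAC} versus \cref{prop:chDPois}, restricted), so the functor and its inverse are mutually inverse equivalences onto that sub-category.

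The main obstacle, and the one point deserving real care, is the bookkeeping of signs and of the index ranges in comparing \cref{prop:InfDecDPoisAC} with \cref{prop:chDPois}: one must be sure that the sign $\xi$ and the shuffle $\omega$ in the curved (non-negative) formula specialize \emph{on the nose} — with no discrepancy coming from, say, the cyclic-rotation sign $(-1)^{n\lambda_1+\lambda_m}$ of \eqref{eq:SignCyclicDual} versus the curved sign $(-1)^{\lambda_1(n-m-1)-n}$ — to the ones in the non-curved formula once all parts are positive, and that the ``boundary'' terms $p=\lambda_{\sigma(m)}$, $q=\lambda_{\sigma(k)}$ are the complete list of terms landing outside the sub-object. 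This is exactly the kind of Koszul-sign verification the paper isolates in \cref{appendice}; here it should reduce to a short check since both formulas are already in hand, the only genuine subtlety being that the curved cyclic-skew-symmetry sign is compatible with the uncurved one only modulo the relation $m\leqslant n$, which holds automatically once every $\lambda_i\geqslant 1$.
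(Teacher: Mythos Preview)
Your approach is essentially the paper's: both rest on the surjection of codioperads $\rmC_{\rm pCY}\twoheadrightarrow\DPois^\antish$ sending basis elements with some $\lambda_i=0$ to zero. However, your exposition contains a genuine error. The claim that ``on the level of underlying $\Sy$-bimodules the three coproperads $\DPois^\antish$, $\rmC_{\rm pCY}$, and their common sub-object agree'' is false: $\rmC_{\rm pCY}$ is \emph{strictly larger}, with basis elements $\nu_{\lambda_1,\ldots,\lambda_m}$ for all $\lambda_i\geqslant 0$, while $\DPois^\antish$ only has those with $\lambda_i\geqslant 1$. Your computation $\mathrm{u}\DPois^!/\field u\cong\DLie^!\boxtimes\rmS\As$ is where this goes wrong---modding out the single element $u$ in arity $(1,0)$ does not kill the stairway elements built with units internally. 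Relatedly, the \emph{embedding} $\DPois^\antish\hookrightarrow\rmC_{\rm pCY}$ is \emph{not} a coproperad morphism (you implicitly see this when you observe that the boundary terms $q=\lambda_{\sigma(k)}$, $p=\lambda_{\sigma(m)}$ ``land outside the sub-object''). The correct map is the surjection, dual to the embedding of properads $\DPois^!\hookrightarrow\mathrm{u}\DPois^!$ (cf.\ the paragraph after \cref{prop:chDPois}); there is no properad projection $\mathrm{u}\DPois^!/\field u\twoheadrightarrow\DPois^!$ to dualize.

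Once you pivot to the surjection in your second-to-last paragraph, the argument is right, modulo one missing check: for $\pi$ to be a coproperad morphism you must also verify that $\Delta_{(1,1)}$ of an element with some $\lambda_j=0$ only produces terms with at least one factor in $\ker\pi$. This follows immediately from the formula: in every term, either $\lambda_j$ reappears intact as a part of one factor, or $j\in\{\sigma(k),\sigma(m)\}$ and the corresponding part $\lambda_j-q$ or $\lambda_j-p$ is $\leqslant 0$, hence zero. Your final worry about the cyclic skew-symmetry signs is a red herring for this proposition---the relevant comparison is between the \emph{infinitesimal decomposition maps}, and those are governed by the same $\xi,\omega$.
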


\begin{proof}
This is a straightforward corollary of the morphism of codioperads 
\[
\begin{array}{rcl} 
\rmC_{\rm pCY} &\twoheadrightarrow &\DPois^{\ac} \\ 
\vcenter{\hbox{
	\begin{tikzpicture}[scale=0.6]
		\coordinate (A) at (0,1);
		\draw[thin]
		(A) ++(0,-1) node[below] {${\scriptstyle{1}}$} --
		(A) -- ++(0,0.5)
		(A) -- ++(-.3,0.5)
		(A) -- ++(.3,0.5)
		(A)++(-.1,0.5) node[above] {${\scriptstyle{\lambda_1\geqslant 0}}$};
		\coordinate (A) at (1.5,1);
		\draw[thin]
		(A) ++(0,-1) node[below] {$\scriptstyle{2}$} --
		(A) -- ++(-.3,0.5)
		(A) -- ++(.3,0.5)
		(A)++(0,0.5) node[above] {${\scriptstyle{\lambda_2\geqslant 0}}$};
		\draw (3.75,0) node[below] {$\scriptstyle{\cdots}$};
		\draw (3.75,1.5) node[above] {$\scriptstyle{\cdots}$};
		\coordinate (A) at (6,1);
		\draw[thin]
		(A) ++(0,-1) node[below] {$\scriptstyle{m}$} --
		(A) -- ++(-.5,0.5)
		(A) -- ++(-.2,0.5)
		(A) -- ++(.2,0.5)
		(A) -- ++(.5,0.5)		
		(A)++(-.1,0.5) node[above] {${\scriptstyle{\lambda_m\geqslant 0}}$};
		\draw[fill=white] (-0.3,0.5) rectangle (6.3,1);
	\end{tikzpicture}}}
&\twoheadrightarrow&
\vcenter{\hbox{
	\begin{tikzpicture}[scale=0.6]
		\coordinate (A) at (0,1);
		\draw[thin]
		(A) ++(0,-1) node[below] {${\scriptstyle{1}}$} --
		(A) -- ++(0,0.5)
		(A) -- ++(-.3,0.5)
		(A) -- ++(.3,0.5)
		(A)++(-.1,0.5) node[above] {${\scriptstyle{\lambda_1> 0}}$};
		\coordinate (A) at (1.5,1);
		\draw[thin]
		(A) ++(0,-1) node[below] {$\scriptstyle{2}$} --
		(A) -- ++(-.3,0.5)
		(A) -- ++(.3,0.5)
		(A)++(0,0.5) node[above] {${\scriptstyle{\lambda_2> 0}}$};
		\draw (3.75,0) node[below] {$\scriptstyle{\cdots}$};
		\draw (3.75,1.5) node[above] {$\scriptstyle{\cdots}$};
		\coordinate (A) at (6,1);
		\draw[thin]
		(A) ++(0,-1) node[below] {$\scriptstyle{m}$} --
		(A) -- ++(-.5,0.5)
		(A) -- ++(-.2,0.5)
		(A) -- ++(.2,0.5)
		(A) -- ++(.5,0.5)		
		(A)++(-.1,0.5) node[above] {${\scriptstyle{\lambda_m> 0}}$};
		\draw[fill=white] (-0.3,0.5) rectangle (6.3,1);
	\end{tikzpicture}}}
\end{array}	
\]
which projects the cyclic stairway basis elements of \cref{lemm:Dpois!} and \cref{Lem:uDPois!} onto the ones with positive numbers of sub-sets of inputs.
\end{proof}

From curved pre-Calabi--Yau algebras to pre-Calabi--Yau algebras, we have been modifying by hands the governing  quasi-free dg properad. The natural question that arises then is: "the notion of a pre-Calabi--Yau algebra is the homotopy version of which algebraic structure?". In this direction, a first question to answer is: the codioperad $\rmC_{\rm pCY}$ is the Koszul dual of which properad, or equivalently, does the properad
$\frac{\mathrm{u}\DPois^!}{\field u}=\rmC_{\rm pCY}^*$ admit a quadratic presentation? 
Regarding the stairway basis elements of the properad $\mathrm{u}\DPois^!$ given in \cref{lemm:Dpois!} and \cref{Lem:uDPois!}, one can see that the two elements 
\[			
\vcenter{\hbox{\begin{tikzpicture}[scale=0.28]
				\draw (0,4) 
				-- (2,2);
				\draw (4,4) 
				-- (2,2) -- (2,0);
				\draw[fill=gray!25] (2,2) circle (10pt);
			\end{tikzpicture}}}
 \qquad \text{and} \qquad
\vcenter{\hbox{ \begin{tikzpicture}[scale=0.33]
				\draw (2,0)  -- (2,2);
				\draw (0,0) -- (0,2);
				\draw[fill=gray!25] (0,2) circle (6pt);
				\draw[fill=gray!25] (2,2) circle (6pt);				
				\draw[fill=gray!25] (-0.3,0.5) rectangle (2.3,1);
			\end{tikzpicture}}}
\]
of respective degrees $-1$ and $+1$ form a set of generators. They lead to the following notion.


\begin{definition}[{$\rmV$-gebra \cite{TZ07Bis}}]\label{def:Valg}
A \emph{$\rmV$-gebra} is an associative algebra $(A, \mu)$ equipped with a degree $-2$ symmetric bi-tensor $b\in \left(A^{\otimes 2}\right)^{\Sy_2}$ satisfying 
\[\sum \mu(a, b')\otimes b''=\sum b'\otimes \mu(b'', a)~,\]
for any $a\in A$, where $b=\sum b'\otimes b''$~.
\end{definition}	
	
\begin{remark}
This notion appears in the works of P. Seidel in symplectic geometry \cite[Section~3.3]{Seidel12} under the name \emph{associative algebra with boundary}.
\end{remark}

The category of $\rmV$-gebras is encoded by the quadratic properad 
	\[
		\rmV \coloneq
		\dfrac{
			\scrG\left(
			\begin{tikzpicture}[baseline=0ex,scale=0.10]
				\draw (0,4) node[above] {$\scriptscriptstyle{1}$}
				-- (2,2);
				\draw (4,4) node[above] {$\scriptscriptstyle{2}$}
				-- (2,2) -- (2,0) node[below] {$\scriptscriptstyle{1}$};
				\draw[fill=white] (2,2) circle (10pt);
			\end{tikzpicture}
			~ ; ~
			\begin{tikzpicture}[baseline=0ex,scale=0.2]
				\draw (0,0)  node[below] {$\scriptscriptstyle{1}$} -- (0,0.5);
				\draw (2,0) node[below] {$\scriptscriptstyle{2}$} -- (2,0.5);
				\draw[fill=white] (-0.3,0.5) rectangle (2.3,1);
			\end{tikzpicture}
			\ = \ 
			\begin{tikzpicture}[baseline=0ex,scale=0.2]
				\draw (0,0)  node[below] {$\scriptscriptstyle{2}$} -- (0,0.5);
				\draw (2,0) node[below] {$\scriptscriptstyle{1}$} -- (2,0.5);
				\draw[fill=white] (-0.3,0.5) rectangle (2.3,1);
			\end{tikzpicture}
			\right)
		}{
			\left(
			\begin{tikzpicture}[baseline=0.5ex,scale=0.1]
				\draw (0,4) node[above] {$\scriptscriptstyle{1}$} --(4,0)
				-- (4,-1) node[below] {$\scriptscriptstyle{1}$};
				\draw (4,4) node[above] {$\scriptscriptstyle{2}$} -- (2,2);
				\draw (8,4) node[above] {$\scriptscriptstyle{3}$}-- (4,0);
				\draw[fill=white] (2,2) circle (10pt);
				\draw[fill=white] (4,0) circle (10pt);
			\end{tikzpicture}
			-
			\begin{tikzpicture}[baseline=0.5ex,scale=0.1]
				\draw (0,4) node[above] {$\scriptscriptstyle{1}$}
				-- (4,0) -- (4,-1) node[below] {$\scriptscriptstyle{1}$};
				\draw (4,4) node[above] {$\scriptscriptstyle{2}$} -- (6,2);
				\draw (8,4) node[above] {$\scriptscriptstyle{3}$} -- (4,0);
				\draw[fill=white] (6,2) circle (10pt);
				\draw[fill=white] (4,0) circle (10pt);
			\end{tikzpicture}
			~~;~~
			\begin{tikzpicture}[scale=0.2,baseline=1ex]
				\draw (1,0.5) -- (2,1.5) -- (1,0.5) -- (0,1.5) -- (0, 2.5) -- (0, 1.5)-- (1,0.5)
				-- (1,-0.5) node[below] {$\scriptscriptstyle{1}$};
				\draw (0,2.5) node[above] {$\scriptscriptstyle{1}$};
				\draw (4,-0.5) node[below] {$\scriptscriptstyle{2}$} -- (4,1.5) ;
				\draw[fill=white] (1,0.5) circle (6pt);
				\draw[fill=white] (1.7,1.5) rectangle (4.3,2);
			\end{tikzpicture}
			-
			\begin{tikzpicture}[scale=0.2,baseline=1ex]
				\draw (1,0.5) -- (2,1.5) -- (1,0.5) -- (0,1.5) -- (1,0.5)
				-- (1,-0.5) node[below] {$\scriptscriptstyle{2}$};
				\draw (2,2.5) node[above] {$\scriptscriptstyle{1}$} -- (2,1.5)-- (1,0.5);
				\draw (-2,-0.5)  node[below] {$\scriptscriptstyle{1}$}--  (-2,1.5);
				\draw[fill=white] (1,0.5) circle (6pt);
				\draw[fill=white] (-2.3,1.5) rectangle (0.3,2);
			\end{tikzpicture}
			\right)
		}~,
	\]
	where the first generator has degree $0$ and the second one has degree $-2$~.

\begin{proposition}[\cite{PT19}]\label{prop:VinftypCY}
The genus 0 part of the Koszul dual coproperad of $\rmV$ is isomorphic to the codioperad encoding pre-Calabi--Yau algebras:
\[\rmV^\antish_{\rm{g}=0} \cong \rmC_{\rm pCY}~. \]
\end{proposition}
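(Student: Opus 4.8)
The plan is to compute both sides from their quadratic data and to exhibit the isomorphism at the level of the predual (di)operads. Since $\rmV$ is quadratic with a finite-dimensional space of generators, \cref{lemm:KDproperad} applies and gives $\rmV^!\cong \scrG(\susp^{-1}E^*)/(\susp^{-2}R^\perp)$, where $E$ is spanned by the associative product $\mu$ and the bi-tensor $b$ and $R$ by the three displayed relations. Dualizing, the genus-$0$ part of the Koszul dual coproperad $\rmV^\antish$ is the arity-wise linear dual of the genus-$0$ part of $\rmV^!$, and its (infinitesimal) decomposition map is obtained by dualizing the infinitesimal composition product; on the other side $\rmC_{\rm pCY}=(\mathrm{u}\DPois^!/\field u)^*$ is obtained in exactly the same way, and $\mathrm{u}\DPois^!/\field u$ is concentrated in genus $0$ by the genus-vanishing relation built into its presentation together with the argument in the proof of \cref{prop:InfDecDPoisAC}. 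Both properads involved are arity-wise finite-dimensional — clear for $\mathrm{u}\DPois^!/\field u$ by \cref{Lem:uDPois!}, and a consequence for $\rmV^!$ of the explicit basis produced below. Hence it suffices to construct an isomorphism of dioperads between the genus-$0$ part of $\rmV^!$ and $\mathrm{u}\DPois^!/\field u$, from which the statement follows by arity-wise linear duality.

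First I would make the generators and relations of $\rmV^!$ explicit: $\susp^{-1}\mu^*$ is the planar binary product $\nu$ of degree $-1$ already appearing in $\DPois^!$, and $\susp^{-1}b^*$ is a single generator $\upsilon$ dual to $b$; computing $\susp^{-2}R^\perp$ inside the two-vertex part of $\scrG(\susp^{-1}E^*)$ turns the associativity relation of $\rmV$ into the anti-associativity of $\nu$ and the two compatibility relations of $\rmV$ into a genus-vanishing relation and into the two rewriting relations \eqref{eqn:ReWrDPois!}. I would then recognize this as a presentation of $\mathrm{u}\DPois^!/\field u$ after the change of generators in which the $\DLie^!$-box \eqref{eq:GenDLie} is realized as a genus-$0$ composite of two copies of $\nu$ glued to one copy of $\upsilon$ (an arity and degree count pins down this composite as the unique such connected genus-$0$ tree, with the correct arity and degree $-1$), checking that under this substitution the $\DLie^!$-relations \eqref{eq:RelDLie} become consequences of the relations already obtained. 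Comparing the stairway bases of \cref{lemm:Dpois!} and \cref{Lem:uDPois!} with the basis of $\rmV^!$ produced by the rewriting rules then identifies the two dioperads. An alternative that avoids the change of generators is to describe $\rmV^\antish_{\mathrm{g}=0}$ directly on the "cut planar corolla" basis of this section and check termwise, signs included, that its infinitesimal decomposition map coincides with the one of \cref{prop:InfDecDPoisAC} restricted to $\rmC_{\rm pCY}$, using \cref{prop:CuttingInfDec} for the precise dictionary.

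The hard part will be the bookkeeping, in two places: computing $R^\perp$ with the correct suspension shifts and Koszul signs, and — more seriously — verifying under the change of generators that every $\DLie^!$-relation \eqref{eq:RelDLie} follows from the smaller relation set coming from $R^\perp$; this is precisely the kind of careful, sign-sensitive computation that the paper elsewhere, e.g.\ in the proof of \cref{prop:InfDecDPoisAC}, flags as tedious. If instead one proceeds via the direct comparison of decomposition maps, the main obstacle becomes matching the signs of the two explicit formulas, which again reduces to a finite combinatorial verification.
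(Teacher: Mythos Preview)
Your strategy---pass to the predual (di)operads, build the isomorphism there, and dualise---is exactly the paper's. The main difference is the direction of the map. The paper constructs $\Phi\colon \rmV^!\to \mathrm{u}\DPois^!/\field u$ on the two generators of $\rmV^!$, sending the binary product to the binary product and the arity-$(2,0)$ generator $\upsilon$ to the element ``$\DLie^!$-box with both inputs capped by $u$''; it then checks that the three relations of $\rmV^!$ map to zero. Going this way one only has to verify three relations rather than the full $\DLie^!$-relation set you would face after your change of generators. The isomorphism on the genus-$0$ part is then read off by an explicit normalisation argument in $\rmV^!$: using anti-associativity together with the single compatibility relation to push every bi-tensor to the top of a left comb, one sees that the genus-$0$ normal forms are exactly the cyclic stairways of \cref{lemm:Dpois!} and \cref{Lem:uDPois!}, with the same index range, and that higher-genus composites are genuinely non-zero (so the genus restriction in the statement is necessary).

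Two small corrections to your write-up. First, $\rmV$ has only \emph{one} compatibility relation besides associativity, not two; accordingly $\rmV^!$ carries three relation types (anti-associativity, one compatibility relation between $\nu$ and $\upsilon$, and the genus-vanishing relation), not four. Second, the relations of $\rmV^!$ are \emph{not} literally the rewriting rules \eqref{eqn:ReWrDPois!}: those involve the $\DLie^!$-box $\kappa$, whereas the quadratic relations of $\rmV^!$ involve only $\nu$ and $\upsilon$. In your approach, \eqref{eqn:ReWrDPois!} would appear only \emph{after} your change of generators, as identities to be derived for the composite ``two $\nu$'s glued to one $\upsilon$'' from the $\rmV^!$-relations---which is precisely the part you flag as hard. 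The paper avoids this by going in the opposite direction.
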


\begin{proof}
Though this statement can essential be found in \cite[Proposition~3.4]{PT19}, we give it a new proof here 
since the methods applied in \cref{subsec::DPois_antish} for the quadratic properad $\DPois$ apply \emph{mutatis mutandis} to the quadratic properad $\rmV$, 
since it establishes a relationship with homotopy (curved) double Poisson gebras, and 
since it
 unravels the new higher genera part. \cref{lemm:KDproperad} shows that the Koszul dual properad $\rmV^!$ is given by 
	\[
		\rmV^! \cong
		\dfrac{
			\scrG\left(
			\begin{tikzpicture}[baseline=0ex,scale=0.10]
				\draw (0,4) node[above] {$\scriptscriptstyle{1}$}
				-- (2,2);
				\draw (4,4) node[above] {$\scriptscriptstyle{2}$}
				-- (2,2) -- (2,0) node[below] {$\scriptscriptstyle{1}$};
				\draw[fill=gray!25] (2,2) circle (10pt);
			\end{tikzpicture}
			~ ; ~
			\begin{tikzpicture}[baseline=0ex,scale=0.2]
				\draw (0,0)  node[below] {$\scriptscriptstyle{1}$} -- (0,0.5);
				\draw (2,0) node[below] {$\scriptscriptstyle{2}$} -- (2,0.5);
				\draw[fill=gray!25] (-0.3,0.5) rectangle (2.3,1);
			\end{tikzpicture}
			\ = \ 
			\begin{tikzpicture}[baseline=0ex,scale=0.2]
				\draw (0,0)  node[below] {$\scriptscriptstyle{2}$} -- (0,0.5);
				\draw (2,0) node[below] {$\scriptscriptstyle{1}$} -- (2,0.5);
				\draw[fill=gray!25] (-0.3,0.5) rectangle (2.3,1);
			\end{tikzpicture}
			\right)
		}{
			\left(
			\begin{tikzpicture}[baseline=0.5ex,scale=0.1]
				\draw (0,4) node[above] {$\scriptscriptstyle{1}$} --(4,0)
				-- (4,-1) node[below] {$\scriptscriptstyle{1}$};
				\draw (4,4) node[above] {$\scriptscriptstyle{2}$} -- (2,2);
				\draw (8,4) node[above] {$\scriptscriptstyle{3}$}-- (4,0);
				\draw[fill=gray!25] (2,2) circle (10pt);
				\draw[fill=gray!25] (4,0) circle (10pt);
			\end{tikzpicture}
			+
			\begin{tikzpicture}[baseline=0.5ex,scale=0.1]
				\draw (0,4) node[above] {$\scriptscriptstyle{1}$}
				-- (4,0) -- (4,-1) node[below] {$\scriptscriptstyle{1}$};
				\draw (4,4) node[above] {$\scriptscriptstyle{2}$} -- (6,2);
				\draw (8,4) node[above] {$\scriptscriptstyle{3}$} -- (4,0);
				\draw[fill=gray!25] (6,2) circle (10pt);
				\draw[fill=gray!25] (4,0) circle (10pt);
			\end{tikzpicture}
			~~;~~
			\begin{tikzpicture}[scale=0.2,baseline=1ex]
				\draw (1,0.5) -- (2,1.5) -- (1,0.5) -- (0,1.5) -- (0, 2.5) -- (0, 1.5)-- (1,0.5)
				-- (1,-0.5) node[below] {$\scriptscriptstyle{1}$};
				\draw (0,2.5) node[above] {$\scriptscriptstyle{1}$};
				\draw (4,-0.5) node[below] {$\scriptscriptstyle{2}$} -- (4,1.5) ;
				\draw[fill=gray!25] (1,0.5) circle (6pt);
				\draw[fill=gray!25] (1.7,1.5) rectangle (4.3,2);
			\end{tikzpicture}
			+
			\begin{tikzpicture}[scale=0.2,baseline=1ex]
				\draw (1,0.5) -- (2,1.5) -- (1,0.5) -- (0,1.5) -- (1,0.5)
				-- (1,-0.5) node[below] {$\scriptscriptstyle{2}$};
				\draw (2,2.5) node[above] {$\scriptscriptstyle{1}$} -- (2,1.5)-- (1,0.5);
				\draw (-2,-0.5)  node[below] {$\scriptscriptstyle{1}$}--  (-2,1.5);
				\draw[fill=gray!25] (1,0.5) circle (6pt);
				\draw[fill=gray!25] (-2.3,1.5) rectangle (0.3,2);
			\end{tikzpicture}
			~~;~~
					\begin{tikzpicture}[scale=0.2,baseline=-0.8ex)]
			\draw (-2,0.5) 			to[out=270, in=140] (-1,-1.5)
			-- (-1,-2) node[below] {$\scriptscriptstyle{1}$}
			-- (-1,-1.5) to[out=40, in=270] (0,0)
			-- (0,0.5);
			\draw[fill=gray!25] (-2.3,0) rectangle (0.3,0.5)
			(-1,-1.5) circle (5pt);
		\end{tikzpicture}
			\right)
		}~,
	\]
where the first generator has degree $-1$ and the second one has degree $1$~.
Any element of the free properad on these two generators is a linear combination of forests of planar binary rooted trees composed at the top with some symmetric bi-tensors. When a symmetric bi-tensor connects two planar binary rooted trees, one can use the anti-associativity relation to pull up top vertices attached to it, and then use  the second relation as follows to pass all the binary vertices from one tree to the other. 
\[
\vcenter{\hbox{
\begin{tikzpicture}[scale=0.4]
	\draw (1,0.5) -- (2,1.5) -- (1,0.5) -- (0,1.5) -- (1,0.5) -- (1,-0.5) ;
	\draw (4,1.5) -- (5,0.5) -- (6, 1.5) -- (5,0.5) -- (5, -0.5);	
	\draw[fill=gray!25] (1,0.5) circle (6pt);
	\draw[fill=gray!25] (5,0.5) circle (6pt);	
	\draw[fill=gray!25] (1.7,1.5) rectangle (4.3,2);
\end{tikzpicture}}}
			~=~-~
\vcenter{\hbox{\begin{tikzpicture}[scale=0.4]
	\draw (1,0.5) -- (2,1.5) -- (1,0.5) -- (0,1.5) -- (1,0.5) -- (1,-0.5) ;
	\draw (2,1.5) -- (1,2.5) ;	
	\draw (2,1.5) -- (3,2.5) ;	
	\draw (5,2.5) -- (5,1.5) ;			
	\draw[fill=gray!25] (1,0.5) circle (6pt);
	\draw[fill=gray!25] (2,1.5) circle (6pt);	
	\draw[fill=gray!25] (2.7,2.5) rectangle (5.3,3);
\end{tikzpicture}}}
\]
In the end, one gets just one planar binary rooted tree modulo the anti-associativity relation with some symmetric bi-tensors plugged at its inputs. 
Unlike in \cref{lemm:Dpois!}, we choose here left combs of planar binary trees as normal forms. \emph{A priori}, there are three possibilities: a symmetric bi-tensor can be plugged to such a planar tree at one output, at two consecutive inputs, or at two non-consecutive inputs. Having all symmetric bi-tensors plugged at only one output produces the basis of the genus 0 part $\rmV^\antish_{\rm{g}=0}$ of the Koszul dual properad, i.e. the Koszul dual dioperad, see \cref{eqn:IsoBases}. 
The second case turns out to be impossible, i.e. is equal to zero: using the anti-associativity relation, one can pull up one binary product with one symmetric bi-tensor grafted at its two inputs, but this is the third relation of $\rmV^!$~. The third case produces non-trivial higher genus composites whose combinatorics will be addressed elsewhere.

\medskip

We consider the morphism of properads $\Phi \colon \rmV^! \to \frac{\mathrm{u}\DPois^!}{\field u}$ defined by 
\[
\vcenter{\hbox{\begin{tikzpicture}[scale=0.2]
				\draw (0,4) 
				-- (2,2);
				\draw (4,4) 
				-- (2,2) -- (2,0);
				\draw[fill=gray!25] (2,2) circle (10pt);
			\end{tikzpicture}}}
\ \mapsto \ 
\vcenter{\hbox{\begin{tikzpicture}[scale=0.2]
				\draw (0,4) 
				-- (2,2);
				\draw (4,4) 
				-- (2,2) -- (2,0);
				\draw[fill=gray!25] (2,2) circle (10pt);
			\end{tikzpicture}}} 
\qquad \text{and} \qquad 
\vcenter{\hbox{ \begin{tikzpicture}[scale=0.33]
				\draw (2,0)  -- (2,0.5);
				\draw (0,0) -- (0,0.5);
				\draw[fill=gray!25] (-0.3,0.5) rectangle (2.3,1);
			\end{tikzpicture}}}
\ \mapsto \ 
\vcenter{\hbox{ \begin{tikzpicture}[scale=0.33]
				\draw (2,0)  -- (2,2);
				\draw (0,0) -- (0,2);
				\draw[fill=gray!25] (0,2) circle (6pt);
				\draw[fill=gray!25] (2,2) circle (6pt);				
				\draw[fill=gray!25] (-0.3,0.5) rectangle (2.3,1);
			\end{tikzpicture}}}
			\ ,
\]
where it is straightforward to check that this  assignment sends the relations of the properad $\rmV^!$ to $0$ in 
$\frac{\mathrm{u}\DPois^!}{\field u}$~. 
This morphism sends the abovementioned basis of the genus $0$ part of $\rmV^!$ to the cyclic stairways basis of 
$\frac{\mathrm{u}\DPois^!}{\field u}$, see \cref{lemm:Dpois!} and \cref{Lem:uDPois!}:
\begin{equation}\label{eqn:IsoBases}
\Phi \ \colon \ 
\vcenter{\hbox{
			\begin{tikzpicture}[scale=0.5]
			\coordinate (A) at (0,0);
			\draw (A) --++ (0,-0.5) node[below] {\tiny{$j_1$}};
			\draw (A) --++ (45:1) ++ (45:0.4) ++ (0:0.2) node {\tiny{$\ i_{\lambda_1}$}} ;
			\draw (A) --++ (135:1) coordinate (B);
			\draw[fill=gray!25] (A) circle (4pt);
			\draw (B) --++ (45:1) ++ (45:0.4) ++(0:0.4) node {\tiny{$\ i_{\lambda_1-1}$}} ;
			\draw (B) --++ (135:0.5);	 
			\draw[fill=gray!25] (B) circle (4pt);	 
			\draw[dotted] (B) ++ (135:0.5) --++(135:1);
			\draw (B) ++(135:1.5) --++ (135:0.5) coordinate (C);
			\draw (C) --++ (45:1) ++ (45:0.4) ++ (0:0.1) node {\tiny{$i_{1}$}} ;
			\draw (C)  --++ (135:1.5) coordinate (D);	 
			\draw[fill=gray!25] (C) circle (4pt);	 
			\draw (D) --++ (45:1) --++(0,0.3) coordinate (B1); 
			\draw (D) --++ (135:1.5) coordinate (E);
			\draw (E) --++ (135:0.5);
			\draw[fill=gray!25] (D) circle (4pt);	 
			\draw (B1) ++ (-0.2,0.3) coordinate (B12);
			\draw (B1) ++ (+0.8,0) --++(0,-0.3);
			\draw (B1) ++ (0.8,0) ++(0,-0.1) node[below] {\tiny{$j_2$}};			
			\draw[fill=gray!25] (B1) ++(1,0) rectangle (B12);	
			\draw (E) --++ (45:1) ++ (45:0.4) ++ (0:0.6) node {\tiny{$\ i_{\lambda_1+\lambda_2}$}}; 
			\draw[fill=gray!25] (E) circle (4pt);	 
			\draw[dotted] (E) ++ (135:.5) --++ (135:1);	 
			\draw (E) ++ (135:1.5) --++ (135:0.5) coordinate (F);	 
			\draw (F) --++ (45:1) ++ (45:0.4) ++ (0:0.1) node 
			{\tiny{$\qquad \qquad \qquad i_{\lambda_1+\cdots+\lambda_{m-2}+1}$}};
			\draw  (F) --++ (135:1.5) coordinate (G);	 
			\draw[fill=gray!25] (F) circle (4pt);	 
			\draw (G) --++ (45:1) --++ (0,0.3) coordinate (B2);
			\draw (G) --++ (135:1.5) coordinate (H);
			\draw[fill=gray!25] (G) circle (4pt);
			\draw (B2) ++ (-0.2,0.3) coordinate (B22);
			\draw (B2) ++ (0.8,0) --++(0,-0.3);
			\draw (B2) ++ (0.8,0) --++(0,-0.1) node[below] {\tiny{$j_m$}};			
			\draw[fill=gray!25] (B2) ++(1,0) rectangle (B22);	 
			\draw (H) --++ (45:1) ++ (45:0.4) ++ (0:1) node
			{\tiny{$i_{\lambda_1+\cdots + \lambda_{m}}$}} ;
			\draw (H) --++(135:0.5);
			\draw[fill=gray!25] (H) circle (4pt);	
			\draw[dotted] (H) ++ (135:.5) --++ (135:1);	 
			\draw (H) ++ (135:1.5) --++ (135:0.5) coordinate (I);	  
			\draw (I) --++(45:1); 
			\draw (I) --++(135:1); 			
			\draw (I) ++ (135:1.5) ++ (180:0.5) node {\tiny{$i_{\lambda_1+\cdots + \lambda_{m-1}+1}$}}; 
			\draw (I) ++(45:1.4) ++(0:1.5) node {\tiny{$i_{\lambda_1+\cdots + \lambda_{m-1}+2}$}};
			\draw[fill=gray!25] (I) circle (4pt);	
		\end{tikzpicture}}}
\ \mapsto \ 	\pm		\
		\vcenter{\hbox{\begin{tikzpicture}[scale=0.4]
			\coordinate (A) at (0,1);
			\draw[thin]
			(A) ++(0,-1) node[below] {$\scriptstyle{j_1}$} --
		(A) -- ++(0,0.5)
		(A) -- ++(-.3,0.5)
		(A) -- ++(.3,0.5)
			(A)++(0,.5) node[above] {${\scriptstyle{\Lambda_1}}$};
			\coordinate (A) at (1.5,1);
			\draw[thin]
			(A) ++(0,-1) node[below] {$\scriptstyle{j_2}$} --
		(A) -- ++(-.3,0.5)
		(A) -- ++(.3,0.5)
			(A)++(0,.5) node[above] {${\scriptstyle{\Lambda_2}}$};
			\draw (3.75,0) node[below] {$\scriptstyle{\cdots}$};
			\draw (3.75,1.5) node[above] {$\scriptstyle{\cdots}$};
			\coordinate (A) at (6,1);
			\draw[thin]
			(A) ++(0,-1) node[below] {$\scriptstyle{j_m}$} --
		(A) -- ++(-.5,0.5)
		(A) -- ++(-.2,0.5)
		(A) -- ++(.2,0.5)
		(A) -- ++(.5,0.5)		
			(A)++(0,.5) node[above] {${\scriptstyle{\Lambda_m}}$};
			\draw[fill=gray!25] (-0.3,0.5) rectangle (6.3,1);
		\end{tikzpicture}}}	
~,
\end{equation}
under the notations of \cref{lemm:Dpois!}. 
Notice that the range of indices is the same on both sides: for $m\geqslant 2$, the non-negative integers $\lambda_1, \ldots, \lambda_m$ form an ordered partition $\lambda_1+\cdots+\lambda_m=n$ of the number $n\geqslant 0$ of inputs and we have $\lambda_1\geqslant 1$,  for $m=1$~. 
This shows that the restriction of the morphism $\Phi$ to the genus $0$ part of the properad $\rmV^!$ is an isomorphism of dioperads. Its dual produces the requested isomorphism of codioperads: 
\[\rmV^\antish_{\rm{g}=0} \cong \rmC_{\rm pCY}~. \]
\end{proof}

\begin{definition}[{Genus 0 $\rmV_\infty$-gebra \cite{TZ07Bis}}]\label{def:Vinfty}
A \emph{genus 0 $\rmV_\infty$-gebra} is a gebra over the cobar construction of the genus 0 part of the Koszul dual codioperad $\rmV^\antish_{\rm{g}=0}$~.
\end{definition}	

\begin{remark}
This notion appears in the works of P. Seidel in symplectic geometry \cite[Definition~3.1]{Seidel12} under the name \emph{$\rmA_\infty$-algebra with boundary}.
\end{remark}

\begin{corollary}
The category of a pre-Calabi--Yau algebras is equivalent to the category of a genus 0 
$\rmV_\infty$-algebras. 
\end{corollary}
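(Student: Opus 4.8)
The plan is to deduce the statement directly from the codioperadic isomorphism of \cref{prop:VinftypCY} and the functoriality of the cobar construction. By \cref{def:Vinfty}, a genus $0$ $\rmV_\infty$-gebra is a gebra over $\Omega\rmV^\antish_{\rm{g}=0}$, while, by the proposition identifying the category of pre-Calabi--Yau algebras with that of gebras over $\Omega\rmC_{\rm pCY}$, a pre-Calabi--Yau algebra is a gebra over $\Omega\rmC_{\rm pCY}$. So it suffices to exhibit an isomorphism of quasi-free dg properads $\Omega\rmV^\antish_{\rm{g}=0}\cong\Omega\rmC_{\rm pCY}$ and then to observe that isomorphic dg properads have isomorphic --- hence a fortiori equivalent --- categories of gebras.

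First I would invoke \cref{prop:VinftypCY}, which provides an isomorphism of codioperads $\rmV^\antish_{\rm{g}=0}\cong\rmC_{\rm pCY}$. Since $\rmC_{\rm pCY}$ is conilpotent (no infinite series occurs in the iterated infinitesimal decomposition maps, as recorded in the paper right after its introduction), so is $\rmV^\antish_{\rm{g}=0}$, and the cobar construction is a well-defined functor on conilpotent (partial) codioperads; applying it to the above isomorphism yields an isomorphism of dg properads $\Omega\rmV^\antish_{\rm{g}=0}\xrightarrow{\cong}\Omega\rmC_{\rm pCY}$ intertwining the differentials. Finally, for any dg properad $\rmP$, a $\rmP$-gebra on a dg vector space $A$ is a morphism $\rmP\to\End_A$ and a morphism of $\rmP$-gebras is a morphism of the underlying complexes commuting with the structure morphisms; precomposition along the above isomorphism therefore defines an isomorphism of categories between $\Omega\rmC_{\rm pCY}$-gebras and $\Omega\rmV^\antish_{\rm{g}=0}$-gebras, that is, between pre-Calabi--Yau algebras and genus $0$ $\rmV_\infty$-gebras.

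The single point needing care --- and it is already settled in the body of the paper --- is the conilpotency of $\rmC_{\rm pCY}$, which is what makes the cobar construction and its functoriality available for a merely partial codioperad; the remainder is formal. Alternatively, one can bypass the cobar construction altogether and simply transport the twisting-morphism description $\Tw(\rmC_{\rm pCY},\End_A)\cong\Tw(\rmV^\antish_{\rm{g}=0},\End_A)$ along \cref{prop:VinftypCY}, which renders the comparison of the structures, and of their $\infty$-morphisms from \cref{sec:InftyMor}, equally transparent.
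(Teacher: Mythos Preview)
Your argument is correct and is precisely the expansion of the paper's one-line proof, which simply states that this is a direct corollary of \cref{prop:VinftypCY}. You have spelled out the standard reason why an isomorphism of conilpotent codioperads induces, via the functorial cobar construction, an isomorphism of the associated categories of gebras.
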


\begin{proof}
This is a direct corollary of \cref{prop:VinftypCY}.
\end{proof}

Let us summarise the various algebraic structures that we have encountered throughout this section.
\[\boxed{ 
\text{homotopy double Poisson gebras} \hookrightarrow  \text{pre-Calabi--Yau algebras}  
 \cong  \text{genus 0}\  \rmV_\infty\text{-gebras}
}\]

Regarding their homotopy properties, K. Poirier and T. Tradler have established the following homological result. 

\begin{theorem}[{\cite[Theorem~3.5]{PT19}}]
The dioperad $\rmV_{{\rm g}=0}$ is Koszul. 
\end{theorem}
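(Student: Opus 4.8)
The plan is to read off Koszulness of $\rmV_{{\rm g}=0}$ from the combinatorial analysis already carried out in the proof of \cref{prop:VinftypCY}. There, the free properad on the two generators of $\rmV^!$ is studied modulo the three quadratic relations, and every element is reduced to a single \emph{left comb} of binary vertices (itself taken modulo anti-associativity) with symmetric bi-tensors plugged at some of its inputs; the genus $0$ normal forms are then matched, via the morphism $\Phi$, with the cyclic stairways basis of $\bigl(\mathrm{u}\DPois^!/\field u\bigr)_{{\rm g}=0}$ of \cref{lemm:Dpois!} and \cref{Lem:uDPois!}. First I would turn this reduction into an honest rewriting system (equivalently, fix a suitable admissible order on the monomials of the free dioperad and exhibit a quadratic Gröbner basis). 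The only nontrivial rule is ``pull a symmetric bi-tensor above a binary vertex'', and confluence has to be checked on the overlaps of two applications of the rules: the overlap in which a bi-tensor meets two consecutive inputs of the comb, which must reduce to zero by the last relation of $\rmV^!$, and the one producing a bi-tensor attached to two non-consecutive inputs, which creates a higher-genus monomial and is therefore absent in the genus $0$ quotient. Both computations are precisely what is sketched in \emph{loc. cit.} Once this is in place, the Poincaré--Birkhoff--Witt/Gröbner-basis criterion for (di)operads (the dioperadic version of \cite[Chapter~8]{LV12}, cf. also \cite{Gan03}) gives that $\rmV^!_{{\rm g}=0}$ is Koszul, hence that $\rmV_{{\rm g}=0}$ is Koszul.

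Equivalently, and more in the spirit of the present paper, one can phrase the same input as a distributive law. The reduction above exhibits $\rmV_{{\rm g}=0}$ as the dioperad obtained by a distributive law between the associative operad $\As$, sitting in output arity $1$, and the dioperad $\mathcal{B}$ freely generated by the degree $-2$ symmetric arity $(2,0)$ generator. The operad $\As$ is Koszul, and $\mathcal{B}$ is trivially Koszul, since its generator has no inputs, so that the free dioperad on it is just $\rmI \oplus \field\, b$. The diamond condition for the distributive law --- that the canonical epimorphism $\As \boxtimes \mathcal{B} \twoheadrightarrow \rmV_{{\rm g}=0}$ is an isomorphism --- is again the normal-form statement of \cref{prop:VinftypCY}. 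Koszulness of $\rmV_{{\rm g}=0}$ then follows from the Koszulness criterion for (di)operads presented by a distributive law (the same mechanism underlying \cref{thm:DPois_Koszul}, see \cite{Gan03, Ler19ii}); concretely, a filtration argument reduces the acyclicity of the Koszul complex of $\rmV_{{\rm g}=0}$ to that of the Koszul complexes of $\As$ and of $\mathcal{B}$.

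The hard part, in either formulation, is not the genus $0$ combinatorics itself --- that is already done in the proof of \cref{prop:VinftypCY} --- but making the argument genus-aware in a clean way: one must check that the genus $0$ part of the properad defined by this distributive law (respectively, the genus $0$ truncation of the rewriting system) is again the dioperad defined by the restricted data, and in particular that no higher-genus monomial is ever needed to close a confluence diagram inside genus $0$. This holds here because $\As$ is concentrated in output arity $1$, so composing with it never raises the genus, while the bi-tensor $b$ can raise the genus only when grafted at two inputs of a common binary vertex --- exactly the overlap governed by the third relation of $\rmV^!$, which forces that configuration to vanish. An alternative route, bypassing the dioperadic rewriting machinery, would be to deduce Koszulness of $\rmV_{{\rm g}=0}$ directly from that of $\DLie$ (\cref{thm:DPois_Koszul}) through the isomorphism $\rmV^!_{{\rm g}=0}\cong\bigl(\mathrm{u}\DPois^!/\field u\bigr)_{{\rm g}=0}$ of \cref{prop:VinftypCY} together with the distributive law of \cref{Lem:uDPois!}; this, however, requires a curved Koszul-duality statement for properads in the sense of \cite{RL22} to handle the unital extension, which is heavier machinery than the elementary argument above.
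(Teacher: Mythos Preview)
The paper does not supply its own proof of this statement: the theorem is attributed in full to \cite[Theorem~3.5]{PT19} and is simply quoted, with the surrounding text only explaining what the statement means and why it does \emph{not} extend to the properadic level. So there is no argument in the paper to compare your proposal against.

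As for your proposal itself: the two strategies you outline (a PBW/Gr\"obner argument built on the normal-form reduction of \cref{prop:VinftypCY}, or a distributive-law factorisation $\As\boxtimes\mathcal{B}$) are both plausible routes, and you correctly identify the delicate point---keeping the confluence/diamond check inside genus~$0$. That said, what you have written is a proof \emph{plan}, not a proof: the confluence of the rewriting system is asserted rather than verified, the dioperadic PBW/Gr\"obner criterion you invoke is not actually stated in \cite{Gan03} or \cite{LV12} and would need to be formulated and proved (or a precise reference given), and the distributive-law Koszulness criterion for dioperads likewise needs a reference or proof. The third route via $\DLie$ and curved Koszul duality you yourself flag as requiring machinery not available here. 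If you want a self-contained proof, the cleanest option is probably to follow \cite{PT19} directly, where the Koszul complex is analysed by hand.
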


In other words, this statement asserts  that the dioperadic cobar construction, that is the genus 0 cobar construction, of the codioperad $\rmC_{\rm pCY}$ is quasi-isomorphic to the dioperad $\rmV_{\rm{g}=0}$. However, the computation of the Koszul dual coproperad $\rmV^{\ac}$ performed in the proof of \cref{{prop:VinftypCY}} shows the presence of non-trivial elements of higher genera; this implies that the properadic cobar construction of the codioperad $\rmC_{\rm pCY}$ fails to be 
quasi-isomorphic to the properad $\rmV$~. For instance, the cycle 
\[
\begin{tikzpicture}[scale=0.5,baseline=(n.base)]
	\coordinate (n) at (0,1);
	\coordinate (A) at (0,0);
	\draw (A) ++ (0,-1) -- (A) --++ (45:1) coordinate (AS1)	++(45:-1) --++(135:1) coordinate (AS2);
	\draw[fill=white] (A) circle (4pt);
	\draw (AS1) --++(0,1) coordinate (B);	 
	\draw (B) --++ (45:1) coordinate (BS1)	++(45:-1) --++(135:1) coordinate (BS2);
	\draw[fill=white] (B) circle (4pt);
	\draw (BS1)--++ (0,0.5)  coordinate (T) ++ (0.2,0.5) coordinate (BxR);
	\coordinate (Temp) at (AS2|-T);
	\draw (AS2) -- (Temp) ++(-0.2,0) coordinate (BxL);
	\draw[fill=white] (BxL) rectangle (BxR); 
	\draw[densely dotted] (0.3,2.8) circle[x radius = 2, y radius = 1.4];
	\draw[densely dotted] (0,0.2) circle[x radius = 1, y radius = 0.8];
\end{tikzpicture}
\quad - \quad 
\begin{tikzpicture}[scale=0.5,baseline=(n.base)]
	\coordinate (n) at (0,1);
	\coordinate (A) at (0,0);
	\draw (A) ++ (0,-1) -- (A) --++ (45:1) coordinate (AS1)	++(45:-1) --++(135:1) coordinate (AS2);
	\draw[fill=white] (A) circle (4pt);
	\draw (AS1) --++(0,0) coordinate (B);	 
	\draw (B) --++ (45:1) coordinate (BS1)	++(45:-1) --++(135:1) coordinate (BS2);
	\draw[fill=white] (B) circle (4pt);
	\draw (BS1)--++ (0,1.5)  coordinate (T) ++ (0.2,0.5) coordinate (BxR);
	\coordinate (Temp) at (AS2|-T);
	\draw (AS2) -- (Temp) ++(-0.2,0) coordinate (BxL);
	\draw[fill=white] (BxL) rectangle (BxR); 
	\draw[densely dotted] (0.3,0.5) circle[x radius = 2, y radius = 1.4];
	\draw[densely dotted] (0.35,3.2) circle[x radius = 1.8, y radius = 0.8];
\end{tikzpicture}		\]
produces a non-trivial homology class in the properadic cobar construction of the codioperad
$\rmC_{\rm pCY}$. But it is equal to the boundary of the following element of the Koszul dual coproperad $\rmV^{\ac}$ 
\[
	\begin{tikzpicture}[scale=0.5,baseline=(n.base)]
		\coordinate (n) at (0,1);
		\coordinate (A) at (0,0);
		\draw (A) ++ (0,-1) -- (A) --++ (45:1) coordinate (AS1)	++(45:-1) --++(135:1) coordinate (AS2);
		\draw[fill=white] (A) circle (4pt);
		\draw (AS1)  coordinate (B);	 
		\draw (B) --++ (45:1) coordinate (BS1)	++(45:-1) --++(135:1) coordinate (BS2);
		\draw[fill=white] (B) circle (4pt);
		\draw (BS1)--++ (0,0.5)  coordinate (T) ++ (0.2,0.5) coordinate (BxR);
		\coordinate (Temp) at (AS2|-T);
		\draw (AS2) -- (Temp) ++(-0.2,0) coordinate (BxL);
		\draw[fill=white] (BxL) rectangle (BxR); 
	\end{tikzpicture}	
~.\]
in the properadic cobar construction. 

\medskip 

This raises the questions of the Koszul property of the properad $\rmV$ and the computation of the homology of the properadic cobar construction of the codioperad $\rmC_{\rm pCY}$. Since this goes outside the scope of the present paper, we will address these interesting points elsewhere. 

\subsection{Combinatorial description of the decomposition maps} \label{subsec:CoproduitsCpCY}
Recall that, in the properadic calculus \cite{HLV20}, every homotopical construction relies on one of  the various decomposition maps of the (Koszul dual) coproperad.
The  relation between the different coproperads encountered in the previous sections are as follows: 
\[
\rmV^{\ac} \twoheadrightarrow \rmC_{\rm pCY} \twoheadrightarrow \DPois^{\ac}~.
\]
This implies the following morphisms of properads  
\[\rmV_\infty \twoheadrightarrow {\rm pCY}\twoheadrightarrow \DPois_\infty~,\]
which show that is any homotopy double Poisson gebra carries a canonical pre-Calabi--Yau algebra structure  and  that any pre-Calabi--Yau algebra gives the genus 0 part of a $\rmV_\infty$-gebra. More important, since these functors of categories of gebras come from morphisms of coproperads, they preserves \emph{automatically} all the various homotopical constructions. \\

Since the infinitesimal coproperad $\rmC_{\rm pCY}$ 
(respectively $\DPois^{\ac}$ and $\rmV^{\ac}$) 
encoding pre-Calabi--Yau algebras 
(respectively homotopy double Poisson gebras and $\rmV_\infty$-gebras) 
is conilpotent, it carries 
all the possible properadic decomposition maps 
(infinitesimal $\Delta_{(1,1)}$, full $\Delta$, left and right infinitesimal 
$\Cop{}{(*)}$  and $\Cop{(*)}{}$, comonadic $\widetilde{\Delta}$) 
 which will be the key elements in order to describe the various homotopical properties of pre-Calabi--Yau algebras 
 (respectively homotopy double Poisson gebras and $\rmV_\infty$-gebras) in the next section. In this section, we provide the reader with combinatorial descriptions of all these decomposition maps in the case of the codioperad $\rmC_{\rm pCY}$. The case of the codioperad 
$\DPois^{\ac}$ is a sub-case obtained by restricting the sets of indices of the inputs; 
 the case of the coproperad $\rmV^{\ac}$ requires more details and will be treated elsewhere. 

\begin{definition}[Roundabout]
A \emph{roundabout} is a disc with outgoing edges labelled bijectively from $1$ to $m$ and ingoing edges labelled bijectively from $1$ to $n$, see 
\cref{Fig:Roondabout}.  
\end{definition}

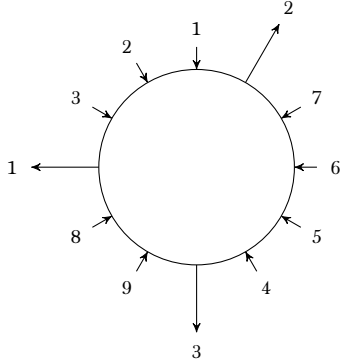
\begin{figure}[h!]
	\tikzmath{
		\radIn = 1.3; 
		\radMid = 1.6; 
		\radOut = 2.2; 
		\dec = 0.25; 
		}
\[
	\begin{tikzpicture}
		\coordinate (C) at (0,0);
		\draw[fill=white] (C) circle (\radIn);
		\foreach \angle/\i in {180/1,60/2,270/3}{
			\draw[->] (\angle:\radIn) -- (\angle:\radOut);
			\draw (\angle:\radOut) ++ (\angle:\dec) node {\tiny{$\i$}};
		}
		\foreach \angle/\i in {210/8,240/9,300/4,330/5,0/6,30/7,90/1,120/2,150/3}{
			\draw[<-] (\angle:\radIn) -- (\angle:\radMid);
			\draw (\angle:\radMid) ++ (\angle:\dec) node {\tiny{$\i$}};
		}
	\end{tikzpicture}
\]
	\caption{A roundabout with 3 inputs and 9 outputs.}
	\label{Fig:Roondabout}
\end{figure}

All roundabouts have at least one output. In the case of the codioperad $\rmC_{\rm pCY}$, there is no restriction on the number of inputs, except for roundabouts with only one output which are required to have at least one input. In the case of the codioperad $\DPois^{\ac}$, there should always be at least one input between two consecutive outputs. 

\begin{lemma}\label{lem:RoundaBasis}
The set of roundabouts forms a basis of the underlying $\Sy$-bimodule of the codioperad $\rmC_{\rm pCY}$~. 
\end{lemma}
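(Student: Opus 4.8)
The plan is to identify explicitly the basis of the underlying $\Sy$-bimodule of $\rmC_{\rm pCY}$ coming from the previous subsections and to set up a bijection with roundabouts. Recall from \cref{lemm:Dpois!} and \cref{Lem:uDPois!} that the properad $\mathrm{u}\DPois^!$ has an arity-wise basis given by the \emph{treewise stairways}: for $m \geqslant 1$ and $n \geqslant 0$, each such element is determined by an ordered partition $\lambda_1 + \cdots + \lambda_m = n$ into non-negative integers (with $\lambda_1 \geqslant 1$ when $m = 1$), a bijection of the $m$ outputs with $\{1,\ldots,m\}$, a bijection of the $n$ inputs with $\{1,\ldots,n\}$, and this datum is taken up to the cyclic action $\C_m$ rotating the blocks, carrying the sign prescribed by \eqref{eq:SignCyclicDual}. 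Quotienting by $\field u$ and dualising (\cref{prop:InfDecDPoisAC} and the discussion preceding \cref{def:pCY}) gives precisely the partial codioperad $\rmC_{\rm pCY}$, whose basis elements are the dual \emph{cyclic stairway} basis elements, subject to the same combinatorial data and the same cyclic skew-symmetry relation \eqref{eq:SignCyclicDual}.

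First I would make the correspondence precise. Given a roundabout with $m$ outputs labelled $1,\ldots,m$ cyclically and $n$ ingoing edges labelled $1,\ldots,n$, reading the boundary of the disc clockwise starting just after output $1$ records, between two consecutive outputs $k$ and $k+1$, a (cyclically ordered) block of ingoing edges; the size of the block after output $\sigma$ gives the integer $\lambda$ attached to that output, and the labels within the block give the internal labelling of the corresponding leaves of the stairway. This assignment is visibly a bijection between roundabouts and the combinatorial data described above, and it is $\Sy_m \times \Sy_n$-equivariant for the obvious actions (relabelling outputs acts on the left, relabelling ingoing edges on the right). The only identification to check is that rotating the basepoint of the roundabout — equivalently, cyclically permuting the $m$ output labels together with their attached blocks — is exactly the relation \eqref{eq:SignCyclicDual} defining the cyclic stairway basis up to sign, which is a direct comparison of the two descriptions.

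Next I would match the admissibility constraints. For $\rmC_{\rm pCY}$ the constraint (stated just before \cref{lemm:Dpois!} and in \cref{def:pCY}) is $n \geqslant 0$ arbitrary, $m \geqslant 1$, with $\lambda_1 \geqslant 1$ forced only when $m = 1$; translated through the bijection this says exactly ``at least one output, and roundabouts with a single output have at least one ingoing edge,'' which is what the statement of \cref{lem:RoundaBasis} asserts. Similarly, for the sub-codioperad $\DPois^{\ac}$ the constraint that no $\lambda_i$ vanish becomes ``at least one ingoing edge between two consecutive outgoing edges,'' matching the remark following the definition of roundabout.

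I do not expect a genuine obstacle here: the whole content is bookkeeping, since \cref{lemm:Dpois!}, \cref{Lem:uDPois!}, and \cref{prop:InfDecDPoisAC} have already done the structural work of computing the $\Sy$-bimodule $\mathrm{u}\DPois^!/\field u$ and its linear dual. The one point requiring a little care — and the closest thing to a ``hard part'' — is the sign: the cyclic stairway basis carries the explicit sign $(-1)^{n\lambda_1+\lambda_m}$ under block-rotation, and one must fix, once and for all, a normalisation of the roundabout basis (e.g. the representative with output $1$ placed at a chosen position and the sign absorbed into the identification) so that the bijection is an honest isomorphism of $\Sy$-bimodules and not merely a bijection of sets modulo signs. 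Once that normalisation is pinned down, the proof is a one-paragraph dictionary; I would present it as such, referring back to \cref{lemm:Dpois!}, \cref{Lem:uDPois!}, and \eqref{eq:SignCyclicDual} for the combinatorial data and the sign convention.
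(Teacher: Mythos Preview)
Your proposal is correct and follows essentially the same route as the paper: both arguments amount to reading off the cyclic stairway basis of \cref{lemm:Dpois!} and \cref{Lem:uDPois!} and writing down the dictionary with roundabouts (the paper goes stairway $\to$ roundabout, you go roundabout $\to$ stairway, but the bijection is the same). The paper's version is terser---it just states the correspondence and gives one worked example---while you spell out the $\Sy_m\times\Sy_n$-equivariance, the admissibility constraints, and the sign normalisation; the latter is a legitimate point of care but not a genuine obstacle, since a roundabout simply labels a $\C_m$-orbit in $(\Sy_m\times\Sy_n\times\mathrm{Part}_m(n))/\C_m$ and hence a one-dimensional summand, with the choice of basis vector fixed by any convention for the representative.
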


\begin{proof}
\cref{lemm:Dpois!} and \cref{Lem:uDPois!} provided us with cyclic stairway basis elements  for the codioperad $\rmC_{\rm pCY}$: 
\[	
\vcenter{\hbox{\begin{tikzpicture}[scale=0.6]
		\coordinate (A) at (0,1);
		\draw[thin]
		(A) ++(0,-1) node[below] {${\scriptstyle{1}}$} --
		(A) -- ++(-0.6,0.5) node[above] {${\scriptstyle{1}}$}
		(A) -- ++(0,0.5) node[above] {${\scriptstyle{2}}$}
		(A) -- ++(.6,0.5) node[above] {${\scriptstyle{3}}$};
		\coordinate (A) at (2.5,1);
		\draw[thin]
		(A) ++(0,-1) node[below] {$\scriptstyle{2}$} --
		(A) -- ++(-.9,0.5) node[above] {${\scriptstyle{4}}$}
		(A) -- ++(-.3,0.5) node[above] {${\scriptstyle{5}}$}
		(A) -- ++(.3,0.5) node[above] {${\scriptstyle{6}}$}
		(A) -- ++(.9,0.5)	node[above] {${\scriptstyle{7}}$};	
		\coordinate (A) at (5,1);
		\draw[thin]
		(A) ++(0,-1) node[below] {$\scriptstyle{3}$} --
		(A) -- ++(-0.3,0.5) node[above] {${\scriptstyle{8}}$}
		(A) -- ++(0.3,0.5) node[above] {${\scriptstyle{9}}$};
		\draw[fill=white] (-0.3,0.5) rectangle (5.3,1);
	\end{tikzpicture}}}~. 
\]
They are in one-to-one correspondence with roundabouts: given a cyclic stairway, we start drawing a roundabout with a first outgoing edge labelled by the label of the first output of the stairway, we proceed clockwise with incoming edges labelled 
anti-clockwise with the first set of inputs of the stairways, etc. 
For instance, the abovementioned cyclic stairway corresponds to the roundabout of \cref{Fig:Roondabout} under this bijection. 
\end{proof}

\begin{definition}[Oriented arc]
An \emph{oriented arc} of a roundabout is an arc of the underlying disc that starts and ends on the boundary circle where there is no edge attached and equipped with an edge pointing in the direction of one of the two areas created by the arc. 
\end{definition}

\begin{definition}[Elementary coloured cutting]
An \emph{elementary coloured cutting} of a roundabout is a partition of a roundabout into two parts, black and white,  delimited by an oriented arc, see \cref{Fig:CuttingsRounda}; the orientation of the edge of the arc leaves the white part and goes into the black part. 
The white part (respectively the black part) is required to contain at least one edge (respectively one outgoing edge) of the roundabout. 
\end{definition}

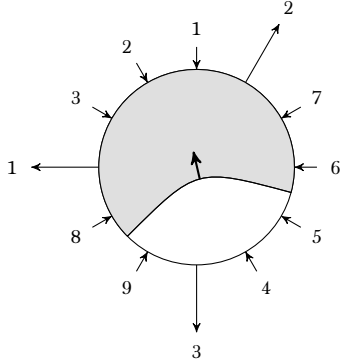
\begin{figure}[h!]
	\tikzmath{
		\radIn = 1.3; 
		\radMid = 1.6; 
		\radOut = 2.2; 
		\dec = 0.25; 
		}
\[
	\begin{tikzpicture}
		\coordinate (C) at (0,0);
		\draw[fill=white] (C) circle (\radIn);   
		\foreach \angle/\i in {180/1,60/2,270/3}{
			\draw[->] (\angle:\radIn) -- (\angle:\radOut);
			\draw (\angle:\radOut) ++ (\angle:\dec) node {\tiny{$\i$}};
		}
		\foreach \angle/\i in {210/8,240/9,300/4,330/5,0/6,30/7,90/1,120/2,150/3}{
			\draw[<-] (\angle:\radIn) -- (\angle:\radMid);
			\draw (\angle:\radMid) ++ (\angle:\dec) node {\tiny{$\i$}};
		}
		\draw[fill=gray!25] (225:\radIn) .. controls (0,0) .. (-15:\radIn) arc (-15:225:\radIn);
		\draw (225:\radIn) .. controls (0,0) .. (-15:\radIn) node[midway] (A) {};
		\draw[->,thick] (A.center) --++ (103:{1.5*\dec});
	\end{tikzpicture}
\]
	\caption{An elementary cutting of a roundabout.}
	\label{Fig:CuttingsRounda}
\end{figure}

\begin{proposition}\label{prop:CuttingInfDec}
The  terms appearing in the infinitesimal decomposition map 
\[\Delta_{(1,1)} \colon \rmC_{\rm pCY} \to \rmC_{\rm pCY} \ibt \rmC_{\rm pCY}\]
 of the codioperad $\rmC_{\rm pCY}$ are in one-to-one correspondence with 
the elementary coloured cuttings of roundabouts. 
\end{proposition}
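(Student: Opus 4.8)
The plan is to establish the bijection by translating the known formula for $\Delta_{(1,1)}$ on the cyclic stairway basis (\cref{prop:InfDecDPoisAC}, appropriately enlarged to the roundabout-indexed basis of $\rmC_{\rm pCY}$ by \cref{lem:RoundaBasis}) into the pictorial language of cuttings, and conversely to read off from each elementary coloured cutting the pair of roundabouts it produces. First I would recall that, under the bijection of \cref{lem:RoundaBasis}, a roundabout is determined by a cyclic word in outgoing and incoming edges: reading clockwise from an outgoing edge, one records the labels of the outputs (interspersed with the anti-clockwise-ordered blocks of inputs between consecutive outputs). The basis element corresponding to a roundabout $\mathfrak{r}$ is $\nu_{\lambda_1,\dots,\lambda_m}$ (up to relabelling and the cyclic sign of \eqref{eq:SignCyclicDual}), where $m$ is the number of outputs and $\lambda_i$ the number of inputs in the $i$-th gap.

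Next I would describe the map from cuttings to terms. Given an elementary coloured cutting of $\mathfrak{r}$, the oriented arc separates the boundary circle into a white part and a black part. The white part, together with the extra outgoing edge created by the orientation of the arc's edge (pointing out of white, into black), forms a roundabout $\mathfrak{r}'$; the black part, together with the extra incoming edge, forms a roundabout $\mathfrak{r}''$; and the original $\mathfrak{r}$ is recovered as $\mathfrak{r}' \ibt \mathfrak{r}''$ glued along that edge. One then checks that the constraints in the definition of an elementary coloured cutting — the white part must contain at least one edge, the black part at least one outgoing edge — correspond exactly to the constraints on the summation indices in \cref{prop:InfDecDPoisAC} (the $k=1$ and $k=m$ degenerate cases, and the requirement $p+1+q\geqslant 2$ resp.\ the corolla having at least two leaves). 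I would match the data $(k,\sigma,p,q)$ of a summand in \cref{prop:InfDecDPoisAC} with the combinatorial data of a cutting: $\sigma\in\C_m$ records which outgoing edge of $\mathfrak{r}$ becomes the first one in $\mathfrak{r}'$, $k$ records the gap that is split by the arc on one side, and $p,q$ record how many inputs of the two "extreme" gaps $\lambda_{\sigma(m)}$, $\lambda_{\sigma(k)}$ fall on the black side versus the white side. Conversely every choice of oriented arc subject to the stated restrictions arises this way, so the correspondence is a bijection.

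The main obstacle I anticipate is bookkeeping, not conceptual: one must verify that the arc genuinely produces \emph{genus zero} two-vertex graphs (i.e.\ that the cutting never reconnects in a way that raises genus), which follows from \cref{prop:InfDecDPoisAC} showing $\DPois^{\ac}$ — hence its quotient $\rmC_{\rm pCY}$ — is a codioperad; and one must be careful that the white/black edge-count conditions are phrased so as to exclude precisely the trivial (identity-producing) splittings, matching the conilpotency discussion. Since the statement as phrased asserts only a one-to-one correspondence of \emph{terms} (not yet the signs — those are deferred to \cref{subsec:CoproduitsCpCY} and the explicit full decomposition map), I would keep the proof at the level of the underlying index sets.

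\begin{proof}
By \cref{lem:RoundaBasis}, roundabouts index a basis of $\rmC_{\rm pCY}$, and under this indexing the basis element attached to a roundabout with $m$ outgoing edges and gaps of sizes $\lambda_1,\dots,\lambda_m$ is $\nu_{\lambda_1,\dots,\lambda_m}$, up to the relabelling of edges and the cyclic sign \eqref{eq:SignCyclicDual}. The infinitesimal decomposition map $\Delta_{(1,1)}$ on such an element is given by \cref{prop:InfDecDPoisAC}: it is a sum over $1\leqslant k\leqslant m$, $\sigma\in\C_m$, and $0\leqslant p<\lambda_{\sigma(m)}$, $0\leqslant q<\lambda_{\sigma(k)}$ of two-vertex genus $0$ graphs.

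To each summand we associate an elementary coloured cutting of the roundabout $\mathfrak{r}$ as follows. Draw the oriented arc so that, reading clockwise from the outgoing edge $\sigma(1)$, the black part contains the outgoing edges $\sigma(k), \sigma(k+1),\dots,\sigma(m)$ together with: the last $q$ inputs of the gap after $\sigma(k)$, the first $p$ inputs of the gap after $\sigma(m)$, and all inputs of the gaps after $\sigma(k+1),\dots,\sigma(m-1)$; the white part contains the remaining outgoing edges $\sigma(1),\dots,\sigma(k-1)$ and all remaining inputs. Orient the arc's edge out of white into black. Cutting along this arc splits $\mathfrak{r}$ into a white roundabout $\mathfrak{r}'$ (with the new outgoing edge adjoined) and a black roundabout $\mathfrak{r}''$ (with the new incoming edge adjoined), and $\mathfrak{r}=\mathfrak{r}'\ibt\mathfrak{r}''$ along that edge. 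Comparing with \cref{prop:InfDecDPoisAC}, $\mathfrak{r}'$ is the lower vertex $\calm_{\lambda_{\sigma(1)},\dots,\lambda_{\sigma(k-1)},p+1+q}$ and $\mathfrak{r}''$ is the upper vertex $\calm_{\lambda_{\sigma(k)-q},\lambda_{\sigma(k+1)},\dots,\lambda_{\sigma(m-1)},\lambda_{\sigma(m)-p}}$. The conditions $p+1+q\geqslant 2$ in the degenerate case $k=1$ and the requirement in the case $k=m$ that the residual corolla have at least two leaves translate respectively into the requirement that the white part contain at least one edge and the black part at least one outgoing edge; that is, the summand is non-degenerate precisely when the arc is an \emph{elementary coloured cutting} in the sense of the definition. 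Since $\rmC_{\rm pCY}$ is a codioperad (\cref{prop:InfDecDPoisAC}), the graph produced is of genus $0$, consistent with $\mathfrak{r}',\mathfrak{r}''$ being roundabouts.

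Conversely, an elementary coloured cutting of $\mathfrak{r}$ is determined by the choice of the two boundary points where the arc meets the circle (between edges) and the orientation of its edge; the black part being non-empty on outgoing edges and the white part non-empty on edges, this is precisely the data $(\sigma,k,p,q)$ subject to the ranges of \cref{prop:InfDecDPoisAC}. Hence the assignment above is a bijection between the summands of $\Delta_{(1,1)}(\mathfrak{r})$ and the elementary coloured cuttings of $\mathfrak{r}$, as claimed.
\end{proof}
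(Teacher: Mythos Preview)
Your overall approach is exactly the paper's: translate the stairway formula of \cref{prop:InfDecDPoisAC} into the roundabout language via \cref{lem:RoundaBasis}. However, your explicit bijection has the colours reversed, and this is not a harmless relabelling.

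By the definition of an elementary coloured cutting, the arc's oriented edge leaves the white part; hence the white sub-roundabout acquires a new \emph{outgoing} edge and the black one a new \emph{incoming} edge. In the two-vertex graph this means the white vertex has an output feeding an input of the black vertex, so white is the \emph{upper} vertex and black the \emph{lower} one. The paper says this verbatim: ``we interpret the white part as a roundabout with a new outgoing edge entering the black part also interpreted as a roundabout located under the white one''. You instead identify the white roundabout $\mathfrak{r}'$ with the lower vertex $\calm_{\lambda_{\sigma(1)},\dots,\lambda_{\sigma(k-1)},p+1+q}$, which is inconsistent with your own orientation convention. Relatedly, your output assignment is off by one: the lower vertex in \cref{prop:InfDecDPoisAC} carries the $k$ original outputs $\sigma(1),\dots,\sigma(k)$ (its $k$-th output is $\sigma(k)$, not the internal edge), whereas the upper vertex carries $\sigma(k+1),\dots,\sigma(m)$ plus the new internal output. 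Your description places $\sigma(k)$ on the wrong side.

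Once the colours are corrected (white $=$ top $=\calm_{\lambda_{\sigma(k)-q},\dots,\lambda_{\sigma(m)-p}}$, black $=$ bottom $=\calm_{\lambda_{\sigma(1)},\dots,\lambda_{\sigma(k-1)},p+1+q}$), the rest of your argument --- matching $(\sigma,k,p,q)$ to the arc's endpoints and checking that the non-degeneracy constraints on cuttings encode exactly the index bounds of \cref{prop:InfDecDPoisAC} --- goes through and coincides with the paper's proof.
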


\begin{proof}
The images of the cyclic stairway basis elements under the infinitesimal decomposition map of the codioperad 
$\rmC_{\rm pCY}$ 
are given by 2-steps stairways of basic cyclic stairways, see 
\cref{prop:InfDecDPoisAC}. 
Under the bijection given in the proof of \cref{lem:RoundaBasis} between cyclic stairways and roundabouts, such decompositions into 2-steps stairways coincide with elementary coloured cuttings: given an elementary coloured cutting, we interpret the white part as a roundabout with a new outgoing edge entering the black part also interpreted as a roundabout located under the white one. Notice that the twisting between $\overline{p}$ and $\overline{q}$ on the stairways is produced by the anti-clockwise labelling of the incoming edges of the roundabouts. 
For instance, the elementary coloured cutting displayed on \cref{Fig:CuttingsRounda} corresponds to the following 2-steps stairway: 
\[
\vcenter{\hbox{
\begin{tikzpicture}[scale=0.6]
		\coordinate (A) at (0,1);
		\draw[thin]
		(A) ++(0,-1) node[below] {${\scriptstyle{1}}$} --
		(A) -- ++(-0.6,0.5) node[above] {${\scriptstyle{1}}$}
		(A) -- ++(0,0.5) node[above] {${\scriptstyle{2}}$}
		(A) -- ++(.6,0.5) node[above] {${\scriptstyle{3}}$};
		\coordinate (A) at (2.5,1);
		\draw[thin]
		(A) ++(0,-1) node[below] {$\scriptstyle{2}$} --
		(A) -- ++(.3,0.5) node[above] {${\scriptstyle{6}}$}
		(A) -- ++(.9,0.5)	node[above] {${\scriptstyle{7}}$};	
		\coordinate (A) at (5,1);
		\draw[fill=gray!25] (-0.3,0.5) rectangle (2.8,1);
		\coordinate (B) at (2.5,1);
		\draw[thin]
		(B) --++(0,1.5) ++ (0,0.5) coordinate (D) ++ (2.5,0) coordinate (E) ++ (0, -0.5) coordinate (F)
		(D) -- ++(-.9,0.5) node[above] {${\scriptstyle{4}}$}
		(D) -- ++(-.3,0.5) node[above] {${\scriptstyle{5}}$}
		(E) -- ++(0.3,0.5) node[above] {${\scriptstyle{9}}$}
		(B) -- ++(-0.3,0.5) node[above] {${\scriptstyle{8}}$}
		(F) --++(0,-0.5)  node[below] {$\scriptstyle{3}$};				
		\draw[thick]
		(B) --++(0,1.5);
		\draw[fill=white] (2.2,2.5) rectangle (5.3,3);		
	\end{tikzpicture}}}
\]
\end{proof}

\begin{definition}[Bicoloured partition]
A \emph{bicoloured partition} of a roundabout is a partition of a roundabout into parts of alternating two colours, black and white: the frontier between a white part and a black part is always delimited by an oriented arc, with the oriented edge leaving the white part and entering into the black part, see \cref{Fig:BiColoredPartRouna}. 
All the incoming (respectively outgoing) edges of the roundabout have to belong to white (respectively black) parts. 
Each white part (respectively  black part) is required to contain at least
two leaving oriented edges (respectively two outgoing edges of the roundabout)
or at least one incoming edge of the roundabout (respectively one incoming oriented edge) . 
\end{definition}

In other words,  the data of a bicoloured partition of a roundabout is equivalent to the data of a set of non-crossing arcs with  edges oriented alternatively. 

\begin{figure}[h!]
	\tikzmath{
		\radIn = 2.4; 
		\radMid = 2.8; 
		\radOut = 3.4; 
		\dec = 0.25; 
		}
\[	\begin{tikzpicture}
		\coordinate (C) at (0,0);
		\draw[fill=white] (C) circle (\radIn);
		\foreach \angle/\i in {90/1, 0/2, 270/3, 180/4}{
			\draw[->] (\angle:\radIn) -- (\angle:\radOut);
			\draw (\angle:\radOut) ++ (\angle:\dec) node {\tiny{$\i$}};  
		}
		\foreach \angle/\i in 
		{45/1,-45/2, 240/3, 210/4, 120/5, 150/6
		}{
			\draw[<-] (\angle:\radIn) -- (\angle:\radMid);
			\draw (\angle:\radMid) ++ (\angle:\dec) node {\tiny{$\i$}}; 
		}
		\draw[fill=gray!25] (105:\radIn) .. controls (41.25:0) .. (-22.5:\radIn) arc (-22.5:105:\radIn);
		\draw (105:\radIn) .. controls (41.25:0) .. (-22.5:\radIn) node[midway] (A) {};
		\draw[->,thick] (A.center) --++ (41.25:{1.5*\dec});
		\draw[fill=gray!25] (165:\radIn) .. controls (228.75:0.2) .. (292.5:\radIn) arc (292.5:165:\radIn);
		\draw (165:\radIn) .. controls (228.75:0.2) .. (292.5:\radIn) node[midway] (A) {};
		\draw[->,thick] (A.center) --++ (228.75:{1.5*\dec});		
		\draw[fill=white] (60:\radIn) .. controls (45:1.7) .. (30:\radIn) arc (30:60:\radIn);
		\draw (60:\radIn) .. controls (45:1.7) .. (30:\radIn) node[midway] (B) {};
		\draw[->,thick] (B.center) --++ (225:{1.5*\dec});
		\draw[fill=white] (200:\radIn) .. controls (225:1.5) .. (250:\radIn) arc (250:200:\radIn);
		\draw (200:\radIn) .. controls (225:1.5) .. (250:\radIn) node[midway] (B) {};
		\draw[->,thick] (B.center) --++ (45:{1.5*\dec});
	\end{tikzpicture} 
\]
	\caption{A bicoloured partition of a roundabout.}
	\label{Fig:BiColoredPartRouna}
\end{figure}
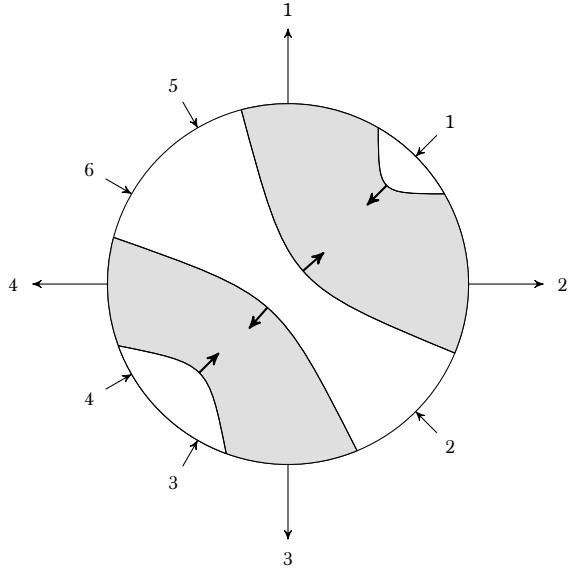

\begin{proposition}\label{prop:BiColPartFullDec}
The  terms appearing in the decomposition map 
\[\Delta \colon \rmC_{\rm pCY} \to \rmC_{\rm pCY} \boxtimes \rmC_{\rm pCY}\]
 of the codioperad $\rmC_{\rm pCY}$ are in one-to-one correspondence with 
the bicoloured partitions of roundabouts.
\end{proposition}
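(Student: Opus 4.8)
The plan is to build on the explicit description of the infinitesimal decomposition map obtained in \cref{prop:CuttingInfDec} and iterate it, following the standard principle that the full decomposition map $\Delta$ of a conilpotent coproperad is recovered from the infinitesimal one $\Delta_{(1,1)}$ by iteration and then packaging the result. Concretely, recall from \cite[Section~2]{HLV20} that for a conilpotent (partial) coproperad the full decomposition map $\Delta$ decomposes an element into an arbitrary number of pieces arranged along a connected level graph, and that this is obtained as the (co)limit of the iterations of $\Delta_{(1,1)}$, which each split off one single edge. Since $\rmC_{\rm pCY}$ is conilpotent (no infinite series appear, as noted just before \cref{subsec:CoproduitsCpCY}), this iteration terminates on every basis element.

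First I would set up the bijection at the level of a single cutting step. By \cref{prop:CuttingInfDec}, one application of $\Delta_{(1,1)}$ to a roundabout corresponds to drawing one oriented arc, splitting the disc into a white part (containing all incoming edges that end up ``on top'') and a black part (carrying a new incoming oriented edge, and all the original outgoing edges that end up ``below''); the white part sits above the black part in the 2-level graph, connected by the oriented edge. Iterating, each further application of $\Delta_{(1,1)}$ to one of the already-produced pieces draws one more oriented arc inside that piece, always with the convention that the oriented edge leaves the white side and enters the black side. Carrying this out to the end, an element of $\rmC_{\rm pCY}\boxtimes\rmC_{\rm pCY}$ is exactly the data of a family of pairwise non-crossing oriented arcs in the original disc whose colouring alternates (white above, black below along each arc), with all the original incoming edges forced into white regions and all the original outgoing edges forced into black regions — that is, precisely a bicoloured partition of the roundabout. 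I would then check that the arity and nonemptiness constraints match: the requirement that each white region carry at least two leaving oriented edges or at least one incoming edge of the roundabout, and dually for black regions, is exactly the condition that each vertex of the underlying two-level connected graph be a genuine (nontrivial) generator of $\rmC_{\rm pCY}$, i.e. lies in the reduced part $\overline{\rmC}_{\rm pCY}$ — this is where one uses that $\rmC_{\rm pCY}$ has trivial component in arity $(1,0)$ and that arity-$(1,1)$ pieces are identities, hence get collapsed.

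The remaining point is to show that this combinatorial iteration genuinely computes $\Delta$, i.e. that no two distinct bicoloured partitions are identified and that every term of $\Delta$ arises. For the first, I would observe that a bicoloured partition, being a system of non-crossing arcs, admits a canonical ``outermost first'' ordering of its arcs, so the same partition is obtained from a unique maximal sequence of elementary cuttings (up to the permutations of independent cuttings, which are exactly the relations built into $\boxtimes$); associativity and coassociativity of the cutting procedure — which follow from the coproperad axioms for $\Delta_{(1,1)}$, already implicit in \cref{prop:InfDecDPoisAC} — guarantee the result is independent of the chosen order. For surjectivity onto the terms of $\Delta$, I would invoke the general fact (\cite[Section~2]{HLV20}) that $\Delta$ on a conilpotent coproperad is determined by and reconstructed from $\Delta_{(1,1)}$, so that every connected level graph appearing in $\Delta$ is realised by some iteration of cuttings, hence by some bicoloured partition. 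The main obstacle I anticipate is purely bookkeeping rather than conceptual: keeping the \emph{orientations and colours} consistent through the iteration — making sure that the new oriented edge introduced at each cutting is correctly recorded as an incoming edge of the black piece and an outgoing edge of the white piece, so that a later cutting inside the black piece again produces a \emph{white-above-black} frontier — and verifying that the global non-crossing condition on arcs is exactly equivalent to ``the iterated cuttings can be performed in some order''. Signs play no role in this particular statement since it concerns only which terms appear; the signs of $\Delta$ would be assembled from the signs in \cref{prop:InfDecDPoisAC} by the same iteration, but that is not claimed here.
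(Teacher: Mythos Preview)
Your approach and the paper's are close in spirit: both build the bijection inductively from \cref{prop:CuttingInfDec}, one arc at a time. The paper, however, works on the dual side --- $\Delta$ is the linear dual of the composition map $\gamma$ of the properad $\tfrac{\mathrm{u}\DPois^!}{\field u}$, so the terms of $\Delta(\nu)$ are exactly the connected 2-level genus-0 graphs of cyclic stairways whose composite under $\gamma$ is $\nu$ --- and then proves directly, by induction on the number of arcs (removing an \emph{innermost} one at each step), that such 2-level graphs are in bijection with bicoloured partitions.

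One step in your argument needs tightening. Freely iterating $\Delta_{(1,1)}$ on pieces, as you describe (cut, then cut a sub-piece, then cut a sub-sub-piece, \ldots), does not produce the terms of $\Delta$: it produces the terms of the comonadic $\widetilde{\Delta}$, which are arbitrary \emph{oriented} partitions (this is \cref{prop:ComonadicDec}), corresponding to general flow-directed connected genus-0 graphs rather than 2-level ones. What singles out the 2-level graphs --- and hence the terms of $\Delta$ --- among these is that a top vertex may border only bottom vertices and conversely; translated back to the disc this reads exactly as ``white regions border only black regions'', i.e.\ the bicoloured condition. You assert this identification (``an element of $\rmC_{\rm pCY}\boxtimes\rmC_{\rm pCY}$ is exactly the data of \ldots\ arcs whose colouring alternates'') but do not argue it. The paper's induction is precisely what establishes it: contracting the innermost arc of a bicoloured partition (which delimits a region adjacent to a single other region) yields a smaller bicoloured partition, and on the graph side this matches composing a 2-level graph along the corresponding edge while remaining 2-level.
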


\begin{proof}
Recall that the decomposition map $\Delta \colon \rmC_{\rm pCY} \to \rmC_{\rm pCY} \boxtimes \rmC_{\rm pCY}$ is the linear dual of the composition map 
\[\gamma\  \colon\  \frac{\mathrm{u}\DPois^!}{\field u} \boxtimes \frac{\mathrm{u}\DPois^!}{\field u} \to \frac{\mathrm{u}\DPois^!}{\field u}~.\]
So the terms appearing on the image of a cyclic stairway basis element 
\[	\nu\coloneq 
\vcenter{\hbox{\begin{tikzpicture}[scale=0.6]
		\coordinate (A) at (0,1);
		\draw[thin]
		(A) ++(0,-1) node[below] {${\scriptstyle{1}}$} --
		(A) -- ++(0,0.5) node[above] {${\scriptstyle{1}}$};
		\coordinate (A) at (1.5,1);
		\draw[thin]
		(A) ++(0,-1) node[below] {$\scriptstyle{2}$} --
		(A) -- ++(0,0.5)	node[above] {${\scriptstyle{2}}$};	
		\coordinate (A) at (3,1);
		\draw[thin]
		(A) ++(0,-1) node[below] {$\scriptstyle{3}$} --
		(A) -- ++(-0.3,0.5) node[above] {${\scriptstyle{3}}$}
		(A) -- ++(0.3,0.5) node[above] {${\scriptstyle{4}}$};
		\coordinate (A) at (4.5,1);
		\draw[thin]
		(A) ++(0,-1) node[below] {$\scriptstyle{4}$} --
		(A) -- ++(-0.3,0.5) node[above] {${\scriptstyle{5}}$}
		(A) -- ++(0.3,0.5) node[above] {${\scriptstyle{6}}$};		
		\draw[fill=white] (-0.3,0.5) rectangle (4.8,1);
	\end{tikzpicture}}}
\]
under the decomposition map $\Delta$ are the connected 2-levels graphs of cyclic stairways 
whose image under the composition map $\gamma$ gives $\nu$, like for instance 
\[
\vcenter{\hbox{\begin{tikzpicture}[scale=0.6]
		\draw[thin] (7.5,0) --(7.5,2.5);	 				
		\draw[thick] (7.5,1) --(7.5,2);	 						
		\draw[thick] (5,1) --(5,2);	 						
		\draw[thick] (2.5,1) --(2.5,2);	 						
		\draw[thick] (0,1) --(0,2);	 														
		\draw[thin] (0,0) --(0,3) node[above] {$\scriptstyle{1}$};	
		\draw[thin] (2.5,2.5) --(2.5,3) node[above] {$\scriptstyle{2}$};	
		\draw[thin] (5,2.5) -- (4.7,3) node[above] {${\scriptstyle{5}}$};
		\draw[thin] (5,2.5) -- (5.3,3) node[above] {${\scriptstyle{6}}$};
		\draw[thin] (7.5,2.5) -- (7.2,3) node[above] {${\scriptstyle{3}}$};
		\draw[thin] (7.5,2.5) -- (7.8,3) node[above] {${\scriptstyle{4}}$};		
		\coordinate (A) at (0,1);
		\draw[thin]
		(A) --++(0,-1) node[below] {${\scriptstyle{1}}$};
		\coordinate (A) at (2.5,1);
		\draw[thin]
		(A) --++(0,-1) node[below] {$\scriptstyle{2}$};	
		\coordinate (A) at (5,1);
		\draw[fill=gray!25] (-0.3,0.5) rectangle (2.8,1);
		\coordinate (B) at (2.5,1);
		\draw[thin]
		(B) --++(0,1.5) ++ (0,0.5) coordinate (D) ++ (2.5,0) coordinate (E) ++ (0, -0.5) coordinate (F)
		(F) --++(0,-2.5)  node[below] {$\scriptstyle{4}$};		
		\draw[thin] (7.5, 0.5) --(7.5,0)  node[below] {$\scriptstyle{3}$};						
		\draw[fill=white] (2.2,2) rectangle (5.3,2.5);
		\draw[fill=white] (-0.3,2) rectangle (0.3,2.5);
		\draw[fill=white] (7.2,2) rectangle (7.8,2.5);
		\draw[fill=gray!25] (4.7,0.5) rectangle (7.8,1);		
	\end{tikzpicture}}}~.
\]
Recall from \cref{prop:InfDecDPoisAC} and \cref{Lem:uDPois!}  that all the composites along 2-levels graphs of positive genus vanish in the properad $\frac{\mathrm{u}\DPois^!}{\field u}$, so we only have to consider genus $0$ graphs here. 
Under the bijection between cyclic stairways and roundabouts given in \cref{lem:RoundaBasis}, 
we will show that these connected 2-levels graphs of cyclic stairways are in one-to-one correspondence with bicoloured partitions of roundabouts.
Like in the proof of \cref{prop:CuttingInfDec}, we interpret the white parts of a bicoloured partition of a roundabout 
as cyclic stairways, using \cref{lem:RoundaBasis}, placed at the top level and the black parts as cyclic stairways 
placed at the bottom level; the oriented edges from white parts to black parts provide us with the top-to-bottom edges on the connected 2-levels graph of cyclic stairways. Under this assignment, the example of a bicoloured partition 
 depicted on \cref{Fig:BiColoredPartRouna} gives the above-mentioned example of a connected 2-levels graph. 
 
 \medskip

Let $\nu$ be a cyclic stairway basis element and let $\rho$ be the corresponding roundabout under the bijection given in \cref{lem:RoundaBasis}. 
For any bicoloured partition $\pi$ of the roundabout $\rho$, we consider a total order on the set of arcs such that any arc contained in a roundabout delimited by another arc is less than that latter one. This induces a total order on the inner edges of the connected 2-levels graph $\rmg$ associated to the bicoloured partition $\pi$~. 

\medskip

Let us now prove, by induction on the number $k\in \NN^*$ of inner edges, that the above-mentioned assignment defines a bijection between the set of bicoloured partitions $\pi$ of the roundabout $\rho$ and the set of 
connected 2-levels graphs $\rmg$ whose image under the composition map $\gamma$ is equal to $\nu$. 
For $k=1$, this statement is precisely \cref{prop:CuttingInfDec}. 
Suppose now that this statement holds true for $k$ and let us prove it for $k+1$. \medskip

First we claim that any bicoloured partition $\pi$ of $\rho$ with $k+1$ inner edges gives a connected 2-levels graph $\rmg$ whose image under the composition map $\gamma$ is equal to  $\nu$. 
To see this, we notice that the action of the composition map $\gamma$ on the connected 2-levels graph $\rmg$ is equal to the action of the infinitesimal composition product applied 
to the 2-vertices subgraphs defined by the first inner edge followed by the action of the composition map on the resulting connected 2-levels graph $\rmg'$: since the first inner edge corresponds to the first arc, it delimits a single top (or bottom) cyclic stairway element that is composed inside the cyclic stairway element located below (or above). 
One can now see that the connected 2-levels graph $\rmg'$ corresponds to the bicoloured partition $\pi'$ obtained 
forgetting the first arc and by keeping the outside colour: this follows actually from \cref{prop:CuttingInfDec} applied to the sub-roundabout delimited by the smallest arc containing the first arc. Now we can apply the induction hypothesis since both the conected 2-level graph $\rmg'$ and the bicoloured partition $\pi'$ have $k$ inner edges. 

\medskip

In the other way round, let us show that any connected 2-levels graph $\rmg$ with $k+1$ inner edges, whose image under the composition map $\gamma$ is equal to  $\nu$, is produced by a bicoloured partition $\pi$ of $\rho$. 
The arguments are quite similar. We claim that the connected 2-levels graph $\rmg$ has at least one bottom or top vertex connected to just one top or bottom vertex: otherwise the total number of vertices would be infinite or the graph would not be of genus $0$. We decide that the associated inner edge is the first one. 
The connected 2-levels graph $\rmg'$ obtained by applying the infinitesimal composition product  
to the 2-vertices subgraphs defined by this first inner edge satisfies the same property as $\rmg$, that is its image under the composition map $\gamma$ is equal to  $\nu$. 
By the induction hypothesis, the connected 2-levels graph $\rmg'$ corresponds to a bicoloured partition $\pi'$ of the roundabout $\rho$. We consider the white or black part corresponding to the top or bottom vertex obtained by the abovementioned infinitesimal composition product. 
Viewing this part as a sub-roundabout, 
\cref{prop:CuttingInfDec} ensures that it admits an arc whose part with new colour corresponds to the original bottom or top vertex. The finer bicoloured partition $\pi$ obtained from $\pi'$ by adding this new arc is sent to the connected 2-levels graph $\rmg$ under the above assignment, which concludes the proof. 
\end{proof}

\begin{definition}[Infinitesimal black/white partition]
An \emph{infinitesimal black} (respectively \emph{white}) \emph{partition} of a roundabout is 
a partition of a roundabout into parts where one is black (respectively white) and all the others white (respectively black): the frontier between a white part and a black part is always delimited by an oriented arc with the oriented edge leaving the white part and entering into the black part, see \cref{Fig:BWSplitingRounda}. 

In an infinitesimal black partition, all the incoming edges of the roundabout have to belong to white parts, 
each white part should have at least one incoming edge of the roundabout or at least 
two edges leaving to black part, and 
the black part needs to have at least two outgoing edges or 
two incoming edges from white parts when it has only one output edge of the roundabout.

In an infinitesimal white partition, all the outgoing edges of the roundabout have to belong to black parts, 
each black part should have at least one outgoing edge, 
and the white part needs to have at least two input edges of the roundabout when there is only one black part. 
\end{definition}

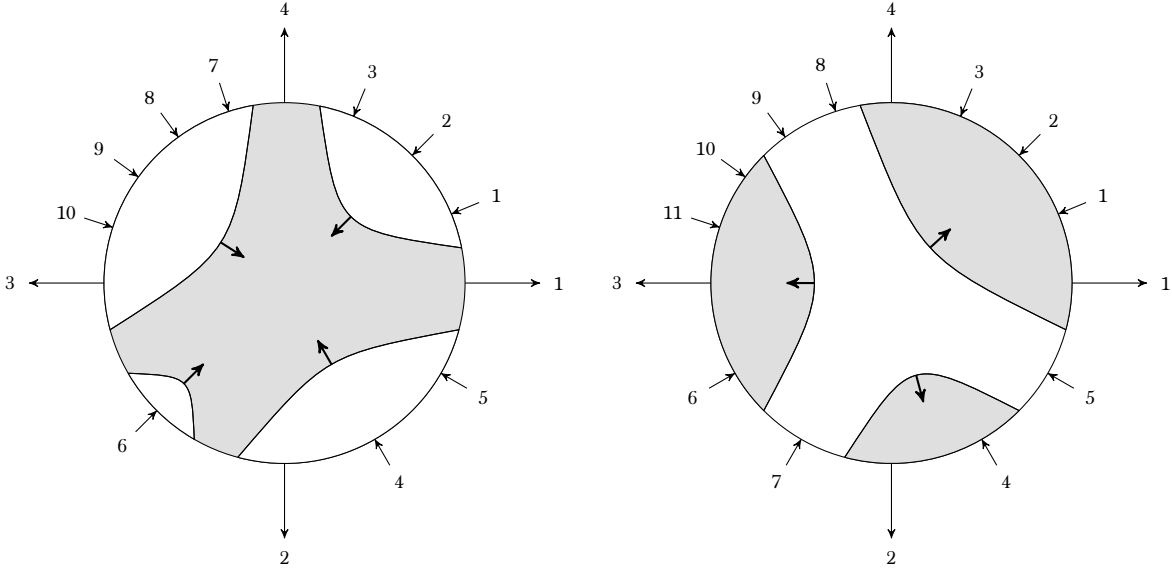
\begin{figure}[h!]
	\tikzmath{
		\radIn = 2.4; 
		\radMid = 2.8; 
		\radOut = 3.4; 
		\dec = 0.25; 
		}
\[
	\begin{tikzpicture}
		\coordinate (C) at (0,0);
		\draw[fill=gray!25] (C) circle (\radIn);
		\foreach \angle/\i in {0/1, 270/2, 180/3, 90/4}{
			\draw[->] (\angle:\radIn) -- (\angle:\radOut);
			\draw (\angle:\radOut) ++ (\angle:\dec) node {\tiny{$\i$}};  
		}
		\foreach \angle/\i in 
		{22.5/1,45/2, 67.5/3, -60/4, -30/5, 225/6, 108/7, 126/8, 144/9, 162/10
		}{
			\draw[<-] (\angle:\radIn) -- (\angle:\radMid);
			\draw (\angle:\radMid) ++ (\angle:\dec) node {\tiny{$\i$}}; 
		}
		\draw[fill=white] (100:\radIn) .. controls (147.5:0.8) .. (195:\radIn) arc (195:100:\radIn);
		\draw (100:\radIn) .. controls (147.5:0.8) .. (195:\radIn) node[midway] (A) {};
		\draw[->,thick] (A.center) --++ (327.5:{1.5*\dec});
		\draw[fill=white] (78.75:\radIn) .. controls (45:1) .. (11.25:\radIn) arc (11.25:78.75:\radIn);
		\draw (78.75:\radIn) .. controls (45:1) .. (11.25:\radIn) node[midway] (A) {};
		\draw[->,thick] (A.center) --++ (225:{1.5*\dec});		
		\draw[fill=white] (255:\radIn) .. controls (300:1.1) .. (345:\radIn) arc (345:255:\radIn);
		\draw (345:\radIn) .. controls (300:1.1) .. (255:\radIn) node[midway] (A) {};
		\draw[->,thick] (A.center) --++ (120:{1.5*\dec});				
		\draw[fill=white] (210:\radIn) .. controls (225:1.75) .. (240:\radIn) arc (240:210:\radIn);
		\draw (210:\radIn) .. controls (225:1.75) .. (240:\radIn) node[midway] (A) {};
		\draw[->,thick] (A.center) --++ (45:{1.5*\dec});						
	\end{tikzpicture}
\quad
	\begin{tikzpicture}
		\coordinate (C) at (0,0);
		\draw[fill=white] (C) circle (\radIn);
		\foreach \angle/\i in {0/1, 270/2, 180/3, 90/4}{
			\draw[->] (\angle:\radIn) -- (\angle:\radOut);
			\draw (\angle:\radOut) ++ (\angle:\dec) node {\tiny{$\i$}};  
		}
		\foreach \angle/\i in 
		{22.5/1,45/2,67.5/3, -60/4, -30/5, 210/6, 240/7, 108/8, 126/9, 144/10, 162/11
		}{
			\draw[<-] (\angle:\radIn) -- (\angle:\radMid);
			\draw (\angle:\radMid) ++ (\angle:\dec) node {\tiny{$\i$}}; 
		}
		\draw[fill=gray!25] (135:\radIn) .. controls (180:0.8) .. (225:\radIn) arc (225:135:\radIn);
		\draw (135:\radIn) .. controls (180:0.8) .. (225:\radIn) node[midway] (A) {};
		\draw[->,thick] (A.center) --++ (180:{1.5*\dec});
		\draw[fill=gray!25] (-15:\radIn) .. controls (42.5:0.5) .. (100:\radIn) arc (100:-15:\radIn);
		\draw (-15:\radIn) .. controls (42.5:0.5) .. (100:\radIn) node[midway] (A) {};
		\draw[->,thick] (A.center) --++ (42.5:{1.5*\dec});		
		\draw[fill=gray!25] (255:\radIn) .. controls (285:1) .. (315:\radIn) arc (315:255:\radIn);
		\draw (255:\radIn) .. controls (285:1) .. (315:\radIn) node[midway] (A) {};
		\draw[->,thick] (A.center) --++ (285:{1.5*\dec});						
	\end{tikzpicture}
	\]
	\caption{An infinitesimal black and an infinitesimal white partition of a roundabout.}
	\label{Fig:BWSplitingRounda}
\end{figure}

Let us recall from \cite[Definition~3.12]{HLV20} that the \emph{left} and \emph{right infinitesimal composition products}
\[
	\rmM  \libt  \rmN
	\qquad \text{and} \qquad
	\rmM 	\ribt  \rmN\ ,
\]
are the sub-$\Sy$-bimodules of $(\Ibox \oplus \rmM)\boxtimes \rmN$ and $\rmM\boxtimes (\Ibox \oplus \rmN)$ made up of the linear parts in $\rmM$ and $\rmN$ respectively. For $n\geqslant 1$, we consider their following summands
\[
	\rmM \lhd_{(n)} \rmN \
	\qquad \text{and} \qquad
	\rmM \,{}_{(n)}\!\rhd \rmN \ ,
\]
made up of one element of $\rmM$ on the bottom (resp. of $\rmN$ at the top) and $n$ elements of $\rmN$ on the top (resp. of $\rmM$ at the bottom). Notice that the top (resp. bottom) level is saturated by elements of $\rmN$ (resp. $\rmM$) and that the bottom (resp. top) level contains one element of $\rmM$ (resp. $\rmN$) and possibly many copies of the identity element from $\Ibox$~, see \cref{Fig:Triangle}.

\begin{figure*}[h]
	\begin{tikzpicture}[scale=0.75]
		\draw[thick] (1.95,1) to[out=270,in=90] (3.5,-1) ;
		\draw[thick] (4,-1)-- (4,-2);
		\draw[thick] (3,-1)-- (3,-2);
		\draw[thick] (6,-1)-- (6,-2);
		\draw[thick] (1,1) to[out=270,in=90] (2,-2);
		\draw[thick] (0.5,2) to  (0.5,1);
		\draw[thick] (0,1) to  (0,-2);
		\draw[thick] (7.5,2) to[out=270,in=90] (5.95,1) to[out=270,in=90] (8,-1) to[out=270,in=90] (8,-2);
		\draw[thick] (5,2) to  (5,1) to[out=270,in=90] (4.5,-1);
		\draw[draw=white,double=black,double distance=2*\pgflinewidth,thick] (9,2) to  (9,1) to (9,-2);
		\draw[thick] (1.5,2) to  (1.5,1);
		\draw[draw=white,double=black,double distance=2*\pgflinewidth,thick] (8,1) to[out=270,in=90] (5.5,-1);
		\draw[thick] (5,-1) 	 -- (5,-2);
		\draw[draw=white,double=black,double distance=2*\pgflinewidth,thick] (4,2) to[out=270,in=90] (4,1);
		\draw[draw=white,double=black,double distance=2*\pgflinewidth,thick]  (4,1) to[out=270,in=90] (1,-2);
		\draw[draw=white,double=black,double distance=2*\pgflinewidth,thick] (6.5,2) to[out=270,in=90] (8,1) ;
		\draw[fill=white] (-0.3,0.8) rectangle (2.3,1.2);
		\draw[fill=white] (3.7,0.8) rectangle (6.3,1.2);
		\draw[fill=white] (7.7,0.8) rectangle (9.3,1.2);
		\draw[fill=white] (2.7,-1.2) rectangle (6.3,-0.8);
		\draw (1,1) node {\small{$\nu_1$}};
		\draw (5,1) node {\small{$\nu_2$}};
		\draw (8.5,1) node {\small{$\nu_3$}};
		\draw (4.5,-1) node {\small{$\mu$}};
	\end{tikzpicture}
	\caption{An element of $\rmM \lhd_{(3)} \rmN$~.}
	\label{Fig:Triangle}
\end{figure*}

Let us recall from \cite[Definition~3.13]{HLV20} that  the definitions of  the \emph{left} and \emph{right infinitesimal decomposition maps} of the coproperad $\rmC_{\rm pCY}$ are given by
\[
	\begin{tikzcd}[column sep=large, row sep=tiny]
		\Cop{}{(*)} \ \colon \ \overline{\rmC}_{\rm pCY} \arrow[r,"\Delta"] &
		\rmC_{\rm pCY} \boxtimes \rmC_{\rm pCY} \arrow[r,"(\eps; \id)\boxtimes \id"] & \overline{\rmC}_{\rm pCY}\libt  \rmC_{\rm pCY} \ , \\
		\Cop{(*)}{} \ \colon \ \overline{\rmC}_{\rm pCY} \arrow[r,"\Delta"] &
		\rmC_{\rm pCY} \boxtimes \rmC_{\rm pCY} \arrow[r,"\id \boxtimes (\eps; \id)"] & \rmC_{\rm pCY} \ribt  \overline{\rmC}_{\rm pCY}\ .
	\end{tikzcd}
\]
Heuristically speaking, they amount to decompose the elements of $\rmC_{\rm pCY}$ into two levels, using the  decomposition map $\Delta$, but keeping only a non-trivial one at the bottom or at the top respectively. 

\begin{proposition}\label{prop:LRInfDecMap}
The  terms appearing in the left infinitesimal decomposition map $\Cop{}{(*)}$
(respectively the right infinitesimal decomposition map $\Cop{(*)}{}$)
 of the codioperad $\rmC_{\rm pCY}$ are in one-to-one correspondence with 
the infinitesimal black partitions (respectively infinitesimal white partitions) of roundabouts.
\end{proposition}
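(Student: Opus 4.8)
The plan is to read both infinitesimal decomposition maps off the full decomposition map $\Delta$, whose terms were put in bijection with the bicoloured partitions of roundabouts in \cref{prop:BiColPartFullDec}, and then to track what the defining projections do to those terms. Recall that $\Cop{}{(*)}$ is obtained by post-composing $\Delta \colon \overline{\rmC}_{\rm pCY}\to\rmC_{\rm pCY}\boxtimes\rmC_{\rm pCY}$ with the map $(\eps;\id)\boxtimes\id$, which rewrites the bottom factor as $\Ibox\oplus\overline{\rmC}_{\rm pCY}$ and retains only the summand $\overline{\rmC}_{\rm pCY}\libt\rmC_{\rm pCY}$ that is linear in $\overline{\rmC}_{\rm pCY}$. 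So the first step is to observe that, among the connected $2$-level graphs of cyclic stairways with $\gamma$-image a fixed basis element, the ones surviving this projection are exactly those whose bottom level carries a single non-trivial vertex, all the other bottom vertices being identity strands. Under the dictionary of \cref{prop:BiColPartFullDec} — in which, following the conventions of its proof, white parts sit at the top level and black parts at the bottom level — this translates into: the bicoloured partition has exactly one black part and all incoming edges of the roundabout are attached to white parts, which is precisely the data of an infinitesimal black partition. The case of $\Cop{(*)}{}$, which instead keeps a single non-trivial vertex at the top, is completely symmetric upon exchanging top and bottom, hence white and black, and produces the infinitesimal white partitions.

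The second step is to match the admissibility clauses decorating the definitions of infinitesimal black and white partitions with those built into $\overline{\rmC}_{\rm pCY}\libt\rmC_{\rm pCY}$ and $\rmC_{\rm pCY}\ribt\overline{\rmC}_{\rm pCY}$ respectively. I would do this by viewing each part as a sub-roundabout: for a white part, the ingoing edges are the incoming roundabout edges it contains and the outgoing edges are the oriented arc-edges leaving it towards black parts, so the requirement that it be a legitimate element of $\rmC_{\rm pCY}$ (at least one output, and at least one input when it has exactly one output) reads off as "at least one incoming edge of the roundabout, or at least two edges leaving to a black part"; for the unique black part, which is automatically connected and carries all the outgoing roundabout edges, the requirement that it lie in the coaugmentation coideal $\overline{\rmC}_{\rm pCY}$ amounts to excluding the unit $\Ibox$, i.e. to the clause "at least two outgoing edges, or at least two incoming edges from white parts when it has only one outgoing edge of the roundabout". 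Reversing colours and flow yields the constraints of infinitesimal white partitions from those of $\rmC_{\rm pCY}\ribt\overline{\rmC}_{\rm pCY}$.

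The main obstacle, as usual in these arguments, is the geometric bookkeeping inside the first step: one must argue that in an infinitesimal black partition every white part meets the unique black part along all of its arcs, so that the associated $2$-level graph genuinely has its top level saturated by white vertices and its bottom level made of one black vertex plus identity strands, and conversely that every connected $2$-level graph in $\overline{\rmC}_{\rm pCY}\libt\rmC_{\rm pCY}$ with the given $\gamma$-image arises in this way. This is a planarity (genus $0$) argument of the same flavour as the inductive one in the proof of \cref{prop:BiColPartFullDec}, and, once it is in place, bijectivity and compatibility with signs follow formally, since $\Cop{}{(*)}$ and $\Cop{(*)}{}$ are by construction restrictions of $\Delta$.
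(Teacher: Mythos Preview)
Your argument is correct and follows exactly the paper's approach: the paper's proof consists of the single sentence ``This is a direct corollary of \cref{prop:BiColPartFullDec}'' together with an illustrative example, and you have simply unpacked what that corollary means by tracking the projections $(\eps;\id)\boxtimes\id$ and $\id\boxtimes(\eps;\id)$ through the bicoloured-partition dictionary. One small wording caveat: an infinitesimal black partition is not literally a bicoloured partition with a single black part (white parts may carry outgoing roundabout edges, which bicoloured partitions forbid), so the passage goes through ``one non-trivial bottom vertex plus identity strands'' rather than through a special bicoloured partition---but you already say this, and the rest of your translation is faithful.
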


\begin{proof}
This is a direct corollary of \cref{prop:BiColPartFullDec}. For instance, 
the two infinitesimal black and white partitions of a roundabout 
displayed on \cref{Fig:BWSplitingRounda} correspond to the following two 2-levels of stairways
\[
\vcenter{\hbox{	\begin{tikzpicture}[scale=0.6]
		\coordinate (Z) at (8,3);		
		\draw[thin] (7.5,1) to[out=90,in=270] (8,2.5);	 					
		\draw[thin] (Z) --++ (-0.4,0.5) node[above] {${\scriptstyle{1}}$};
		\draw[thin] (Z) --++ (0,0.5) node[above] {${\scriptstyle{2}}$};		
		\draw[thin] (Z) --++ (0.4,0.5) node[above] {${\scriptstyle{3}}$};	
		\draw[thin] (7,2.5) --(7,2) 	node[below] {$\scriptstyle{2}$};		
		\draw[fill=white] (7.7,2.5) rectangle (8.3,3);
		\coordinate (B) at (3,3);			
		\draw[thin] (B) --++ (-0.75,0.5) node[above] {${\scriptstyle{7}}$};
		\draw[thin] (B) --++ (-0.25,0.5) node[above] {${\scriptstyle{8}}$};		
		\draw[thin] (B) --++ (0.25,0.5) node[above] {${\scriptstyle{9}}$};			
		\draw[thin] (B) --++ (0.75,0.5) node[above] {${\scriptstyle{10}}$};					
		\draw[thin] (5,1) to[out=70,in=270] (6,2.5);	 								
		\draw[fill=white] (5.7,2.5) rectangle (7.3,3);	
		\draw[thin] (5,3) --++ (0,0.5) node[above] {${\scriptstyle{6}}$};		
		\draw[fill=white] (4.7,2.5) rectangle (5.3,3);				
		\coordinate (A) at (6,3);
		\draw[thin] (A) --++(-0.3, 0.5) node[above] {${\scriptstyle{4}}$};	
		\draw[thin] (A) --++(0.3, 0.5) node[above] {${\scriptstyle{5}}$};			
		\draw[thin] (5,0.5) -- (5,0) node[below] {$\scriptstyle{1}$};		
		\draw[thin] (5,1) -- (5,2.5);
		\draw[thin] (7.5, 0.5) --(7.5,0)  node[below] {$\scriptstyle{4}$};						
		\draw[fill=gray!25] (4.7,0.5) rectangle (7.8,1);	
		\draw[thin] (5,1) to[out=110,in=270] (4,2.5);	 
		\draw[fill=white] (2.7,2.5) rectangle (4.3,3);
		\draw[thin] (7.5, 0.5) --(7.5,0)  node[below] {$\scriptstyle{4}$};											\draw[thin] (3, 2.5) --(3,2)  node[below] {$\scriptstyle{3}$};									
	\end{tikzpicture}
	}}
\qquad \& \qquad 
\vcenter{\hbox{\begin{tikzpicture}[scale=0.6]
		\draw[thin] (2,2.5) -- (2,1);
		\draw[thin] (3.5,2.5) -- (3.5,1);
		\draw[thin] (5,2.5) -- (5,1);		
		\draw[fill=white] (1.7,2.5) rectangle (5.3,3);
		\draw[fill=gray!25] (3.2,1) rectangle (3.8,0.5);	
		\draw[thin] (3.5,0.5) -- (3.5,0) node[below] {$\scriptstyle{2}$};									
		\draw[fill=gray!25] (4.7,1) rectangle (5.3,0.5);			
		\draw[thin] (5,0.5) -- (5,0) node[below] {$\scriptstyle{3}$};	
		\draw[thin] (2,0.5) -- (2,0) node[below] {$\scriptstyle{1}$};			
		\draw[thin] (0.5,0.5) -- (0.5,0) node[below] {$\scriptstyle{4}$};					
		\draw[fill=gray!25] (0.2,1) rectangle (2.3,0.5);					
		\coordinate (A) at (0.5,1);		
		\draw[thin] (A) --++ (-0.4,0.5) node[above] {${\scriptstyle{1}}$};
		\draw[thin] (A) --++ (0,0.5) node[above] {${\scriptstyle{2}}$};		
		\draw[thin] (A) --++ (0.4,0.5) node[above] {${\scriptstyle{3}}$};	
		\coordinate (A) at (3.5,1);			
		\draw[thin] (A) --++ (-0.4,0.5) node[above] {${\scriptstyle{4}}$};
		\coordinate (A) at (5,1);			
		\draw[thin] (A) --++ (-0.4,0.5) node[above] {${\scriptstyle{6}}$};
		\draw[thin] (A) --++ (0.4,0.5) node[above] {${\scriptstyle{10}}$};		
		\draw[thin] (A) --++ (1.1,0.5) node[above] {${\scriptstyle{11}}$};	
		\coordinate (A) at (2,3);			
		\draw[thin] (A) --++ (0,0.5) node[above] {${\scriptstyle{5}}$};		
		\coordinate (A) at (3.5,3);			
		\draw[thin] (A) --++ (0,0.5) node[above] {${\scriptstyle{7}}$};				
		\coordinate (A) at (5,3);			
		\draw[thin] (A) --++ (-0.4,0.5) node[above] {${\scriptstyle{8}}$};				
		\draw[thin] (A) --++ (0.4,0.5) node[above] {${\scriptstyle{9}}$};						
	\end{tikzpicture}}}~,
\]
appearing respectively on the image of $\Cop{}{(*)}$ and $\Cop{(*)}{}$~. 
\end{proof}

\begin{definition}[Oriented partition]
An \emph{oriented partition} of a roundabout is a partition of a roundabout into parts delimited by oriented arcs, see \cref{Fig:OrPartRounda}. 
Each part of an oriented partition is required to have at least two outgoing edges or two incoming edges when it has only one ongoing edge. 
\end{definition}

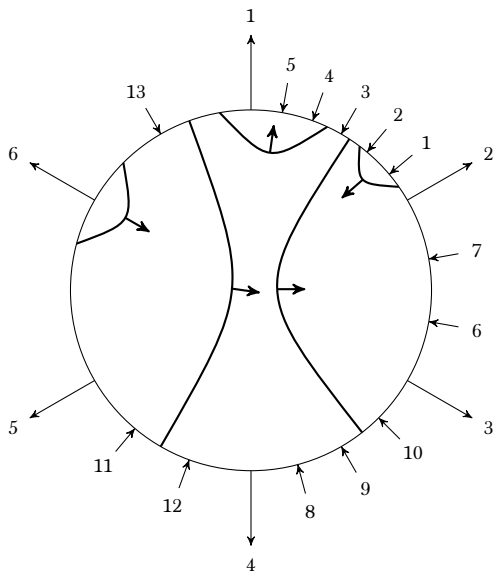
\begin{figure}[h!]
	\tikzmath{
		\radIn = 2.4; 
		\radMid = 2.8; 
		\radOut = 3.4; 
		\dec = 0.25; 
		}
\[
	\begin{tikzpicture}
		\coordinate (C) at (0,0);
		\draw[fill=white] (C) circle (\radIn);
		\foreach \angle/\i in {90/1, 30/2, -30/3, -90/4, -150/5, 150/6}{
			\draw[->] (\angle:\radIn) -- (\angle:\radOut);
			\draw (\angle:\radOut) ++ (\angle:\dec) node {\tiny{$\i$}};  
		}
		\foreach \angle/\i in 
		{40/1,50/2,60/3,70/4,80/5, -10/6, 10/7, -75/8, -60/9, -45/10, -130/11, 
		-110/12, 120/13
		}{
			\draw[<-] (\angle:\radIn) -- (\angle:\radMid);
			\draw (\angle:\radMid) ++ (\angle:\dec) node {\tiny{$\i$}}; 
		}
		\draw[thick] (110:\radIn) .. controls (0,0) .. (240:\radIn) node[midway] (A) {};
		\draw[->,thick] (A.center) --++ (-8:{1.5*\dec});
		\draw[thick] (100:\radIn) .. controls (82:1.7) .. (65:\radIn) node[midway] (A) {};
		\draw[->,thick] (A.center) --++ (82:{1.5*\dec});		
		\draw[thick] (57:\radIn) .. controls (0,0) .. (-52:\radIn) node[midway] (A) {};
		\draw[->,thick] (A.center) --++ (0:{1.5*\dec});		
		\draw[thick] (165:\radIn) .. controls (150:1.8) .. (135:\radIn) node[midway] (B) {};
		\draw[->,thick] (B.center) --++ (-30:{1.5*\dec});
		\draw[thick] (53:\radIn) .. controls (45:2) .. (35:\radIn) node[midway] (B) {};
		\draw[->,thick] (B.center) --++ (221:{1.5*\dec});
	\end{tikzpicture} 
\]
	\caption{An oriented partition of a roundabout.}
	\label{Fig:OrPartRounda}
\end{figure}

\begin{proposition}\label{prop:ComonadicDec}
The  terms appearing in the comonadic decomposition map 
\[
\widetilde{\Delta} \colon \overline{\rmC}_{\rm pCY} \to \scrG^c\left(\overline{\rmC}_{\rm pCY}\right)
\] 
 of the codioperad $\rmC_{\rm pCY}$ are in one-to-one correspondence with 
the oriented partitions of roundabouts. 
\end{proposition}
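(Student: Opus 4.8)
The plan is to realise the comonadic decomposition map $\widetilde{\Delta}$ as the total iteration of the full decomposition map $\Delta$ studied in \cref{prop:BiColPartFullDec}, and to transport, under the bijection of \cref{lem:RoundaBasis} between cyclic stairway basis elements and roundabouts, the combinatorics of nested bicoloured partitions into that of oriented partitions. Since $\rmC_{\rm pCY}$ is a conilpotent codioperad, the map $\widetilde{\Delta}$ is the standard one and lands automatically in $\scrG^c\left(\overline{\rmC}_{\rm pCY}\right)$ while being coassociative; so there is no structural content to check and the whole proof is the combinatorial identification of its terms. Concretely, for a roundabout $\rho$ every summand of $\widetilde{\Delta}(\rho)$ is indexed by a connected flow-directed graph $\rmg$ together with a decoration of its vertices by roundabouts whose composite under $\gamma$ equals $\pm\rho$; by \cref{prop:InfDecDPoisAC} and \cref{Lem:uDPois!} only genus $0$ graphs survive. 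I would define the map to oriented partitions by sending such a decorated graph to the oriented partition of $\rho$ whose arcs are the inner edges of $\rmg$ — each inner edge being realised, as in the proof of \cref{prop:CuttingInfDec}, as an oriented arc cutting off the sub-roundabout attached to its source vertex, the orientation recording the global flow — and the inverse map by cutting $\rho$ along the non-crossing family of oriented arcs of an oriented partition, reading each region as a roundabout via \cref{lem:RoundaBasis}, and recording the incidences between regions as the edges of a genus $0$ graph.

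The core of the argument is an induction on the number $k$ of arcs, equivalently on the number of inner edges of $\rmg$, mimicking the proof of \cref{prop:BiColPartFullDec}. For $k=1$ the statement is exactly \cref{prop:CuttingInfDec}: an oriented partition into two regions is the same datum as an elementary coloured cutting, the black/white colouring being forced by the orientation and hence forgettable. For the inductive step, I would peel off one extremal region, i.e. a region cut off from the rest by a single innermost arc; on the graph side this is a vertex joined to the rest of $\rmg$ by a single edge, and such a vertex exists for any connected genus $0$ graph with at least two vertices — otherwise the total number of vertices would be infinite or the genus positive, precisely the argument already used in the proof of \cref{prop:BiColPartFullDec}. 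Contracting this edge via the infinitesimal composition product yields a decorated graph $\rmg'$ with $k$ inner edges whose composite is still $\pm\rho$; by the induction hypothesis $\rmg'$ corresponds to an oriented partition $\pi'$ of $\rho$ with $k$ arcs, and \cref{prop:CuttingInfDec}, applied to the sub-roundabout delimited by the smallest arc of $\pi'$ containing the removed region, shows that re-inserting the contracted edge amounts to adding exactly one further oriented arc inside that sub-roundabout. This produces the oriented partition $\pi$ with $k+1$ arcs, and running the same reasoning in reverse shows that the two assignments are mutually inverse.

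It then remains to match the combinatorial constraints defining an oriented partition with the admissibility constraints on the decorating roundabouts: a region with a single outgoing edge (of the roundabout or an oriented arc leaving it) and fewer than two incoming edges would be decorated by the forbidden roundabout with one output and no input, that is by the counit of $\rmC_{\rm pCY}$, so demanding at least two outgoing edges or two incoming edges in that case is exactly the requirement that each vertex lie in the coaugmentation coideal $\overline{\rmC}_{\rm pCY}$. The signs attached to the terms of $\widetilde{\Delta}$ are forced by coassociativity from those of \cref{prop:CuttingInfDec}, hence determined and not worth displaying. The main obstacle I expect is the bookkeeping in the inductive step — verifying that the non-crossing condition is preserved under contraction and that the oriented partition obtained from a decorated graph is genuinely independent of the order in which extremal regions are peeled off — rather than any conceptual difficulty, since \cref{prop:CuttingInfDec} and \cref{prop:BiColPartFullDec} already supply all the structural input.
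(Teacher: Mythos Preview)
Your proposal is correct and follows essentially the same approach as the paper: both dualise the comonadic composition map to identify the terms of $\widetilde{\Delta}$ with connected genus $0$ flow-directed graphs decorated by roundabouts whose composite recovers the original element, and both invoke the bijection of \cref{lem:RoundaBasis} and the inductive argument of \cref{prop:BiColPartFullDec} (peeling off an extremal region via \cref{prop:CuttingInfDec}) to match these with oriented partitions. Your write-up is in fact more detailed than the paper's, which simply says the proof is \emph{mutatis mutandis} that of \cref{prop:BiColPartFullDec}; in particular your explicit check that the admissibility constraint on regions corresponds to landing in $\overline{\rmC}_{\rm pCY}$ is a welcome clarification.
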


\begin{proof}
This proof is similar to one of \cref{prop:BiColPartFullDec}: the comonadic decomposition map $\widetilde{\Delta} \colon \overline{\rmC}_{\rm pCY} \to \scrG^c\left(\overline{\rmC}_{\rm pCY}\right)$ is the linear dual of the monadic composition map 
\[\widetilde{\gamma}\  \colon\  \scrG\left(\frac{\overline{\mathrm{u}\DPois}^!}{\field u}\right) \to \frac{\overline{\mathrm{u}\DPois}^!}{\field u}~,\]
so the terms appearing on the image of a cyclic stairway basis element $\nu$ 
under the comonadic decomposition map $\widetilde{\Delta}$ are the connected flow-directed genus $0$ graphs of cyclic stairways 
whose image under the monadic composition map $\widetilde{\gamma}$ gives $\nu$. 
Using again the bijection between cyclic stairways and roundabouts of \cref{lem:RoundaBasis} and the interpretation of parts of an oriented partition as sub-roundabouts, one can show that 
 these connected flow-directed genus $0$ graphs of cyclic stairways 
  are in one-to-one correspondence with oriented partitions of roundabouts applying \emph{mutatis mutandis} the arguments of the proof of \cref{prop:BiColPartFullDec}. For instance, the connected flow-directed genus $0$ graphs corresponding to the oriented partition displayed on \cref{Fig:OrPartRounda} is the following one 
\[
\vcenter{\hbox{\begin{tikzpicture}[scale=0.7]
		\coordinate (A1) at (0,4);
		\draw[thin]
		(A1) -- ++(0,1) node[above] {${\scriptstyle{3}}$};
		\coordinate (A2) at (1.5,4);
		\draw[thin]
		(A2) ++ (0,0.5) -- ++(-0.3,0.5)	node[above] {${\scriptstyle{8}}$}	
		(A2) ++ (0,0.5) -- ++(0.3,0.5)	node[above] {${\scriptstyle{9}}$};	
		\coordinate (A3) at (3,4);
		\draw[thin]
		(A3) --++(0,-0.5) node[below] {$\scriptstyle{4}$} --
		(A3) ++(0,0.5) -- ++(0.3,0.5) node[above] {${\scriptstyle{12}}$};
		\coordinate (A4) at (4.5,4);
		\draw[fill=white] (-0.3,4) rectangle (3.3, 4.5);
		\draw[thick] (A1) to[out=270,in=120] ++(-1.8,-2.5) --++ (0.3, -0.5) coordinate (F1);
		\draw[thick] (A2) to[out=270,in=90] ++(0,-3) coordinate (C1);		
		\draw[thick] (3,4.5) --++ (0, 1.5) coordinate (B1);
		\draw[thick] (4.5,6.5) to[out=90,in=270] (4.5, 8) coordinate (Z1);	
		\draw[thick] (0.5,2) to[out=270,in=115] (1.5, 1);						
		\draw[thin]
		(B1)  ++ (0,0.5) -- ++(0,0.5)	node[above] {${\scriptstyle{11}}\ $}	
		(B1)  ++(1.5, 0.5) --++(-0.3,0.5)	node[above] {${\scriptstyle{13}}$}
		(B1)  ++ (1.5,0) -- ++(0,-0.5)	node[below] {${\scriptstyle{5}}\ $};
		\draw[fill=white] (2.7,6) rectangle (4.8, 6.5);		
		\draw[thin]
		(C1)  -- ++(0.3,0.5)	node[above] {$\ {\scriptstyle{10}}$};	
		\draw[thin]
		(C1)  -- ++(0,-1)	node[below] {${\scriptstyle{3}}$};		
		\coordinate (C2) at (3,1);		
		\draw[thin]
		(C2)  -- ++(-0.3,0.5)	node[above] {${\scriptstyle{6}}$}	
		(C2)  -- ++(0.3,0.5)	node[above] {${\scriptstyle{7}}$}
		(C2)  -- ++(0,-1)	node[below] {${\scriptstyle{2}}$};		
		\draw[fill=white] (1.2,1) rectangle (3.3, 0.5);
		\draw[fill=white] (4.2, 8) rectangle (6.3, 8.5);
		\draw[thin]
		(Z1) ++(1.5, 0)  -- ++(0,-0.5) node[below] {${\scriptstyle{6}}$}	;		
		\coordinate (D1) at (0.5,2.5);
		\draw[thin]
		(D1)  -- ++(-0.3,0.5)	node[above] {${\scriptstyle{1}}$}	
		(D1) -- ++(0.3,0.5)	node[above] {${\scriptstyle{2}}$};			
		\draw[fill=white] (0.2,2) rectangle (0.8, 2.5);
		\draw[thin]
		(F1)  -- ++(0,0.5)	node[above] {${\scriptstyle{4}}$}	
		(F1)  -- ++(0.3,0.5)	node[above] {${\scriptstyle{5}}$}
		(F1)  -- ++(0,-1)		node[below] {${\scriptstyle{1}}$};	
		\draw[fill=white] (-1.8,0.5) rectangle (-1.2, 1);
	\end{tikzpicture}}}~.
\]
\end{proof}

\begin{remark}
As we have seen throughout this section, the interpretation in terms of roundabouts provides us with an efficient combinatorial way to come up with all the terms appearing in the various decomposition maps of the codioperad 
$\rmC_{\rm pCY}$, and the Koszul dual coproperad $\DPois^{\ac}$ by restriction. 
However the only way we know to make the various signs explicit amounts to use 
the bijection with the stairways graphs, and to compute the signs of the associated composites in the properad 
$\frac{\mathrm{u}\DPois^!}{\field u}$~.
\end{remark}


\section{Homotopy theory via $\infty$-morphisms}\label{sec:InftyMor}

The categories of pre-Calabi--Yau algebras, homotopy double Poisson gebras, and $\rmV_\infty$-gebras are
 faithfully encoded by (co)properads. 
This key property opens the doors to the properadic calculus developed in \cite{HLV20, HLV22} and in \cite{CV22}. 
In this section, we apply the general results of \emph{loc. cit.} to the present algebraic structures to settle the notion of an $\infty$-morphism satisfying all the expected homotopical properties : homotopy transfer theorem, invertibility of $\infty$-quasi-isomorphisms, formality, Koszul hierarchy, and twisting procedure. 

\subsection{Moduli spaces}
\cref{thm:main} allows us to compare the moduli spaces of curved pre-Calabi--Yau algebras and 
the moduli spaces of curved homotopy double Poisson gebras. 
Let us first recall the definition of the moduli space of tensorial pre-Calabi--Yau algebras introduced by W.-K. Yeung  in \cite[Section~3.1]{Yeu18}.

\begin{definition}[Moduli space of tensorial pre-Calabi--Yau structures]
	The \emph{moduli space of tensorial pre-Calabi--Yau structures} on a graded vector space $A$ is the Kan complex $\mathrm{cpCY}\{A\}$ defined by
	\[
		\mathrm{cpCY}\{A\}\coloneq \mathrm{MC}(\cnec_A\otimes \Omega_\bullet)~,
	\]
	where $\Omega_\bullet$ is the Sullivan simplicial commutative algebra of polynomial differential forms of the standard simplicies $\Delta^\bullet$~.
\end{definition}

Similarly, the moduli space of homotopy gebras, introduced by S. Yalin in {\cite[Section 3.1]{Yal16moduli}}, takes the following form here. We denote by $\cDPois_\infty$ the cobar construction of the Infinitesimal coproperad $\cDPois^\antish$, which is given by the quasi-free properad on the desuspension of $\cDPois^\antish$~. (We do not consider any coaugmentation ideal here as the infinitesimal coproperad $\cDPois^\antish$ fails to be coaugmented.) Since 
$\cDPois^\antish$ is non-negatively graded, the dg properad $\cDPois_\infty$ is cofibrant. 

\begin{definition}[Moduli space of curved homotopy double Poisson gebra structures]
	The \emph{moduli space of curved homotopy double Poisson gebra structures} on a dg vector space $A$ is the Kan complex $\cDPois_\infty\{A\}$ defined by
	\[
		\cDPois_\infty\{A\}	\coloneq
		\Hom_{\mathsf{dg}\ \Properad}\left(\cDPois_\infty,\End_A\otimes \, \Omega_\bullet\right) \ ,
	\]
	where the properad  $\End_A\otimes\, \Omega_\bullet$ is defined by
	\[
		(\End_A\otimes\, \Omega_\bullet) (m,n) \coloneq \End_A(m,n) \otimes \Omega_\bullet \ .
	\]
\end{definition}

\begin{remark}
The same definition applies obviously to pre-Calabi--Yau algebras, homotopy double Poisson gebras, and $\rmV_\infty$-algebras and provides them with suitable moduli spaces of structures. 
\end{remark}

\begin{proposition}
The embedding $\cnec_A \hookrightarrow\mathfrak{c}\convDPois_A$ of dg Lie-admissible algebras induces 
an embedding of 
Kan complexes
	\[
		  \mathrm{cpCY}\{A\}  \hookrightarrow  \cDPois_\infty\{A\} \ ,
	\]
which is an isomorphism when $A$ is degree-wise finite dimensional. 
\end{proposition}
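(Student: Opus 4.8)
The plan is to relate both moduli spaces to Maurer--Cartan spaces of the \emph{same} complete differential graded Lie-admissible algebra, and then invoke functoriality of the Maurer--Cartan space construction together with \cref{thm:main}. First I would recall the comparison between the bar-cobar adjunction description and the convolution-algebra description of the moduli space of curved homotopy double Poisson gebras: by \cref{prop:twisting} (adapted to the partial coproperad $\cDPois^\antish$ exactly as in \cref{def:HoCuDPois}), for any commutative dg algebra $R$ there is a natural bijection
\[
\Hom_{\mathsf{dg}\ \Properad}\left(\cDPois_\infty, \End_A\otimes R\right)\cong \MC\!\left(\mathfrak{c}\convDPois_A\otimes R\right),
\]
since a properad morphism out of the quasi-free properad $\cDPois_\infty=\Omega\, \cDPois^\antish$ is the same datum as a twisting morphism $\cDPois^\antish\to \End_A\otimes R$, hence a Maurer--Cartan element in the convolution Lie-admissible algebra $\widehat{\Hom}(\cDPois^\antish,\End_A\otimes R)=\mathfrak{c}\convDPois_A\otimes R$. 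Applying this with $R=\Omega_\bullet$ simplicial-degreewise gives an isomorphism of Kan complexes $\cDPois_\infty\{A\}\cong \MC_\bullet\!\left(\mathfrak{c}\convDPois_A\right)$, where $\MC_\bullet(\g)\coloneqq \MC(\g\otimes\Omega_\bullet)$ is the usual Maurer--Cartan/Deligne--Getzler--Hinich space. On the other side, $\mathrm{cpCY}\{A\}=\MC_\bullet(\cnec_A)$ by definition.

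Next I would observe that $\MC_\bullet(-)$ is a functor from complete (shifted) Lie-admissible — or even just the underlying complete Lie — algebras to simplicial sets: a morphism $\g\to\g'$ of such algebras induces $\g\otimes\Omega_\bullet\to\g'\otimes\Omega_\bullet$, which preserves the Maurer--Cartan equation since the bracket (the skew-symmetrization of $\ast$) is preserved and the differential is preserved, hence a simplicial map $\MC_\bullet(\g)\to\MC_\bullet(\g')$. Moreover this functor sends injections to injections and isomorphisms to isomorphisms, because at each simplicial level $\MC(\g\otimes\Omega_n)$ is a subset of $\g_{-1}\otimes\Omega_n$ (degree $-1$ part), tensoring with the flat $\field$-module $\Omega_n$ preserves monomorphisms, and an isomorphism of Lie-admissible algebras clearly induces a bijection on Maurer--Cartan sets. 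Therefore the monomorphism of dg Lie-admissible algebras $\cnec_A\hookrightarrow \mathfrak{c}\convDPois_A$ — obtained by composing the embedding $\cnec_A\hookrightarrow \hhc_A$ of \cref{def::necklaceLiealg} and the surrounding discussion with the isomorphism $\hhc_A\cong\mathfrak{c}\convDPois_A$ of \cref{thm:main} — induces a monomorphism of Kan complexes
\[
\mathrm{cpCY}\{A\}=\MC_\bullet(\cnec_A)\hookrightarrow \MC_\bullet(\mathfrak{c}\convDPois_A)\cong \cDPois_\infty\{A\}.
\]
Finally, when $A$ is degree-wise finite dimensional, the inclusion \eqref{eq:Inclusion} is an isomorphism, so $\cnec_A\hookrightarrow\hhc_A$ is an isomorphism of dg Lie-admissible algebras (as already noted after \cref{thm:main}), whence the induced map of Maurer--Cartan spaces is an isomorphism.

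The only genuinely non-routine point — and the step I expect to need the most care — is checking that the two \emph{a priori} different descriptions of $\cDPois_\infty\{A\}$ (the mapping-space one of the displayed definition and the Maurer--Cartan one) really do agree compatibly with all the structure, i.e. that the bijection $\Hom_{\mathsf{dg}\ \Properad}(\Omega\,\cDPois^\antish,\End_A\otimes R)\cong\MC(\mathfrak{c}\convDPois_A\otimes R)$ is natural in $R$ and respects the simplicial structure coming from $\Omega_\bullet$; this is the partial-coproperad analogue of \cref{prop:twisting}, and one must make sure the bar-cobar/twisting-morphism correspondence of \cite{MV09I,HLV20} still holds when $\cDPois^\antish$ is only a \emph{partial} (and non-coaugmented) coproperad — which is exactly the setting already in force since \cref{def:HoCuDPois}, so it should go through \emph{mutatis mutandis}, but the naturality bookkeeping is where the argument has to be written out carefully. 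Everything else reduces to the elementary functoriality properties of $\MC_\bullet(-)$ stated above, which require no computation beyond tracking that $\Omega_n$ is flat over $\field$.
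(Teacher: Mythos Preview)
Your proposal is correct and follows essentially the same approach as the paper: identify $\cDPois_\infty\{A\}$ with the Maurer--Cartan simplicial set $\MC\!\left(\mathfrak{c}\convDPois_A\otimes\Omega_\bullet\right)$, then apply \cref{thm:main} together with functoriality of $\MC_\bullet$. The paper simply cites \cite[Theorem~3.12]{Yal16moduli} for the identification you spell out via \cref{prop:twisting}, so your version is a more detailed unpacking of the same argument rather than a different one.
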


\begin{proof}
This is a direct corollary of the isomorphism of Kan complexes 
\[
		\cDPois_\infty\{A\}
		\cong
		\mathrm{MC}\left(\widehat{\Hom}\left(\cDPois^\antish,\End_A\right)\otimes \Omega_\bullet\right) \ , 
	\]
see  \cite[Theorem 3.12]{Yal16moduli}, 
and 
\cref{thm:main}.
\end{proof}

\subsection{Infinity-morphisms}\label{subsect::infty_morphism}
The  advantage of the approach of pre-Calabi--Yau algebras, homotopy double Poisson gebras, 
and $V_\infty$-gebras via structural coproperads is that
the properadic calculus recently developed in \cite{HLV20} automatically equips
them with a higher notion of $\infty$-morphism which behaves well with respect the homotopical properties: homological invertibility of $\infty$-quasi-isomorphisms,  homotopy transfer theorem and Deligne groupoid, for instance.
These various homotopical constructions actually rely on the various decomposition maps of the coproperads. After applying the general theory to the present cases, we make them explicit using the combinatorics introduced in \cref{subsec:CoproduitsCpCY}.
 \\

\paragraph*{\sc Notation.} In this section, we treat the homotopical properties of pre-Calabi--Yau algebras, homotopy double Poisson gebras, and $V_\infty$-gebras at the same time. To this extend, we will use the commun notation $\rmC$ for either the codioperad $\rmC_{\rm pCY}$ encoding pre-Calabi--Yau algebras, the codioperad $\DPois^\antish$ encoding homotopy double Poisson gebras, and the coproperad $\rmV^{\ac}$ encoding $\rmV_\infty$-gebras. 
\\

 Given
$f\in \Hom_{\Sy}\big(\rmC, \End^A_B\big)$, $\alpha \in \Hom_{\Sy}\big(\rmC, \End_A\big)$, and $\beta\in \Hom_{\Sy}\big(\rmC, \End_B\big)$, the \emph{left action}  of $\beta$ on $f$ and the \emph{right action} of $\alpha$ on $f$ are defined respectively by
\[
	\begin{tikzcd}[column sep=normal, row sep=tiny]
		\beta \lhd f  \ \colon \ &
		\overline{\rmC} \arrow[r,"\Cop{}{(*)}"]
		& \overline{\rmC} \libt \DPois^\antish
		\arrow[r,"\beta\libt f"] & \End_B \libt \End^A_B \arrow[r] & \End^A_B\ ,
		\\
		& \Ibox \arrow[r,"\cong"] &  \Ibox \boxtimes \Ibox
		\arrow[r,"\beta\boxtimes f"] & \End_B \boxtimes \End^A_B \arrow[r] & \End^A_B\ ,
		\\
		f \rhd \alpha  \ \colon \ &
		\overline{\rmC} \arrow[r,"\Cop{(*)}{}"] &  \rmC \ribt \overline{\rmC}
		\arrow[r,"f\ribt \alpha"] & \End^A_B \ribt \End_A \arrow[r] & \End^A_B \ ,\\
		& \Ibox \arrow[r,"\cong"] &  \Ibox \boxtimes \Ibox
		\arrow[r,"f\boxtimes \alpha"] & \End^A_B \boxtimes \End_A \arrow[r] & \End^A_B\ ,
	\end{tikzcd}
\]
where the rightmost arrows are given by the usual composition of functions.

\begin{definition}[$\infty$-morphism]
	\label{def:infmor}
	Let $A,B$ be dg vector spaces equipped respectively with two 
	pre-Calabi--Yau algebra structures (respectively homotopy double Poisson gebra or $\rmV_\infty$-gebra) 
	 viewed as two Maurer--Cartan elements $\alpha\in \widehat{\Hom}\big(\rmC , \End_A\!\big)$ and $\beta\in \widehat{\Hom}\big(\rmC , \End_B\!\big)$ in the respective convolution Lie-admissible algebras.
	An \emph{$\infty$-morphism} $f \colon (A,\alpha) \rightsquigarrow (B, \beta)$ is a degree $0$  map of $\Sy$-bimodules $f \colon \rmC  \to  \End^A_B$  satisfying the equation
	\begin{equation}\label{eq:Morph}
		\partial (f)= f  \rhd \alpha - \beta \lhd f\ .
	\end{equation}
	The \emph{composite} of $\infty$-morphisms is defined by
	\[
		\begin{tikzcd}
			g\circledcirc f \ \colon \ \rmC \arrow[r,"\Delta"]
			& \rmC  \boxtimes \rmC \arrow[r,"g\boxtimes f"]
			& \End^B_C \boxtimes \End_B^A \arrow[r]
			& \End_C^A \  .
		\end{tikzcd}
	\]
\end{definition}

\begin{proposition}\label{prop:HoInfiniCat}
Pre-Calabi--Yau algebras (respectively homotopy double Poisson gebras or $\rmV_\infty$-gebras) equipped with their $\infty$-morphisms and the composite $\circledcirc$ form a category.
\end{proposition}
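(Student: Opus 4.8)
The plan is to invoke the general categorical machinery of \cite{HLV20} (and its companion \cite{CV22}) rather than to redo the verification by hand, since the objects here --- pre-Calabi--Yau algebras, homotopy double Poisson gebras, and $\rmV_\infty$-gebras --- are all gebras over the cobar construction of a conilpotent (partial) coproperad $\rmC$, and the statement we must prove is a special case of the fact that for any conilpotent coproperad $\rmC$, the coaugmented cobar construction $\Omega \rmC$ (equivalently, the associated quasi-free properad) gives rise to a category whose objects are $\Omega \rmC$-gebras and whose morphisms are $\infty$-morphisms with the composite $\circledcirc$. So the first move is to record that $\rmC_{\rm pCY}$, $\DPois^\antish$, and $\rmV^\antish$ are conilpotent (this was noted earlier in the excerpt: $\rmC_{\rm pCY}$ is conilpotent, $\DPois^\antish$ is conilpotent by the codioperad statement following \cref{prop:InfDecDPoisAC}, and $\rmV^\antish$ is conilpotent as a Koszul dual coproperad), so that the full decomposition map $\Delta \colon \rmC \to \rmC \boxtimes \rmC$ and the left/right infinitesimal decomposition maps $\Cop{}{(*)}$, $\Cop{(*)}{}$ used in \cref{def:infmor} are all available and satisfy the coassociativity, counitality, and compatibility identities of a coproperad.

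Given that, the proof reduces to three verifications, carried out in the order: well-definedness, associativity, units. First I would check that $\circledcirc$ is well defined, i.e. that the composite $g \circledcirc f$ of two $\infty$-morphisms again satisfies the defining equation \eqref{eq:Morph}: this is a direct computation with $\partial(g \circledcirc f)$, expanding the differential through $\Delta$ using that $\partial$ is a coderivation-type differential on $\rmC$ and a derivation on the endomorphism properads, then substituting $\partial f = f \rhd \alpha - \beta \lhd f$ and $\partial g = g \rhd \beta - \gamma \lhd g$, and finally using the coassociativity of $\Delta$ together with the compatibility between $\Delta$ and the infinitesimal decomposition maps to see the cross terms $\beta$ reorganise correctly into $(g \circledcirc f) \rhd \alpha - \gamma \lhd (g \circledcirc f)$. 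Second, associativity $(h \circledcirc g) \circledcirc f = h \circledcirc (g \circledcirc f)$ follows from the coassociativity of the decomposition map $\Delta$ of $\rmC$, i.e. $(\Delta \boxtimes \mathrm{id})\Delta = (\mathrm{id} \boxtimes \Delta)\Delta$, transported through the (associative) horizontal composite of endomorphism properads $\End^C_D \boxtimes \End^B_C \boxtimes \End^A_B \to \End^A_D$. Third, the identity $\infty$-morphism $\mathrm{id}_A \colon (A,\alpha) \rightsquigarrow (A,\alpha)$ is the map $\rmC \to \End_A$ which is the counit $\eps \colon \rmC \to \Ibox$ followed by the unit $\Ibox \to \End_A$ (zero on $\overline{\rmC}$); that it satisfies \eqref{eq:Morph} and is a two-sided unit for $\circledcirc$ is exactly the counit axiom of the coproperad $\rmC$ applied to $\Delta$.

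Concretely I would phrase the proof as: ``Each of $\rmC_{\rm pCY}$, $\DPois^\antish$, and $\rmV^\antish$ is a conilpotent (partial) coproperad, so the claim follows from \cite[Section~\,\ldots]{HLV20} applied to the quasi-free properad $\Omega\rmC$; for the reader's convenience we spell out the three axioms.'' Then give the three short paragraphs above, each as a one-line reduction to coassociativity, compatibility of $\Delta$ with $\Cop{}{(*)}$ and $\Cop{(*)}{}$, and counitality respectively, without grinding the signs (they are dictated, as everywhere in the paper, by the Koszul sign rule and convention and were already fixed in the construction of the convolution algebra and the decomposition maps).

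The main obstacle I expect is purely bookkeeping rather than conceptual: in the well-definedness step one must check that the ``mixed'' terms involving $\beta$ --- which appear both in $\partial g$ expanded along the top of $\Delta$ and in the expansion of $\partial f$ along the bottom --- cancel, and this cancellation is exactly the statement that the left and right infinitesimal decomposition maps $\Cop{}{(*)}$ and $\Cop{(*)}{}$ are compatible with the full decomposition map $\Delta$ in the precise sense encoded by the coproperad axioms; tracking the Koszul signs through this cancellation for a general (non-coaugmented, in the pre-CY case) partial coproperad is the one place where care is genuinely required, and it is precisely here that one leans on the fact that the same verification has already been done at the level of arbitrary conilpotent coproperads in \cite{HLV20}. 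Everything else (associativity, units) is a formal consequence of the coproperad structure and the associativity/unitality of the horizontal composite of endomorphism properads.
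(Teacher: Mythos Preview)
Your proposal is correct and takes essentially the same approach as the paper: the paper's proof is simply the one-line sentence ``This is a direct corollary of the general theory settled in \cite[Section~3]{HLV20}.'' Your additional sketch of the three verifications (well-definedness of $\circledcirc$, associativity via coassociativity of $\Delta$, units via the counit) is accurate but goes beyond what the paper records.
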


\begin{proof}
This is a direct corollary of the general theory settled in {\cite[Section~3]{HLV20}}.
\end{proof}

By definition, an $\infty$-morphism of pre-Calabi--Yau algebras (respectively homotopy double Poisson gebras) is a collection of maps
\[
	f_{\lambda_1, \ldots, \lambda_m} \colon A^{\otimes n} \too B^{\otimes m}
\]
of degree $n-1$, for any ordered partition $\lambda_1+\cdots+\lambda_m=n$ of $n\geqslant 1$ into non-negative integers (respectively positive integers), for $m\geqslant 2$, and $\lambda_1\geqslant 1$, for $m=1$, satisfying the relations given by \eqref{eq:Morph}. 
We can make these relations explicit using \cref{prop:LRInfDecMap} as follows. 
The term $f  \rhd \alpha$ evaluated at cyclic stairway basis element or equivalently at a roundabout (\cref{lem:RoundaBasis}) is equal to the sum over the infinitesimal black partitions of that roundabout interpreted, up to a sign, as a composite of components of $\alpha$ (white parts) with one 
map $f_{\lambda_1, \ldots, \lambda_m}$ (black part). The other term $\beta \lhd f$ is interpreted similarly with infinitesimal white partitions where the white part corresponds to one map $f_{\lambda_1, \ldots, \lambda_m}$ and where the black parts correspond to components of $\beta$. 
Following the same method, the composite $g\circledcirc f$ of two $\infty$-morphisms $f$ and $g$ can be made explicit using \cref{prop:BiColPartFullDec}: 
the term $g\circledcirc f$ evaluated at a roundabout is equal to the sum over the bicoloured partitions interpreted, up to a sign, as a composite of components of $f$ corresponding to white parts with 
components of $g$ corresponding to black parts. 

\begin{proposition}
The present category of pre-Calabi--Yau algebras with their $\infty$-morphisms is equivalent to the one defined in \cite{KTV21}. 
\end{proposition}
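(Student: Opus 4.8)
The plan is to establish an explicit dictionary between the two definitions and check that it is a functor which is moreover full and faithful. First I would recall the definition of an $\infty$-morphism of pre-Calabi--Yau algebras given by Kontsevich--Takeda--Vlassopoulos in \cite[Definition~25]{KTV21}: like our $\infty$-morphisms, it consists of a family of multilinear maps indexed by the same combinatorial data (multi-corollas, or equivalently roundabouts with a marked set of outputs), of the same degrees, and the defining equations there are written as a sum of composites of components of the source and target structures with one component of $f$. The strategy is to compare these equations term by term with Equation~\eqref{eq:Morph}, $\partial(f) = f\rhd\alpha - \beta\lhd f$, using the combinatorial descriptions of the left and right infinitesimal decomposition maps of $\rmC_{\rm pCY}$ obtained in \cref{prop:LRInfDecMap}. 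Concretely, I would identify the term $f\rhd\alpha$ evaluated on a roundabout with the sum over its infinitesimal black partitions (white parts $\mapsto$ components of $\alpha$, the black part $\mapsto$ a component of $f$), and $\beta\lhd f$ with the sum over its infinitesimal white partitions. Inspecting the combinatorics in \cite[Section~6]{KTV21}, one sees that these two families of partitions are exactly the two families of configurations appearing on the two sides of the KTV relation, under the isomorphism of partial codioperads $\rmC_{\rm pCY}\cong$ (partial codioperad of multi-corollas) already recorded in \cref{prop:VinftypCY} and the accompanying remark (with the precise isomorphism provided by \cref{prop:CuttingInfDec}).

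Next I would check that the two notions of composition agree. Our composite $g\circledcirc f$ is defined via the full decomposition map $\Delta\colon\rmC_{\rm pCY}\to\rmC_{\rm pCY}\boxtimes\rmC_{\rm pCY}$, whose terms are indexed by bicoloured partitions of roundabouts by \cref{prop:BiColPartFullDec}; the composition of \cite{KTV21} is likewise a sum over nested multi-corolla configurations. Matching these amounts to the same bijection of combinatorial data, now applied to two-level (rather than infinitesimal) decompositions; since both are built by iterating the same elementary coloured cuttings, the identification of the full decomposition maps is forced by the identification of the infinitesimal ones together with coassociativity. Identities and the unit $\infty$-morphism correspond to the trivial roundabout, so the equivalence of categories is immediate once objects and morphism-sets are identified. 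I would phrase the conclusion as: the identity on objects together with the term-by-term identification of morphisms yields an isomorphism of categories, a fortiori an equivalence.

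The main obstacle, and the only genuinely non-routine part, is the bookkeeping of signs. The KTV conventions (grading, Koszul signs, the placement of shifts $\susp A\oplus A^*$ versus $A\oplus(\susp A)^*$, and the cyclic-invariance normalisations) differ from ours, and matching our signs $(-1)^\xi$, the permutations $\omega$, and the cyclic-symmetry signs $(-1)^{\lambda_1(n-m-1)-n}$ against theirs requires a careful, if mechanical, comparison. As in the proof of \cref{thm:main}, I expect this to reduce to a single overall rescaling isomorphism of $\Sy$-bimodules $\rmC_{\rm pCY}\to$ (multi-corollas) — essentially the same kind of sign twist $\Phi$ and $\Psi$ that already appeared in \cref{subsec::pCY_alg} and \cref{subsec:TypesofAlg} — conjugating which turns our structure relations and $\infty$-morphism relations into theirs. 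I would therefore either quote the relevant sign computation from \cref{appendice} and \cref{prop:InfDecDPoisAC}, or remark that the comparison of signs, being entirely analogous to the one carried out there, is left to the reader; the structural content of the statement is entirely captured by the bijections of \cref{prop:LRInfDecMap} and \cref{prop:BiColPartFullDec}.
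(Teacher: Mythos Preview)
Your approach is essentially the paper's, but far more elaborate. The paper's proof is two sentences: it names the explicit bijection between the combinatorics (contract the oriented arcs of a partitioned roundabout and keep the oriented edges between the resulting sub-roundabouts; this yields exactly the tree-of-multi-corollas pictures of \cite{KTV21}), and then states that under this bijection one recovers \cite[Definition~25]{KTV21} for $\infty$-morphisms and \cite[Definition~26]{KTV21} for their composite. Your plan to use \cref{prop:LRInfDecMap} and \cref{prop:BiColPartFullDec} amounts to the same identification, but you never state the actual combinatorial move (arc-contraction), which is the entire content of the paper's argument.

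Your extended discussion of signs is not needed here and is not present in the paper's proof: the isomorphism of partial codioperads between $\rmC_{\rm pCY}$ and the multi-corolla codioperad of \cite{KTV21} has already been recorded (remark after \cref{prop:VinftypCY}, with the precise map in \cref{prop:CuttingInfDec}), and once the codioperads agree, the $\infty$-morphism equations, the left/right actions, and the composite $\circledcirc$ are \emph{defined} by the same coproperadic formulas on both sides, so no further sign check is required. Your worry about a rescaling twist $\Phi,\Psi$ is therefore misplaced in this particular proof.
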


\begin{proof}
One can pass from the present combinatorial objects (partitioned roundabouts) to the ones of \cite{KTV21} by contracting all the oriented arcs and keeping the oriented edges between the induced sub-roundabouts. 
This way, we recover the definition given in \cite[Definition~25]{KTV21} of an $\infty$-morphism of pre-Calabi--Yau algebras and the definition given in \cite[Definition~26]{KTV21} of their composite.
\end{proof}

A first immediate advantage of the present properadic definitions lies in the straightforward proofs that $\infty$-morphisms of pre-Calabi--Yau algebras are stable under composition and that this composition is associative. Here are now further homotopical properties which are direct applications of the general theory settled in \cite{HLV20}. 

\begin{definition}[$\infty$-isotopy, $\infty$-isomorphism, and $\infty$-quasi-isomorphism]
	An \emph{$\infty$-isotopy} (resp. \emph{$\infty$-iso\-mor\-phi\-sm}, \emph{$\infty$-quasi-isomorphism}) is an $\infty$-morphism  whose first component $f_{1}$ is the identity (resp. isomorphism, quasi-isomorphism).
\end{definition}

\begin{proposition}
The invertible $\infty$-morphisms 
of pre-Calabi--Yau algebras (respectively homotopy double Poisson gebras and $\rmV_\infty$-gebras)
are the $\infty$-isomorphisms.
\end{proposition}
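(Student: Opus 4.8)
The plan is to follow the standard argument for inverting $\infty$-morphisms in the properadic formalism, exactly as in \cite[Section~3]{HLV20}, adapted to our structural (co)properad $\rmC$. The statement to prove is the equivalence: an $\infty$-morphism $f$ is invertible (i.e. admits an inverse for the composition $\circledcirc$, with $\infty$-isotopies as identities in each hom-set) if and only if its first component $f_1 = f_{(1)}$ is an isomorphism of the underlying graded vector spaces. One direction is immediate: if $f \circledcirc g = \mathrm{id}_A$ and $g \circledcirc f = \mathrm{id}_B$, then since the first component of a composite $g\circledcirc f$ is $g_1 \circ f_1$ (the term in $\Delta$ landing in $\Ibox \boxtimes \Ibox$, which gives precisely the ordinary composition of the linear parts by \cref{def:infmor}), and the first component of an $\infty$-isotopy is the identity, we get $g_1 f_1 = \mathrm{id}_A$ and $f_1 g_1 = \mathrm{id}_B$, so $f_1$ is an isomorphism.

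For the converse, suppose $f_1$ is an isomorphism. First I would reduce to the case $f_1 = \mathrm{id}$: compose $f$ with the \emph{strict} $\infty$-morphism $f_1^{-1} \colon (B,\beta) \rightsquigarrow (B, \beta')$ — more precisely, transport the structure $\beta$ along the linear isomorphism $f_1^{-1}$ to a structure $\beta'$ on $A$, which is possible because $\End_B \cong \End_A$ as properads via conjugation by $f_1$, and then $f_1^{-1} \circledcirc f$ is an $\infty$-isotopy from $(A,\alpha)$ to $(A,\beta')$. So it suffices to invert $\infty$-isotopies. The construction of the inverse $g$ of an $\infty$-isotopy $f$ is by induction on the arity (or rather on a filtration of $\rmC$ by the total number of inputs $n$, which is bounded below since $\rmC = \rmC_{\rm pCY}$, $\DPois^\antish$, or $\rmV^\antish$ is concentrated in arities with $n \geq$ some value and each graded piece is finite in the relevant sense). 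Setting $g_1 = \mathrm{id}$, one defines $g$ on the arity-$n$ piece so that $(g \circledcirc f)$ agrees with the identity there; the equation $g \circledcirc f = \mathrm{id}$ restricted to arity $n$, using the explicit form of $\Delta$ from \cref{prop:BiColPartFullDec}, isolates the top component $g_{\lambda_1,\ldots,\lambda_m}$ (coming from the bicoloured partition with a single white part equal to the whole roundabout) as a linear term, while all remaining terms involve $g$ and $f$ in strictly lower arities; hence $g_{\lambda_1,\ldots,\lambda_m}$ is forced and well-defined. One then checks that the $g$ so constructed is indeed an $\infty$-morphism, i.e. satisfies \eqref{eq:Morph} with $\alpha$ and $\beta'$ swapped, and that $f \circledcirc g = \mathrm{id}$ as well (again by induction, or by a formal cancellation argument in the category structure of \cref{prop:HoInfiniCat}).

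Most of this is pure formalism once one has the category of $\infty$-morphisms of \cref{prop:HoInfiniCat} and the explicit decomposition map $\Delta$ of \cref{prop:BiColPartFullDec}; the cleanest route is simply to invoke the general result \cite[Section~3]{HLV20} (or \cite[Proposition~3.xx]{HLV22}) which states that for any conilpotent coproperad $\rmC$, invertible $\infty$-morphisms are exactly $\infty$-isomorphisms, and observe that this applies verbatim since $\rmC_{\rm pCY}$, $\DPois^\antish$, and $\rmV^\antish$ are all conilpotent (this conilpotence is recorded earlier in the paper: $\DPois^\antish$ and $\rmV^\antish$ are conilpotent coproperads, and $\rmC_{\rm pCY}$ is conilpotent as noted after \cref{prop:VinftypCY}). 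So the proof proposal is essentially: "This is a direct corollary of the general theory of \cite{HLV20}, applied to the conilpotent (co)properads $\rmC_{\rm pCY}$, $\DPois^\antish$, and $\rmV^\antish$", possibly with a sentence recalling the inductive triangularity argument above for the reader's convenience.

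\textbf{Main obstacle.} The only genuine subtlety — and the step I expect to require the most care — is confirming that the inductive construction terminates and produces \emph{finite} formulas at each stage. For the codioperads $\rmC_{\rm pCY}$ and $\DPois^\antish$ this is automatic because each bicoloured partition of a fixed roundabout is a finite combinatorial object and there are finitely many of them, so the defining equation for $g$ in arity $n$ is a finite sum; moreover the filtration by number of inputs is exhaustive and bounded below. For $\rmV^\antish$ one must additionally be sure that the higher-genus elements (whose existence was flagged in the proof of \cref{prop:VinftypCY}) do not spoil conilpotence — but conilpotence of $\rmV^\antish$ is exactly the statement that iterating the decomposition map on any element yields a finite result, which holds because the Koszul dual coproperad of a quadratic properad is always conilpotent by construction. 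Hence no real obstacle remains, and the proof is a one-line appeal to \cite{HLV20} once conilpotence is in place.
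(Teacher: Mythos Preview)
Your proposal is correct and takes essentially the same approach as the paper: the paper's proof is the single sentence ``This is a special case of \cite[Theorem~3.22]{HLV20} applied to the (Koszul dual) coproperad $\rmC_{\rm pCY}$ (respectively $\DPois^\antish$ or $\rmV^{\ac}$),'' which is exactly your bottom line. Your additional sketch of the triangularity/induction argument and the conilpotence check is simply an unpacking of what the cited reference contains, not a different route.
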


\begin{proof}
This is a special case of \cite[Theorem~3.22]{HLV20} applied to the 
(Koszul dual) coproperad $\rmC_{\rm pCY}$ (respectively $\DPois^\antish$ or $\rmV^{\ac}$).
\end{proof}

Recall that the \emph{Deligne groupoid} $\mathsf{Del}(\g)$ of a complete Lie algebra $\g$ is made up of
the Maurer--Cartan elements for objects and the gauges for isomorphisms. We refer the reader to \cite[Chapter~1]{DSV21} for more details.

\begin{proposition}
	The Deligne groupoid associated to the convolution algebra
	of pre-Calabi--Yau algebras (respectively homotopy double Poisson gebras or $\rmV_\infty$-algebras)
	on $A$
	is isomorphic to the groupoid of of pre-Calabi--Yau algebra structures 
	(respectively homotopy double Poisson gebra or $\rmV_\infty$-gebra) on $A$ with their $\infty$-isotopies:
	\[
		\mathsf{Del}\left(\widehat{\Hom}\left(\rmC , \End_A\right)\right)\cong
		\left(\Omega \rmC\textrm{-}\,\mathrm{gebras}, \infty\textrm{-}\,\mathrm{isotopies}\right)\ .
	\]
\end{proposition}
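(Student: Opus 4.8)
The plan is to deduce this statement from the general properadic homotopical calculus of \cite{HLV20}, together with the integration of Lie-admissible convolution algebras into deformation gauge groups carried out in \cite{CV22}; this is in the same vein as the preceding propositions of this section. First I would record that all three (co)properads under consideration — $\rmC_{\rm pCY}$, $\DPois^\antish$, and $\rmV^{\ac}$ — are conilpotent (established in \cref{sec:pCYViDP}), so that the convolution Lie-admissible algebra $\widehat{\Hom}(\rmC, \End_A)$ is complete with respect to the decreasing filtration induced by the coweight grading of $\rmC$. In particular its underlying Lie algebra (the skew-symmetrisation of $\star$) is complete, so both sides of the claimed isomorphism are well-defined: the Deligne groupoid $\mathsf{Del}(\widehat{\Hom}(\rmC, \End_A))$ and the gauge group acting on its Maurer--Cartan set make sense by \cite{CV22}.

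The identification on objects is immediate: essentially by definition of the cobar construction (see \cref{prop:twisting} for the Koszul case), the Maurer--Cartan elements of $\widehat{\Hom}(\rmC, \End_A)$ are in canonical bijection with $\Omega\rmC$-gebra structures on $A$, the bijection being the identity on the underlying structural data. It remains to identify the morphisms, i.e. to show that the gauges between two Maurer--Cartan elements $\alpha$ and $\beta$ correspond bijectively and functorially to the $\infty$-isotopies $f \colon (A, \alpha) \rightsquigarrow (A, \beta)$. An $\infty$-isotopy is a degree $0$ map $f \colon \rmC \to \End^A_A$ with $f_1 = \id_A$; equivalently, writing $f = \id + \bar f$ with $\bar f$ the restriction of $f$ to the coaugmentation coideal $\overline{\rmC}$, it is a degree $0$ element $\bar f$ of the convolution algebra. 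The plan is then to rewrite the $\infty$-morphism equation $\partial(f) = f \rhd \alpha - \beta \lhd f$ — phrased through the left and right infinitesimal decomposition maps $\Cop{}{(*)}$ and $\Cop{(*)}{}$ — as the gauge-action equation $\beta = e^{\bar f} \cdot \alpha$, and dually to identify the composite $g \circledcirc f$ of two $\infty$-isotopies, phrased through the full decomposition map $\Delta$, with the Baker--Campbell--Hausdorff product of the corresponding gauge-group elements, the neutral gauge matching the identity $\infty$-isotopy. Both identities are instances of the comparison between the various decomposition maps of a conilpotent coproperad and, for the Lie-admissible refinement, they are precisely the content of the gauge-group construction of \cite{CV22}. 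Assembling these bijections, which are compatible with source, target, and composition, yields the announced isomorphism of groupoids.

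The routine but genuinely delicate point is the precise matching of the $\infty$-morphism equation with the gauge equation at the level of signs, since $\rhd$ and $\lhd$ are one-sided avatars of the Lie-admissible product $\star$ whereas the gauge action is expressed through its skew-symmetrisation and the exponential; unraveling this uses the combinatorial dictionary of \cref{subsec:CoproduitsCpCY} — infinitesimal black/white and bicoloured partitions of roundabouts — together with the sign conventions fixed there. Once one grants the general statements of \cite{HLV20} and \cite{CV22}, however, no new difficulty arises, and the proof reduces to checking that the present conilpotency hypotheses are met; I would therefore present it as a direct corollary of \emph{loc. cit.}
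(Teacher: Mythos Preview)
Your proposal is correct and takes essentially the same approach as the paper: both deduce the statement as a direct corollary of the general gauge-group/Deligne-groupoid comparison established in \cite{CV22} (the paper cites \cite[Theorem~2.22]{CV22} specifically), applied to the conilpotent coproperad $\rmC$ at hand. One minor discrepancy worth noting: in the framework of \cite{CV22} used here, gauge group elements are parametrised as $\1+\theta$ (cf.\ the proofs of \cref{thm:KoszulHierach} and \cref{thm:TwistProc}) and their multiplication corresponds directly to $\circledcirc$, rather than via the exponential/BCH picture you sketch---but this is a point internal to \cite{CV22} and does not affect the validity of your deduction.
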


\begin{proof}
	This is a special case of \cite[Theorem~1.2]{CV22} applied to the 
	(Koszul dual) 
coproperad $\rmC_{\rm pCY}$ (respectively $\DPois^\antish$ or $\rmV^{\ac}$).
\end{proof}

\begin{theorem}[Homological invertibility of $\infty$-quasi-isomorphisms]\label{thm:InvInfQI}
	For any $\infty$-quasi-isomorphism
	$f:A \overset{\sim}{\rightsquigarrow} B$ of 
	of pre-Calabi--Yau algebras (respectively homotopy double Poisson gebras or $\rmV_\infty$-gebras), there exists an $\infty$-quasi-isomorphism $g:B \overset{\sim}{\rightsquigarrow} A$
	whose first component induces 	the homology inverse of the first component of $f$.
\end{theorem}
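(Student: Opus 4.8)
The plan is to deduce this from the corresponding statement for $\Omega\rmC$-gebras obtained in the general properadic calculus of \cite{HLV20}, using the common notation $\rmC$ from \cref{subsect::infty_morphism}. Since pre-Calabi--Yau algebras, homotopy double Poisson gebras, and $\rmV_\infty$-gebras are all gebras over the cobar construction $\Omega\rmC$ of a conilpotent (partial) coproperad, and since their $\infty$-morphisms are precisely the $\infty$-morphisms in the sense of \cite[Section~3]{HLV20}, the statement is a direct specialisation of \cite[Theorem~3.23]{HLV20} (the homological invertibility of $\infty$-quasi-isomorphisms of gebras over a conilpotent coproperad). So the first and main step is simply to invoke that general result. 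I would phrase the proof as: \emph{This is a direct application of \cite[Theorem~3.23]{HLV20} to the conilpotent (Koszul dual) coproperad $\rmC_{\rm pCY}$ (respectively $\DPois^\antish$ or $\rmV^{\ac}$).}

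If one wanted to spell out the argument behind the cited general theorem rather than black-box it, the plan would be as follows. First I would recall that an $\infty$-quasi-isomorphism $f\colon A\overset{\sim}{\rightsquigarrow}B$ has first component $f_{1}\colon A\to B$ a quasi-isomorphism of chain complexes; by the usual obstruction-theoretic argument (working over a field of characteristic $0$, so that a quasi-isomorphism admits a homotopy inverse $h_1\colon B\to A$ with explicit chain homotopies), one constructs the components $g_{\lambda_1,\ldots,\lambda_m}$ of the putative inverse $g$ by induction on the total arity $n=\lambda_1+\cdots+\lambda_m$. The base case $g_{1}\coloneqq h_1$ is the chosen homotopy inverse. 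The inductive step solves, at arity $n$, the defining equation \eqref{eq:Morph} for $g$ modulo terms of strictly lower arity: the right-hand side $g\rhd\beta-\alpha\lhd g$ evaluated on an arity-$n$ basis element of $\rmC$ only involves $g_{\lambda'}$ for lower arities (because $\rmC$ is conilpotent and the infinitesimal decomposition maps $\Cop{}{(*)}$, $\Cop{(*)}{}$ strictly decrease arity on the relevant summands, see \cref{prop:LRInfDecMap}), so this is a cocycle in the relevant hom-complex, and using $f_1, h_1$ and the contracting homotopies one produces a preimage, defining $g$ at arity $n$. One then checks that $g$ is indeed an $\infty$-quasi-isomorphism, its first component being $h_1$, hence inducing the homology inverse of $H_\bullet(f_1)$.

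The main obstacle — the only genuinely nontrivial point — is the solvability of the obstruction equation at each inductive stage, i.e. verifying that the arity-$n$ error term is a \emph{cocycle} in the hom-complex and that it is a \emph{coboundary} that can be killed using the homotopy inverse data. This is exactly where the conilpotence of $\rmC$ (so that the relevant decomposition maps are locally finite and arity-decreasing) and the characteristic-zero hypothesis (giving the homotopy inverse and contractions) enter, and it is the content encapsulated in \cite[Theorem~3.23]{HLV20}. For the present paper the cleanest route is therefore to cite that theorem: the only thing to check is that our $\rmC$ is conilpotent, which was established for $\rmC_{\rm pCY}$ right after \cref{prop:VinftypCY}, holds for $\DPois^\antish$ by \cref{subsec::DPois_antish}, and holds for $\rmV^{\ac}$ by construction, and that our notion of $\infty$-morphism (\cref{def:infmor}) coincides with the properadic one of \cite{HLV20}, which is true by definition. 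Thus:

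\begin{proof}
	This is a special case of \cite[Theorem~3.23]{HLV20} applied to the conilpotent (Koszul dual) coproperad $\rmC_{\rm pCY}$ (respectively $\DPois^\antish$ or $\rmV^{\ac}$).
\end{proof}
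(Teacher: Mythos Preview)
Your proposal is correct and takes essentially the same approach as the paper: both simply invoke the general homological invertibility theorem from \cite{HLV20} applied to the relevant conilpotent coproperad. The only discrepancy is the precise theorem number you cite (3.23 versus the paper's 4.18), which you may want to double-check against the final version of \cite{HLV20}.
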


\begin{proof}
	This is a special case of \cite[Theorem~4.18]{HLV20} applied to 
	(Koszul dual) 
coproperad $\rmC_{\rm pCY}$ (respectively $\DPois^\antish$ or $\rmV^{\ac}$).

\end{proof}

Recall that a \emph{contraction} of a dg vector space $(A, d_A)$ is another dg vector space $(H,d_H)$ equipped with chain maps $i$ and $p$ and a
homotopy $h$ of degree $1$
\[
	\begin{tikzcd}
		(A,d_A)\arrow[r, shift left, "p"] \arrow[loop left, distance=1.5em,, "h"] & (H,d_H) \arrow[l, shift left, "i"]
	\end{tikzcd} \ ,
\]
satisfying
\begin{align*}
	pi=\id_H\ , \quad \id_A-ip = d_Ah+hd_A\ , \quad
	hi=0\ , \quad ph=0\ , \quad \text{and} \quad h^2=0 \ .
\end{align*}

\begin{theorem}[Homotopy Transfer Theorem]\label{thm:HTT}
	For any 
	pre-Calabi--Yau algebra structure (respectively homotopy double Poisson gebra or $\rmV_\infty$-gebra)
	on a dg vector space $A$ and any contraction of it, there exists
	a pre-Calabi--Yau algebra structure (respectively homotopy double Poisson gebra or $\rmV_\infty$-gebra)
	 on $H$ and extensions of the chain maps $i$ and $p$ into $\infty$-quasi-isomorphisms.
\end{theorem}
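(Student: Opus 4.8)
The plan is to derive all three cases at once from the general homotopy transfer theorem for gebras over the cobar construction of a conilpotent (partial) coproperad proved in \cite{HLV20}, applied to the structural coproperad $\rmC$ — one of the conilpotent codioperads $\rmC_{\rm pCY}$, $\DPois^\antish$, or the conilpotent coproperad $\rmV^\antish$. First I would recall that a $\Omega\rmC$-gebra structure on $A$ is the same datum as a Maurer--Cartan element $\alpha\in\widehat{\Hom}(\rmC,\End_A)$, equivalently a twisting morphism $\rmC\to\End_A$ (\cref{prop:twisting} and its analogues in \cref{sec:InftyMor}). A contraction of $(A,d_A)$ onto $(H,d_H)$ with data $(i,p,h)$ then supplies, via the sided composition of $\rmC$-coloured operations with $i$ at the inputs, $p$ at the outputs and $h$ on internal edges, precisely the input required for the transfer formula.

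Next I would write the transferred structure explicitly: $\alpha_H$ sends a basis element $c$ of $\rmC$ — a roundabout, in the pre-Calabi--Yau case, by \cref{lem:RoundaBasis} — to the sum, over the terms of the iterated comonadic decomposition map $\widetilde{\Delta}\colon\overline{\rmC}\to\scrG^c(\overline{\rmC})$ (combinatorially the oriented partitions of the roundabout, \cref{prop:ComonadicDec}), of the composite obtained by plugging the components of $\alpha$ into the vertices, $i$ at the inputs of the roundabout, $p$ at its outputs, and $h$ along every internal (oriented) edge, with the Koszul signs read off exactly as in \cref{subsec:CoproduitsCpCY}. Conilpotency of $\rmC$ guarantees this is a finite sum. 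That $\alpha_H$ again satisfies the Maurer--Cartan equation, and is therefore a genuine pre-Calabi--Yau algebra (respectively homotopy double Poisson gebra, $\rmV_\infty$-gebra) structure on $H$, is exactly the content of the general theorem, so no further argument is needed here.

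For the $\infty$-quasi-isomorphisms extending $i$ and $p$, I would use the twin graph formulas — the same decorated roundabouts, but now with $p$ (respectively $i$) at the root and $h$ on the internal edges, read against the left and right infinitesimal decomposition maps $\Cop{}{(*)}$ and $\Cop{(*)}{}$ that govern $\infty$-morphisms (\cref{prop:LRInfDecMap}, \cref{def:infmor}) — to produce $p_\infty\colon(A,\alpha)\rightsquigarrow(H,\alpha_H)$ with first component $p$, a quasi-isomorphism by hypothesis. One then obtains an extension $i_\infty$ of $i$ either by the dual formula or, more cheaply, by invoking the homological invertibility of $\infty$-quasi-isomorphisms (\cref{thm:InvInfQI}). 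All verifications — compatibility with the differentials and the $\infty$-morphism equation~\eqref{eq:Morph} — are instances of the results of \cite[Section~4]{HLV20}.

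The main obstacle is not conceptual: once one knows that the three structural coproperads are conilpotent (established in \cref{subsec:CoproduitsCpCY} and around \cref{def:Vinfty}), the statement is a direct specialisation of the properadic homotopy transfer theorem of \cite{HLV20}. The only genuine work is the bookkeeping of translating the abstract graph sums into the roundabout combinatorics of \cref{subsec:CoproduitsCpCY} and tracking the Koszul signs, exactly as for the decomposition maps there; since the statement does not claim explicit transfer formulas, it suffices to cite the general result, and, if desired, to record the explicit formula in a subsequent remark.
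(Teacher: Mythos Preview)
Your proposal is correct and follows essentially the same route as the paper: the proof there is a one-line citation of \cite[Theorem~4.14]{HLV20} applied to the conilpotent coproperad $\rmC_{\rm pCY}$ (respectively $\DPois^\antish$ or $\rmV^{\ac}$), and your final paragraph lands on exactly this. The explicit roundabout formulas you sketch are not part of the paper's proof of this theorem but are deferred to the subsequent \cref{prop:FormulaHTT}; note also (cf.\ the closing remark of \cref{appendiceHTT}) that in the dioperadic setting the extension of $i$ genuinely requires levelled graphs, so your ``twin graph'' description of $i_\infty$ would need that caveat if you actually wanted to record the formula.
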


\begin{proof}
	This is a special case of \cite[Theorem~4.14]{HLV20} applied to the (Koszul dual) coproperad $\rmC_{\rm pCY}$ (respectively $\DPois^\antish$ or $\rmV^{\ac}$).
\end{proof}

Notice that \emph{loc. cit.} provides us with explicit formulas for the transferred structure and the extensions of the two maps. For instance, the homotopy transferred structure is given by 
	\[
	\begin{tikzcd}[column sep=normal]
	\overline{\rmC} \arrow[r,"\widetilde{\Delta}"] & \scrG^c\left(\overline{\rmC} \right) \arrow[r,"\scrG^c(\susp\alpha)"] & \Bar\End_A \arrow[r,"\varphi"] & \End_H~, 
	\end{tikzcd}
	\]
where $\widetilde{\Delta}$ is the comonadic decomposition map of the (Koszul dual) coproperad $\rmC_{\rm pCY}$ (respectively $\DPois^\antish$ or $\rmV^{\ac}$)  made up of all the ways to decompose its elements, 
where $\alpha$ is the initial 
pre-Calabi--Yau algebra structure on $A$ (respectively homotopy double Poisson gebra or $\rmV_\infty$-gebra), 
and where 
$\varphi$ is the Van der Laan twisting morphism which amonts to labelling the edges of 
the graphs by the maps $i$, $p$, and $h$ of the contraction is a levelled way, see \cite[Section~4]{HLV20} for more details. 
Since pre-Calabi--Yau algebras and homotopy double Poisson gebras are actually encoded by codioperads, 
this level-wise universal formula for the homotopy transferred structure simplifies as follows according to \cref{appendiceHTT}. 

\begin{proposition}[Formula for the Homotopy Transfer Theorem]\label{prop:FormulaHTT}
	For any 
	pre-Calabi--Yau algebra structure (respectively homotopy double Poisson gebra)
	on a dg vector space $A$ and any contraction of it, 
	the pre-Calabi--Yau algebra structure (respectively homotopy double Poisson gebra) transferred onto $H$ is equal to the signed sum over all the oriented partitions of roundabouts with inputs labelled by $i$, outputs labelled by $p$, and oriented inner edges labelled by $h$, see \cref{Fig:HTTRounda}. 
\begin{figure}[h!]	
		\tikzmath{
		\radIn = 2.4; 
		\radMid = 2.8; 
		\radOut = 3.4; 
		\semidecp = -0.5; 
		\semideci = -0.7; 
		\dec = 0.25; 
		}
\[
\vcenter{\hbox{\begin{tikzpicture}
		\coordinate (C) at (0,0);
		\draw[fill=white] (C) circle (\radIn);
		\foreach \angle/\i in {90/1, 30/2, -30/3, -90/4, -150/5, 150/6}{
			\draw[->] (\angle:\radIn) -- (\angle:\radOut);
			\draw (\angle:\radOut) ++ (\angle:\dec) node {\scalebox{0.6}{$\i$}};  
			\draw (\angle+3:\radOut) ++ (\angle+3:\semidecp) node {\scalebox{0.8}{$p$}};  
		}
		\foreach \angle/\i in 
		{40/1,50/2,60/3,70/4,80/5, -10/6, 10/7, -75/8, -60/9, -45/10, -130/11, 
		-110/12, 120/13
		}{
			\draw[<-] (\angle:\radIn) -- (\angle:\radMid);
			\draw (\angle:\radMid) ++ (\angle:\dec) node {\scalebox{0.6}{$\i$}}; 
			\draw (\angle+3:\radOut) ++ (\angle+3:\semideci) node {\scalebox{0.8}{$i$}};  
		}
		\draw[thick] (110:\radIn) .. controls (0,0) .. (240:\radIn) node[midway] (A) {};
		\draw[->,thick] (A.center) --++ (-8:{1.5*\dec});
		\draw (A.center) ++ (225:{0.4*\semideci}) node {\scalebox{0.8}{$h$}};
		\draw[thick] (100:\radIn) .. controls (82:1.7) .. (65:\radIn) node[midway] (A) {};
		\draw[->,thick] (A.center) --++ (82:{1.5*\dec});	
		\draw (A.center) ++ (225:{0.4*\semideci}) node {\scalebox{0.8}{$h$}};
		\draw[thick] (57:\radIn) .. controls (0,0) .. (-52:\radIn) node[midway] (A) {};
		\draw[->,thick] (A.center) --++ (0:{1.5*\dec});		
		\draw (A.center) ++ (225:{0.4*\semideci}) node {\scalebox{0.8}{$h$}};
		\draw[thick] (165:\radIn) .. controls (150:1.8) .. (135:\radIn) node[midway] (B) {};
		\draw[->,thick] (B.center) --++ (-30:{1.5*\dec});		
		\draw (B.center) ++ (215:{0.4*\semideci}) node {\scalebox{0.8}{$h$}};
		\draw[thick] (53:\radIn) .. controls (45:2) .. (35:\radIn) node[midway] (B) {};
		\draw[->,thick] (B.center) --++ (221:{1.5*\dec});
		\draw (B.center) ++ (340:{0.35*\semideci}) node {\scalebox{0.8}{$h$}};
	\end{tikzpicture}}}
\quad \longleftrightarrow \quad \vcenter{\hbox{
\begin{tikzpicture}[scale=0.7]
		\coordinate (A1) at (0,4);
		\draw[thin]
		(A1)  ++(0,1) ++(-0.1,0.1) node[below right] {\scalebox{0.8}{$i$}}		
		(A1) -- ++(0,1) node[above] {${\scalebox{0.6}{$3$}}$};
		\coordinate (A2) at (1.5,4);
		\draw[thin]
		(A2) ++ (0,0.5)  ++(-0.3,0.5) ++(0.15,0.1) node[below left] {\scalebox{0.8}{$i$}}		
		(A2) ++ (0,0.5) -- ++(-0.3,0.5)	node[above] {${\scalebox{0.6}{$8$}}$}	
		(A2) ++ (0,0.5) ++(0.3,0.5) ++(-0.15,0.1) node[below right] {\scalebox{0.8}{$i$}}		
		(A2) ++ (0,0.5) -- ++(0.3,0.5)	node[above] {${\scalebox{0.6}{$9$}}$};	
		\coordinate (A3) at (3,4);
		\draw[thin]
		(A3) --++(0,-0.5) node[below] {$\scalebox{0.6}{$4$}$} --
		(A3) ++(0,0.5) -- ++(0.3,0.5) node[above] {${\scalebox{0.6}{$12$}}$}
		(A3) ++(0,0.5) ++(0.3,0.5) ++(-0.15,0.1) node[below right] {\scalebox{0.8}{$i$}}		
		(A3)  ++(0,-0.5) ++ (-0.1,-0.1) node[above right] {\scalebox{0.8}{$p$}};
		\coordinate (A4) at (4.5,4);
		\draw[fill=white] (-0.3,4) rectangle (3.3, 4.5);
		\draw[thick] (A1) to[out=270,in=120] ++(-1.8,-2.5) --++ (0.3, -0.5) coordinate (F1);
		\draw[thick] (A2) to[out=270,in=90] ++(0,-3) coordinate (C1);		
		\draw[thick] (3,4.5) --++ (0, 1.5) coordinate (B1);
		\draw[thick] (4.5,6.5) to[out=90,in=270] (4.5, 8) coordinate (Z1);	
		\draw[thick] (0.5,2) to[out=270,in=115] (1.5, 1);
		\draw[thin]
		(B1)  ++ (0,0.5) -- ++(0,0.5)	node[above] {${\scalebox{0.6}{$11$}}\ $}
		(B1)  ++ (0,0.5)  ++(0,0.5) ++(-0.1,0.1) node[below right] {\scalebox{0.8}{$i$}}				
		(B1)  ++(1.5, 0.5) --++(-0.3,0.5)	node[above] {${\scalebox{0.6}{$13$}}$}
		(B1)  ++(1.5, 0.5) ++(-0.3,0.5) ++(0.15,0.1) node[below left] {\scalebox{0.8}{$i$}}			
		(B1)  ++ (1.5,0) ++(0,-0.5) ++ (-0.1,-0.1) node[above right] {\scalebox{0.8}{$p$}}		
		(B1)  ++ (1.5,0) -- ++(0,-0.5)	node[below] {${\scalebox{0.6}{$5$}}\ $};
		\draw[fill=white] (2.7,6) rectangle (4.8, 6.5);		
		\draw[thin]
		(C1)   ++(0.3,0.5) ++(-0.15,0.1) node[below right] {\scalebox{0.8}{$i$}}	
		(C1)  -- ++(0.3,0.5)	node[above] {$\ {\scalebox{0.6}{$10$}}$};	
		\draw[thin]
		(C1)  -- ++(0,-1)	node[below] {${\scalebox{0.6}{$3$}}$};		
		\coordinate (C2) at (3,1);		
		\draw[thin]
		(C2)  -- ++(-0.3,0.5)	node[above] {${\scalebox{0.6}{$6$}}$}	
		(C2)  ++(-0.3,0.5) ++(0.15,0.1) node[below left] {\scalebox{0.8}{$i$}}			
		(C2)  -- ++(0.3,0.5)	node[above] {${\scalebox{0.6}{$7$}}$}
		(C2)   ++(0.3,0.5) ++(-0.15,0.1) node[below right] {\scalebox{0.8}{$i$}}				
		(C2)  -- ++(0,-1)	node[below] {${\scalebox{0.6}{$2$}}$}		
		(C2)  ++(0,-1) ++ (-0.1,-0.1) node[above right] {\scalebox{0.8}{$p$}}				
		(C1)  ++(0,-1) ++(-0.1,-0.1) node[above right] {\scalebox{0.8}{$p$}};		
		\draw[fill=white] (1.2,1) rectangle (3.3, 0.5);
		\draw[fill=white] (4.2, 8) rectangle (6.3, 8.5);
		\draw[thin]
		(Z1)  ++(1.5, 0)   ++(0,-0.5) ++ (-0.1,-0.1) node[above right] {\scalebox{0.8}{$p$}}			
		(Z1) ++(1.5, 0)  -- ++(0,-0.5) node[below] {${\scalebox{0.6}{$6$}}$}	;		
		\coordinate (D1) at (0.5,2.5);
		\draw[thin]
		(D1)  -- ++(-0.3,0.5)	node[above] {${\scalebox{0.6}{$1$}}$} 
		(D1)  ++ (-0.3,0.5) ++(0.1,0.1) node[below left] {\scalebox{0.8}{$i$}}
		(D1) -- ++(0.3,0.5)	node[above] {${\scalebox{0.6}{$2$}}$}
		(D1)  ++ (0.3,0.5) ++(-0.1,0.1) node[below right] {\scalebox{0.8}{$i$}};  		
		\draw[fill=white] (0.2,2) rectangle (0.8, 2.5);
		\draw[thin]
		(F1)  -- ++(0,0.5)	node[above] {${\scalebox{0.6}{$4$}}$}	
		(F1)  ++(0,0.5) ++(-0.1,0.2) node[below right] {\scalebox{0.8}{$i$}}  		
		(F1)  -- ++(0.5,0.5)	node[above] {${\scalebox{0.6}{$5$}}$}
		(F1)  ++(0.5,0.5) ++(-0.18,0.2) node[below right] {\scalebox{0.8}{$i$}}
		(F1)  -- ++(0,-1)		node[below] {${\scalebox{0.6}{$1$}}$}
		(F1)  ++(0,-1) ++(-0.1,-0.1) node[above right] {\scalebox{0.8}{$p$}}	;	
		\draw[fill=white] (-1.8,0.5) rectangle (-1.2, 1);
		\draw (4.5,7.25) node[right] {\scalebox{0.8}{$h$}}; 
		\draw (3,5.25) node[left] {\scalebox{0.8}{$h$}}; 
		\draw (-1.5,2.75) node {\scalebox{0.8}{$h$}}; 
		\draw (1.5,2.5) node[right] {\scalebox{0.8}{$h$}}; 
		\draw (1.1,1.75) node {\scalebox{0.8}{$h$}}; 
	\end{tikzpicture}}}
\]
	\caption{One term appearing in the homotopy transferred structure.}
	\label{Fig:HTTRounda}
\end{figure}
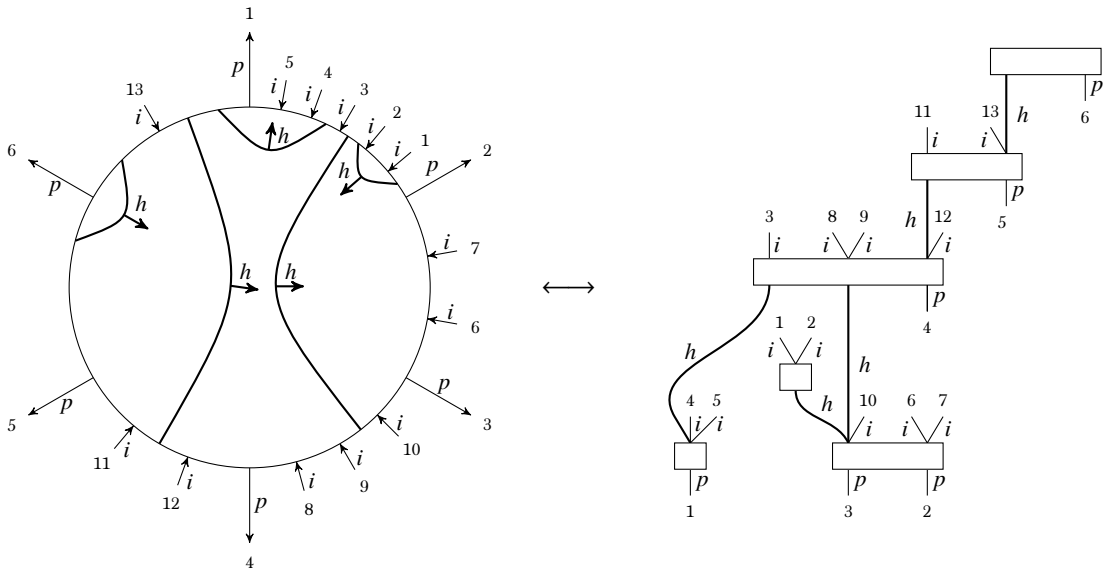

\end{proposition}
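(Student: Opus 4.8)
The plan is to specialise the general homotopy transfer formula recalled just above the statement. By \cref{thm:HTT} and \cite[Section~4]{HLV20}, the transferred structure is the composite
\[
\begin{tikzcd}[column sep=normal]
\overline{\rmC} \arrow[r,"\widetilde{\Delta}"] & \scrG^c\left(\overline{\rmC}\right) \arrow[r,"\scrG^c(\susp\alpha)"] & \Bar\End_A \arrow[r,"\varphi"] & \End_H~,
\end{tikzcd}
\]
where $\rmC$ stands for $\rmC_{\rm pCY}$ (respectively $\DPois^\antish$), $\alpha$ is the initial structure, and $\varphi$ is the Van der Laan twisting morphism. So the whole argument amounts to reading off each of these three arrows in terms of roundabouts and checking that nothing more survives than the claimed labelling.

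First I would invoke \cref{prop:ComonadicDec}: the terms of the comonadic decomposition map $\widetilde{\Delta}$ evaluated on a cyclic stairway basis element, equivalently on the associated roundabout via \cref{lem:RoundaBasis}, are indexed by the oriented partitions of that roundabout, each part being a sub-roundabout and hence again a cyclic stairway basis element. Precomposing with $\scrG^c(\susp\alpha)$ then simply labels each part by the corresponding structure operation $\calm_{\lambda_1, \ldots, \lambda_m}$ of $\alpha$. Thus, already after the first two arrows, the transferred structure is a sum indexed exactly by the oriented partitions of roundabouts, with one structure operation per part.

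Next I would unravel the Van der Laan twisting morphism $\varphi \colon \Bar\End_A \to \End_H$ on such a connected flow-directed graph of operations: it composes the operations sitting at the vertices after inserting the homotopy $h$ on each internal edge, the map $i$ on each input, and the map $p$ on each output, with signs prescribed by the Koszul sign rule, following a levelled prescription. The point I expect to be the main obstacle is showing that, because $\rmC$ is a \emph{codioperad}, every graph occurring here is of genus $0$, so that any two levellings differ only by transpositions of vertices lying on distinct branches and hence, by the parallel--sequential axioms of the endomorphism properad, produce the very same composite in $\End_H$; the levelled prescription then collapses to a single term per oriented partition, namely the one displayed in \cref{Fig:HTTRounda}. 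The remaining work is the overall sign: as in the proof of \cref{prop:ComonadicDec}, I would transport the question through the bijection with stairway graphs and compute the sign of the associated composite in the properad $\frac{\mathrm{u}\DPois^!}{\field u}$, and I would carry out this bookkeeping in \cref{appendiceHTT}.
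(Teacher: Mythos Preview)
Your overall outline is right—identify the comonadic decomposition with oriented partitions via \cref{prop:ComonadicDec}, then analyse the Van der Laan map—but the key step is not what you describe. The properadic Van der Laan map $\varphi$ from \cite[Section~4]{HLV20} is \emph{not} simply ``compose along the graph after inserting $i$, $p$, $h$ on edges''; it first sends the graph to the sum of all its levelisations and then inserts, at each intermediate level, the symmetric homotopy $h_n$ (depending on the number $n$ of edges crossing that level). Different levelisations therefore produce genuinely different composites in $\End_H$, and the parallel--sequential axioms alone cannot identify them: those axioms control how vertex operations commute, not how the level-dependent insertions $h_n$ interact.

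What actually happens, and what the paper proves in \cref{appendiceHTT}, is a two-step argument. First, using the side conditions $ph=0$, $hi=0$, $h^2=0$ of the contraction, one shows that on a genus~$0$ graph each single levelisation $\lambda$ contributes exactly $\omega(\lambda)\cdot\psi(\gg)$, where $\psi$ is the \emph{unlevelled} dioperadic Van der Laan map (inputs $i$, outputs $p$, internal edges $h$) and $\omega(\lambda)$ is the product over intermediate levels of the inverse of the number of crossing internal edges. Second, one proves the purely combinatorial identity $\sum_{\lambda}\omega(\lambda)=1$ for any flow-directed connected genus~$0$ graph (\cref{lem:CoefSumOmega=Aut}). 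Only after these two steps does the levelled formula collapse to the unlevelled one. So the ``remaining work'' is not merely a sign computation: the heart of the simplification is this weight identity, which is where genus~$0$ is genuinely used (the number of internal edges equals the number of intermediate levels, forcing exactly one $h$ per edge).
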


\begin{proof}
This is a direct application of \cref{thm:HTTCodioperad}. 
\end{proof}

\begin{remark}
The methods of B. Fresse, especially \cite[Theorem~7.3]{Fre10}, applied to the cobar construction of the 
(Koszul dual) coproperad $\rmC_{\rm pCY}$ (respectively $\DPois^\antish$ or $\rmV^{\ac}$), 
  show the existence of homotopy transferred structures through quasi-isomorphisms over a ring of characteristic $0$. Such a statement was also proved directly, i.e. without the use of the properadic calculus, in the case of pre-Calabi--Yau algebras by M. Boucrot in \cite{Boucrot2023}. 
\end{remark}

\medskip

In the recent paper \cite{HLV22}, we went further and established even more structural properties for $\infty$-morphisms associated to coproperads, like the following formality property, that automatically apply to the cases of 
pre-Calabi--Yau algebras, homotopy double Poisson gebras, and $\rmV_\infty$-gebras.

\begin{proposition}\label{prop:InftyIsot}
If two  pre-Calabi--Yau algebra structures 
 $\alpha$ and $\alpha'$ (respectively homotopy double Poisson gebra or $\rmV_\infty$-algebra) on the same underlying chain complex $A$ are $\infty$-isotopic then they are related by a two-arrows zig-zag of (strict) quasi-isomorphisms
\[
\begin{array}{c}
\exists\  \text{$\infty$-isotopy} \\
\begin{tikzcd}[column sep=normal]
	(A,\alpha) \ar[r,squiggly,"="] 
	& (A, \alpha')
\end{tikzcd} 
\end{array}
\Longrightarrow
\begin{array}{c}
\exists\  \text{two-arrows zig-zag of quasi-isomorphisms} \\
\begin{tikzcd}
	(A, \alpha) 
	& \point \ar[l,"\sim"', "g"] \ar[r,"\sim", "h"'] 
	& (A, \alpha')
	\end{tikzcd}
  \ ,
\end{array}
\]
where the respective isomorphisms in homology satisfy $H(g)=H(h)^{-1}$~.
\end{proposition}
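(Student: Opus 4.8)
The plan is to deduce this statement from the general homotopy theory of gebras over a quasi-free properad $\Omega\rmC$ on a conilpotent (partial) coproperad $\rmC$, applied uniformly to $\rmC = \rmC_{\rm pCY}$, to $\rmC = \DPois^{\ac}$, and to $\rmC = \rmV^{\ac}$; all the required tools are the ones developed in \cite{HLV20, HLV22, CV22} and recalled in this section. Throughout, a structure on $A$ is a Maurer--Cartan element of the convolution Lie-admissible algebra $\g_A \coloneqq \widehat{\Hom}(\rmC, \End_A)$, and by the Deligne groupoid proposition above, two structures $\alpha, \alpha'$ on $A$ are $\infty$-isotopic if and only if they are gauge equivalent in $\g_A$, equivalently if and only if they lie in the same connected component of the moduli space $\Omega\rmC\{A\} \cong \mathrm{MC}(\g_A \otimes \Omega_\bullet)$. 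So the whole proposition becomes a comparison between this path-component relation on the fixed convolution algebra $\g_A$ and the relation generated by zig-zags of strict quasi-isomorphisms of $\Omega\rmC$-gebras.

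For the direct implication, I would simply note that an $\infty$-isotopy $f \colon (A,\alpha) \rightsquigarrow (A,\alpha')$ is in particular an $\infty$-quasi-isomorphism, so by \cref{thm:MainInftyQi} (the equivalence between $\infty$-quasi-isomorphisms and zig-zags of quasi-isomorphisms) it produces a zig-zag of strict quasi-isomorphisms of $\Omega\rmC$-gebras between $(A,\alpha)$ and $(A,\alpha')$, which is exactly what is claimed.

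For the converse, the strategy would be threefold. First, compress the given zig-zag into a single $\infty$-quasi-isomorphism $g \colon (A,\alpha) \rightsquigarrow (A,\alpha')$ by repeatedly inverting the backwards-pointing strict quasi-isomorphisms via \cref{thm:InvInfQI} and composing with $\circledcirc$ (the formal properties needed, associativity of $\circledcirc$ and functoriality, are \cref{prop:HoInfiniCat}). Second, use the homotopy transfer theorem \cref{thm:HTT} together with the contraction data along the zig-zag to control the first component of $g$ and to record the coherent system of homotopies relating the various composites to the identity. Third, and this is the heart of the matter, upgrade $g$ to an $\infty$-isotopy: integrate those homotopies into an actual gauge of $\g_A$, i.e. realize $g$ and the homotopies as a path in the moduli space $\Omega\rmC\{A\}$ joining $\alpha$ to $\alpha'$, and conclude by the Deligne groupoid description that $\alpha$ and $\alpha'$ are gauge equivalent, hence $\infty$-isotopic. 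Since this argument only uses the five decomposition maps of $\rmC$ and the formal machinery of \cref{prop:HoInfiniCat}, \cref{thm:InvInfQI}, and \cref{thm:HTT}, it would apply verbatim to the three cases at once.

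The main obstacle is the third step: passing from ``$\infty$-quasi-isomorphic on $A$'' to ``$\infty$-isotopic on $A$''. An arbitrary zig-zag of quasi-isomorphisms may travel through $\Omega\rmC$-gebras supported on other chain complexes, and after compression the first component of $g$ need not be the identity; one must show that the homotopies produced in the first two steps can be assembled into a genuine gauge transformation of $\g_A$ fixing the underlying complex of $A$, rather than merely an $\infty$-isomorphism. This rectification of $\infty$-quasi-isomorphisms into $\infty$-isotopies in the presence of a fixed underlying complex is precisely the general statement carried out in the forthcoming \cite{HLV22}; I would prove it there in full generality and, here, only specialize it to the coproperads $\rmC_{\rm pCY}$, $\DPois^{\ac}$, and $\rmV^{\ac}$, the combinatorial descriptions of whose decomposition maps were made explicit in \cref{subsec:CoproduitsCpCY}.
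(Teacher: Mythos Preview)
Your proposal is correct and lands in essentially the same place as the paper: both arguments ultimately defer the substance to the general result in \cite{HLV22}, applied uniformly to the three coproperads $\rmC_{\rm pCY}$, $\DPois^{\ac}$, and $\rmV^{\ac}$. The paper's proof is simply a one-line citation of \cite[Proposition~1.10]{HLV22}, whereas you unpack a plausible strategy for how that result is proved (compression of zig-zags via \cref{thm:InvInfQI}, transfer data from \cref{thm:HTT}, and a rectification of the resulting $\infty$-quasi-isomorphism into an $\infty$-isotopy), before conceding that the crucial third step is exactly what \cite{HLV22} supplies.

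One minor organizational remark: for the forward implication you invoke \cref{thm:MainInftyQi}, which in the paper is stated \emph{after} the present proposition and is itself a citation of \cite[Theorem~1.11]{HLV22}. This is not circular, since both statements are independent imports from \cite{HLV22}, but it does mean your argument for the easy direction is no more self-contained than the paper's direct citation; you might as well cite \cite{HLV22} outright, as the paper does.
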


\begin{proof}
	This is a special case of \cite[Proposition~1.14]{HLV22} applied to the 
	(Koszul dual) coproperad $\rmC_{\rm pCY}$ (respectively $\DPois^\antish$ or $\rmV^{\ac}$).
\end{proof}

\begin{theorem}\label{thm:MainInftyQi}
Two
pre-Calabi--Yau algebra structures  
 $(A,\alpha)$ and $(B, \beta)$ 
 (respectively homotopy double Poisson gebra or $\rmV_\infty$-algebra)
 are $\infty$-quasi-isomorphic if and only if they are related by a zig-zag of (strict) quasi-isomorphisms:
\[
\begin{array}{c}
\exists\  \text{$\infty$-quasi-isomorphism} \\
\begin{tikzcd}[column sep=normal]
	(A,\alpha) \ar[r,squiggly,"\sim"] 
	& (B, \beta)
\end{tikzcd} 
\end{array}
\Longleftrightarrow
\begin{array}{c}
 \exists\  \text{zig-zag of quasi-isomorphisms} \\
\begin{tikzcd}[column sep=small]
	(A, \alpha) \ar[r,"\sim"] 
	& \point
	& \point \ar[l,"\sim"'] \ar[r,dotted, no head]
	&\point
	& \point \ar[l,"\sim"'] \ar[r,"\sim"] 
	& (B, \beta)
	\end{tikzcd}  \ .
\end{array}
\]
\end{theorem}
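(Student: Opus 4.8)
The plan is to handle the three cases $\rmC = \rmC_{\rm pCY}$, $\rmC = \DPois^\antish$ and $\rmC = \rmV^\antish$ at once, using only that each is a conilpotent (partial) coproperad together with the properadic homotopical calculus of \cite{HLV20, HLV22}. For the implication $\Leftarrow$ I would argue exactly as for \cref{thm:InvInfQI}: a zig-zag of strict quasi-isomorphisms between $(A,\alpha)$ and $(B,\beta)$ is in particular a zig-zag of $\infty$-quasi-isomorphisms (a strict morphism being an $\infty$-morphism concentrated in its linear component), each backwards arrow can be turned into a forward $\infty$-quasi-isomorphism by \cref{thm:InvInfQI}, and composing them all with the associative product $\circledcirc$ of \cref{prop:HoInfiniCat} produces an $\infty$-morphism $(A,\alpha) \rightsquigarrow (B,\beta)$ whose linear component induces on homology the composite of the isomorphisms of the zig-zag, hence is a quasi-isomorphism.

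For the substantial implication $\Rightarrow$, the idea is to reduce to the $\infty$-isotopy case already settled in \cref{prop:InftyIsot}. Starting from an $\infty$-quasi-isomorphism $f \colon (A,\alpha) \rightsquigarrow (B,\beta)$, one first applies the homotopy transfer theorem (\cref{thm:HTT}) to contractions of $A$ and $B$ onto their homologies, obtaining transferred structures $(H_\bullet A, \alpha_\infty)$ and $(H_\bullet B, \beta_\infty)$ together with connecting $\infty$-quasi-isomorphisms. The composite of $f$ with these is an $\infty$-morphism $(H_\bullet A, \alpha_\infty) \rightsquigarrow (H_\bullet B, \beta_\infty)$ whose linear component is an isomorphism of graded vector spaces, hence an $\infty$-isomorphism; transporting the target structure along that underlying isomorphism makes it into an $\infty$-isotopy, to which \cref{prop:InftyIsot} applies and yields a zig-zag of strict quasi-isomorphisms between the two transferred structures. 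It then remains to connect $(A,\alpha)$ and $(B,\beta)$ to their transferred models by \emph{strict} zig-zags, which is achieved through the properadic bar--cobar resolution: for any $\Omega\rmC$-gebra $X$ the counit $\Omega\mathrm{B}_\iota X \xrightarrow{\sim} X$ is a strict quasi-isomorphism, and the $\infty$-quasi-isomorphisms above induce quasi-isomorphisms of conilpotent bar constructions which $\Omega$ turns into strict quasi-isomorphisms; in fact this last observation already gives the theorem directly, via the length-three zig-zag $(A,\alpha) \xleftarrow{\sim} \Omega\mathrm{B}_\iota A \xrightarrow{\sim} \Omega\mathrm{B}_\iota B \xrightarrow{\sim} (B,\beta)$.

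The main obstacle, and the point to check carefully, is precisely this properadic homological input: that gebras over a conilpotent coproperad $\rmC$ carry a functorial bar--cobar resolution with a quasi-isomorphic counit and that the properadic cobar functor preserves quasi-isomorphisms between conilpotent $\rmC$-cogebras over a field of characteristic zero --- the properadic counterparts of the classical operadic facts, available through \cite{HLV20, HLV22} (see also \cite{Fre10}). Granting this, the conilpotency of $\rmC_{\rm pCY}$, $\DPois^\antish$ and $\rmV^\antish$ recorded earlier in the paper makes the argument apply uniformly and the statement follows.
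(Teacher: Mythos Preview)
Your proposal is correct and in fact more informative than the paper's own proof, which consists of a single sentence citing \cite[Theorem~1.11]{HLV22} applied to the coproperad $\rmC_{\rm pCY}$ (respectively $\DPois^\antish$ or $\rmV^{\ac}$). You have essentially reconstructed the standard argument that lies behind that general theorem: the easy direction via \cref{thm:InvInfQI} and composition, and the substantive direction via the bar--cobar resolution $\Omega\mathrm{B}_\iota A \xrightarrow{\sim} A$, using that an $\infty$-morphism corresponds to a morphism of $\rmC$-cogebras on bar constructions and that $\Omega$ sends quasi-isomorphisms of conilpotent $\rmC$-cogebras to quasi-isomorphisms. The detour through homology and \cref{prop:InftyIsot} that you sketch first is also valid but, as you yourself observe, the direct length-three bar--cobar zig-zag is the cleaner route.

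The only caveat is the one you already flag: the properadic analogues of the operadic bar--cobar facts (acyclicity of the counit, preservation of quasi-isomorphisms by $\Omega$ on conilpotent cogebras) are precisely the content of the forthcoming \cite{HLV22}, so your argument and the paper's citation ultimately rest on the same external input. Your write-up makes transparent \emph{what} is being imported, whereas the paper simply defers to the reference.
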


\begin{proof}
	This is a special case of \cite[Theorem~1.16]{HLV22} applied to the (Koszul dual) coproperad $\rmC_{\rm pCY}$ (respectively $\DPois^\antish$ or $\rmV^{\ac}$) .
\end{proof}

The results of \cite[Section~3]{HLV22} also provide us with a simplicial enrichment of the category of 
pre-Calabi--Yau algebras (respectively homotopy double Poisson gebras or $\rmV_\infty$-algebras)
 and $\infty$-morphisms such that the associated homotopy category coincides with the localisation with respect to ($\infty$-)quasi-isomorphisms. 

\medskip

We finish this section with an application of the general theory of Koszul hierarchy given in \cite[Section~3]{CV22} 
which provides us here with a way to construct homotopy double Poisson gebras (and thus pre-Calabi--Yau algebras) from shifted $\DPois^!$-gebra structures  via universal formulas. 
The notion of a \emph{shifted $\DPois^!$-gebra} is defined as a $\DPois^!$-gebra (\cref{subsec::DPois_antish}) except that we require the generating binary product and ``dual'' double bracket to be of homological degree $+1$ instead of $-1$.
Recall that the original construction, given by J.-L. Koszul in \cite{Koszul85}, produces a (shifted) homotopy Lie algebra from a commutative product on a dg vector space. 

\begin{theorem}[Koszul hierarchy]\label{thm:KoszulHierach}
	Given  a shifted $\DPois^!$-algebra structure on graded vector space $A$ and a differential $d$ on $A$, the action of the associated canonical element of the gauge group on the trivial double Poisson gebra structure on $(A,d)$ produces a homotopy double Poisson gebra. 
\end{theorem}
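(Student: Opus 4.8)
The plan is to recognize this as a direct instance of the general Koszul hierarchy construction of \cite[Section~3.3]{CV22}, applied to the properadic convolution Lie-admissible algebra $\convDPois_A = \widehat{\Hom}(\DPois^\antish, \End_A)$. First I would set up the input data properadically: a shifted $\DPois^!$-gebra structure on $A$ is, by \cref{lemm:KDproperad} and \cref{prop:decompo_duale_DPois}, a morphism of properads from the (suspended) Koszul dual properad into $\End_{\susp^{-1} A}$, which under linear dualization and the pairing $\DPois^! = (\DPois^\antish)^*$ is exactly the datum of a degree $0$ element $\varphi \in \widehat{\Hom}(\DPois^\antish, \End_A)$; more precisely, the shift by one in the degrees of the generators is arranged precisely so that $\varphi$ has the correct degree to be an element of the gauge group of the dg Lie-admissible algebra $\convDPois_A$ in the sense of \cite{CV22}. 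The chosen differential $d$ on $A$ is a Maurer--Cartan element of $\convDPois_A$ supported on the arity $(1,1)$ component — indeed it is the \emph{trivial} double Poisson gebra structure on $(A,d)$, meaning $\alpha_d$ has $\calm_1 = d$ and all other structure operations zero, which is Maurer--Cartan since $d^2=0$.

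Next I would invoke \cref{def:ConDeltaLie}-style integration: \cite{CV22} integrates any dg Lie-admissible algebra of properadic convolution type into an explicit gauge group acting on Maurer--Cartan elements. So the element $\varphi$ determines a canonical group element $e^\varphi$ (or its Lie-admissible-gauge analogue), and its action on the Maurer--Cartan element $\alpha_d$ produces a new Maurer--Cartan element $\alpha_d \cdot e^\varphi$ of $\convDPois_A$. By \cref{prop:twisting} (homotopy $\prop$-gebra structures on $A$ are twisting morphisms $\Tw(\DPois^\antish, \End_A)$) together with \cref{thm:DPois_Koszul} (so that $\DPois_\infty$-gebras are the homotopy-meaningful notion), this new Maurer--Cartan element \emph{is} a homotopy double Poisson gebra structure on $A$. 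Composing with the morphism of coproperads $\rmC_{\rm pCY} \twoheadrightarrow \DPois^\antish$ (or equivalently invoking \cref{cor::hDPois_pCY}) then yields the promised pre-Calabi--Yau algebra structure; this last step is automatic and requires no further argument.

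The only genuine content beyond citing \cite{CV22} is checking that the hypotheses of the general Koszul hierarchy theorem of \emph{loc. cit.} are met in this case: namely, that the convolution algebra $\convDPois_A$ is complete (so that the exponential/integration makes sense), which follows from the arity-wise filtration $\F_N$ exactly as in \cref{lem:ModtoDeltaLie}, and that the degree conventions match — a \emph{shifted} $\DPois^!$-gebra is precisely what produces a degree-$0$ gauge element rather than a Maurer--Cartan element, which is the source of the word ``shifted'' in the statement and mirrors the classical situation where a commutative product (not a Maurer--Cartan element of a Lie algebra) generates Koszul's hierarchy. The main obstacle, then, is not conceptual but bookkeeping: one must verify carefully that the grading shift built into the definition of ``shifted $\DPois^!$-gebra'' lands $\varphi$ in the correct component of the gauge group — equivalently, that $|\varphi| = 0$ after dualizing $\DPois^! \to (\DPois^\antish)^*$ with its intrinsic degrees from \cref{lemm:Dpois!}, where $\DPois^\antish(m,n)$ sits in degree $n-1$ — and that the trivial structure $\alpha_d$ really is fixed appropriately so that the action is well-defined. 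Once the degree ledger is confirmed, the theorem is an immediate corollary of \cite[Section~3.3]{CV22}.
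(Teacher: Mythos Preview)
Your proposal is correct and follows essentially the same route as the paper: both reduce the statement to the general Koszul hierarchy construction of \cite[Section~3.3]{CV22} applied to the convolution algebra $\convDPois_A$, with the shifted $\DPois^!$-structure furnishing a degree-$0$ gauge element and the differential $d$ playing the role of the trivial Maurer--Cartan element. The only minor discrepancy is the form of the gauge element: the paper constructs it as $\1+\theta$ (via the canonical identification of graded $\Sy$-bimodules $\overline{\DPois}^\antish\cong\overline{\rms\DPois}^!$ given by the dual treewise stairways basis) and writes the action explicitly as $\pap{(\1+\theta)}{d}{(\1+\theta)^{-1}}$, whereas you write $e^\varphi$; your parenthetical ``or its Lie-admissible-gauge analogue'' already anticipates this, and the final paragraph about obtaining a pre-Calabi--Yau structure is superfluous since the theorem only claims a homotopy double Poisson gebra.
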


\begin{proof}
This is a special case of \cite[Section~3]{CV22} applied to the Koszul dual coproperad $\DPois^\antish$, but let us give it more details. Under the notation $\rms \DPois^!$ for the properad encoding shifted $\DPois^!$-gebras, a structure of a gebra over it amounts to a morphism of properads
$\theta\ \colon \rms \DPois^!\to \End_A$~. 
Identifying the dual  treewise stairways basis given in \cref{subsec::DPois_antish}, we get a canonical isomorphism of graded $\Sy$-bimodules $\mathrm{K} \colon {\DPois}^\antish \xrightarrow{\cong} {\rms\DPois}^!$ and thus  a canonical element $\Theta\coloneq  \theta(\mathrm{K})$ of the 
deformation gauge group integrating the dg Lie algebra $\convDPois_A$~.
This gauge action, producing a homotopy double Poisson gebra structure by definition, is given by the explicit formula 
\[\Theta^{-1} \cdot d =  -\left(\Theta^{-1}; \d\left( \Theta^{-1} \right)\right)\circledcirc\Theta=
\Theta^{-1} \circledcirc \left(\Theta; \d \Theta \right) \in \MC\left(\convDPois_A\right)\ ,\]
see \cite[Theorem~2.29]{CV25I} for more details. 
\end{proof}

Notice that we do \emph{not} require the shifted $\DPois^!$-gebra structure to be compatible with the differential. 
This construction applied to a shifted associative algebra viewed as a shifted $\DPois^!$-algebra, with trivial dual double bracket, provides us with the $\rmA_\infty$-algebra of K. Borjeson \cite{B13}.

\subsection{Infinity-morphisms in the curved case}\label{subsec:HoThCHDPois}
In this section, we extend the properadic calculus of \cite{HLV20} along the lines of \cite{DSV21, CV22} to handle the case of curved pre-Calabi--Yau algebras and equivalently homotopy curved double Poisson gebras. 
The first mandatory move is to work over the category of \emph{complete} graded vector spaces in order to manage the infinite sums that  appear in  this case.  

\medskip

Let us recall that a graded vector space $A$ is \emph{complete} when it is equipped with 
a degree-wise decreasing filtration 
	\[
A_n=\rmF_0 A_n \supseteq \rmF_1 A_n \supseteq \rmF_2 A_n \supseteq \cdots \supseteq \rmF_k A_n \supseteq \rmF_{k+1}A_n \supseteq \cdots
	\]
	of sub-vector spaces such that the associated topology is complete, i.e. $A_n\cong \lim_{k\to\infty} A_n/\rmF_k  A_n$~. 
The category of complete graded vector space is made up of maps which preserve the respective filtrations. 
Equipped with the complete tensor product $\widehat{\otimes}$, it forms a symmetric monoidal category, where one can perform properadic calculus, see \cite[Chapter~2]{DSV21} for more details. For instance, its complete endomorphism properad is denoted by  $\eend_A$~. 

\medskip

On the opposite to the Koszul dual coproperad $\DPois^\antish$ which encodes homotopy double Poisson gebras, the infinitesimal coproperad $\mathrm{c}\DPois^\antish$ which encodes curved homotopy double Poisson gebras (equivalently curved pre-Calabi--Yau algebras) fails to form a coproperad: one can iterate its infinitesimal decomposition map in order to form two-levels graphs, but the upshot will be made up of infinite series and not finite sums. 
As a consequence  the operations $\Delta$, $\Cop{(*)}{}$ and $\Cop{}{(*)}$ are not well-defined on the level of $\mathrm{c}\DPois^\antish$~. 
However, the global operations $\circledcirc$, $\lhd$ and $\rhd$, defined by the same overall formulas, make sense as soon as one works with complete graded modules $A, B$, and with elements $f$ satisfying $f_{0}(1)\in \rmF_1 A$~.

\begin{definition}[$\infty$-morphism in the  complete curved case]
	\label{def:infmor}
	Let $A,B$ be complete graded vector spaces equipped respectively with two curved homotopy double Poisson gebra structures (equivalently curved pre-Calabi--Yau algebras) $\alpha\in \MC\left(\widehat{\Hom}\left(\mathrm{c}\DPois^\antish, \eend_A\right)\right)$ and 
	$\beta\in \MC\left(\widehat{\Hom}\left(\mathrm{c}\DPois^\antish, \eend_B\right)\right)$~.
	An \emph{$\infty$-morphism} $f \colon (A,\alpha) \rightsquigarrow (B, \beta)$ is a degree $0$ map of complete $\Sy$-bimodules $f \colon \mathrm{c}\DPois^\antish \to  \eend^A_B$  such that $f(1)\in \rmF_1 B_0$ and  satisfying the equation
	\begin{equation}\label{eq:MorphCurved}
		\partial (f)= f  \rhd \alpha - \beta \lhd f\ .
	\end{equation}
	The \emph{composite} of $\infty$-morphisms is defined by $g\circledcirc f$~. 
\end{definition}

\begin{proposition}
	Complete curved homotopy double Poisson gebras (equivalently complete curved pre-Calabi--Yau algebras) equipped with their $\infty$-morphisms and the composite $\circledcirc$ form a category.
\end{proposition}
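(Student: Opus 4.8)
The proof plan is to mirror exactly the argument used in the non-curved case (\cref{prop:HoInfiniCat}), but checking that every construction still makes sense once one works over complete graded vector spaces and imposes the condition $f_{0}(1)\in \rmF_1 B$. First I would recall that in \cite[Section~3]{HLV20} the category structure on homotopy gebras with $\infty$-morphisms is deduced entirely from formal properties of the coproperad: associativity of $\circledcirc$ comes from coassociativity of the decomposition map $\Delta$, the identity $\infty$-morphism is the map sending the generator to $\id_A$ and everything else to zero, and the compatibility of $\circledcirc$ with the defining equation~\eqref{eq:MorphCurved} comes from the compatibilities between $\Delta$, $\Cop{(*)}{}$, $\Cop{}{(*)}$, and the left/right actions $\lhd$, $\rhd$. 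So the only thing to verify here is that these maps are well-defined in the complete curved setting, where $\mathrm{c}\DPois^\antish$ is merely a partial coproperad.

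The key step is the observation already made right before \cref{def:infmor}: although $\Delta$, $\Cop{(*)}{}$, and $\Cop{}{(*)}$ are \emph{not} defined on $\mathrm{c}\DPois^\antish$ itself (iterating $\Delta_{(1,1)}$ produces infinite series, by \cref{Lem:uDPois!}), the \emph{global} operations $\circledcirc(g,f)$, $\beta\lhd f$, and $f\rhd\alpha$ \emph{are} well-defined as soon as $A,B$ are complete and $f_0(1)\in\rmF_1 B$, $\alpha_0(1)\in\rmF_1 A$, $\beta_0(1)\in\rmF_1 B$. Concretely I would argue as follows: in the complete endomorphism properad $\eend_A$, an element whose arity-$(m,0)$ component lands in $\rmF_1 A$ contributes positively to the filtration degree under any composition, so the potentially infinite sum defining $g\circledcirc f$ (indexed, via \cref{prop:BiColPartFullDec}, by bicoloured partitions of roundabouts, each white part carrying a component of $f$ and each black part a component of $g$) converges in $\eend^A_C$ because only finitely many terms contribute modulo each $\rmF_k$. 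The same filtration-degree bookkeeping shows $\beta\lhd f$ and $f\rhd\alpha$ converge. Then I would check that $g\circledcirc f$ again satisfies $(g\circledcirc f)_0(1)\in\rmF_1 C$ and equation~\eqref{eq:MorphCurved} with respect to $\alpha$ and $\gamma$, which is the same formal computation as in \cite{HLV20} performed term-by-term in the completion.

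After that, associativity of $\circledcirc$ and the unit axioms are verified exactly as in \cite[Section~3]{HLV20}: coassociativity of the (partial) decomposition data of $\mathrm{u}\DPois^!$, dualised, gives coassociativity of the formal expansion underlying $\circledcirc$, and since all series involved converge in the complete setting the identity of formal expressions descends to an identity of honest maps. I would spell out that the identity morphism $\id_A$ is the $\infty$-morphism whose only nonzero component is the arity-$(1,1)$ one, equal to $\id_A$, and note that $f\circledcirc\id_A=f=\id_B\circledcirc f$ follows from the counit axiom for $\mathrm{c}\DPois^\antish$ (which, as remarked after \cref{def:HoCuDPois}, does carry a counit even though it is not coaugmented).

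The main obstacle I expect is purely the convergence/completeness bookkeeping: one must pin down precisely which filtration on $\eend^A_B$ is being used, confirm that the curvature-type components $f_0(1)$, $\alpha_0(1)$, $\beta_0(1)$ raise filtration degree under every composition along a roundabout with at least one inner arc, and thereby guarantee that each of $\circledcirc$, $\lhd$, $\rhd$ is a well-defined continuous map. Once that is settled the categorical axioms are formal and identical to the non-curved case, so I would simply say "the rest of the proof is \emph{mutatis mutandis} that of \cref{prop:HoInfiniCat}, using the general curved properadic calculus sketched in \cite[Chapter~2]{DSV21}" rather than reproducing it.
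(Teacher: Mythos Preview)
Your proposal is correct and follows exactly the same approach as the paper, which simply states that the proof is similar to that of \cref{prop:HoInfiniCat} once one checks that all the terms make sense thanks to the completion assumption. You have essentially written out in detail what the paper compresses into one sentence.
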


\begin{proof}
This proof is similar to that of \cref{prop:HoInfiniCat} once one checks that all the terms make sense thanks to the completion assumption. 
\end{proof}

By definition, an $\infty$-morphism of complete curved homotopy double Poisson gebras (equivalently complete curved pre-Calabi--Yau algebras) is a collection of filtration preserving maps 
\[
	f_{\lambda_1, \ldots, \lambda_m} \colon A^{\widehat{\otimes} n} \too B^{\widehat{\otimes} m}
\]
of degree $n-1$, for any ordered partition $\lambda_1+\cdots+\lambda_m=n$ of any $n\geqslant 0$  into non-negative integers, satisfying the relations given by \eqref{eq:MorphCurved}. 
One can make this notion of $\infty$-morphisms and their composite explicit like in the previous section, using the  combinatorics of \cref{subsec:CoproduitsCpCY} suitably extended to take into account the curvature terms corresponding to $\lambda_1=0$. 

\medskip

We conclude the present study with the \emph{twisting procedure}, which is a universal way to produce  new curved homotopy double Poisson gebras (equivalently curved pre-Calabi--Yau algebras) from the data of one element. 

\begin{theorem}[Twisting procedure]\label{thm:TwistProc}
Given any complete curved homotopy double Poisson algebra structure 
(equivalently complete curved pre-Calabi--Yau algebra) 
$\left\{\calm_{\lambda_1, \ldots, \lambda_m}\right\}_{\lambda_1, \ldots, \lambda_m\geqslant 0}$  on $A$ and any element 
$a\in \rmF_1 A_{-1}$~, the sum of all the insertions of $a$ at any possible inputs forms a complete curved homotopy double Poisson gebra (equivalently complete curved pre-Calabi--Yau algebra): 
\[
\calm_{\lambda_1, \ldots, \lambda_m}^a\coloneq 
\sum_{\substack{
r^1_0, \ldots, r^1_{\lambda_1}\geqslant 0 \\
\rule{0pt}{6pt}\ldots \\
\rule{0pt}{8pt} r^m_0, \ldots, r^m_{\lambda_m}\geqslant 0}}
(-1)^{\eta} 
\calm_{\lambda_1+r^1, \ldots, \lambda_m+r^m}
\left(a^{r^1_0}, -, a^{r^1_1}, -, \cdots, -, a^{r^1_{\lambda_1}}, \cdots, 
a^{r^m_0}, -, a^{r^m_1}, -, \cdots, -, a^{r^m_{\lambda_m}}
\right)~, 
\]
where $r^j\coloneq r^j_0+\cdots+r^j_{\lambda_j}$~, for $1\leqslant j \leqslant m$, and where 
\[\eta= rn+\frac{r(r-1)}{2}+\sum_{j=1}^{m-1}r^j\left(1+\sum_{k=j+1}^m \lambda_k+r^k\right)+
\sum_{j=1}^m\left(\sum_{i=0}^{\lambda_j} \left(r^j_i +1\right)\left(r^j_0+\cdots+r^j_{i-1}\right)+\frac{r^j_i\left(r^j_i-1\right)}{2} \right)  
~, 
\]
with $r\coloneq r^1+\cdots+r^m$~. 
\end{theorem}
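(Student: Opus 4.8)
The plan is to recognize the twisting procedure as a direct instance of the general twisting machinery for convolution (dg Lie-admissible) algebras, applied to the partial coproperad $\mathrm{c}\DPois^\antish$ and the complete endomorphism properad $\eend_A$. The element $a \in \rmF_1 A_{-1}$ should be read as a Maurer--Cartan-type element of low weight in a suitable auxiliary convolution algebra; more precisely, insertion of $a$ amounts to precomposing each structure operation with the map $A \to A \oplus \field a \cong A$ that adjoins $a$ to each slot, which is exactly the effect of twisting a $\rmC$-coalgebra structure by a degree $-1$ element. Concretely, I would introduce the augmented coproperad $\mathrm{c}\DPois^\antish \oplus \field$ (adding back a counit-type generator in arity $(1,0)$, i.e. the dual $u^*$ we removed), observe that a degree $-1$ element $a \in \rmF_1 A_{-1}$ together with the curved homotopy double Poisson structure $\alpha$ assembles into a single Maurer--Cartan element $\alpha^a$ of $\widehat{\Hom}(\mathrm{c}\DPois^\antish, \eend_A)$ after conjugation by the gauge element associated to $a$, and then unravel what $\alpha^a$ does on the basis elements $\nu_{\lambda_1,\ldots,\lambda_m}$.

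\textbf{Key steps.} First I would set up the gauge element: the element $a$ defines, for the complete graded vector space $A$, an $\infty$-isotopy-type map (or rather an element of the gauge group acting on $\MC$) whose only nonzero component is the arity $(1,0)$ piece sending $u^* \mapsto a$, living in $\rmF_1$ so that all the infinite sums converge in the complete setting. Second, I would invoke the general fact (from the deformation theory of $\infty$-morphisms over coproperads, as used throughout Section~\ref{sec:InftyMor} and in \cite{CV22, DSV21}) that conjugating a Maurer--Cartan element $\alpha$ by such a gauge element again yields a Maurer--Cartan element $\alpha^a$; this is precisely what guarantees that $\{\calm^a_{\lambda_1,\ldots,\lambda_m}\}$ satisfies the homotopy curved double Poisson relations and the cyclic skew-symmetry, without any further computation. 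Third — and this is the only real work — I would make the conjugation formula explicit on each basis element: expanding $(\1+\theta_a)\star\alpha\star(\1+\theta_a)^{-1}$ where $\theta_a$ is the arity $(1,0)$ map, and reading off via the infinitesimal decomposition map $\Delta_{(1,1)}$ of \Cref{prop:InfDecDPoisAC} that each insertion of $u^*$ contributes one copy of $a$ at some input slot, with the sign dictated by the Koszul sign rule as one moves $a$ past the preceding inputs and across the cyclic blocks. Fourth, I would identify the combinatorial bookkeeping: the indices $r^j_i$ count how many copies of $a$ are inserted in block $j$ between inputs $i-1$ and $i$, so the total operation becomes $\calm_{\lambda_1+r^1,\ldots,\lambda_m+r^m}$ evaluated with $a$'s interleaved, and collecting all the Koszul signs from (i) sliding each $a$ (degree $-1$) past the $n$ genuine inputs, (ii) the $\tbinom{r}{2}$-type sign from permuting the $a$'s among themselves, (iii) the block-wise contributions as $a$'s cross from one $\lambda_j$-block to the next, and (iv) the within-block contributions, yields exactly the exponent $\eta$ displayed in the statement.

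\textbf{Main obstacle.} The genuinely delicate part is the sign $\eta$: matching the abstract conjugation-of-Maurer--Cartan-elements formula against the explicit interleaved-insertion formula requires carefully tracking how each factor of $a$ picks up signs when commuted through the tensor factors under the Koszul sign convention $(f\otimes g)(x\otimes y) = (-1)^{|x||g|}f(x)\otimes g(y)$, and through the cyclic-invariance constraint that forces the cyclic skew-symmetry sign to interact with the insertion positions. I expect this to be a tedious but mechanical induction on the number $r$ of inserted copies, organized so that inserting one further copy of $a$ at the leftmost available slot multiplies the current sign by a controlled monomial in the $\lambda$'s and $r$'s; the five summands of $\eta$ should then fall out as the telescoped product of these elementary steps. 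As with the proof of the main theorem, I would relegate the full sign verification to a computational lemma (or an appendix-style paragraph), since the conceptual content — that twisting is the gauge action of $a$ and therefore automatically preserves the Maurer--Cartan equation — is already complete once the setup is in place.
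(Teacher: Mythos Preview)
Your proposal is correct and follows essentially the same route as the paper: interpret the twisting as the gauge action of the element determined by $a$ (in arity $(1,0)$) on the Maurer--Cartan element $\alpha$ in $\mathfrak{c}\convDPois_A$, invoke the general result of \cite{CV22} that this action preserves Maurer--Cartan elements, and then unwind the formula on the basis elements $\nu_{\lambda_1,\ldots,\lambda_m}$ with the sign computed exactly as in \cref{prop:InfDecDPoisAC}. The paper additionally notes the simplification $(\1-a)\cdot\alpha=\pap{(\1-a)}{\alpha}{(\1+a)}=\alpha\circledcirc(\1+a)$, i.e.\ the left factor acts trivially because $a$ has no inputs, so the twisted structure is literally $\alpha$ precomposed with $\1+a$; this shortcut lets you skip the ``two-sided conjugation'' bookkeeping you sketch and read off the insertion formula directly.
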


\begin{proof}
	This is a direct corollary of \cite[Definition~4.11]{CV22}. Let us denote by $\alpha\in \mathfrak{c}\convDPois_A$ the Maurer--Cartan element corresponding to the original complete curved homotopy double Poisson gebra. Its image under the action of the element $\1-a$ of the deformation gauge group integrating the complete convolution dg Lie algebra $\mathfrak{c}\convDPois_A$ is equal to 
	\[(\1-a)\cdot \alpha = \pap{(\1-a)}{\alpha}{(\1+a)}=\alpha\lhd (\1+a)~, \]
by \cite[Theorem~2.29]{CV25I} and under the notations of \emph{loc. cit.}. 
The right-hand side produces the formula for the twisted structure claimed in the statement. 
The sign is computed as in the proof of \cref{prop:InfDecDPoisAC}.
\end{proof}

Notice that the original and the twisted complete curved homotopy double Poisson gebras (equivalently complete curved pre-Calabi--Yau algebras) are gauge equivalent by the conceptual definition. 

\begin{definition}[Maurer--Cartan equation of a complete curved pre-Calabi--Yau algebra]
The \emph{Maurer--Cartan equation} of a complete curved pre-Calabi--Yau algebra
$\left(A, \left\{\calm_{\lambda_1, \ldots, \lambda_m}\right\}\right)$ is 
\begin{equation}
\sum_{n\geqslant 0} \calm_n\left(a^n\right)=\calm_0 + \calm_1(a)+\calm_2(a,a)+\calm_3(a,a,a)+\cdots =0~. 
\end{equation}
Its solutions $a\in \rmF_1 A_{-1}$ are called the \emph{Maurer--Cartan elements} of the complete curved pre-Calabi--Yau algebra. 
\end{definition}

The Maurer--Cartan equation of a complete curved pre-Calabi--Yau algebra is noting but the Maurer--Cartan equation of its underlying complete curved $\rmA_\infty$-algebra. 

\begin{proposition}\label{prop:TwistProcNocurv}
Given any complete pre-Calabi--Yau algebra $\left(A, \left\{\calm_{\lambda_1, \ldots, \lambda_m}\right\}\right)$ and any Maurer--Cartan element $a\in \rmF_1 A_{-1}$, the twisted operations 
$\left\{\calm_{\lambda_1, \ldots, \lambda_m}^a\right\}$ form a complete pre-Calabi--Yau algebra. 
\end{proposition}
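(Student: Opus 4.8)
The plan is to deduce this statement directly from the twisting procedure of \cref{thm:TwistProc} together with the description of pre-Calabi--Yau algebras as curved pre-Calabi--Yau algebras with vanishing curvature (\cref{def:pCY}). First I would apply \cref{thm:TwistProc}: a complete pre-Calabi--Yau algebra is in particular a complete curved pre-Calabi--Yau algebra whose curvature $\calm_0$ vanishes, and $a\in \rmF_1 A_{-1}$, so the twisted collection $\left\{\calm^a_{\lambda_1, \ldots, \lambda_m}\right\}$ forms a complete curved pre-Calabi--Yau algebra. By \cref{def:pCY}, in order to upgrade this to a genuine complete pre-Calabi--Yau algebra it is enough to check that the twisted curvature $\calm^a_0$ vanishes: all the other twisted operations are permitted by \cref{def:pCY}, since vanishing indices $\lambda_i=0$ are allowed there as soon as $m\geqslant 2$, and once $\calm^a_0=0$ the relation of the remark following \cref{prop:chDPois} forces $\bigl(\calm^a_1\bigr)^2=0$, so that $\calm^a_1$ is a differential.

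Next I would specialise the explicit formula of \cref{thm:TwistProc} to $m=1$ and $\lambda_1=0$. In that range the only summation index is $r\coloneqq r^1_0\geqslant 0$, one has $n=0$ and $r^1=r$, so the sign exponent collapses to
\[
\eta=\frac{r(r-1)}{2}+\frac{r(r-1)}{2}=r(r-1)~,
\]
which is even; hence $(-1)^\eta=1$ and the twisted curvature is
\[
\calm^a_0=\sum_{r\geqslant 0}\calm_r\!\left(a^{r}\right)=\calm_0+\calm_1(a)+\calm_2(a,a)+\calm_3(a,a,a)+\cdots~.
\]
This is exactly the left-hand side of the Maurer--Cartan equation of the underlying complete curved pre-Calabi--Yau algebra (equivalently, since $\calm_0=0$ here, of the underlying complete $\rmA_\infty$-algebra $(A,\{\calm_n\}_{n\geqslant 1})$). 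As $a$ is a Maurer--Cartan element by hypothesis, this series vanishes, so $\calm^a_0=0$ and the twisted structure is a complete pre-Calabi--Yau algebra.

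More conceptually, the same computation can be phrased inside the convolution Lie-admissible algebra: a complete pre-Calabi--Yau structure on $A$ is a Maurer--Cartan element of the sub-dg Lie-admissible algebra of $\mathfrak{c}\convDPois_A$ cut out by the vanishing of the arity $(1,0)$ component, the twisted structure is $(\1-a)\cdot\alpha=\alpha\circledcirc(\1+a)$ (see \cref{thm:TwistProc}), and the arity $(1,0)$ component of $\alpha\circledcirc(\1+a)$ is precisely $\sum_{r\geqslant 0}\calm_r(a^{r})$. Thus the gauge orbit of $\alpha$ stays inside this sub-dg Lie-admissible algebra exactly when $a$ solves the Maurer--Cartan equation, which is the content of the statement.

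The only delicate point is the sign bookkeeping needed to extract $\calm^a_0$ from the general twisting formula of \cref{thm:TwistProc} and to confirm that $(-1)^\eta=1$ in this degenerate range of indices, so that the twisted curvature coincides on the nose with the standard Maurer--Cartan series and the hypothesis applies verbatim; everything else is formal. Convergence of the infinite sum is automatic in the complete setting and is already built into \cref{thm:TwistProc}.
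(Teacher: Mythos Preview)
Your proof is correct and follows essentially the same route as the paper: interpret the pre-Calabi--Yau algebra as a curved one with trivial curvature, apply \cref{thm:TwistProc}, and observe that the twisted curvature $\calm^a_0$ is the Maurer--Cartan series, which vanishes by assumption. You add the explicit sign check $(-1)^\eta=1$ for the case $m=1$, $\lambda_1=0$, which the paper leaves implicit, and the conceptual rephrasing in terms of the gauge action staying in the sub-dg Lie-admissible algebra; both are useful elaborations of the same argument.
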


\begin{proof}
This is a direct corollary of \cref{thm:TwistProc} as follows. First, we interpret the original complete pre-Calabi--Yau algebra as a curved one with trivial curvature. Then, the action of the element $a$ produces a new twisted complete curved pre-Calabi--Yau algebra. Finally, the curvature of this new structure is equal to the left-hand side of the Maurer--Cartan equation, so it vanishes and one actually gets a complete pre-Calabi--Yau algebra. 
\end{proof}

\appendix

\section{Proof of the main theorem}\label{appendice}

This appendix contains the proof of the main result (\cref{thm:main}) of the present paper, that is the existence of  a canonical and functorial isomorphism of dg Lie-admissible algebras
\[
	\mathfrak{c}\convDPois_{A}  \cong  \hhc_A \ .
\]

\begin{proof}[Proof of \cref{thm:main}]
	The isomorphism $\Upsilon \colon \mathfrak{c}\convDPois_{A}  \to  \hhc_A$ from left to right is given by the following assignment:
	\[
		\Upsilon\left(\bar{\rmF}\right)_{\lambda_1, \ldots, \lambda_m}\ \colon \ \susp a_1 \otimes \cdots \otimes \susp a_n \mapsto
		(-1)^{\left|\bar{\rmF}\right|+\nabla(a_1, \ldots, a_n)+\lambda_m+\frac{n(n+1)}{2}} \, \susp \bar{\rmF}\left(\nu_{\lambda_1, \ldots, \lambda_m}\right)(a_1\otimes \cdots \otimes a_n)~,
	\]
	for $\bar{\rmF} \colon \mathrm{c}\DPois^\antish \to \End_A$,
	where
	$\nu_{\lambda_1, \ldots, \lambda_m}$ denotes the basis elements of the coproperad
	$\mathrm{c}\DPois^\antish$,
	and where
	\[
		\nabla(a_1, \ldots, a_n)\coloneq (n-1)|a_1|+(n-2)|a_2|+\cdots+2|a_{n-2}|+|a_{n-1}|~.
	\]
	First, we check that this map is equivariant with respect to the actions of the cyclic groups; so it will induce a well-defined map on the level of invariant elements. Under the  notation,
	\[
		\bar{\rmF}\left(\nu_{\lambda_1, \ldots, \lambda_m}\right)(a_1\otimes \cdots \otimes a_n)
		= \sum  b_1\otimes \cdots \otimes b_m~,  \]
	we get
	\begin{align*}
		\left(\tau_m^{-1}\cdot \Upsilon\left(\bar{\rmF}\right)_{\lambda_1, \ldots, \lambda_m}
		\cdot \tau_{\lambda_1, \cdots, \lambda_m}\right)
		(\susp a_{\lambda_1+1}\otimes \cdots \otimes \susp a_n\otimes \susp a_1 \otimes \cdots \otimes \susp a_{\lambda_1})
		=\sum (-1)^A \,  \susp\, b_2\otimes \cdots \otimes b_m\otimes b_1~,
	\end{align*}
	with
	\[A=(|a_1|+\cdots+|a_{\lambda_1}|+\lambda_1)(|a_{\lambda_1+1}|+\cdots+|a_n|+n-\lambda_1)
		+\left|\bar{\rmF}\right|+\nabla(a_1, \ldots, a_n)+\lambda_m+\frac{n(n+1)}{2}+
		|b_1|(|b_2|+\cdots+|b_m|)~.
	\]
	On the other hand, since the action of the cyclic groups on
	${\Hom}\left(\mathrm{c}\DPois^\antish , \End_A\right)$ is given by conjugation,
	we have
	\begin{align*}
		\Upsilon\left(\bar{\rmF}\cdot \tau_m\right)_{\lambda_2, \ldots, \lambda_m, \lambda_1}
		(\susp a_{\lambda_1+1}\otimes \cdots \otimes \susp a_n\otimes \susp a_1 \otimes \cdots \otimes \susp a_{\lambda_1})
		=\sum (-1)^B \,  \susp\, b_2\otimes \cdots \otimes b_m\otimes b_1~,
	\end{align*}
	with
	\begin{multline*}
		B=
		\left|\bar{\rmF}\right|+\nabla(a_{\lambda_1+1}, \ldots, a_n,  a_1,\ldots, a_{\lambda_1})
		+\lambda_1+\frac{n(n+1)}{2}+
		n\lambda_1+\lambda_m
		+\\
		(|a_1|+\cdots+|a_{\lambda_1}|)(|a_{\lambda_1+1}|+\cdots+|a_n|)+
		|b_1|(|b_2|+\cdots+|b_m|)~,
	\end{multline*}
	after the symmetry relation~\eqref{eq:SignCyclicDual}. Since $A\equiv B \mod 2$, this concludes that part of the proof.

	\medskip

	Lemmata~\ref{lemm:Dpois!} and \ref{Lem:uDPois!} show that
	\[
		\Upsilon \ \colon \
		\widehat{\Hom}\left(\mathrm{c}\DPois^\antish , \End_A\right)
		\xrightarrow{\cong}
		\prod_{N \geqslant 1}
		\left(
		\bigoplus_{1\leqslant m < N}
		\Hom _{\C_m}
		\left(	\bigoplus_{\lambda_1+\cdots+\lambda_m=n} \bigotimes_{j=1}^m
		(\susp A)^{\otimes \lambda_j},  A^{\otimes m}
		\right)\right)
	\]
	is indeed an isomorphism. It remains to check that it preserves the respective Lie-admissible products, that is
	\[
		\Upsilon\left(\bar{\rmF}\star \bar{\rmG}\right)=
		\Upsilon\left(\bar{\rmF}\right)
		\circledast
		\Upsilon\left(\bar{\rmG}\right)~.
	\]
Comparing the formulas given in \cref{prop:InfDecDPoisAC} and in \cref{def:HighHochCx}, one can see that 
both $\Upsilon\left(\bar{\rmF}\star \bar{\rmG}\right)$ and 
$\Upsilon\left(\bar{\rmF}\right)\circledast\Upsilon\left(\bar{\rmG}\right)$ will be made up of the same terms, involving the same composites, possibly up to a sign. 

\medskip	
	
	For one of these terms, we will compute the signs appearing in the two formulas. 
	Let 
	$\lambda_1+\cdots+\lambda_m=n$ and
	$\lambda'_1+\cdots+\lambda'_{m'}=n'$
	be two  ordered partitions and let
	$p+1+q=\lambda_m$~, with $p, q \geqslant 0$~.
	For any elements
	$a_1, \ldots, a_{\cev{\lambda}_m+p}, a_{\cev{\lambda}_m+p+2}, \ldots, a_n, a'_1, \ldots, a'_{n'}\in A$, we consider the following notations
	\begin{align*}
		 & \bar{\rmG}\left(\nu_{\lambda'_1, \ldots, \lambda'_{m'}}\right)\big(a'_1\otimes \cdots \otimes a'_{n'}\big)=
		\sum b'_1\otimes \cdots \otimes b'_{m'} \quad \text{and}                                                       \\
		 &
		\bar{\rmF}\left(\nu_{\lambda_1, \ldots, \lambda_m}\right)
		\left(a_1\otimes \cdots \otimes a_{\cev{\lambda}_m+p}\otimes b'_1\otimes a_{\cev{\lambda}_m+p+2}
		\otimes \cdots \otimes
		a_n\right)
		= \sum  b_1\otimes \cdots \otimes b_m~.
	\end{align*}
	On the one hand, we have
	\begin{multline*}
		\left(\Upsilon\left(\bar{\rmF}\right)_{\lambda_1, \ldots, \lambda_m-1}
		\circledast_i^1 \Upsilon\left(\bar{\rmG}\right)_{\lambda'_1, \ldots, \lambda'_m}\right)
		\left(
		\susp a_1\otimes \cdots \otimes \susp a_{\cev{\lambda}_m}\otimes
		\susp a'_{1}\otimes \cdots \otimes \susp a'_{\lambda'_1}\otimes
		\susp a_{\cev{\lambda}_m+p+2}\otimes \cdots \otimes \susp a_n \otimes  \right. \\
		\left. \susp a'_{{\lambda'}_1+1}\otimes \cdots \otimes a'_{\cev{\lambda'}_{m'}}\otimes
		\susp a_{\cev{\lambda}_m+1}\otimes \cdots \otimes \susp a_{\cev{\lambda}_m+p}\otimes
		\susp a'_{\cev{\lambda'}_{m'}+1}\otimes \cdots \otimes \susp a'_{n'}
		\right)=\\
		\sum (-1)^C\, \susp\, b_1\otimes \cdots \otimes b_m \otimes b'_2 \otimes \cdots \otimes b'_{m'}~,
	\end{multline*}
	where $i=\cev{\lambda}_m+p+1$ and where
	\begin{align*}
		C= &
		\left(\left|a_{\cev{\lambda}_m+p+2}\right| + \cdots + \left|a_n\right|+q\right)
		\left(\left|a'_{1}\right| + \cdots + \left|a'_{\lambda'_1}\right|+\lambda'_1\right)
		\\&
		\ + \left(\left|a_{\cev{\lambda}_m+1}\right|+ \cdots + \left| a_{\cev{\lambda}_m+p}\right|+ p\right)
		\left(\left|a_{\cev{\lambda}_m+p+2}\right| + \cdots + \left|a_n\right|+
		\left|a'_{1}\right|+ \cdots + \left|a'_{\cev{\lambda'}_{m'}}\right|+q+\cev{\lambda'}_{m'}
		\right)
		\\&
		\ + \bar{\rmG}(|a|+n-1)+
		\left|\bar{\rmG}\right|+\nabla\left(a'_1, \ldots, a'_{n'}\right)+\lambda'_m+\frac{n'(n'+1)}{2}
		\\&
		\ + \left(\left|b'_1\right|+1\right)\left(\left|a_{\cev{\lambda}_m+p+2}\right| + \cdots + \left|a_n\right|+q\right)
		\\&
		\ + \left|\bar{\rmF}\right|
		+\nabla\left(a_1, \ldots, a_{\cev{\lambda}_{m}+p}, b'_1,a_{\cev{\lambda}_{m}+p+2}, \ldots, a_n\right)
		+\lambda_m
		+\frac{n(n+1)}{2}~,
	\end{align*}
	under the convention $a\coloneq a_1\otimes \cdots \otimes a_{\cev{\lambda}_m+p}\otimes  a_{\cev{\lambda}_m+p+2}\otimes \cdots \otimes  a_n$~.
	In plain words, one starts by permuting the elements $\susp a_j$ and $\susp a'_k$ in order to have the first ones on the left-hand side of the second ones (first two lines of $C$). 
	Then, one permutes $\Upsilon\left(\bar{\rmG}\right)_{\lambda'_1, \ldots, \lambda'_m}$ with the elements $\susp a_j$ and one applies it to the elements $\susp a'_k$ (third line of $C$). Finally, one permutes the output element $\susp b'_1$ with the last $q$ elements  $\susp a_j$ (fourth line of $C$) and one applies
	$\Upsilon\left(\bar{\rmF}\right)_{\lambda_1, \ldots, \lambda_m-1}$
	to the elements $\susp a_j$ and $b'_1$
	(fifth line of $C$).

	\medskip

	On the other hand, we have
	\begin{multline*}
		\Upsilon\left(\bar{\rmF}\star\bar{\rmG}\right)_{\lambda_1, \ldots, \lambda_{m-1},
		\lambda'_1+q, \lambda'_2, \ldots, \lambda'_{m'-1}, p+\lambda'_{m'}}
		\left(
		\susp a_1\otimes \cdots \otimes \susp a_{\cev{\lambda}_m}\otimes
		\susp a'_{1}\otimes \cdots \otimes \susp a'_{\lambda'_1}\otimes \right.\\
		\left. \susp a_{\cev{\lambda}_m+p+2}\otimes \cdots \otimes \susp a_n \otimes
		\susp a'_{{\lambda'}_1+1}\otimes \cdots \otimes a'_{\cev{\lambda'}_{m'}}\otimes
		\susp a_{\cev{\lambda}_m+1}\otimes \cdots \otimes \susp a_{\cev{\lambda}_m+p}\otimes
		\susp a'_{\cev{\lambda'}_{m'}+1}\otimes \cdots \otimes \susp a'_{n'}
		\right)
	\end{multline*}
	is equal to
	\[
		\sum (-1)^D\, \susp\, b_1\otimes \cdots \otimes b_m \otimes b'_2 \otimes \cdots \otimes b'_{m'}~,
	\]
	with
	\begin{align*}
		D= & \left|\bar{\rmF}\right|+\left|\bar{\rmG}\right|+ \\&
		\nabla\left(a_1, \ldots,  a_{\cev{\lambda}_m},
		a'_{1}, \ldots,  a'_{\lambda'_1},  a_{\cev{\lambda}_m+p+2},  \ldots,  a_n,  a'_{\cev{\lambda'}_1+1}, \ldots, a'_{\cev{\lambda'}_{m'}},
		a_{\cev{\lambda}_m+1},  \ldots,  a_{\cev{\lambda}_m+p},
		\ a'_{\cev{\lambda'}_{m'}+1},  \ldots, a'_{n'}
		\right)+                                              \\&
		p+\lambda'_m+\frac{(n+n'-1)(n+n')}{2}+                \\&
		\left(\left|a_{\cev{\lambda}_m+p+2}\right| + \cdots + \left|a_n\right|\right)
		\left(\left|a'_{1}\right| + \cdots + \left|a'_{\lambda'_1}\right|\right)
		+                                                     \\&
		\left(\left|a_{\cev{\lambda}_m+1}\right|+ \cdots + \left| a_{\cev{\lambda}_m+p}\right|\right)
		\left(\left|a_{\cev{\lambda}_m+p+2}\right| + \cdots + \left|a_n\right|+
		\left|a'_{1}\right|+ \cdots + \left|a'_{\cev{\lambda'}_{m'}}\right|
		\right)+                                              \\&
		q\lambda'_1+p\left(q+\cev{\lambda}'_{m'}\right) +(n-1)(n'-1)+
		\left|\bar{\rmG}\right|(n-1)+                         \\&
		\left(\bar{\rmG}+n'-1\right)|a|+
		\left|b'_1\right|\left(\left|a_{\cev{\lambda}_m+p+2}\right| + \cdots + \left|a_n\right|\right)~.
	\end{align*}
	In plain words, the first signs (first three lines of $D$) come from the application of
	\[\Upsilon\left(\bar{\rmF}\star\bar{\rmG}\right)_{\lambda_1, \ldots, \lambda_{m-1},
		\lambda'_1+q, \lambda'_2, \ldots, \lambda'_{m'-1}, p+\lambda'_{m'}}~,\]
	which implies to unshift all the elements $\susp a_j$ and $\susp a'_k$ for instance.
	As above, one permutes the elements $a_j$ and $a'_k$ in order to have the first ones on the left-hand side of the second ones (forth and fifth lines of $D$).
	Then, one computes the corresponding term in $\bar{\rmF} \star \bar{\rmG}$~:
	first one decomposes the element
	$\nu_{\lambda_1, \ldots, \lambda_{m-1}, \lambda'_1+q, \lambda'_2, \ldots, \lambda'_{m'-1}, p+\lambda'_{m'}}$
	in the infinitesimal coproperad $\mathrm{c}\DPois^\antish$ according to \cref{prop:InfDecDPoisAC}
	(first three terms of the sixth line of $D$) and then
	one applies $\bar{\rmG}$ to the above element
	$\nu_{ \lambda'_1,  \ldots, \lambda'_{m'}}$~,
	thereby permuting
	$\bar{\rmG}$ with the bottom element
	$\nu_{\lambda_1, \ldots, \lambda_{m}}$
	(last term of the sixth line of $D$).
	It remains to permute the map
	$\bar{\rmG}\left(\nu_{ \lambda'_1,  \ldots, \lambda'_{m'}}\right)$ with the elements $a_j$
	and to apply it to the elements $a'_k$ (first term of the seventh line of $D$).
	Finally, one permutes the output element $b'_1$ with the last $q$ elements  $a_j$ (last term of the seventh line of $D$) and one applies
	$\bar{\rmF}\left(\nu_{\lambda_1, \ldots, \lambda_m}\right)$
	to the elements $a_j$ and $b'_1$~, which produces no extra sign.

	\medskip

	Notice first that
	\begin{equation}\label{eq:frac}
		\frac{n(n+1)}{2}+\frac{n'(n'+1)}{2}  \equiv \frac{(n+n'-1)(n+n')}{2}+(n-1)(n'-1) +1 \mod 2 ~.
	\end{equation}
	Then we compute the difference between the $\nabla$ terms appearing in $C$ and in $D$:
	\begin{align*}
		 & \quad \nabla\left(a_1, \ldots,  a_{\cev{\lambda}_m},
		a'_{1}, \ldots,  a'_{\lambda'_1},  a_{\cev{\lambda}_m+p+2},  \ldots,  a_n,
		a'_{{\lambda'}_1+1}, \ldots, a'_{\cev{\lambda'}_{m'}},
		a_{\cev{\lambda}_m+1},  \ldots,  a_{\cev{\lambda}_m+p},
		\ a'_{\cev{\lambda'}_{m'}+1},  \ldots, a'_{n'}
		\right)-                                                                           \\&\quad
		\nabla\left(a_1, \ldots, a_{\cev{\lambda}_{m}+p}, b'_1,a_{\cev{\lambda}_{m}+p+2}, \ldots, a_n\right)
		-\nabla\left(a'_1, \ldots, a'_{n'}\right)
		\\
		&=
		(n'-1)\left(\left|a_1\right| +  \cdots + \left| a_{\cev{\lambda}_m}\right|\right)
		+\left(\lambda'_{m'}-q-1\right)\left(\left|a_{\cev{\lambda}_{m}+1}\right|+\cdots+\left|a_{\cev{\lambda}_{m}+p}\right|\right)
		\\ & \quad 
		+ \left(n'-\lambda'_1 +p\right)\left(\left|a_{\cev{\lambda}_{m}+p+2}\right|+\cdots+ \left|a_n\right|\right)
		+ q\left(\left|a'_1\right|+\cdots + \left|a'_{\lambda'_1}\right|\right) 
		+ p\left(\left|a'_1\right| + \cdots + \left|a'_{\cev{\lambda'}_{m'}}\right|\right) 
		- q\left|b_1'\right| \ .
	\end{align*}
	So the difference, computed modulo $2$, of all the terms in $C$ and $D$ of the form $\left|a_j\right|$ 
	is equal to $(n'-1)|a|$, which matches with the one present in $D$~. It remains to check the combinatorial terms which do not depend on the degrees of the elements $a_j$ and $a
	'_k$: after removing the same terms which obviously appear both in $C$ and in $D$, we get
	\[q\lambda'_1+p\left(q+\cev{\lambda'}_{m'}\right)+q +\lambda_m \]
	in $C$ and
	\[p+ q\lambda'_1+p\left(q+\cev{\lambda}'_{m'}\right)+1  \]
	in $D$, where the final term $1$ comes from \eqref{eq:frac}.
	Since $\lambda_m=p+q+1$, we get $C\equiv D \mod 2$,
	which concludes the proof.
\end{proof}


\section{The homotopy transfer formula in genus 0}\label{appendiceHTT}
The homotopy transfer theorem amounts to extending the data of a contraction and a $\Omega \rmC$-gebra structure to a transferred $\Omega \rmC$-gebra structure together with the extensions of the two chain maps into $\infty$-quasi-isomorphisms and the homotopy into an $\infty$-homotopy. In the end, four formulas are required. 
In the operadic case, that is for rooted trees, an interesting phenomenon appears: the first two formulas rely on \emph{trees} with edges labelled by the homotopy and the two last formulas rely on \emph{trees with levels} labelled by homotopies. This was first shown by M. Markl in \cite{Markl06} in the example of $A_\infty$-algebras and a general conceptual explanation was given in \cite{DSV16} in terms of two canonical gauge group elements. In the properadic case, the four formulas of the homotopy transfer theorem rely on levelled graphs \cite[Section~4]{HLV20} and there is a similar interpretation in terms of gauge group action \cite[Section~3.2]{CV22}. 

\medskip

In this appendix, we simplify the formula given in \cite[Theorem~4.14]{HLV20} for the homotopy transferred algebraic structure removing the levels in the case where only genus 0 graphs are involved, that is for $\Omega \rmC$-gebras where $\rmC$ is a conilpotent codioperad. This applies to pre-Calabi--Yau algebras and homotopy double Poisson gebras in this paper, see \cref{prop:FormulaHTT}, but it also applies to homotopy Lie bialgebras and homotopy infinitesimal bialgebras, see \cite[Section~5.5]{MV09I}. Throughout this appendix, we use the conventions of \cite{HLV20} and we add the index ``$\gg=0$'' for the dioperadic analogues.

\medskip

We consider a contraction of a dg vector space $(A, d_A)$ into another dg vector space $(H,d_H)$:
\begin{eqnarray*}
	&\begin{tikzcd}
		(A,d_A)\arrow[r, shift left, "p"] \arrow[loop left, distance=1.5em,, "h"] & (H,d_H) \arrow[l, shift left, "i"]
	\end{tikzcd} \ ,&\\
&\text{with}\quad 	pi=\id_H\ , \quad \id_A -ip= d_Ah+hd_A\ , \quad
	hi=0\ , \quad ph=0\ , \quad \text{and} \quad h^2=0 \ .&
\end{eqnarray*}
We denote by $\scrG^c_{\rmg =0}$ the 
sub-comonad of $\scrG^c$ made up of reduced connected flow-directed graphs of genus $0$ and we denote by 
$\rmB_{\rmg=0}$ the dioperadic bar construction, which is made up the summand of the properadic bar construction restricted to genus 0 flow-directed connected graphs. It forms a functor from dg properads (or dg dioperads) to dg codioperads that are particular dg coproperads. 

\begin{definition}[Dioperadic Van der Laan map]
The \emph{dioperadic Van der Laan map} 
\[\psi \in \Hom_{\Sy}\left(\rmB_{\gg =0}\End_A,\End_H\right)\]
is defined on non-trivial elements $\scrG^c_{\rmg =0}(\susp\End_A)\to \End_H$ 
by removing all the suspensions 
$\susp$ and by labelling the input edges by $i$, the output edges by $p$, and the internal edges by $h$:
\[
\vcenter{\hbox{\begin{tikzpicture}[scale=0.7]
		\coordinate (A1) at (1.5,4);
		\draw[thin]
		(A1)  --++(0,-0.5) 
		(A1) ++(0.75,0) --++(0,-0.5) 
		(A1) ++(1.5,0) --++(0,-0.5) 
		(A1) ++(0.75,0.5) --++(0,0.5);
		\draw[fill=white] (1.2,4) rectangle (3.3, 4.5);
		\draw (3,4.5) --++ (0, 1.5) coordinate (B1);
		\draw (1.5,4.5) --++ (0, 1.5) coordinate (C1);		
		\draw (4.5,6.5) to[out=90,in=270] (4.5, 8) coordinate (Z1);	
		\draw[thin]
		(B1) ++(0,0.5) --++(0,0.5) 
		(B1) ++(0.75,0) --++(0,-0.5) 
		(B1) ++(1.5,0) --++(0,-0.5); 
		\draw[fill=white] (2.7,6) rectangle (4.8, 6.5);		
		\draw[thin]
		(Z1) ++(1.5,0) --++(0,-0.5); 
		\draw[fill=white] (4.2, 8) rectangle (6.3, 8.5);
	\draw[thin]
	(C1)  ++(0,0.5) --++(0,0.5) 
	(C1)  ++(-1.5,0.5) --++(0,0.5); 
	\draw[fill=white] (-0.3,6) rectangle (1.8, 6.5);		
	\draw (2.3,4.25) node {\scalebox{0.7}{$\susp f_1$}}; 
	\draw (0.8,6.25) node {\scalebox{0.7}{$\susp f_2$}}; 
	\draw (3.8,6.25) node {\scalebox{0.7}{$\susp f_3$}}; 
	\draw (5.3,8.25) node {\scalebox{0.7}{$\susp f_4$}}; 			
	\end{tikzpicture}}}
\qquad \mapsto \qquad 
\vcenter{\hbox{\begin{tikzpicture}[scale=0.7]
		\coordinate (A1) at (1.5,4);
		\draw[thin]
		(A1)  --++(0,-0.5)  node[below] {\scalebox{0.7}{$p$}}		
		(A1) ++(0.75,0) --++(0,-0.5)  node[below] {\scalebox{0.7}{$p$}}		
		(A1) ++(1.5,0) --++(0,-0.5)  node[below] {\scalebox{0.7}{$p$}}				
		(A1) ++(0.75,0.5) --++(0,0.5)  node[above] {\scalebox{0.7}{$i$}};				
		\draw[fill=white] (1.2,4) rectangle (3.3, 4.5);
		\draw (3,4.5) --++ (0, 1.5) coordinate (B1);
		\draw (1.5,4.5) --++ (0, 1.5) coordinate (C1);		
		\draw (4.5,6.5) to[out=90,in=270] (4.5, 8) coordinate (Z1);	
		\draw[thin]
		(B1) ++(0,0.5) --++(0,0.5)  node[above] {\scalebox{0.7}{$i$}}		
		(B1) ++(0.75,0) --++(0,-0.5)  node[below] {\scalebox{0.7}{$p$}}				
		(B1) ++(1.5,0) --++(0,-0.5) node[below] {\scalebox{0.7}{$p$}};						
		\draw[fill=white] (2.7,6) rectangle (4.8, 6.5);		
		\draw[thin]
		(Z1) ++(1.5,0) --++(0,-0.5) node[below] {\scalebox{0.7}{$p$}};
		\draw[fill=white] (4.2, 8) rectangle (6.3, 8.5);
	\draw[thin]
	(C1)  ++(0,0.5) --++(0,0.5)  node[above] {\scalebox{0.8}{$i$}}		
	(C1)  ++(-1.5,0.5) --++(0,0.5)  node[above] {\scalebox{0.8}{$i$}}	;
	\draw[fill=white] (-0.3,6) rectangle (1.8, 6.5);		
		\draw (4.4,7.25) node[right] {\scalebox{0.7}{$h$}}; 
		\draw (3.1,5.25) node[left] {\scalebox{0.7}{$h$}};
		\draw (2,5.25) node[left] {\scalebox{0.7}{$h$}}; 		
	\draw (2.3,4.25) node {\scalebox{0.7}{$f_1$}}; 
	\draw (0.8,6.25) node {\scalebox{0.7}{$f_2$}}; 
	\draw (3.8,6.25) node {\scalebox{0.7}{$f_3$}}; 
	\draw (5.3,8.25) node {\scalebox{0.7}{$f_4$}}; 			
	\end{tikzpicture}}}~.
\]
It vanished on the trivial element: $\psi|_\rmI\coloneq 0$~.
\end{definition}

\begin{remark}
Notice that the map $\psi$ produces signs due to the application of the Koszul sign rule when permuting elements. 
 \end{remark}
 
\begin{lemma}\label{lem:VdLTw}
The dioperadic Van der Laan map 
is a properadic twisting morphism 
\[\psi \in \mathrm{Tw}\left(\rmB_{\rmg=0} \End_A, \End_H\right)~.\]
\end{lemma}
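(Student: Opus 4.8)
The statement to establish is that the dioperadic Van der Laan map $\psi$ is a properadic twisting morphism from $\rmB_{\rmg=0}\End_A$ to $\End_H$, that is, it satisfies the Maurer--Cartan equation $\partial \psi + \psi \star \psi = 0$ in the convolution Lie-admissible algebra $\widehat{\Hom}(\rmB_{\rmg=0}\End_A, \End_H)$. The natural approach is to observe that this is exactly the genus $0$ restriction of the already-known fact that the (full) properadic Van der Laan map is a twisting morphism; see \cite[Section~4]{HLV20}. So the plan is \emph{not} to recompute the Maurer--Cartan identity from scratch, but rather to check that the genus $0$ truncation is compatible with all the structure involved: the differential $d_{\rmB}$ on the dioperadic bar construction, the convolution product $\star$, and the definition of $\psi$ itself.

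First I would recall that the differential on $\rmB_{\rmg=0}\End_A$ is the restriction of the differential on the full bar construction $\rmB\End_A$, since contracting an internal edge of a genus $0$ connected flow-directed graph yields again a genus $0$ graph — the bar differential never increases the genus. Likewise, the convolution product $f\star g$ on $\widehat{\Hom}(\rmB_{\rmg=0}\End_A,\End_H)$ is induced by the infinitesimal decomposition map $\Delta_{(1,1)}$ of the codioperad $\rmB_{\rmg=0}\End_A$, which again only produces genus $0$ two-vertex graphs, so it agrees with the restriction of the properadic convolution product. Hence the inclusion $\rmB_{\rmg=0}\End_A \hookrightarrow \rmB\End_A$ induces a morphism of dg Lie-admissible algebras $\widehat{\Hom}(\rmB\End_A,\End_H) \to \widehat{\Hom}(\rmB_{\rmg=0}\End_A,\End_H)$ by restriction. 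Then I would check that $\psi$ is the image under this restriction of the classical properadic Van der Laan twisting morphism of \cite[Section~4]{HLV20}: indeed both are defined on basis elements by the same recipe — strip the suspensions, label input legs by $i$, output legs by $p$, internal edges by $h$, and compose in $\End$ — so they literally coincide on genus $0$ graphs, and the restriction of a twisting morphism is a twisting morphism. This yields $\partial \psi + \psi\star\psi = 0$ immediately.

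For completeness, I would also indicate the direct verification, which is the genus $0$ case of the computation in \emph{loc.\ cit.}: expanding $\partial\psi$ produces, for each genus $0$ graph $\gg$, the terms coming from $d_A$, from $d_H$, and from the bar differential of $\gg$; using the contraction identity $\id_A - ip = d_A h + h d_A$ on each internal edge, the $d_A$-terms on internal edges telescope against the bar-differential terms (edge contractions) and against the $ip$-terms, the latter being precisely what $\psi\star\psi$ contributes after splitting $\gg$ along one edge into two genus $0$ graphs. The identities $hi=0$, $ph=0$, $h^2=0$, and $pi=\id_H$ kill the spurious terms at the boundary legs and rule out double homotopies on a single edge. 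The main obstacle, as always in this kind of statement, is bookkeeping the Koszul signs: one must track the signs produced by desuspending the generators, by permuting the maps $i,p,h$ past one another and past the structure maps $f_v$, and by the orientation conventions on the bar differential of flow-directed graphs. Since the genus $0$ signs are exactly inherited from the properadic signs of \cite[Section~4]{HLV20}, I would handle this by invoking that reference rather than re-deriving every sign, and simply remark that restricting to genus $0$ changes nothing in the sign analysis because no genus-raising composite ever occurs.
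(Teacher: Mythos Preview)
Your primary route---deducing the result by restricting the properadic Van der Laan twisting morphism $\varphi$ of \cite[Section~4]{HLV20} to genus $0$---contains a genuine gap. You assert that $\varphi$ and $\psi$ ``are defined on basis elements by the same recipe,'' but this is not the case: the properadic map $\varphi$ is \emph{not} given by labelling internal edges with $h$. It is defined as a composite through levelisations, $\scrG^c(\susp\End_A)\xrightarrow{\text{lev}}\scrG_{\text{lev}}(\susp\End_A)\xrightarrow{\rmP\rmH\rmI}\End_H$, where each intermediate level is labelled by a symmetric homotopy $h_n$. Establishing that $\varphi|_{\rmg=0}=\psi$ is precisely the content of \cref{thm:HTTCodioperad}, whose proof relies on the nontrivial combinatorial identity of \cref{lem:CoefSumOmega=Aut}. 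So your restriction argument presupposes the main result of this appendix and cannot serve as an independent proof of the lemma.

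Your ``for completeness'' direct verification, on the other hand, is correct and is exactly the argument the paper gives: one evaluates $\partial\psi+\psi\star\psi$ on a genus $0$ graph, the $\partial f_i$ terms cancel between $d_{\End_H}\circ\psi$ and $-\psi\circ d_{\rmB}$, the contraction identity $d_Ah+hd_A=\id_A-ip$ on each internal edge leaves a residual $-ip$, and this is cancelled by $\psi\star\psi$, which splits the graph along one edge and labels it by $ip$. The paper presents this as the main argument rather than as an afterthought; you should promote it accordingly and drop the restriction-from-$\varphi$ strategy.
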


\begin{proof}
The proof is similar to the one performed in the operadic case that is with rooted trees, see \cite[Proposition~10.3.2]{LV12} for complete details. Let us give the arguments quickly here.  
We claim that the evaluation of 
$\partial \psi +\psi\star \psi$
on any  flow-directed connected genus 0 graph $\gg(\susp f_1, \ldots, \susp f_k)$ with vertices labelled by suspension of multilinear operations in $A$ vanishes. 
The term $(\partial \psi)\left( \gg(\susp f_1, \ldots, \susp f_k)\right)$ is equal to the differential of $\End_H$ applied to 
$\psi\left( \gg(\susp f_1, \ldots, \susp f_k)\right)$ minus the composite of the differential of the dioperadic bar construction of $\End_A$ followed by the Van der Laan map $\psi$. 
Both terms produce almost graphs of type $\gg$ with vertices labelled by the operations $f_1, \ldots, f_k$, with input edges labelled by $i$, output edges labelled by $p$, and internal edges labelled by $h$. 
Actually 
the first term is the sum of two summands: the first one is equal to the sum over the vertices of these graphs 
where the vertex is labelled by $\partial f_i$ and the second one is equal to the sum over the internal edges where the internal edge  is labelled by $\partial h=\id_A -ip$~. 
The second term, that comes with an overall minus sign, 
is the sum of two summands: 
the first one is equal to the sum over the internal edges where the internal edge is labelled by the identity $\id_A$ of $A$ and the second one is equal to the sum over the vertices of these graphs where the vertex is labelled by $\partial f_i$. 
This shows that the term $(\partial \psi)\left( \gg(\susp f_1, \ldots, \susp f_k)\right)$ is equal to the sum over the internal edges where the internal edge is labelled by $-ip$~. This term cancels precisely with 
$(\psi \star \psi)\left( \gg(\susp f_1, \ldots, \susp f_k)\right)$, which concludes the proof. 
\end{proof}

\begin{proposition}\label{prop:NewTransForm}
Let $\rmC$ be a conilpotent codioperad, let $\alpha \in \Tw(\rmC, \End_A)$ be a $\Cobar \rmC$-gebra structure on $A$, and let $(h,i,p)$ be a contraction from $A$ onto $H$, the composite 
\[\begin{tikzcd}[column sep=normal]
	{\rmC}\arrow[r,"G_\alpha"] &  \Bar_{\gg=0}\End_A \arrow[r,"\psi"] & \End_H 
\end{tikzcd}\]
defines a $\Cobar \rmC$-gebra structure on $H$.
\end{proposition}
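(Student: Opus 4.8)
\textbf{Proof strategy for \cref{prop:NewTransForm}.} The plan is to reduce this statement to the general properadic homotopy transfer theorem \cite[Theorem~4.14]{HLV20} and then to identify the genus-$0$ restriction of the properadic Van der Laan twisting morphism with the dioperadic Van der Laan map $\psi$ of \cref{lem:VdLTw}. Concretely, a $\Cobar\rmC$-gebra structure $\alpha\in\Tw(\rmC,\End_A)$ corresponds by bar-cobar adjunction to a morphism of dg properads $G_\alpha\colon\rmC\to\Bar\End_A$; since $\rmC$ is a \emph{codioperad}, that is its (iterated) decomposition maps produce only genus-$0$ graphs, the map $G_\alpha$ factors through the sub-codioperad $\Bar_{\gg=0}\End_A\hookrightarrow\Bar\End_A$. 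This is exactly the hypothesis that lets us replace the full properadic bar construction by its genus-$0$ part. The general theorem tells us that the composite $\psi_{\mathrm{full}}\circ G_\alpha$, where $\psi_{\mathrm{full}}\in\Tw(\Bar\End_A,\End_H)$ is the (levelled) properadic Van der Laan twisting morphism, is again a twisting morphism, hence a $\Cobar\rmC$-gebra structure on $H$.

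The key step is therefore to check that the restriction of $\psi_{\mathrm{full}}$ to $\Bar_{\gg=0}\End_A$ agrees with the dioperadic map $\psi$ defined here, i.e. that the levelled description of \cite[Section~4]{HLV20} collapses to an unlevelled one in genus $0$. I would argue as follows: for a genus-$0$ connected flow-directed graph $\gg$, the choice of a compatible level structure corresponds to a linear extension of the partial order induced by the flow, and the sum over all such level structures — each contributing the same labelling of edges by $i$, $p$, $h$ up to the Koszul signs — telescopes exactly as in the operadic case treated by Markl \cite{Markl06} and conceptually explained in \cite{DSV16}. The signs match because in genus $0$ no two distinct edges can be ``braided'', so the permutations appearing in the level-to-no-level passage are precisely the ones accounted for by the Koszul sign rule already built into $\psi$. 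Since \cref{lem:VdLTw} has independently established that this unlevelled $\psi$ is a properadic twisting morphism in $\Tw(\Bar_{\gg=0}\End_A,\End_H)$, the composite $\psi\circ G_\alpha$ solves the Maurer--Cartan equation in $\widehat{\Hom}(\rmC,\End_H)$ by functoriality of the convolution algebra under the morphism of codioperads $G_\alpha$ and the morphism of properads $\psi$; equivalently, one invokes \cite[Lemma~2]{MV09I} / \cref{lemm:ConvolutionLie}-type functoriality to see that pulling back a twisting morphism along a morphism of co(di)operads yields a twisting morphism.

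A cleaner alternative, which I would actually prefer to write up, bypasses the identification with the full levelled formula entirely: \cref{lem:VdLTw} already gives $\psi\in\Tw(\Bar_{\gg=0}\End_A,\End_H)$ directly, with a self-contained proof. Then the only remaining point is the elementary fact that if $\varphi\colon\rmP'\to\rmP$ is a morphism of dg properads and $\tau\in\Tw(\rmC,\rmP')$... no — more precisely, if $G\colon\rmC\to\rmP'$ is a morphism of dg coproperads (here $\rmC\to\Bar_{\gg=0}\End_A$, which exists by adjunction precisely because $\rmC$ is conilpotent and a codioperad) and $\psi\in\Tw(\rmP',\rmP)$ is a twisting morphism, then the convolution composite $\psi\circ G\in\widehat{\Hom}(\rmC,\rmP)$ is again a twisting morphism, because both the differential and the $\star$-product of the convolution algebra are natural in $\rmC$. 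Unwinding the bar--cobar adjunction, $\psi\circ G_\alpha$ is exactly the composite in the statement, and being a twisting morphism it defines a $\Cobar\rmC$-gebra structure on $H$ by \cref{prop:twisting}.

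\textbf{Expected main obstacle.} The genuine work is in \cref{lem:VdLTw}, which we may assume; granting it, the remaining obstacle is purely bookkeeping: verifying that the conilpotency of $\rmC$ guarantees the bar--cobar counit $\Cobar\Bar_{\gg=0}\End_A\to\End_A$ induces, via adjunction, a well-defined morphism of \emph{codioperads} $\rmC\to\Bar_{\gg=0}\End_A$ (rather than merely of $\Sy$-bimodules), and that this morphism lands in the genus-$0$ part — which is automatic since a codioperad has no higher-genus decompositions. I expect the only subtlety requiring care is that $\Bar_{\gg=0}$ of a properad is a codioperad (not just a coproperad), which follows from the fact that the genus of a flow-directed connected graph is additive under grafting along a single edge and non-decreasing under the bar differential's contractions, so that restricting to genus $0$ is closed under the coproduct; I would state this as a preliminary remark before invoking it.
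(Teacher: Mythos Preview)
Your ``cleaner alternative'' is exactly the paper's proof: invoke \cref{lem:VdLTw} directly, observe that $G_\alpha$ factors through $\Bar_{\gg=0}\End_A$ because the comonadic decomposition map of a codioperad produces only genus-$0$ graphs, and conclude by the general fact that a morphism of dg coproperads composed with a twisting morphism yields a twisting morphism. Your first strategy, identifying the restriction of the levelled properadic Van der Laan morphism $\varphi$ with the unlevelled $\psi$, is not needed for this proposition at all; that identification is precisely the content of the subsequent \cref{thm:HTTCodioperad}, proved via the weight formula of \cref{lem:CoefSumOmega=Aut}, and is logically downstream of the present statement rather than an ingredient of it.
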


\begin{proof}
This is a direct corollary of \cref{lem:VdLTw} and the general theory of properadic twisting morphisms \cite{MV09I}. 
Recall first that the canonical morphism of conilpotent coproperads $G_\alpha : \rmC \to \Bar\End_A$ associated to the twisting morphism $\alpha$ is explicitly given by the composite 
\[
\overline{\rmC} \xrightarrow{\widetilde{\Delta}}
\scrG^c\Big(\overline{\rmC}\Big)
\xrightarrow{\scrG^c(\susp \alpha)}
\scrG^c(\susp\End_A)~. 
\]
Since the coproperad $\rmC$ is a conilpotent codioperad, its comonadic decomposition map $\widetilde{\Delta}$ produces only genus 0 flow-directed connected graphs. Therefore, the image of the map $G_\alpha$ actually lands in 
$\scrG^c_{\gg=0}(\susp\End_A)$, that is the dioperadic bar construction $\Bar_{\gg=0}\End_A$. In the end, the map $G_\alpha$ is a morphism of dg codioperads, that can be composed with the dioperad Van der Laan twisting morphism to produce a new properadic twisting morphism. 
\end{proof}

\cref{prop:NewTransForm} provides us with a universal genus 0 formula for homotopy gebra structures controlled codioperads transferred  through contractions. One might now wonder how compatible is this new structure with respect to  the original one, that is how to extend the two chain maps of the contraction into $\infty$-quasi-isomorphisms. We are going to settle this question by proving that  this formula actually coincides with the general one of \cite[Theorem~4.14]{HLV20} in the case of codioperads.
Let us recall that we introduced in \emph{loc. cit.} 
the properadic Van der Laan twisting morphism $\varphi \in\Tw(\rmB\End_A,\End_H)$ 
defined by the composite 
\[\begin{tikzcd}
\scrG^c(\susp\End_A) \arrow[r,"\text{lev}"] & \scrG_{\text{lev}}(\susp\End_A)  \arrow[r,"\rmP\rmH\rmI"] & \End_H~, 
\end{tikzcd}\]
where $\scrG_{\text{lev}}$ is made up of flow-directed connected graphs with levels and only one vertex per level, 
where the levelisation map ``$\text{lev}$'' amounts to sending any flow-directed connected graphs to all the ways to put its vertices (differentiating all of them) on levels, one at a time, and where the map $\rmP\rmH\rmI$ amounts to labelling the input edges by $i$, the output edges by $p$, and the intermediate levels, that are the levels between vertices, by the symmetric homotopies 
$$ h_n:=\frac{1}{n!}\sum_{\sigma \in \Sy_n}  \sum_{k=1}^n \left(
\id_A^{\otimes (k-1)} \otimes h \otimes \pi^{\otimes (n-k)}
\right)^\sigma$$
from $\pi^{\otimes n}=(ip)^{\otimes n}$ to $\id_A^{\otimes n}$, where $n$ stands for the number of edges crossing that intermediate level. 

\begin{theorem}\label{thm:HTTCodioperad}
Let $\rmC$ be a conilpotent codioperad, let $\alpha \in \Tw(\rmC, \End_A)$ be a $\Cobar \rmC$-gebra structure on $A$, and let $(h,i,p)$ be a contraction from $A$ onto $H$, the two 
$\Cobar \rmC$-gebra structures on $H$ defined respectively 
by $\psi \circ G_\alpha$ and by $\varphi \circ G_\alpha$ are equal and given by 
\[
\overline{\rmC} \xrightarrow{\widetilde{\Delta}}
\scrG^c\Big(\overline{\rmC}\Big)
\xrightarrow{\scrG^c(\susp \alpha)}
\scrG^c(\susp\End_A)
\xrightarrow{\psi} \End_H~. 
\]
\end{theorem}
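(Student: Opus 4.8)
The plan is to show that the properadic Van der Laan twisting morphism $\varphi$ and the dioperadic Van der Laan twisting morphism $\psi$ agree once restricted along the morphism $G_\alpha$, whose image lands in the genus $0$ dioperadic bar construction $\Bar_{\gg=0}\End_A$ precisely because $\rmC$ is a conilpotent codioperad (so $\widetilde\Delta$ produces only genus $0$ flow-directed connected graphs). Since both $\psi\circ G_\alpha$ and $\varphi\circ G_\alpha$ are properadic twisting morphisms by \cref{lem:VdLTw} and by \cite[Theorem~4.14]{HLV20} respectively, it suffices to prove that $\psi$ and $\varphi$ coincide on every \emph{genus $0$} flow-directed connected graph $\gg(\susp f_1,\ldots,\susp f_k)$; the equality of the transferred structures, and the fact that they are given by the displayed composite, then follows immediately from the definition of $G_\alpha$.

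First I would unwind the definition of $\varphi = \rmP\rmH\rmI \circ \mathrm{lev}$ on such a graph: $\mathrm{lev}$ sends $\gg$ to the signed sum over all linear orderings (levelisations) of its $k$ vertices, and $\rmP\rmH\rmI$ labels input edges by $i$, output edges by $p$, and each \emph{intermediate level} by the symmetrised homotopy $h_n$ on the $n$ edges crossing it. The key computational step is to show that, for a genus $0$ graph, the signed sum over all levelisations followed by $\rmP\rmH\rmI$ collapses to the single term in which each \emph{internal edge} carries a plain $h$ (and each input $i$, each output $p$), which is exactly $\psi(\gg(\susp f_1,\ldots,\susp f_k))$. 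The mechanism is the standard one (cf.\ \cite{Markl06, DSV16} in the operadic case): in a genus $0$ graph an intermediate level separating a chosen set $S$ of ``already placed'' vertices from the rest is crossed by edges each of which is either (a) an internal edge of $\gg$ going from $S$ to its complement, or (b) an input edge of $\gg$ feeding a complement vertex, or (c) an edge that has already exited $S$ and will re-enter it --- but in genus $0$ this last type cannot occur. Hence every edge crossing an intermediate level is, at the moment of crossing, either a genuine internal edge or an input edge. Expanding $h_n$ as $\frac1{n!}\sum_\sigma\sum_k(\id^{\otimes(k-1)}\otimes h\otimes \pi^{\otimes(n-k)})^\sigma$ and summing over all levelisations, the terms with $\pi=ip$ on an internal edge telescope against the terms of an adjacent levelisation where that edge's target vertex has been moved down a level, exactly as in the Homotopy Transfer Theorem proof for $A_\infty$-algebras (\cite[\S10.3]{LV12}); what survives is one $h$ per internal edge, $i$ on each input, $p$ on each output. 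One must also track the symmetrisation factors: the averaging $\frac1{n!}$ in $h_n$ is compensated by the $k!$-fold overcounting coming from the fact that $\mathrm{lev}$ differentiates all vertices, so the normalisations match.

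The main obstacle I anticipate is the careful bookkeeping of signs in this telescoping cancellation, since the levelisation map introduces Koszul signs from permuting the suspended operations $\susp f_i$ past one another, the homotopies $h_n$ carry their own Koszul signs, and one must verify that these conspire so that adjacent-levelisation terms cancel with the correct sign while the surviving ``all $h$'s on internal edges'' term is produced with sign $+1$. I would handle this by fixing, once and for all, a reference order of the vertices induced by the flow (possible in genus $0$), expressing an arbitrary levelisation as a permutation of this reference order, and organising the cancellation as a discrete homotopy (a poset-theoretic inclusion--exclusion on the set of levelisations refining the partial order given by the graph's flow). Since this is exactly the argument of \cite{DSV16} transported from rooted trees to genus $0$ flow-directed connected graphs --- and genus $0$ is the precise condition that rules out the problematic re-entrant edges --- I expect it to go through \emph{mutatis mutandis}; the only genuinely new input is the observation, already recorded in the proof of \cref{prop:NewTransForm}, that $G_\alpha$ factors through $\Bar_{\gg=0}\End_A$, which is what licenses replacing $\varphi$ by the simpler $\psi$.
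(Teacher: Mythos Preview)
Your overall strategy is correct: one reduces to showing $\varphi|_{\gg=0}=\psi$ on each flow-directed connected genus~$0$ graph, and the factoring of $G_\alpha$ through $\Bar_{\gg=0}\End_A$ is exactly the reason this suffices. However, your proposed mechanism for the collapse has a genuine gap.

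The claim that ``the averaging $\tfrac{1}{n!}$ in $h_n$ is compensated by the $k!$-fold overcounting coming from the fact that $\mathrm{lev}$ differentiates all vertices'' is not correct as stated. The denominator in $h_n$ refers to the number $n$ of strands crossing a given intermediate level, and there are $k-1$ such levels each with its own $n$; these numbers bear no direct relation to $k!$, and in any case $\mathrm{lev}$ does not produce all $k!$ orderings but only those compatible with the flow. The contribution of a single levelisation $\lambda$ is not $\tfrac{1}{k!}$ but rather the product $\omega(\lambda)$ of the reciprocals of the numbers of \emph{internal} edges crossing each intermediate level. The paper establishes precisely this: for each fixed levelisation $\lambda$, one shows $\rmP\rmH\rmI(\gg_\lambda)=\omega(\lambda)\,\psi(\gg)$ by an induction on the number of vertices (removing a leaf vertex $v$ connected by a single internal edge and analysing where along that edge the $h$ appears, using $ph=hi=h^2=0$ to kill the unwanted placements). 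The cancellation happens \emph{within} a levelisation, not \emph{between} adjacent levelisations as your telescoping sketch suggests.

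What then makes the sum over levelisations equal $\psi(\gg)$ on the nose is the purely combinatorial identity $\sum_{\lambda}\omega(\lambda)=1$, proved separately (again by induction: remove a leaf vertex and observe that the weights of all levelisations of $\gg$ inducing a fixed levelisation $\bar\lambda$ of the sub-graph telescope to $\omega(\bar\lambda)$). This lemma is the missing ingredient in your argument; the DSV16 rooted-tree argument does not transport directly, and the paper's approach via weights is what replaces it.
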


We will prove this result by adapting the arguments given in the proofs of Point~$(1)$ of \cite[Lemma~6]{DSV16} and by generalising \cite[Proposition~11]{DSV16}. This latter proposition gives a formula which computes the cardinal of the automorphism group of an unlabelled rooted tree with the weights of its levelisations. We define the \emph{weight} $\omega(\lambda)$ of a levelisation $\lambda$ of a flow-directed connected genus 0 graph by the product over the intermediate levels of the inverse of the number of crossing internal edges:
\[
\omega\left(\vcenter{\hbox{\begin{tikzpicture}[scale=0.7]
		\coordinate (A1) at (1.5,2);
		\draw[thin]
		(A1)  --++(0,-0.3) 
		(A1) ++(0.75,0) --++(0,-0.3) 
		(A1) ++(1.5,0) --++(0,-0.3) 
		(A1) ++(0.75,0.5) --++(0,0.3);
		\draw[fill=white] (1.2,2) rectangle (3.3, 2.5);
		\draw (3,2.5) --++ (0, 1) coordinate (B1);
		\draw (1.5,2.5) --++ (0, 2.5) coordinate (C1);		
		\draw (4.5,4) -- (4.5, 6.5) coordinate (Z1);	
		\draw[thin]
		(B1) ++(0,0.5) --++(0,0.3) 
		(B1) ++(0.75,0) --++(0,-0.3) 
		(B1) ++(1.5,0) --++(0,-0.3); 
		\draw[fill=white] (2.7,3.5) rectangle (4.8, 4);		
		\draw[thin]
		(Z1) ++(1.5,0) --++(0,-0.3); 
		\draw[fill=white] (4.2, 6.5) rectangle (6.3, 7);
	\draw[thin]
	(C1)  ++(0,0.5) --++(0,0.3) 
	(C1)  ++(-1.5,0.5) --++(0,0.3); 
	\draw[fill=white] (-0.3,5) rectangle (1.8, 5.5);		
	\draw[dotted] 
	(-1,3) -- (7,3)
	(-1,4.5) -- (7,4.5)
	(-1,6) -- (7,6);				
	\draw (2.3,2.25) node {\scalebox{0.7}{$1$}}; 
	\draw (0.8,5.25) node {\scalebox{0.7}{$2$}}; 
	\draw (3.8,3.75) node {\scalebox{0.7}{$3$}}; 
	\draw (5.3,6.75) node {\scalebox{0.7}{$4$}}; 				
	\end{tikzpicture}}}\right)=\frac12\times \frac12 \times 1=\frac14~. 
\]
By convention, the weight of the unique levelisation of any 1-vertex graph is equal to 1. 

\begin{lemma}\label{lem:CoefSumOmega=Aut}
For any flow-directed connected graph $\gg$ of genus $0$, the following relation holds 
\[
\sum_{\text{levelisation} \atop \lambda\ \text{of}\ \gg} \omega(\lambda)=1~.
\]
\end{lemma}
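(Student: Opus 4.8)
The statement is a purely combinatorial identity about flow-directed connected graphs of genus $0$, and the natural approach is induction on the number of vertices $k$ of $\gg$. For $k=1$ there is nothing to prove since the unique levelisation has weight $1$ by convention. For the inductive step I would peel off one extremal vertex and relate the levelisations of $\gg$ to those of a smaller graph $\gg'$. The subtlety — and this is exactly the point that makes the genus $0$ hypothesis essential — is \emph{which} vertex to remove and how the weight factors transform; in genus $0$ the underlying graph is a tree once one forgets the flow, so there is always a vertex attached to the rest of $\gg$ by a \emph{single} internal edge, and this is the vertex one should extract. Removing such a vertex $v$ (say it is a sink-type leaf attached above by one edge, or a source-type leaf attached below) gives a graph $\gg'$ with $k-1$ vertices to which the induction hypothesis applies.

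\textbf{Key steps.} First I would make precise the correspondence between levelisations of $\gg$ and pairs (levelisation of $\gg'$, choice of position for $v$ in the level ordering). A levelisation of $\gg$ assigns the $k$ vertices to the $k$ levels bijectively; restricting to the other $k-1$ vertices, and keeping only the relative order, yields a levelisation $\lambda'$ of $\gg'$, while the datum of where $v$ sits among the $k$ positions — equivalently, which of the $k$ slots (before level $1$, between level $1$ and $2$, \dots) it occupies relative to $\lambda'$ — is an integer $j\in\{1,\dots,k\}$ subject to the flow constraint (if $v$ is a sink-leaf attached to vertex $w$, then $v$ must come \emph{after} $w$, so $j$ ranges over the $k - (\text{position of }w\text{ in }\lambda')$ allowed slots). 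Second, I would compute how $\omega(\lambda)$ relates to $\omega(\lambda')$: inserting $v$ at slot $j$ changes the number of edges crossing each intermediate level between $w$ and $v$ by exactly $1$ (the single edge $v\!-\!w$ now threads those levels), and creates one new intermediate level adjacent to $v$ crossed by that one edge (contributing a factor $1$). Unwinding this bookkeeping, the sum $\sum_{j}\omega(\lambda)/\omega(\lambda')$ over the admissible slots $j$ telescopes — it is a sum of the form $\sum_{t=1}^{m}\frac{1}{(\text{current crossing number})}\cdot(\text{ratio of products})$ that collapses to $1$. Concretely this is the same elementary identity that drives \cite[Proposition~11]{DSV16} in the rooted-tree case, namely that summing the weight increments over all legal insertion points of one extra vertex along a strand gives $1$. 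Third, combining $\sum_{\lambda}\omega(\lambda)=\sum_{\lambda'}\omega(\lambda')\big(\sum_{j\text{ admissible}}\tfrac{\omega(\lambda)}{\omega(\lambda')}\big)=\sum_{\lambda'}\omega(\lambda')\cdot 1=1$ by the induction hypothesis closes the argument.

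\textbf{Main obstacle.} The genuine work is in Step two: making the weight-ratio computation for a single vertex insertion correct on the nose, including the edge case where $v$ is attached to a vertex that is itself near the top or bottom, and verifying that the range of admissible slots $j$ is precisely the one for which the telescoping sum equals $1$ (no off-by-one). I expect that, just as in \cite{DSV16}, once one fixes the convention that the ``intermediate levels'' counted in $\omega$ are the $k+1$ levels including the ones adjacent to the extreme vertices but that edges to inputs/outputs are not counted, the insertion of $v$ at slot $j$ multiplies the product $\prod(\text{crossing numbers})^{-1}$ by $\tfrac{1}{c_j}$ where $c_1<c_2<\cdots$ (or the reverse) enumerate the successive crossing numbers along the strand carrying the new edge, and $\sum_j \tfrac{1}{c_j}\cdot(\text{correction for the already-present edges})$ telescopes. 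I would also need to check that signs play no role here — they do not, since $\omega$ is a positive rational and the identity is about cardinalities — and that the genus $0$ assumption is used exactly where I claim it (existence of a mono-valently-attached vertex), which would fail for a graph with a cycle in the underlying unoriented graph.
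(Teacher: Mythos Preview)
Your proposal is correct and follows essentially the same approach as the paper: induction on the number of vertices, removal of a vertex $v$ attached to the rest of $\gg$ by a single internal edge (which exists precisely because of the genus $0$ hypothesis), a bijection between levelisations $\lambda$ of $\gg$ and pairs $(\bar\lambda,j)$ with $\bar\lambda$ a levelisation of the smaller graph and $j$ an admissible slot for $v$, and a telescoping sum showing $\sum_j \omega(\lambda)=\omega(\bar\lambda)$. The paper carries out your ``Step two'' explicitly: if $w$ sits at level $l$ in $\bar\lambda$ with crossing numbers $n_1,\ldots,n_{k-2}$, the $l$ admissible insertions of $v$ yield the weights $\frac{a}{(n_1+1)\cdots(n_{l-1}+1)},\ \frac{a}{n_1(n_1+1)\cdots(n_{l-1}+1)},\ \ldots,\ \frac{a}{n_1\cdots n_{l-1}(n_{l-1}+1)}$ with $a=\tfrac{1}{n_l\cdots n_{k-2}}$, and these telescope to $\omega(\bar\lambda)=\tfrac{a}{n_1\cdots n_{l-1}}$---this is exactly the elementary identity you anticipated.
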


\begin{proof}
The proof is performed by induction on the number $k$ of vertices of the connected flow-directed graphs $\gg$ of genus $0$. For $k=1$ and $k=2$, the statement is obvious since the automorphism group is trivial and since there is one possible levelisation with weight equals to 1. Suppose now that the statement holds for 
flow-directed connected graphs of genus $0$ with $k-1$ vertices and let us prove it for any 
flow-directed connected graph $\gg$ of genus $0$ with $k$ vertices.
We consider one vertex $v$ connected to the rest of the graph $\gg$ by only one internal edge $e$: it exists since otherwise the total number of vertices would be infinite or the graph would not be of genus 0. 
We denote by $w$ the vertex of $\gg$ which lies on the other end of the edge $e$.
Let us suppose here that $v$ lies below $w$; the other case is treated by the same arguments. 
We denote by $\bar \gg$ the sub-graph obtained from the graph $\gg$ by removing the vertex $v$ and the edge $e$. Any levelisation of the graph $\gg$ induces a canonical levelisation of the sub-graph $\bar \gg$. 
In the other way round, from any levelisation $\bar \lambda$ of the sub-graph $\bar \gg$, one recovers all the levelisations of $\gg$ where it comes from as follows. Let $l\geqslant 1$ be the level, counted from bottom to top, on which sits the vertex $w$. Let us denote by $n_1, \ldots, n_{k-2}$ the number of internal edges lying respectively on the intermediate levels $1, \ldots, k-2$. The weight of the levelisation $\bar \lambda$ is thus equal to 
\[\omega\left(\bar \lambda\right)=\frac{1}{n_1\cdots n_{l-1}}\cdot \frac{1}{n_l\cdots n_{k-2}}~.\]
From the levelisation $\bar \lambda$ of the sub-graph $\bar \gg$, one gets the levelisations of $\gg$ by placing the vertex $v$ at levels $1, \ldots, l$ respectively. These levelisations come with the following respective weights
\begin{multline*}
\frac{a}{(n_1+1)\cdots (n_{l-1}+1)}~, \ 
\frac{a}{n_1(n_1+1)\cdots (n_{l-1}+1)}~, \ 
\frac{a}{n_1n_2(n_2+1)\cdots (n_{l-1}+1)}~, \ \ldots  \\
\frac{a}{n_1n_2 \ldots n_{l-1} (n_{l-2}+1)(n_{l-1}+1)}~, \ 
\frac{a}{n_1n_2 \ldots n_{l-1} (n_{l-1}+1)}~, 
\end{multline*}
where $a=\frac{1}{n_l\cdots n_{k-2}}$~. 
Their sum is equal to the weight $\omega\left(\bar \lambda\right)=\frac{a}{n_1\cdots n_{l-1}}$ of the levelisation $\bar \lambda$. Therefore we have 
\[
\sum_{\text{levelisation} \atop \lambda\ \text{of}\ \gg} \omega(\lambda)=
\sum_{\text{levelisation} \atop \bar \lambda\ \text{of}\ \bar \gg}\omega\left(\bar \lambda\right)=1~, 
\]
by the induction hypothesis applied to the sub-graph $\bar \gg$; this concludes the proof. 
\end{proof}

Here is a direct corollary of \cref{lem:CoefSumOmega=Aut} that will not use here but that might be helpful outside the present context. 
For any graph $\gg$, we consider its set of \emph{labelled} graphs obtained by labelling  bijectively its vertices by $1, \ldots, k$~. 
The automorphism group of a flow-directed graph $\gg$ acts canonical on its set of labelled flow-directed graphs. 
An \emph{unlabelled} flow-directed graph $\gamma$ is an orbit  under this action. 
The set of levelled labelled graphs with underlying flow-directed graph $\gg$ admits two compatible actions of the automorphism group of $\gg$: one permuting the levels and one preserving them. 
A levelisation of an unlabelled 
flow-directed graph $\gamma$ is an obit of the set of labelled levelled graphs under the action of 
$\mathrm{Aut}\, \gg \times \mathrm{Aut}\, \gg$~. 
The weight of a levelisation of an unlabelled flow-directed connected graph is defined in the same way as before. 
By extension, we define the automorphism group of an unlabelled flow-directed graph $\gamma$ to be the automorphism group of its underlying flow-directed graph $\gg$.

\begin{proposition}\label{prop:LevelGraph}
For any unlabelled flow-directed connected graph $\gamma$ of genus $0$, the following relation holds 
\[
\sum_{\text{levelisation} \atop \lambda\ \text{of}\ \gamma} \omega(\lambda)=\frac{1}{\left|\mathrm{Aut}\,  \gamma\right|}~.
\]
\end{proposition}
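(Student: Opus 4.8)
The plan is to deduce this statement from \cref{lem:CoefSumOmega=Aut} together with a standard orbit-counting (Burnside-type) argument relating levelisations of the unlabelled graph $\gamma$ to levelisations of a fixed representative $\gg$ of the underlying flow-directed graph. First I would fix a labelled representative $\gg$ of the orbit $\gamma$, so that $\mathrm{Aut}\,\gamma=\mathrm{Aut}\,\gg=:G$, and I would record the two commuting $G$-actions on the set $L(\gg)$ of labelled levelled graphs with underlying graph $\gg$: the action permuting the levels (call it the ``vertical'' action) and the action relabelling the vertices while preserving levels (the ``horizontal'' action). By definition a levelisation of $\gamma$ is an orbit of $L(\gg)$ under $G\times G$, whereas a levelisation of $\gg$ in the sense of \cref{lem:CoefSumOmega=Aut} is an element of $L(\gg)$ taken up to the horizontal action only (which is free, since distinct labellings give distinct levelled labelled graphs); the weight $\omega$ is constant on all these orbits, as it only depends on the numbers of internal edges crossing each intermediate level, which are invariant under both actions.

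Next I would compute $\sum_{\lambda\text{ levelisation of }\gg}\omega(\lambda)=1$ from \cref{lem:CoefSumOmega=Aut} and rewrite the left-hand side as a sum over the $G\times G$-orbits. The key point is that the horizontal action of $G$ on $L(\gg)$ is free: an automorphism of $\gg$ fixing a levelled labelled graph must fix every vertex (levels are totally ordered and only one vertex sits on each level of a levelisation in the strict sense used here, but more relevantly the vertex labels are preserved), hence is the identity. Therefore each levelisation of $\gamma$, viewed as a $G\times G$-orbit $O\subseteq L(\gg)$, decomposes into exactly $|G|/|\mathrm{Stab}|$ horizontal $G$-orbits, where the stabiliser in question is the stabiliser of the corresponding levelisation-of-$\gg$ under the residual vertical $G$-action. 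Running Burnside's lemma for the vertical action on the set of levelisations-of-$\gg$ (a set of size $|G|\cdot(\text{number of levelisations of }\gamma)$ worth of mass, weighted by $\omega$), or more directly counting that a single $G\times G$-orbit $O$ contains $\sum_{\text{lev. of }\gg\text{ in }O}1 = |G|$ many horizontal orbits when the vertical action on that piece is free and fewer otherwise, I would obtain that the weighted count of horizontal orbits inside one $G\times G$-orbit $O$ equals $|G|$ times the common weight $\omega(O)$ divided by... — and here I need to be careful — actually the cleanest route is: the weighted sum over levelisations of $\gg$ equals $\sum_{O}\bigl(\sum_{\text{hor. orbits in }O}\omega\bigr)$, and each $O$ contributes $|G|\cdot\omega(O)/|G|$...

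Let me restate the intended argument more carefully, since the bookkeeping is the crux. Consider the set $X$ of levelised labelled graphs on $\gg$ where I record a full bijective level-labelling of the $k$ vertices (so $|X|$ is a multiple of $k!$ cut down by genus-$0$ planarity constraints), equipped with the free horizontal $G$-action and the vertical $G$-action. A levelisation of $\gg$ (\cref{lem:CoefSumOmega=Aut}) is a horizontal-$G$-orbit; a levelisation of $\gamma$ is a $(G\times G)$-orbit. Since the horizontal action is free, every horizontal-$G$-orbit has size $|G|$, so there are $|X|/|G|$ levelisations of $\gg$, and \cref{lem:CoefSumOmega=Aut} reads $\tfrac{1}{|G|}\sum_{x\in X}\omega(x)=1$. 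For the $(G\times G)$-orbits, the horizontal action being free forces every $(G\times G)$-orbit to have size $|G|\cdot(|G|/|H|)$ where $H$ is the vertical stabiliser of a point within it; hence the number of horizontal-$G$-orbits inside one such $(G\times G)$-orbit is $|G|/|H|$, and summing $\omega$ over those orbits gives $\omega(\text{orbit})\cdot|G|/|H|$, while the $(G\times G)$-orbit's own weight in the count $\sum_{x\in X}\omega(x)$ is $\omega(\text{orbit})\cdot|G|^2/|H|$. Dividing the identity $\tfrac{1}{|G|}\sum_{x}\omega(x)=1$ through, the contribution of each levelisation $\lambda$ of $\gamma$ is $\omega(\lambda)\cdot|G|/|H_\lambda|$, and I then invoke a final Burnside/orbit-count over the levelisations of $\gamma$ themselves to collapse the $|G|/|H_\lambda|$ factors — equivalently, it is cleanest to define $\omega$ on $\gamma$-levelisations so that this works out, and verify directly on small examples. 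The main obstacle, and where I would spend the most care, is precisely this normalisation: pinning down exactly how the weight of a levelisation of the unlabelled graph relates to the weights of the associated levelisations of the labelled graph, i.e. making the freeness of the horizontal action do its job without off-by-a-factor-of-$|\mathrm{Aut}|$ errors. Once that dictionary is fixed, the proof is a one-line consequence of \cref{lem:CoefSumOmega=Aut}; I expect the write-up to be short, with a small worked example (e.g. the graph with two parallel identical sub-branches) included as a sanity check.
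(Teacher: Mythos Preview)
Your approach has the right ingredients but a genuine gap: you never close the argument because you leave the vertical stabilisers $H_\lambda$ unresolved. In fact they are all trivial. A levelisation places exactly one vertex on each level, and the levels are totally ordered; an automorphism of $\gg$ that sends a levelisation to itself must therefore fix every vertex and hence be the identity. You essentially state this observation in your parenthetical remark, but you use it to argue freeness of the horizontal action (which is immediate anyway, since labels are preserved) rather than of the vertical one, where it actually does the work. Once $H_\lambda=\{e\}$ for every $\lambda$, your own bookkeeping gives that each $(G\times G)$-orbit contains exactly $|G|$ horizontal-$G$-orbits, all with the same weight, and the identity from \cref{lem:CoefSumOmega=Aut} collapses immediately to the claim. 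No Burnside, no redefinition of $\omega$, and no example-checking is needed.

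The paper's proof is precisely this, but stripped down: it works with a single $G$-action on the set of levelisations of the labelled graph $\gg$ (your vertical action, after quotienting by the free horizontal action). That action is free by the one-vertex-per-level argument, its orbits are by definition the levelisations of $\gamma$, and the weight is constant on orbits; hence
\[
1=\sum_{\Lambda\ \text{of}\ \gg}\omega(\Lambda)=|\mathrm{Aut}\,\gg|\sum_{\lambda\ \text{of}\ \gamma}\omega(\lambda)~.
\]
Your $G\times G$ framework is not wrong, but it obscures the single freeness observation that makes the proof a two-line computation.
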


\begin{proof}
Let $\gamma$ be an unlabelled flow-directed connected graph of genus $0$ with underlying graph $\gg$. 
The automorphism group of $\gg$ acts canonically on the set of levelisations of $\gg$. 
The set of orbits is in one-to-one correspondence with the set of levelisations of the unlabelled flow-directed connected graph $\gamma$. 
Since this action if free and since the weight of a levelled flow-directed graph is constant on orbits, we get 
\[
\sum_{\text{levelisation} \atop {\Lambda}\ \text{of}\ {\gg}} \omega({\Lambda})=
\sum_{\text{levelisation} \atop \lambda\ \text{of}\ \gamma} \left|\mathrm{Aut} \, {\gg}\right|\omega(\lambda)=
\left|\mathrm{Aut}\,  \gamma\right| \sum_{\text{levelisation} \atop \lambda\ \text{of}\ \gamma} \omega(\lambda)=1
~,\]
by \cref{lem:CoefSumOmega=Aut}.
\end{proof}

\begin{remark}
\cref{prop:LevelGraph} includes the case of unlabelled rooted trees, previously settled in \cite[Proposition~11]{DSV16}. 
The arguments given in \emph{loc. cit.} do not apply to unlabelled connected flow-directed genus 0 graphs:
the present proof is different and somehow more elementary. 
\end{remark}

\begin{proof}[Proof of \cref{thm:HTTCodioperad}]
It is enough to prove that the restriction of the map $\varphi$ to genus 0 flow-directed connected graphs is equal to $\psi$, that is $\varphi|_{\gg=0}=\psi$. 

\medskip

Let  $\gg_\lambda(\susp f_1, \ldots, \susp f_k)$ be a flow-directed connected graph $\gg$ of genus 0 with vertices labelled by the suspension of a multilinear operations $f_1, \ldots, f_k$ of $A$ together with the data of a levelisation $\lambda$ made up of one vertex per level. 
We claim that 
\begin{equation}\label{eq:PHI=omegaPsi}
\rmP\rmH\rmI\big(\gg_\lambda(\susp f_1, \ldots, \susp f_k)\big)
=\omega(\lambda)\psi\big(\gg(\susp f_1, \ldots, \susp f_k)\big)~.
\end{equation}
The relations 
$p\pi =p$, $\pi i=i$, $ph=0$, and $hi=0$ 
of the contraction imply first that the left-hand side is a composite of operations along flow-directed connected graph of genus 0 with the input edges labelled by $i$ and the the output edges labelled by $p$. 
Since the flow-directed connected graph $\gg$ has genus 0, its number of internal edges is equal to the number of intermediate levels.
This implies that these internal edges can only be labelled by one $h$ and the remaining ones by the identity $\id_A$ at each intermediate level. In the end, one gets a graph without any levelisalition, and with input edges labelled by $i$, output edges labelled by $p$, and internal edges labelled by $h$, that is $\psi\big(\gg(\susp f_1, \ldots, \susp f_k)\big)$~.

\medskip

It remains to compute the overall coefficient. If an intermediate level crosses $m$ internal edges, then, the 
symmetric homotopy $h_n$ labels one internal edge by $h$, the other $m-1$ internal edges by the identity $\id_A$ and the input and output edges by $\pi$ or $\id_A$. In the end, each such term produces an intermediate level 
with one internal edge labelled by $h$, the other $m-1$ internal edges labelled by $\id_A$ and the input and output edges by $\pi$. This term appears with the coefficient $\frac{1}{m}$:
notice first that 
\[\left(p^{\otimes k}\otimes \id_A^{\otimes (n-k)}\right)h_{n}\left(\id_A^{\otimes (m+k)}\otimes i^{\otimes (n-m-k)}\right)=p^{\otimes k}\otimes h_m\otimes i^{\otimes (n-m-k)}~,\] 
see the proof of Point~(1) of \cite[Lemma~7]{DSV16} (with $i$ instead of $\pi$), and notice then that the term $h\otimes \id_A^{\otimes (m-1)}$ appears with coefficient $\frac{(m-1)!}{m!}=\frac{1}{m}$ in $h_m$~.
Globally, many terms $\psi\big(\gg(\susp f_1, \ldots, \susp f_k)\big)$ can appear, with coefficient $\pm \omega(\lambda)$ each time: all the configurations with one $h$ per internal edge and one $h$ per intermediate level produce such a term. We claim that almost all these terms cancel, due to sign issues, and that we get only one of them with coefficient $\omega(\lambda)$ in the end. 
Let us proof this claim by induction on the number of vertices. 
This is clear when the number of vertices is equal to $1$ or $2$. 
Suppose now that the result holds true up to $k-1$ vertices and let us prove it for $k$ vertices. 
Let $\gg_\lambda(\susp f_1, \ldots, \susp f_k)$ be the underlying flow-directed connected genus 0 levelled graph. 
We consider one vertex $v$ connected to the rest of the graph by only one internal edge $e$: this exists since otherwise the total number of vertices would be infinite or the graph would not be of genus 0.
If the homotopy $h$ appears on the internal edge $e$ on the intermediate level located just next to $v$, above if $e$ is above or below otherwise, then the other homotopies $h$ appear on the other intermediate levels of the graph and we conclude by the induction hypothesis applied to the flow-directed connected genus 0 levelled sub-graph obtained by removing the vertex $v$, all the edges attached to it, and the level on which it sits. 
If the homotopy $h$ appears at an intermediate level located not next to $v$, then the other homotopies $h$ appear
at the crossings of the internal edges and intermediate levels of the the flow-directed connected genus 0 levelled sub-graph $\bar{\gg}_{\bar{\lambda}}$ obtained by removing the vertex $v$, all the edges attached to it, and the level on which $h$ sits this time. 
If the vertex $v$ is sitting at the top (respectively at the bottom) of the levelled graph $\gg_\lambda$, the labelling 
of the levelled sub-graph $\bar{\gg}_{\bar{\lambda}}$ would be made up of a first level of $i$'s followed directly below by a homotopy $h_n$ (respectively a last level of $p$'s preceded directly above by a homotopy $h_n$): such a composite vanishes. 
Otherwise, it means that the vertex $v$ is sitting between two levels carrying two vertices. Removing $v$ creates a levelled sub-graph $\bar{\gg}_{\bar{\lambda}}$ whose labelling will be made up of two consecutive homotopies $h_n$ and $h_{n}$ whose composite vanishes. This concludes this part of the proof. 

\medskip

Notice that the levelisation map ``$\text{lev}$'', which appears in the definition of the twisting morphism 
$\varphi=\rmP\rmH\rmI \circ\text{lev}$, 
puts all the (differentiated) vertices on levels, one at a time. 
Therefore \cref{eq:PHI=omegaPsi} and \cref{lem:CoefSumOmega=Aut} show that
\begin{align*}
\varphi\big(\gg(\susp f_1, \ldots, \susp f_k)\big)
&=
\sum_{\text{levelisation} \atop \lambda\ \text{of}\ \gg}\rmP\rmH\rmI\big(\gg_\lambda(\susp f_1, \ldots, \susp f_k)\big)
=\left(\sum_{\text{levelisation} \atop \lambda\ \text{of}\ \gg} \omega(\lambda)\right){\psi}\big(\gg(\susp f_1, \ldots, \susp f_k)\big)\\
&={\psi}\big(\gg(\susp f_1, \ldots, \susp f_k)\big)~,
\end{align*}
which concludes the entire proof. 
\end{proof}

\begin{remark}
Already in the operadic case, that is for rooted trees, the formula for the extension of the chain map $p$ to an 
$\infty$-morphism (respectively the extension of the chain homotopy $h$ to an $\infty$-homotopy) cannot be simplified in general: considering levels is mandatory there, see \cite[Section~8]{DSV16}. 
In the dioperadic case, that is for flow-directed connected graphs of genus $0$, it is not possible to simplify the formula for the extension of the chain map $i$ to an $\infty$-morphism: it is still mandatory to consider levels, as the example of upside down trees, with the root at the top and the leaves at the bottom, encoding coalgebras show. In this latter case, one gets the same combinatorics as for the extension of the chain map $p$ to an $\infty$-morphism for rooted trees. 
\end{remark}

\bibliographystyle{alpha}
\bibliography{biblio}
\end{document}